%% file: main.tex
\documentclass[11pt,a4paper]{article}
\usepackage{iftex}
\ifPDFTeX
  \usepackage[T1]{fontenc}
  \usepackage[utf8]{inputenc}
  \DeclareUnicodeCharacter{2265}{\ensuremath{\ge}}
\else
  \usepackage{fontspec}
  \catcode"2265=\active
  \def≥{\ensuremath{\ge}}
\fi
\usepackage[british]{babel}
\usepackage[a4paper,margin=28mm]{geometry}
\usepackage{amsmath,amssymb,amsthm,mathtools,mathrsfs}
\usepackage{enumitem,booktabs}
\usepackage{graphicx,tikz}
\usepackage{xcolor}
\usetikzlibrary{arrows.meta}
\PassOptionsToPackage{hyphens}{url}
\usepackage[hidelinks]{hyperref}
\numberwithin{equation}{section}
\newtheorem{theorem}{Theorem}[section]
\newtheorem{lemma}[theorem]{Lemma}

\newtheorem{corollary}[theorem]{Corollary}
\newtheorem{aoresult}[theorem]{Theorem}
\newtheorem{aoconjecture}[theorem]{Conjecture}
\newenvironment{restated}[2]{%
  \par\medskip\noindent\textbf{#1~\ref{#2}.}\ \itshape
}{\par\medskip}
\theoremstyle{definition}
\newtheorem{definition}[theorem]{Definition}

\newtheorem{example}[theorem]{Example}
\theoremstyle{remark}

\newcommand{\E}{\mathbb E}
\newcommand{\Pp}{\mathbb P}
\newcommand{\Var}{\operatorname{Var}}
\newcommand{\diam}{\operatorname{diam}}
\newcommand{\supp}{\operatorname{supp}}
\newcommand{\Tr}{\operatorname{Tr}}

\newcommand{\Reff}{\mathscr{R}}
\newcommand{\dimminkowski}{\dim_{\mathrm B}}
\newcommand{\dimresis}{\dim_{\mathrm R}}
\newcommand{\dimmass}{\dim_{\mathrm M}}
\newcommand{\dimspec}{\dim_{\mathrm S}}
\newcommand{\dist}{d}
\DeclareMathOperator{\esssup}{ess\,sup}
\title{Iterated Graph Systems (III): \\
Alexander-Orbach conjecture on hierarchical lattices}

\author{Ziyu Neroli\thanks{Email: \href{mailto:z5222549@zmail.unsw.edu.au}{\texttt{z5222549@zmail.unsw.edu.au}}; \href{mailto:ziyu.li21@imperial.ac.uk}{\texttt{ziyu.li21@imperial.ac.uk}}.}\\[3pt]
\small Department of Mathematics, Imperial College London, United Kingdom}
\date{}

\begin{document}
\maketitle

\begin{abstract}
Can the Alexander--Orbach conjecture hold on hierarchical lattices?
We construct Gromov--Hausdorff--Prokhorov scaling limits of critical percolation clusters on hierarchical lattices in the chemical metric and determine the typical spectral dimensions of the associated diffusions.
These dimensions are dense in $(1,2)$;
however, outside a narrow region in dimension space, we prove that the Alexander--Orbach conjecture fails on every hierarchical lattice under consideration.
\end{abstract}

\tableofcontents

\section{Introduction}\label{sec:introduction}

In 1982, Alexander and Orbach conjectured that the simple random walk on the incipient infinite cluster of critical Bernoulli percolation on $\mathbb Z^d$ has spectral dimension $4/3$ in every dimension $d\ge2$~\cite{alexander1982density}.
Kozma and Nachmias~\cite{kozma2009alexander} established this value in the high-dimensional mean-field regime; combined with the work of Fitzner and van der Hofstad~\cite{fitzner2017meanfield}, their result applies to nearest-neighbour percolation in dimensions $d\ge11$.
However, scaling arguments suggest that the conjecture fails in low dimensions~\cite{aharony1984breakdown}, and numerical estimates of the two-dimensional conductivity exponent deviate from its prediction~\cite{lobb1984percolative,grassberger1999conductivity}.
For further discussion of the conjecture and related work on spectral dimensions, diffusion and transport, see also~\cite{gefen1983anomalous,stauffer1994percolation,croydon2008selfsimilarity,croydon2010spectral,hambly2010diffusion}.
Despite this numerical evidence, the conjecture remains mathematically open for nearest-neighbour percolation on $\mathbb Z^d$ in dimensions $2\le d\le10$.

Around the 1970s, researchers began studying statistical-physics problems on graphs with a hierarchical structure rather than Euclidean geometry~\cite{berker1979renormalisation,kaufman1981exactly,griffiths1982spin}.
Such a model is called a \emph{hierarchical lattice}: a graph with self-similar, fractal structure on which an exact renormalisation can usually be carried out.
The classical example is the diamond hierarchical lattice (DHL), shown in Figure~\ref{fig:dhl-construction}.

\begin{figure}[htbp]
\centering
\resizebox{\linewidth}{!}{\input{figure/dhl_construction.tex}}
\caption{The diamond hierarchical lattice (DHL).}
\label{fig:dhl-construction}
\end{figure}

Hambly and Kumagai~\cite{hambly2010diffusion} in 2010 brought these two subjects together by studying diffusion on the scaling limit of the critical percolation cluster on the DHL.
They first obtained heat-kernel estimates on the DHL.
They then showed that the percolation cluster on the DHL has a graph-directed fractal representation, making it more amenable to analysis.
Finally, they determined the spectral dimension of a particular diffusion and proved that it differs from $4/3$.

We aim to examine this problem in depth through a series of papers.

In IGS~(I): \emph{Random walks and diffusion limits}~\cite{Neroli2026}, we studied random walks on a more general class of hierarchical lattices and proved their convergence to a diffusion limit.
Towards the end of that paper, we moved from diffusion on the ambient space to diffusion on its percolation cluster.
After determining the spectral dimension of the ambient space, we solved the open problem left by Hambly and Kumagai~\cite{hambly2010diffusion}, namely the almost-sure existence of the exponential growth rate of the two-terminal effective resistance on the DHL percolation cluster when every open edge has unit resistance.

In IGS~(II): \textcolor{black}{\emph{Bernoulli percolation and critical-exponent universality classes on hierarchical lattices}}~\cite{NeroliIGSII2026}, we continued from the end of IGS~(I) and studied the foundations of percolation in detail.
We then examined eight critical exponents and used four of them to define the critical-exponent universality class.
We rigorously proved that no finite family of substitution-multiplicative dimensions completely determines universality, providing a rigorous proof of the failure of dimension-based universality discussed by \textcolor{black}{Hauser and Saxena~\cite{hauser1986geometrical}}.

\textcolor{black}{In this third paper, we study the scaling limit of the full terminal-connected cluster at each lattice's own critical point, together with the natural diffusion obtained from simple random walks with unit edge resistances.
Here, the Alexander--Orbach conjecture on hierarchical lattices asserts that this diffusion has small-time spectral dimension $4/3$ at almost every point with respect to the limiting mass measure.
We are naturally led to the following questions:}
\begin{itemize}
    \item In IGS~(I), we proved the existence of the natural resistance growth exponent for the percolation cluster on the DHL, resolving an open problem. Can this existence result be extended to the critical percolation cluster on every classical EIGS hierarchical lattice?
    \item As asked at the end of IGS~(I), can we rigorously determine the spectral dimension of the critical percolation cluster on every classical EIGS hierarchical lattice?
    \item Returning to the question that motivated the series, can the critical percolation cluster on a classical EIGS hierarchical lattice have spectral dimension $4/3$?
\end{itemize}

\subsection{Edge iterated graph systems}
We describe hierarchical lattices through the constructions that generate them, treating the generating system, its finite substitution networks and the resulting lattice as distinct objects~\cite{Neroli2026}.
The general framework of Griffiths and Kaufman~\cite{griffiths1982spin} arranges units in successive levels, with uniform bounds on both the size of their boundaries and the number of units from which they are assembled.
Further work on iterated graph constructions can be found in~\cite{li2024on,xi2017fractality,li2025reducible,anttila2024replacement,anttila2026loewner,anttila2025energy}.

\begin{samepage}
\begin{definition}[{\cite[Definition~1.1]{NeroliIGSII2026}; \cite{neroli2024fractal}}]\label{definition:EIGS}
A \emph{random edge iterated graph system} (random EIGS) is specified by the following data and iteration.
{\small
\begin{enumerate}[label=\arabic*.,leftmargin=*,topsep=3pt,itemsep=1pt,parsep=0pt]
\item \textbf{Initial graph.} A finite directed graph $\Xi^0$ with edge colours in $[K]:=\{1,\ldots,K\}$; parallel edges are allowed. For a single initial edge, write its endpoints as $\mathfrak v_+,\mathfrak v_-$.
\item \textbf{Rule laws.} For each colour, a fixed probability law on finitely many finite directed rule graphs, with the same colour set and two ordered distinct \emph{planting vertices}.
\item \textbf{Substitution.} Replace each edge $(a,b)$ by a fresh rule sampled according to its colour, identifying the planting vertices with $a,b$ and making no further identifications. Samples are independent across edges, with fresh randomness at each step.
\item \textbf{Iteration.} Simultaneous substitution produces the \emph{substitution networks} $\Xi^0,\Xi^1,\ldots$. The EIGS is \emph{deterministic} if every rule law is concentrated on one graph.
\end{enumerate}}
\end{definition}
\end{samepage}

\begin{figure}[tbp]
\centering
\resizebox{\linewidth}{!}{\input{figure/random_DHL_rules.tex}}
\caption{A particular example of a random EIGS; it is induced by Bernoulli bond percolation on the DHL.}
\label{fig:random-eigs-dhl}
\end{figure}

For example, Figure~\ref{fig:random-eigs-dhl} gives the four-colour rules for critical percolation on the diamond hierarchical lattice, with $p=p_c=(\sqrt5-1)/2$.
Each displayed probability applies to each rule immediately below it; the bottom row shows one realisation of the first three substitutions.
Starting from one red edge, the red subgraph of $\Xi^n$ has the law of the level-$n$ percolation cluster conditioned on the two terminals being connected; the other colours record auxiliary terminal states.

Following~\cite{Neroli2026}, when the substitution networks admit a limit, we call it an \emph{EIGS hierarchical lattice}, distinguishing the combinatorial limit from the metric scaling limit when needed.
The EIGS is the generating system, and $(\Xi^n)_{n\in\mathbb N}$ is its sequence of finite approximating networks.
We retain the chosen EIGS construction when discussing a hierarchical lattice, so that its substitution steps and renormalisation scale are specified.

In this paper the underlying lattices are generated from one edge by deterministic single-colour systems.
Each edge is therefore replaced by the same two-terminal rule graph at every step.
Bernoulli percolation on such a deterministic system already produces a random coloured EIGS, whose colours record the two-terminal activity states of a cell.
This setting lets us study that mechanism while keeping the deterministic construction simple.
We use the term \emph{classical EIGS hierarchical lattice} for the following class.

\begin{definition}[{\cite[Definition~1.2]{NeroliIGSII2026}}]\label{def:standing}
A \textbf{classical EIGS hierarchical lattice} is generated from a single directed edge by a deterministic single-colour EIGS with a finite, connected, simple rule graph $R=(V(R),E(R);\beta^+,\beta^-)$.
Connectivity, distances and percolation refer to its underlying undirected graph.
{\small
\begin{enumerate}[label=\arabic*.,leftmargin=*,topsep=3pt,itemsep=1pt,parsep=0pt]
\item \textbf{Canonicality.} Every edge lies on a simple path between $\beta^+$ and $\beta^-$.
\item \textbf{Geometric bounds.} We require $\ell:=\dist_R(\beta^+,\beta^-)\ge2$ and $\operatorname{mincut}_R(\beta^+,\beta^-)\ge2$, where $\operatorname{mincut}_R(\beta^+,\beta^-)$ is the minimum size of a terminal-separating edge cut.
\item \textbf{Terminal symmetry.} The underlying undirected graph admits an involutive automorphism exchanging $\beta^+$ and $\beta^-$.
\end{enumerate}}
\end{definition}

The two geometric bounds in Definition~\ref{def:standing} characterise the existence of a nontrivial two-terminal crossing transition for a finite connected rule graph; see~\cite[Theorem~3.1 and Remark~5]{alves2026percolation}.
In particular, once $\ell\ge2$, the condition $\operatorname{mincut}_R(\beta^+,\beta^-)\ge2$ is necessary and sufficient for a critical crossing parameter in $(0,1)$ separating vanishing from asymptotically certain terminal connection.
This criterion does not require canonicality or terminal symmetry.

Unless stated otherwise, the hierarchical lattices studied below are classical EIGS hierarchical lattices in the sense of Definition~\ref{def:standing}.
Write $\Xi^n$ for the depth-$n$ substitution graph of the rule $R=(V(R),E(R);\beta^+,\beta^-)$, with inherited terminals $\mathfrak v_+,\mathfrak v_-$.
We use the notation $m:=|E(R)|$, $\ell:=\dist_R(\beta^+,\beta^-)$ and $d_R:=\deg_R(\beta^+)=\deg_R(\beta^-)$ throughout.
Let $L:=\max\{|P|:P\text{ is a simple terminal path in }R\}$.
Fix edge labels $1,\ldots,m$ and orientations specifying the ordered planting maps.
Cell addresses are words $u\in\{1,\ldots,m\}^{<\infty}$, with generation $|u|$.


\subsection{Main results}

Our aim is to show that the Alexander--Orbach conjecture fails on every classical EIGS hierarchical lattice.

We build on the percolation theory developed in IGS~II~\cite{NeroliIGSII2026}.
Readers may consult its definition of classical EIGS hierarchical lattices (Definition~1.2), critical point and renormalisation results (Theorem~2.4 and Corollary~2.5), random EIGS representation of the critical cluster (Definition~2.7 and Theorem~2.8), and mass-transfer matrix and mass dimension (Definition~2.10, Proposition~2.11 and Theorem~2.14) as needed.
We use these foundations throughout, together with Lemmas~3.2--3.3 and their proofs for the mass estimates and limits.

Let $(\mathfrak C,d_{\mathrm{ch}},\mu)$ be the Gromov--Hausdorff--Prokhorov scaling limit constructed in Section~\ref{sec:geometry} from the full terminal-connected critical clusters in the chemical metric.
Let $\ell_{\mathfrak C}^{\mathrm{ch}}$ be the chemical distance growth factor, and let $\rho(\mathbf M_{\mathfrak C})$ be the spectral radius of the critical mass-transfer matrix of IGS~II.
The chemical mass exponent agrees with the Minkowski dimension of this limit.
\begin{restated}{Theorem}{prop:chemical-minkowski-dimension}
Almost surely,
\[
\dimminkowski(\mathfrak C,d_{\mathrm{ch}})=\frac{\log\rho(\mathbf M_{\mathfrak C})} {\log\ell_{\mathfrak C}^{\mathrm{ch}}}.
\]
\end{restated}
This dimension uses the chemical scale, whereas the ambient mass dimension $\log\rho(\mathbf M_{\mathfrak C})/\log\ell$ studied in IGS~II uses the terminal distance $\ell$ of the rule graph.

Section~\ref{sec:resistance} next establishes the existence of the natural resistance growth exponent.
Let $\mathfrak C_n$ be the full critical percolation cluster in $\Xi^n$ conditioned on terminal connection, with the clusters coupled through their random EIGS representation.
Write $\Reff_{\mathfrak C_n}(\mathfrak v_+,\mathfrak v_-)$ for the effective resistance between the cluster's terminals when every open edge has unit resistance.
\begin{restated}{Theorem}{thm:resistance-exponent}
For every classical EIGS hierarchical lattice, there is a deterministic $\lambda_{\mathfrak C}>1$ such that, almost surely,
\[
\lim_{n\to\infty}\frac{\log\Reff_{\mathfrak C_n}(\mathfrak v_+,\mathfrak v_-)}{n}
=\log\lambda_{\mathfrak C}.
\]
\end{restated}
For the DHL, this is the resistance-growth question posed by Hambly and Kumagai~\cite[Section~6(i)]{hambly2010diffusion}.
The theorem extends its solution in IGS~I~\cite{Neroli2026} to every classical EIGS hierarchical lattice.
Using $\lambda_{\mathfrak C}$, Section~\ref{sec:resistance} constructs the limiting resistance form.

Section~\ref{sec:diffusion} then constructs the diffusion as the scaling limit of the natural random walks and determines its typical spectral dimension.
Write $\dimresis(\mathfrak C,d_{\mathrm{ch}}):=\log\lambda_{\mathfrak C}/\log\ell_{\mathfrak C}^{\mathrm{ch}}$ for the resistance dimension, and let $\dimspec^{(\mathrm N)}(\mathfrak C:x)$ denote the pointwise spectral dimension of this diffusion.
\begin{restated}{Theorem}{thm:heat-kernels}
Almost surely, for $\mu$-almost every $x\in \mathfrak C$,
\[
 \dimspec^{(\mathrm N)}(\mathfrak C:x)
 =
 \frac{2\dimminkowski(\mathfrak C,d_{\mathrm{ch}})}{\dimminkowski(\mathfrak C,d_{\mathrm{ch}})+\dimresis(\mathfrak C,d_{\mathrm{ch}})}.
\]
\end{restated}
Thus both the natural resistance exponent and the typical spectral dimension exist throughout the classical class.

Combining these geometric and diffusion results, Section~\ref{sec:ao} returns to our central question and proves the main theorem.
\begin{restated}{Theorem}{thm:ao-region}
For every classical EIGS rule, if
\[
 (\dimminkowski(\mathfrak C,d_{\mathrm{ch}}),\dimresis(\mathfrak C,d_{\mathrm{ch}}))
 \notin (\frac{4}{3},2)\times(\frac{2}{3},1),
\]
then the typical spectral dimension of the natural limiting diffusion on its critical cluster satisfies
\[
 \dimspec^{(\mathrm N)}(\mathfrak C)\neq\frac{4}{3}.
\]
\end{restated}
The remaining region is narrow, but excluding equality within it remains difficult.
We also seek an example satisfying the Alexander--Orbach equality by approximating prescribed spectral dimensions.
This leads to the following result.
\begin{restated}{Theorem}{thm:spectral-density}
The typical spectral dimensions $\dimspec^{(\mathrm N)}(\mathfrak C)$ of the natural limiting diffusions on critical percolation clusters of classical EIGS hierarchical lattices are dense in $(1,2)$.
\end{restated}
Despite this density result, we have not found an example with spectral dimension exactly $4/3$.
We therefore propose the following conjecture.
\begin{restated}{Conjecture}{conj:ao-global}
For every classical EIGS hierarchical lattice at its own critical point, the typical spectral dimension of the natural limiting diffusion on the full terminal-connected cluster satisfies
\[
\dimspec^{(\mathrm N)}(\mathfrak C) \neq \frac{4}{3}.
\]
That is, the Alexander-Orbach conjecture does not hold on classical EIGS hierarchical lattices.
\end{restated}

\subsection{Lean formalisation}

Sections~\ref{sec:geometry}--\ref{sec:ao} and Appendices~\ref{app:chemical}--\ref{app:spectral-density} have been formalised in Lean~4.32.1 using mathlib.
The proofs use only the standard axioms \texttt{propext}, \texttt{Classical.choice} and \texttt{Quot.sound}, with no \texttt{sorry} placeholders or additional axioms.
The global exclusion in Conjecture~\ref{conj:ao-global} remains open.

The code is available at \url{https://github.com/Nero-17/LEAN-Formalisation-IGS-Alexander-Orbach-conjecture-on-hierarchical-lattice}.

\section{Chemical geometry}\label{sec:geometry}

\textcolor{black}{In this section, we construct the GHP scaling limit of the full critical cluster in the chemical metric and identify its Minkowski dimension with the chemical mass exponent.
This replaces the ambient scale $\ell$ of IGS~II \cite{NeroliIGSII2026} by the intrinsic growth factor $\ell_{\mathfrak C}^{\mathrm{ch}}$, providing the geometry needed for the resistance and diffusion analysis below.}

For the foundations of Bernoulli percolation on classical EIGS hierarchical lattices, we refer to \cite[Definition~1.2 and Section~2.1]{NeroliIGSII2026}.
We use in particular the critical crossing parameter, the random EIGS representation of the full terminal-connected cluster, and the mass-transfer matrix that determines its expected mass growth.

Recall from \cite[Section~2.1]{NeroliIGSII2026} that $\varphi_R(p):=\Pp_p(\beta^+\leftrightarrow\beta^-\text{ in }R)$ is the terminal reliability polynomial, where each edge is independently open with probability $p$.
By \cite[Theorem~2.4 and Corollary~2.5]{NeroliIGSII2026}, its unique fixed point $p_c\in(0,1)$ is the critical crossing parameter.

\begin{definition}\label{def:critical-cluster}
As in \cite[Section~2.1]{NeroliIGSII2026}, let $\mathfrak C_n$ be the full terminal-connected cluster in $\Xi^n$, conditioned on crossing, including all dangling branches.
Its chemical distance $d_{\mathfrak C_n}$ is the intrinsic graph distance along open edges.
\end{definition}

The rule geometry first gives deterministic bounds on this distance.
\begin{lemma}\label{lem:chemical-elementary-bounds}
For every $n\ge0$ and every terminal-connected configuration,
\[
 \ell^n\le d_{\mathfrak C_n}(\mathfrak v_+,\mathfrak v_-)\le L^n.
\]
\end{lemma}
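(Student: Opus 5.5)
We argue by induction on $n$, using the recursive structure of $\Xi^n$: it is obtained from $R$ by replacing each edge $e\in E(R)$ with a copy $\Xi^{n-1}_e$ of $\Xi^{n-1}$ whose terminals are identified with the endpoints of $e$. Distinct copies meet only at vertices of $R$. The case $n=0$ is trivial: $\Xi^0$ is a single edge, and in a terminal-connected configuration that edge is open, so $d_{\mathfrak C_0}(\mathfrak v_+,\mathfrak v_-)=1=\ell^0=L^0$.

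\emph{Lower bound.} Take a geodesic open path $\gamma$ from $\mathfrak v_+$ to $\mathfrak v_-$ in $\mathfrak C_n$. List the vertices of $R$ that $\gamma$ visits, in order. Between two consecutive ones, $\gamma$ stays inside a single cell $\Xi^{n-1}_e$, since cells are glued only at $R$-vertices. Because $\gamma$ is simple and exits that cell at a different $R$-vertex, the segment must run between the two terminals of $\Xi^{n-1}_e$. Those terminals are then joined by an open path inside the cell, so the cell is terminal-connected, and the induction hypothesis bounds the segment length below by $\ell^{n-1}$.

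The sequence of $R$-vertices visited traces a walk in $R$ from $\beta^+$ to $\beta^-$, so it uses at least $\ell$ edges of $R$. This gives $|\gamma|\ge \ell\cdot\ell^{n-1}=\ell^n$. The bound holds for every terminal-connected configuration, since only the geometry of open paths is used.

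\emph{Upper bound.} The same decomposition applies. Because $\gamma$ is a geodesic and hence simple, the visited $R$-vertices are distinct, so they form a simple terminal path $P$ in $R$, with $|P|\le L$. Each segment is a terminal-to-terminal path in a terminal-connected cell $\Xi^{n-1}_e$. We may replace it by a shortest such path inside the cell, of length $d_{\mathfrak C_{n-1}}$ for that cell's configuration, which is at most $L^{n-1}$ by induction. Since $\gamma$ is a geodesic, each segment already has length at most this intra-cell distance. Therefore $|\gamma|\le |P|\,L^{n-1}\le L^n$.

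\emph{Main obstacle.} The step needing care is the claim that a simple path between consecutive $R$-vertices lies in one cell and joins its two terminals. This rests on the fact that substitution makes no further identifications (Definition~\ref{definition:EIGS}), so the only vertices shared by different cells are the images of $V(R)$. A second point is that the intrinsic distance in $\mathfrak C_n$ restricted to a cell may in principle be shorter via detours outside that cell. This is harmless: the argument never uses intrinsic distances within cells, only the cell-by-cell decomposition of the global geodesic, and for the upper bound, the construction of an explicit path through a simple terminal path of $R$.
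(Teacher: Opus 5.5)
Your proof is correct and follows the same route as the paper. You view $\Xi^n$ as $R$ with each edge replaced by a depth-$(n-1)$ cell; a terminal path then projects to a crossing of $R$ with at least $\ell$ edges, each needing a cell crossing, and a simple coarse terminal path of at most $L$ edges, crossed by shortest intra-cell paths, gives the upper bound. The only cosmetic difference is in the upper bound, where you read this structure off a geodesic while the paper builds the path explicitly; the estimate obtained is the same.
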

\begin{proof}
At depth zero the terminal distance is one.
Coarsening a terminal path at depth $n+1$ gives a crossing of the rule graph and hence contains a simple terminal path of at least $\ell$ edges.
Each corresponding depth-$n$ cell must be crossed, so induction gives the lower bound.
Conversely, choose a simple open terminal path in the coarse rule, which has at most $L$ edges, and cross each of its cells along a shortest open path.
Induction bounds the resulting path length by $L^{n+1}$.
\end{proof}

To identify the exact growth factor, we record how terminal distances compose under substitution.
\begin{definition}\label{def:chemical-recursion}
Let $\mathcal Q_c$ be the Bernoulli-$p_c$ rule law conditioned on crossing, and let $H$ be the union of its open edges lying on simple terminal paths.
For nonnegative edge lengths $x$, define the shortest-path functional
\[
 F_H^{\mathrm{ch}}(x):=\min_P\sum_{e\in P}x_e,
\]
where $P$ ranges over the simple terminal paths of $H$.
\end{definition}

\begin{lemma}\label{lem:chemical-path-functional}
The function $F_H^{\mathrm{ch}}$ is increasing, concave, positively homogeneous and superadditive, and satisfies
\[
 |F_H^{\mathrm{ch}}(x)-F_H^{\mathrm{ch}}(y)|\le\sum_e|x_e-y_e|.
\]
\end{lemma}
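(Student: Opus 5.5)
Since $F_H^{\mathrm{ch}}$ is by definition a minimum of linear functionals $x\mapsto\sum_{e\in P}x_e$, one for each simple terminal path $P$ of $H$, I will read off every listed property from the properties of these linear pieces. The pieces are finitely many, because $H$ is a subgraph of the finite rule graph $R$. There is also at least one piece: on the event of crossing, an open terminal path exists, and any simple open terminal path has all its edges in $H$. So the minimum is attained and is finite on $[0,\infty)^{E(H)}$.

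The properties then follow in turn. Each piece has nonnegative coefficients (each is $0$ or $1$), so each is increasing in every coordinate, and a minimum of increasing functions is increasing. Each piece is linear, and a pointwise minimum of finitely many linear (in particular concave) functions is concave. Positive homogeneity holds because, for $t\ge0$, every piece scales by $t$ and $\min_P t a_P=t\min_P a_P$. For superadditivity, let $P^\ast$ be a minimising path for $x+y$. Then
\[
F_H^{\mathrm{ch}}(x+y)=\sum_{e\in P^\ast}x_e+\sum_{e\in P^\ast}y_e\ge F_H^{\mathrm{ch}}(x)+F_H^{\mathrm{ch}}(y).
\]
Alternatively, superadditivity is equivalent to concavity together with homogeneity, via $F(x+y)=2F(\tfrac{x+y}{2})\ge F(x)+F(y)$.

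For the Lipschitz bound, I use the elementary inequality $|\min_P a_P-\min_P b_P|\le\max_P|a_P-b_P|$. Choose $P$ minimising for $y$; then $F(x)-F(y)\le\sum_{e\in P}(x_e-y_e)$, and the symmetric choice bounds $F(y)-F(x)$. For any single path $P$,
\[
\Bigl|\sum_{e\in P}(x_e-y_e)\Bigr|\le\sum_{e\in P}|x_e-y_e|\le\sum_e|x_e-y_e|,
\]
because a simple path uses each edge at most once and the remaining terms are nonnegative. This gives the $\ell^1$ bound with constant $1$.

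I do not expect a genuine obstacle. The only points that need care are the nonemptiness of the path family on the crossing event, which keeps the minimum finite, and the fact that paths are simple, which ensures each edge coefficient is at most $1$ in the Lipschitz estimate.
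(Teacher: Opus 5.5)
Your proof is correct and follows essentially the same route as the paper. Both write the functional as a minimum of nonnegative linear path costs, which gives monotonicity, concavity and homogeneity; both evaluate along a minimising path for $x+y$ to get superadditivity; and both evaluate the cost at $x$ along a minimiser for $y$, then exchange $x$ and $y$, to get the $\ell^1$ Lipschitz bound. You also make explicit two points the paper leaves implicit: the path family is nonempty on the crossing event, and simplicity of the paths keeps every edge coefficient at most $1$.
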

\begin{proof}
Each path cost is linear with nonnegative coefficients, so its minimum is increasing, concave and positively homogeneous.
For every path, its cost at $x+y$ is at least $F_H^{\mathrm{ch}}(x)+F_H^{\mathrm{ch}}(y)$, which proves superadditivity.
Evaluating the cost at $x$ along a minimising path for $y$, and then exchanging $x$ and $y$, gives the stated bound.
\end{proof}

Place independent crossing innovations $\xi_u^c$ with law $\mathcal Q_c$ at all cell addresses, and write $H_u$ for their terminal path cores.
For a connected address $u$, let $\mathfrak C_{u,n}$ be its depth-$n$ descendant cluster in the recursive representation of \cite[Definition~2.7 and Theorem~2.8]{NeroliIGSII2026}, with inherited terminals again denoted by $\mathfrak v_+,\mathfrak v_-$.
An omitted address denotes the root; an additional copy index denotes an independent descendant cluster.
Coarsening and expanding terminal paths gives the common-tree recursion
\[
 d_{\mathfrak C_{u,n+1}}(\mathfrak v_+,\mathfrak v_-)
 =F_{H_u}^{\mathrm{ch}}\bigl((d_{\mathfrak C_{ui,n}}(\mathfrak v_+,\mathfrak v_-))_i\bigr),
 \qquad d_{\mathfrak C_{u,0}}(\mathfrak v_+,\mathfrak v_-)=1.
\]
Unused coordinates may be filled by independent connected copies.
Refining the last generation also gives $\ell d_{\mathfrak C_{u,n}}(\mathfrak v_+,\mathfrak v_-)\le d_{\mathfrak C_{u,n+1}}(\mathfrak v_+,\mathfrak v_-)\le Ld_{\mathfrak C_{u,n}}(\mathfrak v_+,\mathfrak v_-)$.
The exact growth factor and its limiting field are given by the following theorem.

\begin{theorem}\label{thm:chemical-field}
There are a deterministic $\ell_{\mathfrak C}^{\mathrm{ch}}\in[\ell,L]$ and a strictly positive mean-one field $(X_u^{\mathrm{ch}})$, measurable with respect to the innovation tree, such that almost surely at every connected address,
\[
 F_{H_u}^{\mathrm{ch}}(X_{ui}^{\mathrm{ch}})=\ell_{\mathfrak C}^{\mathrm{ch}}X_u^{\mathrm{ch}}.
\]

\end{theorem}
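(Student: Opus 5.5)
The plan is to build the field as a limit of normalised passage times. Write $D_{u,n}:=d_{\mathfrak C_{u,n}}(\mathfrak v_+,\mathfrak v_-)$. For each connected address $u$ and depth $n$, the law of $D_{u,n}$ does not depend on $u$; this follows from the recursive representation of \cite[Definition~2.7 and Theorem~2.8]{NeroliIGSII2026}. Set $a_n:=\E D_{\varnothing,n}$. The common-tree recursion, Lemma~\ref{lem:chemical-path-functional} and independence give submultiplicativity $a_{n+1}\le\E F_{H}^{\mathrm{ch}}(\E D_{i,n})\le\E F_H^{\mathrm{ch}}(\mathbf 1)\,a_n$. This uses concavity and Jensen's inequality in the correct direction, namely $\E F(Y)\le F(\E Y)$. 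It also gives the deterministic bounds $\ell a_n\le a_{n+1}\le La_n$.

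Submultiplicativity alone does not give exact growth. Instead, I would define $\ell_{\mathfrak C}^{\mathrm{ch}}:=\lim a_n^{1/n}$, which lies in $[\ell,L]$ by Lemma~\ref{lem:chemical-elementary-bounds}. Its existence comes from Fekete's lemma applied to a super/sub-additive comparison obtained by splitting depth $n+k$ into a depth-$k$ top with depth-$n$ cells. The upper comparison uses Jensen as above. The lower comparison uses superadditivity: $F(x)\ge \min_i x_i\cdot F(\mathbf 1)$, which controls the decomposition up to constants since $D_{u,n}/a_n$ has bounded ratio fluctuations, $\ell^{k}\le D_{u,n+k}/D_{u,n}\le L^{k}$.

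Next I would show that $W_{u,n}:=D_{u,n}/a_n$ converges. The natural route is a martingale-type argument. It is not literally a martingale, because $F$ is nonlinear. I would therefore prove convergence in two steps. First, $\sup_n\E W_{u,n}^2<\infty$, using the bounded ratios $W_{u,n+1}/W_{u,n}\in[\ell a_n/a_{n+1},La_n/a_{n+1}]\subset[\ell/L,L/\ell]$ together with a variance recursion. Here $F$ is a minimum over paths of sums of independent $W_{ui,n}$, so the variance contracts by a factor of order $\max_P\sum_{e\in P}\theta_e^2<1$ with $\theta_e\le1/\ell$. Second, Cauchy-ness: $\E|W_{u,n+1}-W_{u,n}|\le C\delta^n$. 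To get this, write $W_{u,n+1}=\frac{a_n}{a_{n+1}}F(W_{ui,n})$ and $W_{u,n}=\frac{a_{n-1}}{a_n}F(W_{ui,n-1})$, apply the Lipschitz bound of Lemma~\ref{lem:chemical-path-functional}, and iterate down the tree. Because of the normalisation, the accumulated error at depth $k$ is a product of deviations of the ratios $a_{n+1}/a_n$ from the limit, together with a Lipschitz sum. This is exactly the step I expect to be the main obstacle. The Lipschitz constant is $\sum_e$, not a contraction, so I would instead use the contracting weights $1/\ell_{\mathfrak C}^{\mathrm{ch}}$ along the minimising path and exploit homogeneity. If this fails, a fallback is a fixed-point argument for the distributional equation $\ell^{\mathrm{ch}}X\stackrel d=F_H(X_i)$ on mean-one laws in a Wasserstein metric, followed by an endogeny argument to recover the almost-sure field.

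Given almost-sure and $L^1$ convergence $W_{u,n}\to X_u^{\mathrm{ch}}$ simultaneously at all countably many addresses, the identity follows by passing to the limit in the normalised recursion. Continuity of $F$ and $a_{n+1}/a_n\to\ell_{\mathfrak C}^{\mathrm{ch}}$ are needed here, and the latter follows from the convergence itself since $\E W=1$. The limit has mean one by uniform integrability from the $L^2$ bound, and it is measurable with respect to the innovations below $u$. For strict positivity, I would use $F_H(x)\ge\min_i x_i\cdot\ell$. This shows that the event $\{X_u^{\mathrm{ch}}=0\}$ forces $X_{ui}^{\mathrm{ch}}=0$ for some child, so $q:=\Pp(X=0)$ satisfies $q\le 1-(1-q)^m$-type inclusions, while a zero-one argument on the tree gives $q\in\{0,1\}$. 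Since mean one excludes $q=1$, we get $q=0$.
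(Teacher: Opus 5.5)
\textbf{Gap: the convergence step.} You never establish the almost-sure and $L^1$ convergence of $W_{u,n}=D_{u,n}/a_n$. The recursion identity, the mean-one property and the identification $\ell_{\mathfrak C}^{\mathrm{ch}}=\lim a_{n+1}/a_n$ all depend on it.

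The Cauchy estimate $\E|W_{u,n+1}-W_{u,n}|\le C\delta^n$ has no working mechanism. Iterating the Lipschitz bound of Lemma~\ref{lem:chemical-path-functional} over $k$ levels collects errors from up to $m^k$ leaves. The path bound $F(x)-F(y)\le\sum_{e\in P_y}(x_e-y_e)$ is one-sided, and its minimising path depends on the inputs, so it gives no two-sided contraction.

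Concretely, with $v$ ranging over the leaves of the realised core $\mathcal H_n$, homogeneity gives $W_{u,n+1}-W_{u,n}=a_n^{-1}\bigl[F_{\mathcal H_n}^{\mathrm{ch}}\bigl((a_n/a_{n+1})D_{v,1}\bigr)-F_{\mathcal H_n}^{\mathrm{ch}}(\mathbf 1)\bigr]$. This compares a minimum over the random depth-$n$ core of sums of i.i.d.\ leaf inputs with its unweighted shortest-path length. Proving that this stabilises is a time-constant statement for a random min-plus recursion. It is essentially the content of Theorem~\ref{thm:chemical-scalar}, which the paper proves only after the eigenfield exists, using $X^{\mathrm{ch}}$ as the comparison object together with the path-counting bound of Lemma~\ref{lem:chemical-paths}.

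Your supporting variance claim is also unjustified. The variance of a minimum over overlapping paths is not controlled by the variances of single path sums, and normalised variance can grow under parallel minimisation. The bounded ratios only give $(\ell/L)^n\le W_{u,n}\le(L/\ell)^n$. The Wasserstein fallback does not help either. The Lipschitz estimate yields only the factor $\E|E(H)|/\ell_{\mathfrak C}^{\mathrm{ch}}\ge1$, and the eigenvalue is not known in advance.

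\textbf{What the paper does instead.} You need an existence argument that does not presuppose convergence. The paper first obtains the eigenlaw from Schauder's theorem, applied to the mean-normalised law map on the convex, weakly compact set of mean-one laws with $\E X^p\le K_p$ for large $p$. These constraints are invariant because two facts combine. Expanding along one selected path bounds the pure terms by $\E J(H)\,\E X^p$. The pivotal lower bound $\E F_H^{\mathrm{ch}}(Y_e)\ge\varphi_R'(p_c)\E Y$ then gives the coefficient $\E J(H)/\varphi_R'(p_c)^p<1$ for large $p$. The paper next realises the law on the tree by Kolmogorov extension. It proves endogeny via the one-sided Efron--Stein bound $\E\Var(X^{\mathrm{ch}}\mid\mathcal F_k)\le C(m^{1/p}/\ell_{\mathfrak C}^{\mathrm{ch}})^k$, which uses inverse moments. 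Only afterwards does it get $\ell_{\mathfrak C}^{\mathrm{ch}}\ge\ell$, from boundedness of $(\ell_{\mathfrak C}^{\mathrm{ch}})^{-n}\E d_{\mathfrak C_n}(\mathfrak v_+,\mathfrak v_-)$. Your ordering would make measurability and $\ell_{\mathfrak C}^{\mathrm{ch}}\in[\ell,L]$ automatic, but it puts the hardest step first.

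\textbf{Second gap: positivity.} Your positivity argument fails as written. The fact that some child vanishes gives only $q\le1-(1-q)^m$, which holds for every $q\in[0,1]$. Moreover $\{X_u^{\mathrm{ch}}=0\}$ depends on the top innovation $H_u$, so no zero-one law applies. The correct observation is that zero cost requires an entire terminal path of $H_u$ through zero-cost children. Independent thinning then gives $q=g(q)$, and criticality ($g(s)<s$ for $0<s<1$) forces $q\in\{0,1\}$.
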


\begin{proof}
Lemma~\ref{lem:chemical-moments} constructs the mean-one eigenlaw and proves its moment bounds. Lemma~\ref{lem:chemical-compactness} realises it on the original innovation tree. The recursion holds at each address by that construction, and a countable intersection gives the simultaneous assertion. The lower bound $\ell_{\mathfrak C}^{\mathrm{ch}}\ge\ell$ follows from the first-moment estimate in Lemma~\ref{lem:chemical-compactness}; the selected-path bound in Lemma~\ref{lem:chemical-moments} gives $\ell_{\mathfrak C}^{\mathrm{ch}}\le L$.
\end{proof}

The moment estimates use the normalised reliability function $g(u):=\varphi_R(p_cu)/p_c$.
By \cite[Theorem~2.4 and Corollary~2.5]{NeroliIGSII2026}, $\varphi_R'(p_c)>1$ and $g(u)<u$ for $0<u<1$; a union bound over simple terminal paths gives $g(u)=O(u^\ell)$ as $u$ tends to zero.

\begin{lemma}\label{lem:chemical-field-integrability}
The marginal law of the field in Theorem~\ref{thm:chemical-field} has every positive and negative moment and every finite positive exponential moment.
\end{lemma}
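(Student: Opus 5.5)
We work with the fixed-point equation in law that the marginal $X:=X^{\mathrm{ch}}_u$ satisfies. By Theorem~\ref{thm:chemical-field},
\[
 \ell_{\mathfrak C}^{\mathrm{ch}}X\overset{d}{=}F_H^{\mathrm{ch}}\bigl((X_i)_i\bigr),
\]
where $H\sim\mathcal Q_c$ and the $X_i$ are i.i.d.\ copies of $X$, independent of $H$. Write $\lambda:=\ell_{\mathfrak C}^{\mathrm{ch}}$. The positive and negative tails are treated separately, in each case by exploiting the min-over-paths structure of $F_H^{\mathrm{ch}}$.

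\textbf{Positive exponential moments.} We use the upper bound $F_H^{\mathrm{ch}}(x)\le\sum_{e\in P^\ast}x_e$, where $P^\ast$ is any simple terminal path of $H$ selected measurably from $H$. This path has at most $L$ edges, and by the selected-path bound of Lemma~\ref{lem:chemical-moments} its expected cost is controlled. Thus $\lambda X$ is stochastically dominated by a sum of $|P^\ast|$ independent copies of $X$.

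For $\theta\ge0$ put $\psi(\theta):=\E e^{\theta X}$. Domination gives
\[
 \psi(\lambda\theta)\le\E\bigl[\psi(\theta)^{|P^\ast|}\bigr].
\]
On its own this inequality only propagates finiteness, so we first need finiteness on some interval $[0,\theta_0]$. For that we would go back to the construction in Lemma~\ref{lem:chemical-moments}. The field is a limit of the normalised quantities $W_n=d_{\mathfrak C_n}(\mathfrak v_+,\mathfrak v_-)/\lambda^n$, so it suffices to obtain the bound $\E e^{\theta W_n}\le C$ uniformly in $n$ for small $\theta$, and then pass to the limit by Fatou.

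The uniform bound comes from iterating the inequality $\psi_{n+1}(\theta)\le\E\bigl[\psi_n(\theta/\lambda)^{|P^\ast|}\bigr]$, where $\psi_n(\theta):=\E e^{\theta W_n}$. Expanding to second order gives
\[
 \psi_n(\theta)=1+\theta+O\bigl(\theta^2\E W_n^2\bigr).
\]
Since $\E|P^\ast|\le\lambda$ with mean one, the recursion is contractive near $\theta=0$ provided the second moments of $W_n$ are uniformly bounded, and those bounds were already obtained in Lemma~\ref{lem:chemical-moments}. Once $\psi$ is finite on $[0,\theta_0]$, we have $\psi(\lambda^k\theta_0)<\infty$ for every $k$ by induction, because $|P^\ast|\le L$ is bounded. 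Hence $\psi$ is finite everywhere. All positive moments follow from this.

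\textbf{Negative moments.} This is the main obstacle, since the minimum over paths makes small values of $X$ easy to produce. We aim for a small-ball estimate of the form $\Pp(X\le\varepsilon)\le C\varepsilon^{a}$ for every $a>0$.

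The key observation is that $F_H^{\mathrm{ch}}(x)\le\lambda\varepsilon$ forces some simple terminal path $P$ of $H$ with $x_e\le\lambda\varepsilon$ for every $e\in P$, and $P$ has at least $\ell\ge2$ edges. A union bound over the finitely many paths then gives
\[
 G(\varepsilon)\le N\,G(\lambda\varepsilon)^{\ell},\qquad G(\varepsilon):=\Pp(X\le\varepsilon),
\]
with $N$ the number of simple terminal paths. This is exactly the mechanism that yields $g(u)=O(u^\ell)$ for the reliability function, here applied to $G$.

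Iterating, once $G(\varepsilon_0)<N^{-1/(\ell-1)}$ for some $\varepsilon_0$, we get $G(\varepsilon_0\lambda^{-k})\le c\,\delta^{\ell^k}$ with $\delta<1$. This is a doubly exponential decay in $k$, i.e.
\[
 G(\varepsilon)\le\exp\bigl(-c\,\varepsilon^{-\log\ell/\log\lambda}\bigr),
\]
which is more than enough for every negative moment.

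The remaining step is the base case. Strict positivity of $X$ gives $G(\varepsilon)\to0$ as $\varepsilon\to0$, so such an $\varepsilon_0$ exists. One must check that no atom of $X$ at $0$ arises, and this is guaranteed by the strict positivity asserted in Theorem~\ref{thm:chemical-field}.
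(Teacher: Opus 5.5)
Your lower-tail argument is correct and is essentially the paper's. The union bound $G(\varepsilon)\le N\,G(\lambda\varepsilon)^{\ell}$ is an explicit form of $F(t)\le g(F(\ell_{\mathfrak C}^{\mathrm{ch}}t))$ with $g(u)=O(u^\ell)$. Your extension step $\psi(\lambda\theta)\le\psi(\theta)^L$ is also the paper's. The paper's proof of this lemma simply points to Lemma~\ref{lem:chemical-moments}, where these steps are carried out.

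The gap is the base case of the exponential moment. You assert $\E|P^\ast|\le\lambda$, but the inequality goes the other way. Since $P^\ast$ is chosen from $H$ independently of the inputs, $\lambda=\E F_H^{\mathrm{ch}}(X_e)\le\E\sum_{e\in P^\ast}X_e=\E|P^\ast|$. This is exactly the bound $\ell_{\mathfrak C}^{\mathrm{ch}}\le\E J(H)$ in Lemma~\ref{lem:chemical-moments}. It is strict for the five-cycle of Example~\ref{ex:chemical-scales}, where $\E|P^\ast|\ge3-p_c>\lambda$. So the first-order coefficient $\E|P^\ast|/\lambda$ of your recursion $\psi_{n+1}(\theta)\le\E[\psi_n(\theta/\lambda)^{|P^\ast|}]$ is at least one. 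The recursion is therefore not contracting near $\theta=0$; it only gives $\E W_{n+1}\le(\E|P^\ast|/\lambda)\E W_n$. In addition, $\E W_n$ need not equal one. The remainder in $\E e^{\theta W_n}$ is of order $\theta^2\E[W_n^2e^{\theta W_n}]$, which uniform bounds on second moments, or on any fixed-order moments, cannot control; a lognormal-type tail has all moments and no exponential moment.

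The missing idea is to work at high order $p$, where $\lambda^p$ absorbs the excess of the selected path. In the multinomial expansion of $(\sum_{e\in P^\ast}Y_e)^p$, the pure terms contribute $\E J(H)\,\E Y^p$. There are at most $\binom{p+L-1}{L-1}$ mixed compositions, and each is at most $A^pp!$ under the inductive hypothesis $\E Y^r\le A^rr!$ for $r<p$, because $\binom{p}{k_1,\ldots,k_J}\prod_iA^{k_i}k_i!=A^pp!$. For the eigenlaw this yields
\[
 \bigl(\lambda^p-\E J(H)\bigr)\E X^p\le\Bigl(\tbinom{p+L-1}{L-1}-\E J(H)\Bigr)A^pp!.
\]
Choose $p_0$ with $\binom{p+L-1}{L-1}<\varphi_R'(p_c)^p\le\lambda^p$ for $p\ge p_0$; this closes the induction. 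Hence $\E X^p\le A^pp!$ and $\E e^{tX}<\infty$ for $0\le t<1/A$, after which your extension step reaches every $t$.

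This argument needs each $\E X^p$ to be finite a priori, which the Schauder construction in Lemma~\ref{lem:chemical-moments} supplies. Alternatively, run the same induction on $W_n$, which is bounded at each fixed $n$, with $A$ chosen to cover $\sup_n\E W_n^r$ for $r<p_0$. If you go through $W_n$, use only the one-sided bound $X^{\mathrm{ch}}\le\liminf_nW_n$ of Lemma~\ref{lem:chemical-compactness} in Fatou. The full limit $W_n\to c_{\mathrm{ch}}X^{\mathrm{ch}}$ of Theorem~\ref{thm:chemical-scalar} is itself proved using these exponential moments, so citing it would be circular.
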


\begin{proof}
The field constructed in Lemma~\ref{lem:chemical-compactness} has the marginal law of Lemma~\ref{lem:chemical-moments}, where these integrability properties are proved.
\end{proof}

For the iterated path estimates, let $\mathcal H_n$ denote the realised depth-$n$ terminal path-core graph in this recursion, distinct from the deterministic host $\Xi^n$.
Write $F_{\mathcal H_n}^{\mathrm{ch}}$ for its shortest-path functional.
These estimates identify the limit of the actual terminal distances with the field above.

\begin{theorem}\label{thm:chemical-scalar}
For the field of Theorem~\ref{thm:chemical-field}, there is a deterministic $c_{\mathrm{ch}}\ge1$ such that
\[
 \lim_{n\to\infty}(\ell_{\mathfrak C}^{\mathrm{ch}})^{-n}d_{\mathfrak C_{u,n}}(\mathfrak v_+,\mathfrak v_-)=
 c_{\mathrm{ch}}X_u^{\mathrm{ch}}
 \quad\text{almost surely and in every finite positive }L^p
\]
simultaneously at every connected address.
\end{theorem}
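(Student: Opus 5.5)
The plan is to compare the normalised distances $Y_{u,n}:=(\ell_{\mathfrak C}^{\mathrm{ch}})^{-n}d_{\mathfrak C_{u,n}}(\mathfrak v_+,\mathfrak v_-)$ with the field of Theorem~\ref{thm:chemical-field}. This is a homogenisation problem for a positively homogeneous, monotone, concave recursion on a Galton--Watson-type tree. Iterating the common-tree recursion $n$ times expresses $d_{\mathfrak C_{u,n}}$ as $F_{\mathcal H_n}^{\mathrm{ch}}$ evaluated at the all-ones vector on the generation-$n$ descendants of $u$. Iterating the eigen-identity of Theorem~\ref{thm:chemical-field} expresses $(\ell_{\mathfrak C}^{\mathrm{ch}})^nX_u^{\mathrm{ch}}$ as the same composite functional evaluated at $(X_{uw}^{\mathrm{ch}})_{|w|=n}$. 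Both sides are thus the same monotone, homogeneous, $1$-Lipschitz functional of Lemma~\ref{lem:chemical-path-functional}, evaluated at two different leaf vectors. The unknown constant $c_{\mathrm{ch}}$ appears because the leaf value $1$ is not a mean-one eigen-input.

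\textbf{Step 1 (monotonicity in $n$ along a subsequence).} I would look at the extra refinement ratio. By the recursion, $Y_{u,n+k}$ is the depth-$n$ composite functional applied to the leaf values $(\ell_{\mathfrak C}^{\mathrm{ch}})^{-k}d_{\mathfrak C_{uw,k}}$. Superadditivity and concavity of $F$ (Lemma~\ref{lem:chemical-path-functional}) combine with Jensen and with $\E X^{\mathrm{ch}}=1$. This should show that $\E Y_{u,n}$ is a submultiplicative/supermultiplicative sequence, and Fekete's lemma then gives a limit $c_{\mathrm{ch}}:=\lim_n\E Y_{n}$. The bound $c_{\mathrm{ch}}\ge1$ follows from comparing the leaf input $1$ against the mean-one input $X^{\mathrm{ch}}$. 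By Jensen for concave $F$, a deterministic input $1=\E X$ dominates $\E F(X)$, so $\E Y_n\ge\E X^{\mathrm{ch}}=1$ for every $n$.

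\textbf{Step 2 (almost sure convergence).} I would write
\[
Y_{u,n+k}-c_{\mathrm{ch}}X_u^{\mathrm{ch}} = (\ell_{\mathfrak C}^{\mathrm{ch}})^{-n}\Bigl[F_n\bigl((Y_{uw,k})_w\bigr)-F_n\bigl((c_{\mathrm{ch}}X^{\mathrm{ch}}_{uw})_w\bigr)\Bigr],
\]
where $F_n$ is the composite functional. Conditionally on the top $n$ generations, the leaf differences $Y_{uw,k}-c_{\mathrm{ch}}X_{uw}^{\mathrm{ch}}$ are independent across $w$ with common law. I would not use the crude $\ell^1$ Lipschitz bound, which loses a factor of the number of leaves. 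Instead I would use the minimising path $P_n$ of the realised core $\mathcal H_n$. Its length is at most $L^n$, and the difference above is controlled by a sum over the leaves on a minimising path for either input. The key estimate is that, along the geodesic, the normalised contribution concentrates. Since $|P_n|\,(\ell_{\mathfrak C}^{\mathrm{ch}})^{-n}$ need not be bounded, I would instead use a second-moment/martingale argument. Set $M_n:=\E[\,\cdot\mid\mathcal F_n]$, where $\mathcal F_n$ is the innovation $\sigma$-field up to generation $n$. Then the $L^2$ distance $\E|Y_{u,n+k}-c_{\mathrm{ch}}X_u^{\mathrm{ch}}|^2$ should tend to zero as $n,k\to\infty$. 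This follows from the contraction of the associated smoothing transform on mean-one laws, which was presumably obtained in Lemma~\ref{lem:chemical-moments}. Specifically, the map sending a leaf law to the law of $(\ell^{\mathrm{ch}}_{\mathfrak C})^{-1}F_H(\cdot)$ should be a strict contraction in a Wasserstein-type metric on laws with fixed scale, because $g(u)<u$ gives strict branching. Convergence in law plus coupling on the common tree, together with exponential rates, then yields almost sure convergence by Borel--Cantelli. Taking a countable intersection gives the statement simultaneously at every connected address.

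\textbf{Step 3 ($L^p$ convergence).} Deterministic bounds from Lemma~\ref{lem:chemical-elementary-bounds} are too weak, since $(L/\ell^{\mathrm{ch}}_{\mathfrak C})^n$ is unbounded. I would instead dominate $Y_{u,n}$ by the distance along a fixed selected path, as in the selected-path bound of Lemma~\ref{lem:chemical-moments}. This gives uniform exponential moments of $\sup_n Y_{u,n}$, or at least uniform $L^{p}$ bounds for every $p$. Combined with Lemma~\ref{lem:chemical-field-integrability}, uniform integrability upgrades almost sure convergence to every $L^p$. The main obstacle is Step 2: identifying a genuine contraction for the nonlinear min-plus functional. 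Lipschitz control in $\ell^1$ is too lossy, so I would first try a scale-invariant metric (for example the Hilbert projective metric on leaf vectors, where monotone homogeneous maps are nonexpansive) and exploit strict contraction from the positive probability of two disjoint short crossings.
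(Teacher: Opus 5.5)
Your proposal has a genuine gap at the step that carries the theorem's content: showing that $Y_n:=(\ell_{\mathfrak C}^{\mathrm{ch}})^{-n}d_{\mathfrak C_n}(\mathfrak v_+,\mathfrak v_-)$ converges to a \emph{deterministic} multiple of $X^{\mathrm{ch}}$.

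\textbf{Step~1 does not define $c_{\mathrm{ch}}$.} Your Jensen bound $\E Y_n\ge1$ is correct. But concavity gives only $\E Y_{n+k}\le\E Y_n\,\E Y_k$, and the min-plus functional has no reverse inequality. Fekete's lemma applied to $\log\E Y_n$ then yields only $n^{-1}\log\E Y_n\to0$, which $1\le\E Y_n\le C$ already implies. A bounded submultiplicative sequence need not converge (take $2,1,2,1,\dots$). In the paper, convergence of the means is a corollary of the theorem. In any case, a limit of means would not identify the random limit.

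\textbf{Step~2 rests on a contraction that is not proved anywhere.} Lemma~\ref{lem:chemical-moments} obtains the eigenlaw from Schauder's theorem, which gives neither uniqueness nor contraction. For the shortest-path map, a Wasserstein estimate along minimisers costs a factor comparable to the expected number of leaves on a minimising path divided by $\ell_{\mathfrak C}^{\mathrm{ch}}$, and nothing forces this below one. The inequality $g(u)<u$ by itself gives no contraction. A contraction on mean-normalised laws would also be blind to $c$, since iteration from the input $1$ does not preserve means. Convergence in law also does not give pathwise convergence on the common tree.

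\textbf{Your Hilbert-metric fallback is vacuous at every finite depth.} $Y_n$ is $\mathcal F_n$-measurable and confined to $[(\ell/\ell_{\mathfrak C}^{\mathrm{ch}})^n,(L/\ell_{\mathfrak C}^{\mathrm{ch}})^n]$. By contrast, $X^{\mathrm{ch}}=(\ell_{\mathfrak C}^{\mathrm{ch}})^{-n}F^{\mathrm{ch}}_{\mathcal H_n}(X^{\mathrm{ch}}_w)$, with independent leaves, is in general not bounded away from zero given $\mathcal F_n$ (for instance whenever $\ell_{\mathfrak C}^{\mathrm{ch}}>\ell$). Hence $\esssup(Y_n/X^{\mathrm{ch}})=\infty$.

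\textbf{What the paper uses instead.} There are two devices.

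First, the Efron--Stein estimate \eqref{eq:chemical-compactness} makes $(Y_n)$ relatively compact in $L^2$. Its omega-limit set $\mathcal O$ is invariant under $Tf:=(\ell_{\mathfrak C}^{\mathrm{ch}})^{-1}F^{\mathrm{ch}}_H(f_1,\ldots,f_m)$, and every $h\in\mathcal O$ satisfies $X^{\mathrm{ch}}\le h\le C'X^{\mathrm{ch}}$. Only there does a ratio argument of your projective type make sense. Put $c:=\sup_{h\in\mathcal O}\esssup(h/X^{\mathrm{ch}})$ and suppose $cX^{\mathrm{ch}}\notin\mathcal O$. Superadditivity gives $cX^{\mathrm{ch}}-Th\ge T(cX^{\mathrm{ch}}-h)$. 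Thinning then gives the small-ball recursion $Q(t)\le g(Q(\ell_{\mathfrak C}^{\mathrm{ch}}t))$ for $Q(t):=\sup_h\Pp(cX^{\mathrm{ch}}-h\le t)$; this is where $g(u)<u$ really enters. Lemma~\ref{lem:chemical-paths}, applied through predecessors $h_{-n}\in\mathcal O$, forces $cX^{\mathrm{ch}}-h\ge vX^{\mathrm{ch}}$ with a common $v>0$. This contradicts the choice of $c$ (Lemma~\ref{lem:chemical-endpoint}). The outcome is $Y_{n_j}\to cX^{\mathrm{ch}}$ in $L^2$ along a subsequence, with $c\ge1$.

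Second, to reach every depth, fix one block $n_j$ with $C_R\E e^{\theta(Y_{n_j}-(c+\varepsilon)X^{\mathrm{ch}})}<1$ and $C_R\E e^{\theta((c-\varepsilon)X^{\mathrm{ch}}-Y_{n_j})}<1$. Such a block exists by convergence in probability and uniform exponential moments. Apply the paired bound of Lemma~\ref{lem:chemical-paths} to the independent leaf pairs above this block. The union over at most $C_R^k$ simple terminal paths with $k\ge\ell^n$ leaves gives exception probabilities of order $r^{\ell^n}$, which are summable in $n$. Taking minima over paths then sandwiches $Y_{n+n_j}$ between $(c\pm\varepsilon)X^{\mathrm{ch}}$ eventually.

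This union bound over all terminal paths is what handles the unboundedness of $|P_n|(\ell_{\mathfrak C}^{\mathrm{ch}})^{-n}$ that you correctly flagged. Your Step~3 is fine, given the uniform moments of Lemma~\ref{lem:chemical-compactness}.
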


\begin{proof}
Lemmas~\ref{lem:chemical-moments}--\ref{lem:chemical-endpoint}, proved in Appendix~\ref{app:chemical}, apply to the field of Theorem~\ref{thm:chemical-field} and give a deterministic $c\ge1$ and a subsequence along which $\bigl((\ell_{\mathfrak C}^{\mathrm{ch}})^{-n}d_{\mathfrak C_{n}}(\mathfrak v_+,\mathfrak v_-)\bigr)$ converges to $cX^{\mathrm{ch}}$ in $L^2$.
The remaining step is to pass from that subsequence to every depth.
Select $n_j$ with $\displaystyle\lim_{j\to\infty}\bigl((\ell_{\mathfrak C}^{\mathrm{ch}})^{-{n_j}}d_{\mathfrak C_{n_j}}(\mathfrak v_+,\mathfrak v_-)\bigr)=cX^{\mathrm{ch}}$ in $L^2$.
Fix $0<\varepsilon<c$ and choose $\theta>0$ so that $C_R\E e^{-\theta\varepsilon X^{\mathrm{ch}}}<1$.
Both $e^{\theta(\bigl((\ell_{\mathfrak C}^{\mathrm{ch}})^{-{n_j}}d_{\mathfrak C_{n_j}}(\mathfrak v_+,\mathfrak v_-)\bigr)-(c+\varepsilon)X^{\mathrm{ch}})}$ and $e^{\theta((c-\varepsilon)X^{\mathrm{ch}}-\bigl((\ell_{\mathfrak C}^{\mathrm{ch}})^{-{n_j}}d_{\mathfrak C_{n_j}}(\mathfrak v_+,\mathfrak v_-)\bigr))}$ converge in probability to $e^{-\theta\varepsilon X^{\mathrm{ch}}}$.
Uniform positive exponential moments make them uniformly integrable.
Choose one fixed $j$ for which both signed conditions in Lemma~\ref{lem:chemical-paths} hold.

Above this block, the pairs $(\bigl((\ell_{\mathfrak C}^{\mathrm{ch}})^{-{n_j}}d_{\mathfrak C_{n_j}}(\mathfrak v_+,\mathfrak v_-)\bigr),X^{\mathrm{ch}})$ are independent across the exposed leaves and retain their within-pair coupling.
Outside a summable sequence of exceptions, every terminal path satisfies both signed comparisons.
Taking minima gives $(c-\varepsilon)X^{\mathrm{ch}}\le \bigl((\ell_{\mathfrak C}^{\mathrm{ch}})^{-{n+n_j}}d_{\mathfrak C_{n+n_j}}(\mathfrak v_+,\mathfrak v_-)\bigr)\le(c+\varepsilon)X^{\mathrm{ch}}$ for every sufficiently large $n$.
This fixed shift covers the whole eventual sequence.
A countable sequence $\varepsilon\downarrow0$ gives almost-sure convergence.
Uniform moments give every finite positive $L^p$ convergence and convergence of means.
Set $c_{\mathrm{ch}}:=c$.
A countable address intersection proves the field assertion.
\end{proof}
In particular, the $L^1$ conclusion and $\E X^{\mathrm{ch}}=1$ give
\[
 \lim_{n\to\infty}(\ell_{\mathfrak C}^{\mathrm{ch}})^{-n}\E d_{\mathfrak C_n}(\mathfrak v_+,\mathfrak v_-)=c_{\mathrm{ch}}.
\]

\begin{example}\label{ex:chemical-scales}
If all simple terminal paths of the rule have length $\ell$, induction gives $d_{\mathfrak C_{n}}(\mathfrak v_+,\mathfrak v_-)=\ell^n$ on every crossing cluster; hence $\ell_{\mathfrak C}^{\mathrm{ch}}=\ell$ and $X^{\mathrm{ch}}=1$.
For the standard diamond, this gives $\ell_{\mathfrak C}^{\mathrm{ch}}=2$.

Strict inequality already occurs for a five-cycle with terminal paths of lengths two and three.
Its terminal exchange fixes the middle vertex of the shorter path and interchanges the two internal vertices of the longer path, so it is a classical rule.
Here $\ell=2$ and $\varphi_R(p)=p^2+p^3-p^5$; the interior critical point satisfies $p_c^3+p_c^2=1$.
Conditioned on a crossing, the two coarse branches are open with probabilities $p_c$ and $p_c^2$.
For independent copies of $d_{\mathfrak C_{n}}(\mathfrak v_+,\mathfrak v_-)$, convexity gives
$\E(d_{\mathfrak C_{n,1}}(\mathfrak v_+,\mathfrak v_-)+\cdots+d_{\mathfrak C_{n,j}}(\mathfrak v_+,\mathfrak v_-))^{-3}\le j^{-3}\E d_{\mathfrak C_{n}}(\mathfrak v_+,\mathfrak v_-)^{-3}$.
The inverse cube of the minimum over open branches is at most the sum of their inverse cubes, so
\[
 \E d_{\mathfrak C_{n+1}}(\mathfrak v_+,\mathfrak v_-)^{-3}\le\left(\frac{p_c}{8}+\frac{p_c^2}{27}\right)\E d_{\mathfrak C_{n}}(\mathfrak v_+,\mathfrak v_-)^{-3}.
\]
Iteration and Markov's inequality show that, for every
$c<(p_c/8+p_c^2/27)^{-1/3}$, the probabilities $\Pp(d_{\mathfrak C_{n}}(\mathfrak v_+,\mathfrak v_-)\le c^n)$ are summable.
Consequently
\[
 \ell_{\mathfrak C}^{\mathrm{ch}}\ge
 \left(\frac{p_c}{8}+\frac{p_c^2}{27}\right)^{-1/3}\approx2.0536>2.
\]
The strict inequality also follows without numerical approximation:
$p_c<4/5$ gives $p_c/8+p_c^2/27<167/1350<1/8$.
\end{example}

Terminal distances depend only on paths connecting the two terminals.
To recover the mass and geometry of the full cluster, we now retain its dangling branches and the components inside noncrossing cells.

\begin{definition}\label{def:retained-states}
We retain the states $c,b,u,0$ and the inherited endpoint markings of \cite[Definition~2.7 and Theorem~2.8]{NeroliIGSII2026}.
The state $c$ retains the crossing component, $b$ retains both terminal components of a noncrossing cell, $u$ retains one specified terminal component of a noncrossing cell, and $0$ retains nothing.
A cell of state $u$ carries its inherited active terminal as a mark; terminal symmetry identifies the two choices in law.
For a child edge, assign $c$ when it is open and both endpoints are active, $b$ when it is closed and both endpoints are active, $u$ when exactly one endpoint is active, and $0$ otherwise.
At depth zero the retained objects are an open edge, two isolated terminals, one marked terminal, or the empty set, according to their state.
\end{definition}

Let $\mathcal Q_N$ denote the Bernoulli-$p_c$ rule law conditioned on noncrossing.
Extend the crossing innovations to independent pairs $(\xi_u^c,\xi_u^N)$ at all addresses, with the second marginal $\mathcal Q_N$; the parent type selects the appropriate innovation.
The cited representation gives the law of $\mathfrak C_n$ at every depth, with stationary finite-support reproduction and conditional independence between distinct parents; siblings may be dependent.
Keeping only $c$-children does not in general give the terminal path core, since a connected child may lie in a dangling coarse branch.

We use the mass-transfer matrix $\mathbf M_{\mathfrak C}$ of \cite[Definition~2.10]{NeroliIGSII2026}, with rows and columns ordered as $(c,b,u)$.
Its entry $M_{\sigma,\tau}$ is the expected number of state-$\tau$ children of a state-$\sigma$ parent.
The representation theorem gives a three-type branching process, and the matrix is primitive by \cite[Proposition~2.11]{NeroliIGSII2026}.
We also use $2\le d_R<\rho(\mathbf M_{\mathfrak C})<m$ from \cite[Lemma~3.2]{NeroliIGSII2026}.
Since every state-$c$ parent has at least $\ell$ connected children along a terminal path, $\rho(\mathbf M_{\mathfrak C})\ge M_{c,c}\ge\ell$.

Following \cite[Lemmas~3.2--3.3 and their proofs]{NeroliIGSII2026}, let $X_{\sigma,n}$ count the active internal vertices in a depth-$n$ cell of state $\sigma$, excluding its outer terminals, and let $W_\sigma$ be the $L^2$ limit of $\rho(\mathbf M_{\mathfrak C})^{-n}X_{\sigma,n}$.
Write $\mathbf Z_k$ for the generation-$k$ population row vector in the order $(c,b,u)$.
Let $\mathbf h$ be the positive right Perron vector with $h_\sigma:=\E W_\sigma$, and normalise the positive left Perron vector $\mathbf v$ by $\mathbf v\mathbf h=1$.
This fixes the normalisation so that the population limit for a state-$c$ root is $W_c\mathbf v$, as in the branching-limit argument in \cite[proof of Lemma~3.3]{NeroliIGSII2026}.
For a cell at address $u$, let $T_u$ denote its inherited state; a superscript $(u)$ denotes its descendant realisation, so its mass limit is $W_{T_u}^{(u)}$.
The root has state $c$, and its mass limit is denoted by $W_c$.
The geometric construction below requires these limits almost surely on this same tree.

\begin{lemma}\label{lem:population-limit}
Almost surely,
\[
 \lim_{k\to\infty}\rho(\mathbf M_{\mathfrak C})^{-k}\mathbf Z_k=W_c\mathbf v,
 \qquad W_c>0.
\]
\end{lemma}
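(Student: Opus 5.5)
The plan is to apply the Kesten--Stigum theorem for multitype Galton--Watson processes, adapted to the present setting, and then to identify the limiting scalar with the mass limit $W_c$.

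\textbf{Setting.} The population process $(\mathbf Z_k)$ started from a single state-$c$ root is a three-type branching process with the following properties:
\begin{itemize}
\item reproduction is stationary and finitely supported;
\item distinct parents are conditionally independent;
\item the mean matrix $\mathbf M_{\mathfrak C}$ is primitive, by \cite[Proposition~2.11]{NeroliIGSII2026}, with $\rho(\mathbf M_{\mathfrak C})>1$.
\end{itemize}
Sibling dependence is harmless, because Kesten--Stigum only requires independence across parents. The offspring vectors are bounded by $m$, so the $x\log x$ condition holds trivially.

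\textbf{Step 1: the martingale.} The process $Y_k:=\rho(\mathbf M_{\mathfrak C})^{-k}\mathbf Z_k\mathbf h$ is a nonnegative martingale, since $\mathbf M_{\mathfrak C}\mathbf h=\rho\mathbf h$. Bounded offspring give a conditional variance of order $\rho^{-2k}\mathbf Z_k\mathbf 1$, and summing $\rho^{-k}$ shows the martingale is bounded in $L^2$. Hence it converges almost surely and in $L^2$ to a limit $Y$.

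\textbf{Step 2: direction convergence.} Kesten--Stigum, in its finite-variance form (Athreya--Ney), upgrades this to
\[
\rho^{-k}\mathbf Z_k\to Y\mathbf v \quad\text{almost surely.}
\]
I would cite this, or reprove it by a second-moment argument. Decompose $\mathbf Z_{k+j}$ as the sum over generation-$k$ individuals of their descendants, and take $j$ fixed but large so that $\rho^{-j}\mathbf e_\sigma\mathbf M^j$ is close to $h_\sigma\mathbf v$. The conditional Chebyshev bound then gives errors summable in $k$ along $k\in j\mathbb N$. Monotone interpolation between multiples of $j$ is not available, so instead I would run the argument for every residue class modulo $j$.

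\textbf{Step 3: identifying $Y$ with $W_c$.} Write the active internal vertex count $X_{c,n}$ as a sum over generation-$k$ cells of their contributions, plus the vertices created at levels at most $k$, which number $O(\mathbf Z_{k}\mathbf 1)$. Normalising by $\rho^{-n}$ and letting $n\to\infty$ gives
\[
W_c=\rho^{-k}\sum_{|u|=k}W^{(u)}_{T_u}
\]
up to a term of order $\rho^{-k}\cdot O(\mathbf Z_k\mathbf 1)\cdot\rho^{-(n-k)}$, which vanishes in the limit. The conditional law of large numbers across independent parents, using $\E W_\sigma=h_\sigma$ and the $L^2$ bounds, then shows that the right-hand side is close to $\rho^{-k}\mathbf Z_k\mathbf h=Y_k$ in $L^2$. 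Hence $W_c=Y$ almost surely, and this is exactly the normalisation $\mathbf v\mathbf h=1$ recorded before the lemma.

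\textbf{Step 4: positivity.} Every state-$c$ parent has at least $\ell\ge2$ state-$c$ children. The process is therefore supercritical with no extinction, and in fact $\mathbf Z_k\mathbf 1\ge\ell^k$. Positivity of $W_c$ follows from the standard fact that, under the $x\log x$ condition, $\{Y=0\}$ coincides with extinction up to a null set. I would derive this from the fixed-point equation: the vector $q_\sigma=\Pp(W_\sigma=0)$ is a fixed point of the offspring generating function. Since $\E W_\sigma=h_\sigma>0$, we have $q<\mathbf 1$, and for a primitive supercritical process the only fixed points are the extinction vector and $\mathbf 1$. The extinction vector has $q_c=0$, so $W_c>0$ almost surely.

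\textbf{Main obstacle.} I expect the main difficulty to be the justification of the multitype Kesten--Stigum structure with dependent siblings and state-selected innovations. This amounts to checking that the representation of \cite[Theorem~2.8]{NeroliIGSII2026} yields a genuine multitype Galton--Watson process, whose offspring vector given a parent's type is independent of the past. A secondary difficulty is the identification in Step 3, where the $W_\sigma$ are defined through vertex counts rather than populations.
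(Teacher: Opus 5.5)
Your proposal is correct. Its main line matches the paper's proof:
\begin{itemize}
\item almost-sure convergence comes from the multitype Kesten--Stigum theorem, using the primitive mean matrix, bounded offspring and independence across parents only; the paper simply cites Kurtz--Lyons--Pemantle--Peres instead of reproving it;
\item the scalar limit is identified with $W_c$, under $\mathbf v\mathbf h=1$, by the $L^2$ decomposition over generation-$k$ cells; the paper imports this from the proof of IGS~II, Lemma~3.3.
\end{itemize}

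The one step where you take a different route is positivity. The paper deduces $W_c>0$ from the existence of a density for the mass limit, again taken from IGS~II. You instead use the exact one-step identity $W_\sigma=\rho(\mathbf M_{\mathfrak C})^{-1}\sum_i W^{(i)}_{T_i}$, with the children's limits conditionally independent given their types, to obtain $q=f(q)$ for $q_\sigma=\Pp(W_\sigma=0)$. You then get $q\neq\mathbf 1$ from $\E W_\sigma=h_\sigma>0$, and conclude by uniqueness of nontrivial fixed points for a primitive, nonsingular, supercritical process. This is valid here: a $c$-parent always has at least $\ell\ge2$ children, so the process is nonsingular and a $c$-root never dies out, forcing $q_c=0$. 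Your argument is self-contained and needs only the means and the fixed-point structure. The paper's route is shorter but relies on the stronger absolute-continuity input from IGS~II.

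Two minor remarks:
\begin{itemize}
\item In your reproof of Kesten--Stigum, the conditional Chebyshev bound of order $C_j\rho(\mathbf M_{\mathfrak C})^{-k}$ is summable over all $k$, not just $k\in j\mathbb N$. The residue-class device is therefore unnecessary.
\item The paper's proof also takes a countable intersection over descendant addresses, so that the limits hold simultaneously throughout the tree. This is used later in the paper but is not part of the displayed statement.
\end{itemize}
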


\begin{proof}
The finite-type Kesten--Stigum theorem gives almost-sure population convergence because $\mathbf M_{\mathfrak C}$ is primitive and the offspring are bounded \cite{KurtzLyonsPemantlePeres1997}.
The $L^2$ identification in \cite[proof of Lemma~3.3]{NeroliIGSII2026} identifies its scalar limit with $W_c$ under the stated Perron normalisation.
The same lemma and its proof give a density for the nonnegative mass limit, hence $W_c>0$ almost surely.
Applying the argument at each fixed descendant and taking a countable intersection gives the corresponding limits simultaneously throughout the tree.
\end{proof}

For the vertex counts, we use the same birth reward $J_\sigma$ as in \cite[proof of Lemma~3.2]{NeroliIGSII2026}: the number of active internal vertices introduced by one substitution of a state-$\sigma$ cell.
The inherited active-terminal mark is retained when $\sigma=u$; terminal symmetry makes the reward law independent of its choice.

\begin{lemma}\label{lem:vertex-mass-limit}
Almost surely,
\[
 \lim_{n\to\infty}\rho(\mathbf M_{\mathfrak C})^{-n}|V(\mathfrak C_n)|=W_c.
\]
\end{lemma}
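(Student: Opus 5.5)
The plan is to write $|V(\mathfrak C_n)|$ as a sum of birth rewards over the branching tree and then apply a strong law along the tree, using Lemma~\ref{lem:population-limit}. The two outer terminals are counted separately; they contribute $2$, which is negligible after scaling. Every active internal vertex of $\mathfrak C_n$ is created in exactly one substitution step: some cell $u$ with $|u|=k<n$ and state $T_u\neq0$ introduces it. This gives the exact decomposition
\[
 |V(\mathfrak C_n)|=2+\sum_{k=0}^{n-1}\sum_{|u|=k,\ T_u\neq0}J^{(u)},
\]
where $J^{(u)}$ is the birth reward of cell $u$. Its conditional law given the state $T_u=\sigma$ is that of $J_\sigma$, it is bounded by $|V(R)|$, and it is independent across distinct parents given the generation-$k$ types. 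This is the same decomposition used in \cite[proof of Lemma~3.2]{NeroliIGSII2026}, now read pathwise on the single tree rather than in $L^2$.

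The key step is a generation-wise law of large numbers. Let $S_k:=\sum_{|u|=k}J^{(u)}$ and $\mathbf j:=(\E J_c,\E J_b,\E J_u)^{\top}$. Conditionally on $\mathbf Z_k$, $S_k-\mathbf Z_k\mathbf j$ is a sum of $|\mathbf Z_k|$ independent, centred, bounded terms. Hoeffding's inequality then gives $\Pp(|S_k-\mathbf Z_k\mathbf j|>\varepsilon\rho^k\mid\mathbf Z_k)\le2\exp(-c\varepsilon^2\rho^{2k}/|\mathbf Z_k|)$, where $\rho:=\rho(\mathbf M_{\mathfrak C})$. Since $|\mathbf Z_k|\le m^k$, this alone is not summable. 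However, $\E|\mathbf Z_k|\le C\rho^k$, so a conditional second-moment bound
\[
\E\bigl[(S_k-\mathbf Z_k\mathbf j)^2\bigr]\le C'\E|\mathbf Z_k|\le C''\rho^k
\]
together with Chebyshev yields summable probabilities $C''\varepsilon^{-2}\rho^{-k}$, because $\rho>d_R\ge2$. By Borel--Cantelli, $\rho^{-k}(S_k-\mathbf Z_k\mathbf j)\to0$ almost surely. Lemma~\ref{lem:population-limit} then gives $\rho^{-k}S_k\to W_c\,\mathbf v\mathbf j$ almost surely. Summing the geometric tail,
\[
\rho^{-n}\sum_{k<n}S_k=\sum_{j=1}^{n}\rho^{-j}\,\rho^{-(n-j)}S_{n-j}\longrightarrow W_c\,\mathbf v\mathbf j\sum_{j\ge1}\rho^{-j}=\frac{W_c\,\mathbf v\mathbf j}{\rho-1}
\]
by dominated convergence for series: $\rho^{-k}S_k$ is almost surely bounded, since it converges.

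It remains to identify the constant, i.e.\ to show $\mathbf v\mathbf j/(\rho-1)=1$ under the normalisation $\mathbf v\mathbf h=1$. Take expectations of the decomposition for a state-$\sigma$ root. The mean vertex vector satisfies $\E X_{\sigma,n}=\sum_{k<n}(\mathbf M^k\mathbf j)_\sigma$. Dividing by $\rho^n$ and using $\mathbf M^k\sim\rho^k\mathbf h\mathbf v$ (primitivity, with $\mathbf v\mathbf h=1$) gives
\[
h_\sigma=\E W_\sigma=\lim_{n\to\infty}\rho^{-n}\E X_{\sigma,n}=h_\sigma\,\frac{\mathbf v\mathbf j}{\rho-1}.
\]
Since $h_\sigma>0$, this forces $\mathbf v\mathbf j=\rho-1$ and yields the limit $W_c$. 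The $L^2$ convergence of $\rho^{-n}X_{c,n}$ to $W_c$ is what justifies exchanging the limit and the expectation here.

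The main obstacle is the variance bound in the second step. Siblings may be dependent, so the rewards are independent only across distinct parents, not across all cells of a generation. I would handle this by grouping each generation-$k$ reward sum according to the generation-$(k-1)$ parents: the blocks have bounded size at most $m$ and are conditionally independent given $\mathbf Z_{k-1}$. The conditional variance is then at most $C|\mathbf Z_{k-1}|$, which keeps the bound $O(\rho^k)$ and preserves the summability.
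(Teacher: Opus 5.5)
Your proposal is correct and follows essentially the same route as the paper: the exact birth-reward decomposition, centring each generation's reward by its conditional mean (a fixed linear function of $\mathbf Z_k$), an $O(\E|\mathbf Z_k|)=O(\rho(\mathbf M_{\mathfrak C})^k)$ variance bound with Chebyshev and Borel--Cantelli, Lemma~\ref{lem:population-limit} with geometric summation, and identification of the limit through the $L^2$ convergence from IGS~II. The paper identifies the limit simply by noting that the almost-sure and $L^2$ limits must coincide, so your explicit check $\mathbf v\mathbf j=\rho(\mathbf M_{\mathfrak C})-1$ is unnecessary, though harmless; likewise your closing worry about sibling dependence is moot, because once you condition on the innovations of generations before $k$ (which fix the generation-$k$ types), the generation-$k$ rewards depend only on those cells' own independent innovations, so the direct conditional variance bound holds without grouping by parents.
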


\begin{proof}
Fix a live root state $\sigma$ and use the exact birth recursion in \cite[proof of Lemma~3.2]{NeroliIGSII2026}.
The total reward in generation $k$, after subtraction of its conditional mean, has variance at most $C\E|\mathbf Z_k|=O(\rho(\mathbf M_{\mathfrak C})^k)$.
Chebyshev's inequality and Borel--Cantelli show that division by $\rho(\mathbf M_{\mathfrak C})^k$ makes this centred reward tend to zero almost surely.
Its conditional mean is a fixed linear function of $\mathbf Z_k$.
Lemma~\ref{lem:population-limit} and summation of the resulting geometric series therefore give almost-sure convergence of the normalised accumulated rewards.
The limit equals $W_\sigma$ by the $L^2$ convergence already proved in \cite[proof of Lemma~3.3]{NeroliIGSII2026}; including the outer terminals changes the count by at most two.
For the root, $\sigma=c$ gives the asserted limit.
The same argument at every fixed live descendant gives the limit $W_{T_u}^{(u)}$ on one common event of probability one.
\end{proof}

When inspecting either one-arm mask of a $b$-cell, keep its original noncrossing innovation.
Its two components share that randomness but remain separate for intrinsic distance calculations.

\begin{lemma}\label{lem:mass-perron}
Every fixed retained-component cylinder has a compatible limiting normalised vertex mass, with positive mass for connected cylinders.
The two sides of $b$ use their original joint law.
\end{lemma}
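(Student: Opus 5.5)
Fix a retained-component cylinder, that is, a finite address $u$ with a prescribed inherited state $T_u\in\{c,b,u\}$ and, when $T_u=b$, a choice of one of its two terminal components or of both. The normalised vertex mass of the cylinder at depth $n$ is $\rho(\mathbf M_{\mathfrak C})^{-n}$ times the number of active vertices of $\mathfrak C_n$ lying in the retained component(s) of cell $u$. We argue in three steps: existence of the limit, compatibility across generations, and positivity.

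\textbf{Existence.} We apply Lemma~\ref{lem:vertex-mass-limit} to the descendant realisation at $u$. Since $u$ sits at generation $|u|$, the depth-$n$ count inside the cell is the depth-$(n-|u|)$ count of its descendant process, with root state $T_u$. Hence the normalised mass converges almost surely to
\[
\rho(\mathbf M_{\mathfrak C})^{-|u|}\,W^{(u)}_{T_u}.
\]
Lemma~\ref{lem:vertex-mass-limit} states this for live states via the birth recursion of \cite[proof of Lemma~3.2]{NeroliIGSII2026}, and it covers state $u$ with the inherited mark because terminal symmetry makes the reward law independent of the mark.

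For a single side of a $b$-cell, we run the same birth-reward argument on the mask of that arm. The key choice is to keep the original noncrossing innovation $\xi^N_u$, so that the two arms are read from one sample and keep their joint law. Each arm is a multitype branching population with mean matrix dominated entrywise by $\mathbf M_{\mathfrak C}$ and bounded offspring. The centred-reward Chebyshev and Borel--Cantelli estimate therefore goes through unchanged, as does the Kesten--Stigum limit in Lemma~\ref{lem:population-limit}. This yields limits $W^{(u),\pm}_b$ with $W^{(u),+}_b+W^{(u),-}_b=W^{(u)}_b$; the boundary terminals contribute $O(1)$ and vanish after normalisation. A countable intersection over all cylinders gives one almost-sure event.

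\textbf{Compatibility.} Substituting the cell $u$ once partitions its retained vertices into those of the child cylinders $ui$ with their inherited states, plus at most $|V(R)|$ new internal vertices. The new vertices are negligible after division by $\rho(\mathbf M_{\mathfrak C})^n$. Passing to the limit in the finite identity at each depth gives additivity:
\[
\rho^{-|u|}W^{(u)}_{T_u}=\sum_i\rho^{-|u|-1}W^{(ui)}_{T_{ui}}.
\]
The same additivity holds arm by arm for $b$-cells, since each child edge belongs to exactly one arm or to neither. This is the consistency needed for a Carathéodory extension to a measure later.

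\textbf{Positivity.} This is the main obstacle. For $T_u=c$ we have $W^{(u)}_c>0$ by Lemma~\ref{lem:population-limit}. For a connected cylinder of state $u$, or for one arm of a $b$-cell, we have to exclude a zero limit. We first show that $\Pp(W=0)$ is a fixed point of the offspring generating map of the three-type process. Primitivity of $\mathbf M_{\mathfrak C}$ \cite[Proposition~2.11]{NeroliIGSII2026}, together with $\rho(\mathbf M_{\mathfrak C})>1$, makes the only such fixed points extinction and zero. A connected cell never goes extinct, since its active terminal is retained forever and the arm contains at least one live child with positive probability at every step. So we need to rule out $W=0$ on survival. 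We do this with the Kesten--Stigum $L\log L$ condition, which holds trivially for bounded offspring: it gives $\{W=0\}=\{\text{extinction}\}$ up to null sets. Alternatively, we use the density of the mass limit from \cite[Lemma~3.3]{NeroliIGSII2026}.

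The delicate point is a single arm of a $b$-cell. That arm is not itself a type of $\mathbf M_{\mathfrak C}$, so we must check that it almost surely contains a descendant of type $u$ or $c$ at some generation. Then positivity transfers from that descendant through the additivity identity above. We would verify this by noting that the active terminal of the arm is an endpoint of a child edge whose state is $u$, $b$ or $c$, and then iterating.
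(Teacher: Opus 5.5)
Your overall route is the paper's. You apply Lemma~\ref{lem:vertex-mass-limit} at every fixed descendant address, keep the raw noncrossing innovation for the sides of $b$, pass finite-depth decompositions to the limit for compatibility, and take positivity of connected cylinders from $W^{(u)}_c>0$. Two small remarks: in the paper ``connected cylinders'' means state-$c$ cells, so your first positivity line is all the lemma requires; and the mass that feeds $\mu$ is a vertex proportion, so you should divide your limit $\rho(\mathbf M_{\mathfrak C})^{-|u|}W^{(u)}_{T_u}$ by the positive root limit $W_c$.

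The gap is in how you handle a single side of a $b$-cell, where you miss the identification the paper's proof rests on. A $b$-cell and a $u$-cell are both noncrossing cells driven by an innovation with law $\mathcal Q_N$; they differ only in which terminal components are retained. So on the raw innovation $\xi^N_u$, the side at one terminal is \emph{exactly} the $u$-root mask with that terminal as its active mark. Under this mask, a closed child edge joining the parent's two terminal components (a $b$-child of the parent) is read as a $u$-child marked at its end in the chosen side. The side is therefore an ordinary three-type process started from type $u$, with the correct marginal one-arm law. Lemma~\ref{lem:vertex-mass-limit} then applies verbatim, and the two masked limits sum to the $b$ limit.

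Without this identification, your arm steps do not hold as written.
\begin{itemize}
\item A population whose mean matrix is entrywise dominated by, but not equal to, the primitive $\mathbf M_{\mathfrak C}$ has strictly smaller spectral radius, so normalising by $\rho(\mathbf M_{\mathfrak C})^n$ would give zero. The Kesten--Stigum step does not ``go through unchanged'' unless the side reproduces exactly like a standard type, which is what the mask guarantees.
\item Your arm-by-arm additivity assumes each child edge belongs to exactly one arm or to neither. But straddling $b$-children exist: in the DHL, a noncrossing configuration whose only open edge is $\beta^+a$ makes the closed edge $a\beta^-$ such a child, and it contributes one side to each arm. Under the mask this is just the ordinary finite-depth decomposition of a $u$-cell.
\item Your claim that the side ``is not itself a type'' leads to an iteration through child sides that you never show terminates, since each step may again land on a straddling $b$-child. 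With the identification, positivity of sides (which the lemma does not ask for) follows from your Kesten--Stigum argument for type $u$. That argument is correct here, because every live cell has a live child at its active terminal, so extinction is impossible and bounded offspring give the $L\log L$ condition.
\end{itemize}
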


\begin{proof}
Apply Lemma~\ref{lem:vertex-mass-limit} at every fixed descendant address.
For one side of $b$, retain its raw innovations and use the $u$ root mask with that side's active-terminal mark.
This gives precisely that component on the existing coupling, with the correct marginal one-arm law.
Its counting limit exists, and the two masked limits sum to the $b$ limit.
Assign shared frontier vertices by any fixed convention.
Different conventions affect finitely many vertices, whose normalised mass tends to zero.
A component at address $u$ therefore has limiting mass $\rho(\mathbf M_{\mathfrak C})^{-|u|}$ times its masked martingale limit divided by the positive root limit.
Finite-depth decomposition identities pass to these ratios and give compatibility.
Connected-cylinder positivity follows from the strict positivity in Lemma~\ref{lem:population-limit}.
\end{proof}

The uniform component estimate needed below is Lemma~\ref{lem:full-mesh} in Appendix~\ref{app:chemical}.
It controls both components of every noncrossing socket as well as all dangling branches.

Equip $V(\mathfrak C_n)$ with the uniform probability measure on its vertices.
The two-marked Gromov--Hausdorff--Prokhorov topology compares compact metric spaces, probability measures and ordered terminal marks in common metric embeddings.

\begin{theorem}\label{thm:full-ghp}
There is a random compact two-marked metric--measure space
\[
 (\mathfrak C,d_{\mathrm{ch}},\mu,\mathfrak v_+,\mathfrak v_-)
\]
with full-support measure such that
\[
 \lim_{n\to\infty}(\mathfrak C_n,(\ell_{\mathfrak C}^{\mathrm{ch}})^{-n}d_{\mathfrak C_n},\frac{1}{|V(\mathfrak C_n)|}\sum_{x\in V(\mathfrak C_n)}\delta_x,\mathfrak v_+,\mathfrak v_-)
 =(\mathfrak C,d_{\mathrm{ch}},\mu,\mathfrak v_+,\mathfrak v_-)
\]
almost surely along the whole sequence in two-marked Gromov--Hausdorff--Prokhorov topology.

\end{theorem}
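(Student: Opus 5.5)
We build the limit directly on the common innovation tree, show the rescaled graphs are uniformly close to it, and use the mass limits for the measures. \textbf{Step 1 (cell distances).} At every connected address $u$, Theorem~\ref{thm:chemical-scalar} gives $(\ell_{\mathfrak C}^{\mathrm{ch}})^{-n}d_{\mathfrak C_{u,n}}(\mathfrak v_+,\mathfrak v_-)\to c_{\mathrm{ch}}X_u^{\mathrm{ch}}$ almost surely, simultaneously over all addresses. Theorem~\ref{thm:chemical-field} gives the exact compatibility $F_{H_u}^{\mathrm{ch}}(X^{\mathrm{ch}}_{ui})=\ell_{\mathfrak C}^{\mathrm{ch}}X^{\mathrm{ch}}_u$. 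We then define a limiting pseudometric on the set $V_\infty$ of vertices of all finite levels. Let $V_k$ be the depth-$k$ coarse vertices lying in retained components, and let $G_k$ be the coarse graph. An open depth-$k$ cell $u$ is given edge length $c_{\mathrm{ch}}(\ell_{\mathfrak C}^{\mathrm{ch}})^{-k}X_u^{\mathrm{ch}}$. The distance $d_k$ on $V_k$ is the induced path metric. The recursion shows that $d_{k+1}$ restricted to $V_k$ equals $d_k$, because a shortest path crossing a cell may be taken to use only terminal paths of its core $H_u$. Hence $d_\infty$ is well defined on $V_\infty$. Take $\mathfrak C$ to be its completion, with marks the images of $\mathfrak v_\pm$.

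\textbf{Step 2 (compactness and uniform approximation).} This is the main obstacle. We must show that every point of $\mathfrak C_n$ lies within $o(1)$ of $V_k$ in the rescaled metric, uniformly in $n\ge k$, as $k\to\infty$. The bound must cover dangling branches and both components of noncrossing $b$-sockets, which are not controlled by terminal distances. For this we invoke Lemma~\ref{lem:full-mesh}, which bounds the rescaled intrinsic diameters of all retained components of all depth-$k$ cells by some $\delta_k\to0$ almost surely. We also need the comparison
\[
\sup_{x,y\in V_k}\bigl|(\ell_{\mathfrak C}^{\mathrm{ch}})^{-n}d_{\mathfrak C_n}(x,y)-d_k(x,y)\bigr|\to0 \quad (n\to\infty).
\]
Since $V_k$ is finite, this follows from Step 1 applied to the finitely many depth-$k$ cells. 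The key point is that in $\mathfrak C_n$ a geodesic between vertices of $V_k$ decomposes through depth-$k$ cells, and each cell's crossing cost is its own terminal distance. The two bounds together give total boundedness of $\mathfrak C$. They also give a correspondence between $\mathfrak C_n$ and $\mathfrak C$ with distortion at most $2\delta_k+o_n(1)$, and it sends marks to marks. We first attempt to prove this summability directly with the exponential-moment bounds of Lemma~\ref{lem:chemical-field-integrability} and a union bound over the $m^k$ cells. The point is that cell diameters are of order $(L/\ell_{\mathfrak C}^{\mathrm{ch}})^{k}$ only on exceptional events, so the moment bounds are needed to make the union bound summable.

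\textbf{Step 3 (measures).} By Lemma~\ref{lem:mass-perron}, every depth-$k$ retained component has a limiting normalised mass $\mu_k(\cdot)$ almost surely, and these masses are compatible across $k$. Place mass $\mu_k$ of each component at one of its vertices in $V_k$. The resulting atomic measures are Cauchy in Prokhorov distance, with error at most $\delta_k$. Their limit $\mu$ is also the limit of the uniform measures on $V(\mathfrak C_n)$: the uniform measure assigns component masses converging by Lemma~\ref{lem:vertex-mass-limit}, and each component has diameter at most $\delta_k$. Full support follows because each ball in $\mathfrak C$ contains a connected cylinder for large $k$, and connected cylinders have positive mass by Lemma~\ref{lem:mass-perron}. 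A diagonal choice of $k=k(n)\to\infty$ then gives almost-sure two-marked GHP convergence along the whole sequence.
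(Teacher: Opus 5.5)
Your proposal is correct and follows essentially the same route as the paper: a skeleton metric on $V_k$ with limiting edge lengths $c_{\mathrm{ch}}(\ell_{\mathfrak C}^{\mathrm{ch}})^{-k}X_u^{\mathrm{ch}}$, made compatible by the scalar recursion; the mesh bound of Lemma~\ref{lem:full-mesh}, which makes the component anchors nets for both the approximants and the completion; projection of component masses to the anchors via Lemma~\ref{lem:mass-perron}; and full support from fine connected cylinders with positive mass. Some small points: strict positivity of $X_u^{\mathrm{ch}}$ already makes your pseudometric a genuine metric on the skeleton; the claim that each ball contains a connected cylinder needs the paper's observation that every skeleton vertex is incident to a connected cell at each later generation; and your aside about obtaining the mesh from Lemma~\ref{lem:chemical-field-integrability} is unnecessary (that field controls terminal distances, not dangling or one-root components), since Lemma~\ref{lem:full-mesh} is what does this work.
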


\begin{proof}

Let $V_k$ contain the retained vertices born by generation $k$.
For $n\ge k$, replace each connected generation-$k$ cell by an edge of length $d_{\mathfrak C_{u,n-k}}(\mathfrak v_+,\mathfrak v_-)$.
Other frontier states supply no through-edge.
Connected dangling branches remain.
Compressing microscopic paths and removing one-attachment excursions, and conversely expanding skeleton edges along terminal geodesics, prove that this weighted distance is exactly the restriction of $d_{\mathfrak C_n}$ to $V_k$.

Each scaled skeleton edge tends to the positive value $c_{\mathrm{ch}}(\ell_{\mathfrak C}^{\mathrm{ch}})^{-k}X_u^{\mathrm{ch}}$.
Finite shortest-path matrices are continuous in their edge lengths.
The scalar recursion makes the limiting metrics compatible on successive $V_k$.
Every fixed skeleton has positive distances between distinct vertices.
We obtain a metric $d_{\mathrm{ch}}$ on $V_*=\bigcup_kV_k$.
Component anchors form $\Gamma_k$-nets both for later approximants and for $V_*$.
Thus its completion $\mathfrak C$ is compact.
Match skeleton labels and project the remaining points to anchors.
For fixed $k$ the finite distance errors vanish with $n$, and the projection errors are at most $\Gamma_k$.
Letting $n\to\infty$ and then $k\to\infty$ proves the marked metric convergence.

Project each component's mass to its planted anchor.
Lemma~\ref{lem:mass-perron} gives convergence of these finitely many masses.
The allocation error at old junction vertices vanishes.
Compatibility and the vanishing mesh make the limiting atomic measures a Cauchy family on $\mathfrak C$, defining $\mu$.
The same transports bound the Prokhorov error by the mesh and the vanishing allocation error.
This proves the asserted measured convergence.

Every skeleton vertex is incident to a connected frontier cell.
This follows from its active coarse connection at birth and the connected replacements of incident edges thereafter.
It has arbitrarily fine connected cylinders with positive limiting mass and vanishing diameter.
Every neighbourhood of each skeleton vertex therefore has positive mass.
Density proves full support.
Theorem~\ref{thm:chemical-scalar} also identifies the terminal distance as $d_{\mathrm{ch}}(\mathfrak v_+,\mathfrak v_-)=c_{\mathrm{ch}}X_\varnothing^{\mathrm{ch}}>0$.

Measurability follows from convergence of the measurable finite marked spaces.

\end{proof}

Construct the intrinsic completion of each retained component using only its internal skeleton paths, with metric in the global normalisation.
For a component $A$, write $d_A$ for this intrinsic metric and $\partial A$ for its planted boundary: two vertices for a connected component and one for a one-root component, including a side of $b$.

\begin{lemma}\label{lem:cell-localisation}
The intrinsic retained components embed continuously in $\mathfrak C$, with distinct components at the same generation intersecting only at shared planted vertices.
\end{lemma}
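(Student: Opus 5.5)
The plan is to define the embedding of a retained component $A$ at address $u$ on its finite skeleton vertices, compare its intrinsic metric $d_A$ with the ambient metric $d_{\mathrm{ch}}$, and then extend by continuity to the completion.

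\textbf{Step 1 (domain and contraction).} Let $V_k^A\subset V_k$ be the skeleton vertices of $A$ born by generation $k\ge|u|$. The intrinsic metric $d_A$ on $V_k^A$ is the shortest-path metric of the weighted skeleton restricted to edges inside $A$. Every path inside $A$ is also a path in the global skeleton. Hence, by the exact restriction identity in the proof of Theorem~\ref{thm:full-ghp}, at each finite depth $n$,
\[
d_{\mathfrak C_n}(x,y)\le d_{A,n}(x,y),\qquad x,y\in V^A_k.
\]
Scaling by $(\ell_{\mathfrak C}^{\mathrm{ch}})^{-n}$ and passing to the limit with Theorem~\ref{thm:chemical-scalar} gives $d_{\mathrm{ch}}\le d_A$ on $V_*^A=\bigcup_k V_k^A$. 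The inclusion $V_*^A\hookrightarrow V_*$ is therefore $1$-Lipschitz, and it extends uniquely to a $1$-Lipschitz map $\iota_A$ from the completion of $(V_*^A,d_A)$ into $\mathfrak C$.

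\textbf{Step 2 (compactness of the completion).} The completion of $(V_*^A,d_A)$ should be compact, so that $\iota_A$ is a continuous map from a compact space. Apply the same net argument as in Theorem~\ref{thm:full-ghp}, using only the descendants of $u$: Lemma~\ref{lem:full-mesh} bounds the intrinsic diameters of all generation-$k$ subcomponents of $A$ by a mesh $\Gamma_k\to0$.

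\textbf{Step 3 (injectivity away from the boundary).} Suppose $\iota_A(x)=\iota_A(y)$. Any ambient geodesic joining two points of $A$ either stays inside $A$ or leaves through $\partial A$. In the second case it goes out through one planted vertex and returns through the other. For a one-root component, or a side of $b$, it cannot return at all, so the ambient and intrinsic distances agree. For a connected component $A$ with boundary $\{a,b\}$ the exact relation is
\[
d_{\mathrm{ch}}(x,y)=\min\bigl\{d_A(x,y),\ d_A(x,a)+d_{\mathrm{ch}}(a,b)+d_A(b,y),\ d_A(x,b)+d_{\mathrm{ch}}(b,a)+d_A(a,y)\bigr\}.
\]
The second and third terms vanish only if $x,y$ coincide with the planted vertices, which are distinct because the planted distance $c_{\mathrm{ch}}(\ell_{\mathfrak C}^{\mathrm{ch}})^{-|u|}X^{\mathrm{ch}}_u>0$. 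Hence $\iota_A$ is injective. A continuous injection from a compact space into a Hausdorff space is an embedding.

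\textbf{Step 4 (intersections).} Let $A\neq A'$ be generation-$k$ components. Their skeleton vertex sets meet only in shared planted vertices, by the substitution rule. It remains to rule out a limit point $z\in\iota_A(A)\cap\iota_{A'}(A')$ that is not planted. Let $x_j\to z$ in $A$ and $x'_j\to z$ in $A'$. Any ambient path from $x_j$ to $x'_j$ must pass through a planted vertex of $A$, so
\[
d_{\mathrm{ch}}(x_j,x'_j)\ge \operatorname{dist}_{\mathrm{ch}}(x_j,\partial A).
\]
Letting $j\to\infty$, $z$ would lie at distance $0$ from $\partial A$. Since $\partial A$ is finite, $z$ is then a planted vertex.

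\textbf{Main obstacle.} I expect the hardest part to be the exact separation identity in Step 3: that ambient geodesics between points of one component factor through its planted boundary. This has to pass to the limit. I would establish it at every finite depth $n$ from the cut-vertex/two-terminal structure of cells, where it is combinatorial. I would then use the uniform convergence of finitely many skeleton distances, and the fact that minima of finitely many continuous functions are continuous, to carry it to $d_{\mathrm{ch}}$ on $V_*$, and finally to the completions.
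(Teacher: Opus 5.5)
Your route is the paper's own: compactness of the intrinsic completion from the component mesh, the $1$-Lipschitz inclusion, the first-exit/last-entrance identity proved at finite depth and passed to skeleton limits and completions, injectivity read off from that identity, and the analogous first-exit bound for distinct components. One step, however, rests on the wrong quantity.

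\textbf{Step~3.} Once the identity is in place, injectivity reduces exactly to showing $d_{\mathrm{ch}}(a,b)>0$ for the two planted vertices in the \emph{ambient} metric. The number you cite, $c_{\mathrm{ch}}(\ell_{\mathfrak C}^{\mathrm{ch}})^{-|u|}X_u^{\mathrm{ch}}$, is the cell's own terminal distance, i.e.\ the intrinsic $d_A(a,b)$. It only bounds $d_{\mathrm{ch}}(a,b)$ from above, since a shorter route may run through other cells. Its positivity only says $a\neq b$ in the intrinsic completion, which is not the point at issue.

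The correct input is that $a,b$ are distinct vertices of the finite skeleton $V_{|u|}$, and every limiting edge length of this skeleton is positive. Hence their skeleton distance is positive, and it equals $d_{\mathrm{ch}}(a,b)$ by the restriction identity. This is recorded in the proof of Theorem~\ref{thm:full-ghp}.

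\textbf{Step~4.} The same positivity is needed to finish. Your bound, applied in $A$ and symmetrically in $A'$, shows that $z$ is the image of a planted vertex of each component. Positivity of skeleton distances then forces these to be the same vertex, which is what ``shared'' requires.
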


\begin{proof}
The intrinsic skeleton metric is positive and its completion compact by the component mesh.
Inclusion is $1$-Lipschitz.
For skeleton points in one component, first exit and last entrance give
\begin{equation}\label{eq:cell-route}
 d_{\mathrm{ch}}(x,y)=
 \min\left\{d_A(x,y),
 \min_{a,b\in\partial A}
 [d_A(x,a)+d_{\mathrm{ch}}(a,b)+d_A(b,y)]\right\}.
\end{equation}
Conversely these costs are approached by concatenating the indicated routes.
The equality passes from finite approximants to skeleton limits and then, by continuity and finiteness of the boundary, to completions.
If two distinct intrinsic points had the same image, this formula would force them to be distinct planted boundary points with the same image.
Their positive finite-skeleton distance excludes this.
The analogous first-exit calculation between distinct components shows that intersections are only planted ones.
\end{proof}

The embedding allows paths to leave a component, but every such path must first reach its planted boundary.
\begin{lemma}\label{lem:cell-separation}
For distinct retained components $A,B$ at the same generation and $x\in A$, $y\in B$,
\[
 d_{\mathrm{ch}}(x,y)\ge d_A(x,\partial A)+d_B(y,\partial B).
\]
\end{lemma}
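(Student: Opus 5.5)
The plan is to prove the inequality first on the finite skeletons $V_k$ and then pass it to completions, using the first-exit/last-entrance decomposition already behind \eqref{eq:cell-route} in Lemma~\ref{lem:cell-localisation}. Fix a generation $k$ and distinct retained components $A,B$ at that generation. Start with skeleton points $x\in A\cap V_{k'}$ and $y\in B\cap V_{k'}$ for some $k'\ge k$, so that $d_{\mathrm{ch}}(x,y)$ is a limit of scaled distances $(\ell_{\mathfrak C}^{\mathrm{ch}})^{-n}d_{\mathfrak C_n}(x,y)$. This uses the identification of the weighted skeleton metric with the restriction of $d_{\mathfrak C_n}$ from the proof of Theorem~\ref{thm:full-ghp}.

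The combinatorial step takes place in $\mathfrak C_n$ for $n\ge k'$. Take a geodesic open path $\gamma$ from $x$ to $y$. By the substitution structure, the depth-$n$ descendants of the generation-$k$ cells partition the edges of $\mathfrak C_n$. Two distinct generation-$k$ components share only planted vertices (Lemma~\ref{lem:cell-localisation} at the finite level). Hence any open path from an interior point of $A$ to a point outside $A$ must pass through a vertex of $\partial A$.

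Let $a$ be the first vertex of $\gamma$ in $\partial A$ and $b$ the last vertex of $\gamma$ in $\partial B$. The point $b$ lies after $a$ along $\gamma$, since $\gamma$ has to leave $A$ before it can enter $B$. If $x\in\partial A$, take $a=x$, and similarly for $y$; these degenerate cases need a little care.

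The segment of $\gamma$ from $x$ to $a$ stays inside $A$ until it hits $\partial A$. Its length is therefore at least the intrinsic distance of $x$ to $\partial A$ in the depth-$n$ approximant of $A$. Likewise, the segment from $b$ to $y$ stays inside $B$. Discarding the middle segment, which has nonnegative length, gives
\[
d_{\mathfrak C_n}(x,y)\ge d_{A,n}(x,\partial A)+d_{B,n}(y,\partial B).
\]

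Next, scale by $(\ell_{\mathfrak C}^{\mathrm{ch}})^{-n}$ and let $n\to\infty$. The scaled intrinsic distances $d_{A,n}$ converge to $d_A$ on skeleton points. This is the same argument as for the global metric, applied to the internal skeleton of $A$ with edge lengths given by Theorem~\ref{thm:chemical-scalar}. Since $\partial A$ is finite, the boundary distances converge as well.

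Finally, extend to general $x\in A$ and $y\in B$ by density of skeleton points in the completions. Both sides are continuous: $d_A(\cdot,\partial A)$ is $1$-Lipschitz for $d_A$. The embedding $A\to\mathfrak C$ is continuous by Lemma~\ref{lem:cell-localisation}, and $d_{\mathrm{ch}}$ is continuous on $\mathfrak C\times\mathfrak C$. So approximate $x$ by skeleton points in $d_A$ and $y$ by skeleton points in $d_B$, and pass to the limit.

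The main obstacle is the separation claim: any open path from $A$ to $B$ must exit through $\partial A$ and enter through $\partial B$. This is delicate for $b$-cells, whose two components share an innovation but are not joined by any open path. It is also delicate for one-root components, which have a single planted vertex. I would verify it directly from the retained-state rules of Definition~\ref{def:retained-states}.

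A noncrossing cell contains no open path between its terminals, so the two sides of a $b$-cell meet only at vertices that are planted for both. For a $u$-cell, the retained component is exactly the active terminal's component. Its only contact with the rest of $\mathfrak C_n$ is therefore through that marked terminal. With this settled, everything else is routine limit-passing.
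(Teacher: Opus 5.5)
Your proposal is correct and follows the paper's own proof: first exit from $\partial A$ and last entrance into $\partial B$ on finite approximants, passage to the skeleton limit, and extension to the intrinsic completions by continuity. One small precision: the ordering should allow $a=b$ when $A$ and $B$ share a planted vertex. This follows because a vertex of $\partial B$ strictly before $a$ would lie in $(A\cap B)\setminus\partial A$, which Lemma~\ref{lem:cell-localisation} rules out.
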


\begin{proof}
For skeleton points, any route from $x$ to $y$ must leave $A$ through $\partial A$ and enter $B$ through $\partial B$.
Its initial and final segments therefore have total length at least the right-hand side.
This bound passes from finite approximants to the skeleton limit, and then to the intrinsic completions by continuity.
\end{proof}

The recursive construction also identifies the intrinsic laws of state-$c$ cells.
\begin{lemma}\label{lem:cell-scaling-law}
Conditional on the raw innovation pairs $(\xi_u^c,\xi_u^N)$ with $|u|<k$, the intrinsic completions of distinct generation-$k$ state-$c$ cells are independent, each with law $(\mathfrak C,(\ell_{\mathfrak C}^{\mathrm{ch}})^{-k}d_{\mathrm{ch}})$.
\end{lemma}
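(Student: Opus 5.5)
The plan is to exploit the exact self-similarity of the recursive representation (\cite[Definition~2.7 and Theorem~2.8]{NeroliIGSII2026}) together with the explicit construction of $(\mathfrak C,d_{\mathrm{ch}})$ in Theorem~\ref{thm:full-ghp}. I will show that, for a generation-$k$ state-$c$ cell at address $u$, the intrinsic completion of its component is a measurable function of the innovation subtree rooted at $u$ alone. More precisely, it is the image under $(\ell_{\mathfrak C}^{\mathrm{ch}})^{-k}$-rescaling of the same functional that produces $(\mathfrak C,d_{\mathrm{ch}})$ from the root tree. Independence and the identification of the law will then follow from the branching structure.

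The first step is to make precise which randomness determines the cell. Conditional on the raw innovation pairs $(\xi_v^c,\xi_v^N)$ with $|v|<k$, the states $T_u$ of all generation-$k$ cells are determined. A cell with $T_u=c$ selects the crossing innovations $\xi^c_{uw}$ at its own root and then, through the inherited states, the appropriate innovation at each descendant. The subtrees $\{(\xi_{uw}^c,\xi_{uw}^N)\}_{w}$ for distinct $u$ of length $k$ are disjoint families of i.i.d.\ variables, independent of the conditioning $\sigma$-field. Moreover, each has the same law as the full root tree $\{(\xi_w^c,\xi_w^N)\}_w$, since the root also has state $c$. Note that sibling dependence occurs only through the parent's innovation, which lies in the conditioning.

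The second step is to check that the intrinsic metric $d_A$ of the component $A$ at $u$ equals $(\ell_{\mathfrak C}^{\mathrm{ch}})^{-k}$ times $\Phi$ evaluated on the subtree at $u$. Here $\Phi$ is the measurable map taking an innovation tree to $(\mathfrak C,d_{\mathrm{ch}})$ in Theorem~\ref{thm:full-ghp}. At finite depth $n$, the intrinsic skeleton distances inside $A$ use only internal skeleton paths, which are made of descendant cells $uw$. By the same compress-and-expand argument as in Theorem~\ref{thm:full-ghp}, they coincide with the skeleton distances of the cluster $\mathfrak C_{u,n-k}$ scaled by $(\ell_{\mathfrak C}^{\mathrm{ch}})^{-n}$. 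This equals $(\ell_{\mathfrak C}^{\mathrm{ch}})^{-k}$ times the depth-$(n-k)$ normalisation. The limiting edge lengths $c_{\mathrm{ch}}(\ell_{\mathfrak C}^{\mathrm{ch}})^{-|uw|}X^{\mathrm{ch}}_{uw}$ given by Theorem~\ref{thm:chemical-scalar} are, by Theorem~\ref{thm:chemical-field}, measurable functions of the subtree at $u$. Relative to the subtree root, they have exactly the form used for the root space, apart from the factor $(\ell_{\mathfrak C}^{\mathrm{ch}})^{-k}$. Passing to completions via Lemma~\ref{lem:cell-localisation} then gives the identity of intrinsic completions as metric spaces, with the terminal marks matched.

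The main obstacle is ensuring that the limits are taken on a common almost-sure event and are genuinely the same measurable functional. The issue is that $X^{\mathrm{ch}}_{u}$ and the descendant convergence are defined by simultaneous statements over all addresses. I would handle it by noting that Theorem~\ref{thm:chemical-scalar} and Lemma~\ref{lem:vertex-mass-limit} hold simultaneously at every connected address on one full-measure event. I would also fix $\Phi$ as a limsup-type measurable version, defined everywhere, so that $d_A=(\ell_{\mathfrak C}^{\mathrm{ch}})^{-k}\Phi(\text{subtree at }u)$ holds almost surely. With that in place, independence across distinct $u$ and the common law $(\mathfrak C,(\ell_{\mathfrak C}^{\mathrm{ch}})^{-k}d_{\mathrm{ch}})$ follow immediately from the first step.
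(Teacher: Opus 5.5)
Your proposal is correct and follows essentially the same route as the paper. The generation-$k$ types are fixed by the conditioning, and the descendant innovation subtrees at distinct generation-$k$ addresses are disjoint, independent of the conditioning and distributed as the root tree. Each state-$c$ intrinsic completion is the same measurable functional of its own subtree, with edge lengths carrying the deterministic factor $(\ell_{\mathfrak C}^{\mathrm{ch}})^{-k}$. You spell out the compress-and-expand identification and the measurability and common-event bookkeeping that the paper's short proof leaves implicit.
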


\begin{proof}
The generation-$k$ types are determined by the conditioned innovations.
The descendant innovation trees at distinct generation-$k$ addresses use disjoint coordinates and are conditionally independent.
Each state-$c$ intrinsic completion depends only on its descendant tree, and its edge lengths carry the deterministic factor $(\ell_{\mathfrak C}^{\mathrm{ch}})^{-k}$, giving the scaled root law.
The two sides of a state-$b$ cell use the same noncrossing innovation and retain their joint law.
\end{proof}

The metric in this section is the chemical metric of the full cluster.
The resistance metric and the natural processes require the further arguments below.

Let $\mathcal A_k$ be the collection of retained generation-$k$ components, with the inherited joint masks for the two sides of $b$.

\begin{definition}\label{def:chemical-dimension}
For a compact metric space, let $\mathcal N(r)$ be the minimum number of radius-$r$ balls needed to cover it.
Its Minkowski dimension is the common value of the lower and upper limits of $\log\mathcal N(r)/\log(1/r)$ as $r$ tends to zero, when these agree.
For $(\mathfrak C,d_{\mathrm{ch}})$ we call this the chemical Minkowski dimension.
\end{definition}

\begin{theorem}\label{prop:chemical-minkowski-dimension}
Almost surely,
\[
\dimminkowski(\mathfrak C,d_{\mathrm{ch}})=\frac{\log\rho(\mathbf M_{\mathfrak C})} {\log\ell_{\mathfrak C}^{\mathrm{ch}}}.
\]
\end{theorem}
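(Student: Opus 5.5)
We prove matching upper and lower bounds on the covering numbers $\mathcal N(r)$ along the geometric scales $r_k:=(\ell_{\mathfrak C}^{\mathrm{ch}})^{-k}$. Write $\rho:=\rho(\mathbf M_{\mathfrak C})$ and $\ell_*:=\ell_{\mathfrak C}^{\mathrm{ch}}$. Since $\ell_*\ge\ell\ge2$, consecutive scales differ by the bounded factor $\ell_*$, and $\mathcal N$ is monotone. It therefore suffices to show that, almost surely, $\log\mathcal N(r_k)/k\to\log\rho$. This gives equal upper and lower limits, namely $\log\rho/\log\ell_*$.

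\textbf{Upper bound.} We cover $\mathfrak C$ by the images of the generation-$k$ retained components $\mathcal A_k$; Lemma~\ref{lem:cell-localisation} makes these continuous embedded pieces covering $\mathfrak C$. Their number is $|\mathbf Z_k|$, and Lemma~\ref{lem:population-limit} gives $|\mathbf Z_k|\sim W_c\rho^k|\mathbf v|_1$ almost surely. The difficulty is that component diameters are not uniformly $O(r_k)$. We would use the uniform mesh estimate, Lemma~\ref{lem:full-mesh}, in the following form: for every $\varepsilon>0$, almost surely, every component $A\in\mathcal A_k$ has $d_A$-diameter at most $r_k\rho^{\varepsilon k}$ for all large $k$. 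We expect this to follow from Theorem~\ref{thm:chemical-scalar}, the exponential moments of Lemma~\ref{lem:chemical-field-integrability}, and a union bound over the $O(m^k)$ addresses. Each component then lies in a ball of radius $r_k\rho^{\varepsilon k}$. Comparing with scale $r_{k'}$, where $k'$ is chosen so that $r_{k'}\approx r_k\rho^{\varepsilon k}$, gives $\mathcal N(r_{k'})\le C\rho^{k}$ with $k'=k(1-\varepsilon\log\rho/\log\ell_*)$. Letting $\varepsilon\downarrow0$ gives the upper bound.

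\textbf{Lower bound.} Here we use mass. By Lemma~\ref{lem:mass-perron}, each generation-$k$ component $A$ has $\mu$-mass $\rho^{-k}W^{(A)}/W_c$. We aim to show that every ball $B(x,r_k)$ has $\mu$-mass at most $\rho^{-k(1-\varepsilon)}$ for all large $k$; then $\mathcal N(r_k)\ge\rho^{k(1-\varepsilon)}$. To prove the ball bound, we would pick a generation $j=k-\lceil\delta k\rceil$. We expect most generation-$j$ components to have inradius-type separation $d_A(\,\cdot\,,\partial A)$ comparable to $r_j\gg r_k$. By Lemma~\ref{lem:cell-separation}, a ball of radius $r_k$ then meets only boundedly many generation-$j$ components: those sharing a planted vertex near $x$. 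The number of such components is bounded by vertex degrees, which grow at most like $d_R^{\,k-j}$-type quantities; this is subexponential relative to the gain once $\delta$ is small. Each such component has mass at most $\rho^{-j}\max_A W^{(A)}$. The masses $W^{(A)}$ have stretched-exponential or polynomial tails (from the $L^2$ and branching theory of \cite{NeroliIGSII2026}), so a union bound over at most $m^j$ addresses gives $\max_{A}W^{(A)}\le\rho^{\varepsilon k}$ eventually.

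\textbf{Main obstacle.} The hardest step is the lower bound's control of balls near planted vertices. A ball centred at a high-degree junction may intersect many generation-$j$ components, and components whose interior points lie chemically close to their boundary (small $X^{\mathrm{ch}}$) defeat the separation argument. Rather than a pointwise bound, we would first try a counting argument. Map each ball in a minimal $r_k$-cover to the set of generation-$k'$ components (with $k'$ slightly less than $k$) whose anchor lies in it. Lemma~\ref{lem:cell-separation}, together with the negative moments of $X^{\mathrm{ch}}$ and a union bound, ensures that a ball of radius $r_k$ contains anchors of at most $\rho^{\varepsilon k}$ such components outside an exceptional set. The exceptional set is summable by Borel--Cantelli. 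Comparing with $|\mathbf Z_{k'}|\asymp\rho^{k'}$ then yields the lower bound.
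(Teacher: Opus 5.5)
\textbf{Verdict.} Your upper bound is correct, but your lower bound has a genuine gap: the uniform ball-mass bound it rests on is false, and the fallback counting argument fails for related reasons.

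\textbf{Upper bound.} This is essentially the paper's argument. You cover $\mathfrak C$ by the $O_\omega(\rho^k)$ generation-$k$ components. The all-order moments of the normalised diameter suprema in Lemma~\ref{lem:full-mesh}, with a union bound over at most $2m^k$ components, give $\max_{A\in\mathcal A_k}\diam(A)\le C_{\omega,\varepsilon}\ell_*^{-k}e^{\varepsilon k}$. Note that it is the recursion of Lemma~\ref{lem:full-mesh} that controls dangling and one-root components. The terminal-distance moments of Theorem~\ref{thm:chemical-scalar} alone would not.

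\textbf{The uniform Frostman bound is false.} The bound $\mu(B(x,r_k))\le\rho^{-k(1-\varepsilon)}$ for all $x$ fails at skeleton vertices. Every generation-$k$ cell incident to an active vertex keeps that vertex in a retained component. Hence $\mathfrak v_+$ lies in $d_R^{k}$ generation-$k$ components, each of conditional mean mass of order $\rho^{-k}$; this is the collar count of Lemma~\ref{lem:collar-expected-mass}, read from below. Pass to generation $\lceil k(1+\delta)\rceil$, so that these components fit inside the ball by the mesh bound, and apply Chebyshev. This gives, eventually, $\mu(B(\mathfrak v_+,r_k))\ge c\,(d_R/\rho)^{k(1+\delta)}$. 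That exceeds $\rho^{-k(1-\varepsilon)}$ once $\varepsilon$ and $\delta$ are small compared with $\log d_R/\log\rho$.

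The faulty step is your count of components sharing a nearby planted vertex. A vertex born at generation $i$ meets $d_R^{\,j-i}$ generation-$j$ components. For old vertices this is exponential in $j$, not of the size $d_R^{k-j}$.

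\textbf{The fallback also fails.} It has the same defect and one more. Anchors are planted vertices, so Lemma~\ref{lem:cell-separation} gives nothing for them, since $d_A(a,\partial A)=0$. Moreover, a single old vertex anchors exponentially many components. This is a deterministic geometric effect, not a rare event that negative moments of $X^{\mathrm{ch}}$ and Borel--Cantelli can remove. A mass argument could be salvaged only by restricting to $\mu$-typical points, using collar estimates as in Appendix~\ref{app:spectral}, which is much heavier.

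\textbf{The missing idea: pack with interior points.} Instead of bounding ball masses, the paper does the following.
\begin{itemize}
\item Take a vertex $z$ on a first-generation crossing path of a fresh connected cell. Since $\ell\ge2$, its intrinsic distance $\Delta$ to the planted boundary is positive. So $\Pp(\Delta\ge r_0)>0$ for some deterministic $r_0$.
\item By Lemma~\ref{lem:cell-scaling-law}, these events are conditionally independent across generation-$k$ connected cells, each with the scaled root law.
\item The connected population limit and a Chernoff bound then give, eventually, at least $c_\omega\rho^k$ cells whose selected points have intrinsic boundary distance at least $r_0\ell_*^{-k}$.
\item Lemma~\ref{lem:cell-separation} separates these points pairwise by at least $2r_0\ell_*^{-k}$. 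So a ball of radius $r_0\ell_*^{-k}/3$ contains at most one of them, and $\mathcal N(r_0\ell_*^{-k}/3)\ge c_\omega\rho^k$.
\end{itemize}
This needs no mass estimate and no control of junctions.

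Your $X^{\mathrm{ch}}$ ingredient could supply the interior points in the same way. A midpoint of an intrinsic terminal geodesic in a connected component lies at intrinsic distance $c_{\mathrm{ch}}X_u^{\mathrm{ch}}\ell_*^{-k}/2$ from its boundary.
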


\begin{proof}
The all-order chemical component-diameter bounds and a frontier union bound give
$\max_{A\in\mathcal A_k}\diam_{d_{\mathrm{ch}}}(A)
 \le C_{\omega,\varepsilon}(\ell_{\mathfrak C}^{\mathrm{ch}})^{-k}e^{\varepsilon k}$.
There are $O_\omega(\rho(\mathbf M_{\mathfrak C})^k)$ components and they cover $\mathfrak C$.
For $0<\varepsilon<\log\ell_{\mathfrak C}^{\mathrm{ch}}$, interpolation gives upper Minkowski dimension at most
$\log\rho(\mathbf M_{\mathfrak C})/(\log\ell_{\mathfrak C}^{\mathrm{ch}}-\varepsilon)$.
Let $\varepsilon\downarrow0$.

Choose an interior vertex $z$ on a first-generation crossing path in a fresh connected cell.
Since $\ell\ge2$, this vertex differs from both planted terminals, and the limiting first-skeleton edge lengths are positive.
Thus its intrinsic distance $\Delta$ to the planted boundary is strictly positive.
Choose deterministic $r_0>0$ with $\Pp_c(\Delta\ge r_0)>0$.
Lemma~\ref{lem:cell-scaling-law}, the connected population limit and Chernoff bounds give at least $c_\omega\rho(\mathbf M_{\mathfrak C})^k$ generation-$k$ connected cells whose corresponding vertices have intrinsic boundary distance at least $r_0(\ell_{\mathfrak C}^{\mathrm{ch}})^{-k}$, eventually.
By Lemma~\ref{lem:cell-separation}, their selected points are pairwise separated by at least $r_0(\ell_{\mathfrak C}^{\mathrm{ch}})^{-k}$ in the full-cluster chemical metric.
A ball of one third that radius contains at most one selected point.
Thus $\mathcal N(r_0(\ell_{\mathfrak C}^{\mathrm{ch}})^{-k}/3)\ge c_\omega\rho(\mathbf M_{\mathfrak C})^k$.
Interpolation gives the lower Minkowski dimension.
\end{proof}

The mass-growth formula of \cite[Theorem~2.14]{NeroliIGSII2026} also gives
\[
 \lim_{n\to\infty}\frac{\log\E|E(\mathfrak C_n)|}{n\log\ell_{\mathfrak C}^{\mathrm{ch}}}
 =\frac{\log\rho(\mathbf M_{\mathfrak C})}{\log\ell_{\mathfrak C}^{\mathrm{ch}}}
 =\dimminkowski(\mathfrak C,d_{\mathrm{ch}}).
\]
Thus the chemical mass exponent agrees with the chemical Minkowski dimension proved above.
The ambient mass dimension uses the same expected mass growth but the ambient scale $\ell$:
\[
 \dimmass(\mathfrak C)=\frac{\log\rho(\mathbf M_{\mathfrak C})}{\log\ell},
 \qquad
 \dimminkowski(\mathfrak C,d_{\mathrm{ch}})=\frac{\log\rho(\mathbf M_{\mathfrak C})}{\log\ell_{\mathfrak C}^{\mathrm{ch}}}.
\]
Since $\ell_{\mathfrak C}^{\mathrm{ch}}\ge\ell$, the chemical Minkowski dimension is no larger than the ambient mass dimension.
Example~\ref{ex:chemical-scales} shows that the inequality can be strict.
No identification of the expected mass exponent with an almost-sure ambient Hausdorff dimension is needed here.

\section{Resistance exponent}\label{sec:resistance}

With unit resistance on each microscopic open edge, we prove the existence of the resistance growth exponent and the convergence of the normalised resistances.
These limits define a resistance metric and a resistance form on the chemical space constructed above; together with the limiting mass measure, the form gives a Dirichlet form.
This extends the DHL result of IGS~I~\cite[Section~4.1]{Neroli2026}, which resolves the Hambly--Kumagai resistance-growth problem~\cite[Section~6(i)]{hambly2010diffusion}, to all classical EIGS hierarchical lattices.
Identifying the speed measure in Section~\ref{sec:diffusion} then connects this form to the original simple random walks and yields their limiting typical spectral dimension.

All scalar probabilities in this section refer to a connected root.
Give every microscopic open edge resistance one and use $\Reff_{\mathfrak C_n}(x,y)$ for the resulting effective resistance, as in IGS~I.
For its coarse terminal path core $H$, now write $F_H(x)$ for effective terminal resistance with edge resistances $x$.
The chemical functional $F_H^{\mathrm{ch}}$ remains the shortest-path cost; $F_H$ will henceforth denote electrical resistance.
The cited cluster representation gives $\Reff_{\mathfrak C_{0}}(\mathfrak v_+,\mathfrak v_-)=1$ and $\Reff_{\mathfrak C_{n+1}}(\mathfrak v_+,\mathfrak v_-)=F_H(\Reff_{\mathfrak C_{n,e}}(\mathfrak v_+,\mathfrak v_-))$, with independent descendant copies conditional on $H$.
Removing pieces outside the terminal path core preserves this resistance, because their components attach at a single vertex and carry no terminal current.
They remain part of the full space and its measure.

Retain $g(u)=\varphi_R(p_c u)/p_c$, and choose one simple path $P(H)$ in each core.
Then $\E|P(H)|\le L$.

\begin{lemma}\label{lem:electrical-eigenlaw}
There are $\lambda_{\mathfrak C}\in[\varphi_R'(p_c),\E|P(H)|]$ and a strictly positive mean-one variable $X$ with $\E(X^p+X^{-p})<\infty$ for every $p>0$, such that $F_H(X_e)$ has the law of $\lambda_{\mathfrak C} X$ for independent child copies, independent of $H$.
\end{lemma}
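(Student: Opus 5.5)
We construct the eigenlaw as a fixed point of the smoothing transform $\mathcal T\nu:=\mathrm{Law}(F_H(X_e))$, where the $X_e$ are independent with law $\nu$ and independent of $H\sim\mathcal Q_c$. We follow the scheme used for the chemical field in Lemma~\ref{lem:chemical-moments}. The structural input is the electrical analogue of Lemma~\ref{lem:chemical-path-functional}. By Thomson's principle, $F_H$ is a minimum over unit terminal flows of $\sum_e x_e\theta_e^2$. Hence $F_H$ is increasing, concave and positively homogeneous. It is bounded above by the series cost of $P(H)$, so $F_H(x)\le\sum_{e\in P(H)}x_e$. It is bounded below by Nash--Williams across the $\ell$ disjoint edge cuts separating the terminals.

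\textbf{Normalisation and existence.} We work with $R_n:=\Reff_{\mathfrak C_n}(\mathfrak v_+,\mathfrak v_-)$ and set $\lambda_n:=\E R_{n+1}/\E R_n$. Jensen's inequality for the concave $F_H$ gives $\E R_{n+1}\le\E F_H(\E R_n\mathbf 1)=\E R_n\,\E F_H(\mathbf 1)$, and $\E F_H(\mathbf 1)\le\E|P(H)|$. Superadditivity gives supermultiplicativity of $\E R_n$ along the common tree, with the constants controlled as in Lemma~\ref{lem:chemical-compactness}. Therefore $\lambda_{\mathfrak C}:=\lim(\E R_n)^{1/n}$ exists and satisfies $\lambda_{\mathfrak C}\le\E|P(H)|$.

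We then show tightness of $Y_n:=R_n/\E R_n$ in both directions, together with uniform moment bounds. A Cesàro or subsequential limit point of the laws of $Y_n$ is then mean one and, by the continuity of $F_H$, a fixed point of $\lambda_{\mathfrak C}^{-1}\mathcal T$. Upper moments follow from the series bound: $\E Y_{n+1}^p$ is bounded by a recursion using $\E(\sum_{e\in P}Y_e)^p$. Together with the mean-one constraint and independence, this closes inductively for each $p$, as in the chemical case, because $\E|P(H)|$ is dominated by $\lambda_n$ up to the correction handled there. Strict positivity of the limit follows once negative moments are controlled.

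\textbf{Negative moments and the lower bound $\lambda_{\mathfrak C}\ge\varphi_R'(p_c)$.} This is where $g(u)=\varphi_R(p_cu)/p_c$ enters. A small resistance $F_H(X_e)\le\varepsilon$ requires, by the cut lower bound, a terminal-separating cut all of whose child resistances are small. Coupling with a thinned percolation, in which each coarse edge survives only when its child is "not small", the probability $q_{n+1}(\varepsilon)$ of smallness satisfies a recursion of the form $1-q_{n+1}\gtrsim g(1-q_n)$ at a reduced scale. Since $g(u)=O(u^\ell)$ and $\ell\ge2$, this gives superpolynomial decay of $\Pp(Y_n\le\varepsilon)$ as $\varepsilon\to0$, uniformly in $n$. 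Hence all negative moments are uniformly bounded.

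For the lower bound on $\lambda_{\mathfrak C}$, compare $F_H$ with its linearisation. The number of pivotal coarse edges has mean $\varphi_R'(p_c)$ by Russo's formula under the crossing conditioning. Each pivotal edge lies in series with the rest of the network, so $F_H(x)\ge\sum_{e\text{ pivotal}}x_e$. Taking expectations at the fixed point gives $\lambda_{\mathfrak C}\ge\varphi_R'(p_c)$.

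\textbf{Main obstacle.} The hardest step is the uniform negative-moment (small-ball) estimate. Resistance can collapse through parallel branches, unlike the chemical minimum, so the cut argument must be made quantitative uniformly in depth. I would first try the $-p$-th moment recursion. Here the inverse resistance is the conductance, and the conductance is at most the sum over a minimal cut $S$ of the child conductances. Apply Jensen to $\E(\sum_{e\in S}X_e^{-1})^p$, and use the fact that minimal cuts of $H$ have size $\ge2$ with probability bounded below. If this fails for small $p$, I would fall back on the thinned-percolation recursion with $g$ described above.
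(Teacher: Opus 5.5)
\textbf{Gap in the existence step.} You take a subsequential or Ces\`aro limit $\nu$ of the laws $\nu_n$ of $Y_n=R_n/\E R_n$ and assert that it is a fixed point. This does not follow. We have $\nu_{n+1}=\widehat T\nu_n$, where $\widehat T\nu$ is the law of $F_H(X_e)/\E F_H(X_e)$. From $\nu_{n_j}\to\nu$, continuity only gives $\nu_{n_j+1}\to\widehat T\nu$. A limit point of an orbit of a continuous map need not be fixed; its omega-limit set is merely invariant. Convergence of $Y_n$ is in fact true, but the paper proves it only in Theorem~\ref{thm:electrical-amplitudes}, and that proof uses the eigenlaw.

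Ces\`aro averaging does not rescue this. $\mathcal T$ is not affine in $\nu$, because the independent inputs make it a polynomial functional of degree $|E(H)|$, so the Krylov--Bogoliubov argument is unavailable. Even along a good subsequence you would also need $\E R_{n_j+1}/\E R_{n_j}\to\lambda_{\mathfrak C}$, and existence of $\lim(\E R_n)^{1/n}$ says nothing about ratios. That limit does exist, by Fekete's lemma applied to the submultiplicativity $\E R_{n+k}\le\E R_n\E R_k$ from Jensen. The supermultiplicativity with constants that you invoke is not available before an eigenlaw exists.

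The missing idea is a fixed-point theorem. Your selected-path bound and the pivotal bound $\E F_H(Y_e)\ge\varphi_R'(p_c)$ hold for \emph{every} mean-one input law. Hence for large $p$ the pure-term coefficient $\E|P(H)|/\varphi_R'(p_c)^p$ is below one. The mean-one laws with $\int x^p\,d\nu\le K_p$ for all large $p$ then form a nonempty convex weakly compact set mapped into itself by $\widehat T$, and $\widehat T$ is continuous there by the Lipschitz bound. Schauder's theorem gives the eigenlaw. One then sets $\lambda_{\mathfrak C}:=\E F_H(X_e)$, which lies in $[\varphi_R'(p_c),\E|P(H)|]$ by your two trial-flow bounds. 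This is the paper's route. Identifying $\lambda_{\mathfrak C}$ with the growth rate of $\E R_n$ is not part of this lemma; it comes later via Lemma~\ref{lem:electrical-sandwich}.

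\textbf{Small-ball duality is reversed.} Small effective resistance does not produce a cut of small children. It forces every terminal cut to contain a small child, that is, the small children contain a terminal path. Indeed, if the edges of resistance at most $ma$ contained no terminal path, the vertices reachable through them would give a cut whose at most $m$ edges all exceed $ma$, with conductance below $1/a$.

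Your recursion $1-q_{n+1}\gtrsim g(1-q_n)$ uses the converse: that a crossing through non-small children keeps the resistance large. This is false, since a parallel small path shorts it. Even if it held, it would only bound $1-q$ from below by iterates of $g$, which tend to zero, so it gives no smallness of $q$.

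The correct thinning marks the coarse edges whose children \emph{are} small. At the fixed point this gives
$\Pp(X\le t)\le g(\Pp(X\le m\lambda_{\mathfrak C}t))$.
Also $z=\Pp(X=0)$ satisfies $z=g(z)$, so $z=0$ by the mean-one condition and $g(u)<u$. Iterating from some $t_0$ with $\Pp(X\le t_0)<1$, and using $g(u)=O(u^\ell)$, gives the bound $\exp(-c\ell^k)$ at $t_0(m\lambda_{\mathfrak C})^{-k}$, hence every inverse moment. This is needed only at the fixed point, not uniformly in $n$.

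Your alternative $-p$-th moment recursion over a cut $S$ does not close as stated. Nothing normalises $\E X^{-1}$, so Lyapunov bounds each mixed term only by $\E X^{-p}$ itself. The recursion would therefore need $\lambda_{\mathfrak C}^p>\E|S|^p$, which you have no reason to expect.
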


\begin{proof}
Thomson's principle writes $F_H(x)$ as the infimum of $\sum_ex_eI_e^2$ over unit terminal flows.
Thus it is increasing, concave, positively homogeneous and superadditive.
The unit electrical current satisfies $|I_e|\le1$: orient nonzero currents along decreasing potential and decompose this acyclic flow into terminal path flows with total weight one.
An optimal trial current therefore gives $|F_H(x)-F_H(y)|\le\sum_e|x_e-y_e|$, including the continuous extension to nonnegative resistances.

Every edge lying on all terminal paths carries unit current.
Their expected number is $\varphi_R'(p_c)$ by \textcolor{black}{\cite[proof of Lemma~2.16]{NeroliIGSII2026}}.
The chosen path supplies an upper trial flow.
For independent nonnegative mean-one inputs $Y_e$, therefore, $\varphi_R'(p_c)\le\E F_H(Y_e)\le \E|P(H)|$.

Normalise the output by its mean and denote the resulting law map by $\widehat T$.
The same selected-path multinomial argument as in Lemma~\ref{lem:chemical-moments} gives
\[
 \int_0^\infty x^p\,(\widehat T\nu)(dx)\le(\E|P(H)|/\varphi_R'(p_c)^p)\left(\int_0^\infty x^p\,\nu(dx)\right)
                   +A_p\left(\int_0^\infty x^p\,\nu(dx)\right)^{(p-2)/(p-1)}.
\]
For all sufficiently large integers $p$, choose invariant bounds $K_p\ge1$.
The mean-one laws satisfying all these bounds form a nonempty convex weakly compact set.
Higher moments preserve the mean under limits, the constraints are closed, and the electrical Lipschitz estimate makes $\widehat T$ continuous.
Schauder's theorem yields an eigenlaw with every positive moment and $\lambda_{\mathfrak C}\in[\varphi_R'(p_c),\E|P(H)|]$.

In a network of at most $m$ edges, effective resistance at most $a>0$ forces the edges of resistance at most $ma$ to contain a terminal path.
Otherwise the vertices reachable through such edges determine a terminal cut with conductance strictly less than $1/a$, contradicting the bound on effective conductance.

Independent marking of coarse core edges with probability $u$ gives crossing probability $g(u)$.
This follows by thinning the original Bernoulli-$p_c$ configuration; crossing of the thinned configuration already implies the conditioning event.
Consequently $z=\Pp(X=0)$ obeys $z=g(z)$.
The mean and the inequality $g(u)<u$ for $0<u<1$ give $z=0$.
For $F(t):=\Pp(X\le t)$ the small-edge observation gives $F(t)\le g(F(m\lambda_{\mathfrak C} t))$.
Starting with $F(t_0)<1$ and iterating yields
$F(t_0(m\lambda_{\mathfrak C})^{-k})\le\exp(-c\ell^k)$ after finitely many terms.
Integration proves every inverse moment.
\end{proof}

In the electrical arguments below, $X_u$ always denotes this electrical field, distinct from $X_u^{\mathrm{ch}}$.
Let $\mathcal F_k$ be the first $k$ generations of innovations.

\begin{lemma}\label{lem:electrical-field}
The eigenlaw has a stationary realisation $(X_u)$ measurable with respect to the original innovation tree.
\end{lemma}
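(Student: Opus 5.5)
We will realise the eigenlaw on the original innovation tree by a projective-limit construction. It mirrors the chemical construction in Lemma~\ref{lem:chemical-compactness}, with the electrical functional $F_H$ in place of the shortest-path functional $F_H^{\mathrm{ch}}$.

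Let $\nu$ be the eigenlaw of Lemma~\ref{lem:electrical-eigenlaw}. For each $k$ and each address $u$, attach to every connected descendant address $uw$ with $|w|=k$ an independent $\nu$-distributed leaf variable. These leaf variables are independent of the innovations. Define
\[
X_u^{(k)}:=\lambda_{\mathfrak C}^{-k}F^{(k)}_{u}\bigl((Y_{uw})_{|w|=k}\bigr),
\]
where $F^{(k)}_u$ is the $k$-fold composition of the core functionals $F_{H_{uv}}$ along the realised tree. By the eigen-identity, and because the leaves are independent of $H$, each $X_u^{(k)}$ has law $\nu$. The family $(X_{uv}^{(k-|v|)})$ satisfies the recursion $F_{H_{uv}}(X_{uvi})=\lambda_{\mathfrak C}X_{uv}$ exactly on generations below $k$. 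The laws of the whole arrays (innovations, $X^{(k)}$) on the countable address set are therefore consistent in the sense that projecting the level-$(k+1)$ array gives the level-$k$ array. Kolmogorov extension yields a joint law of the innovation tree together with a stationary field with marginal $\nu$ satisfying the recursion everywhere.

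This alone produces an extension, not a field measurable with respect to the innovations. To get measurability we will show that $X_u^{(k)}$ is Cauchy in $L^1$ when the leaves are resampled. Couple two leaf inputs at depth $k$ and at depth $k+1$, the latter pushed up one generation. By the Lipschitz bound of Lemma~\ref{lem:electrical-eigenlaw}, iterated along unit currents, the difference at the root is at most $\lambda_{\mathfrak C}^{-k}\sum_{|w|=k} I_w\,|Y_w-Y'_w|$. Here $I_w\le1$ is the product of currents along the address, and $Y_w,Y'_w$ are mean-one variables.

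The key step is a contraction: $\E\sum_{|w|=k}I_w^2$, or a suitable power, decays relative to $\lambda_{\mathfrak C}^{k}$. We will try to prove it by conditioning on $\mathcal F_k$ and using that the two leaf inputs have the same law. The difference $F(Y)-F(Y')$ has conditional mean zero up to a concavity (Jensen) correction. Its conditional variance is bounded by $\sum_w I_w^2\lambda_{\mathfrak C}^{-2k}\Var Y$.

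The main obstacle is showing $\lambda_{\mathfrak C}^{-2k}\E\sum_w I_w^2\to0$ geometrically. We plan to use Thomson's principle, which gives $\lambda^{-k}F=\lambda^{-k}\sum_w x_wI_w^2$ with mean of order one. We also use the nondegeneracy $\lambda_{\mathfrak C}\ge\varphi_R'(p_c)>1$ together with the branching structure, which spreads current over exponentially many leaves. This is analogous to the variance-decay step in IGS~I for the DHL. If a direct $L^2$ contraction fails, we will instead use a martingale argument. The conditional expectations $\E[X_u\mid\mathcal F_k]$ form a bounded-in-$L^2$ martingale, by the moment bounds. Their limit is $\mathcal F_\infty$-measurable, and we identify it with $X_u$ by showing that the conditional variance vanishes via the same current estimate.

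Finally, a countable intersection over addresses gives the simultaneous recursion. Stationarity is inherited from the shift-invariance of the construction.
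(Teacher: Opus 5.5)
Your architecture matches the paper's. Kolmogorov extension from independent $\nu$-leaves gives a stationary recursive field. Measurability then follows once $\E\Var(X_\varnothing\mid\mathcal F_k)\to0$, because $X_\varnothing$ is then the $L^2$ limit of $\E[X_\varnothing\mid\mathcal F_k]$; this is your fallback. The gap is that this decay estimate, which is the whole content of the lemma, is left as ``the main obstacle'', and the mechanism you sketch would not deliver it.

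First, the $L^1$-Cauchy route cannot work at first order. The correct one-sided coefficients are squared currents. Thomson's identity $\sum_wY_wI_w(Y)^2=F(Y)$, of order $\lambda_{\mathfrak C}^k$, makes $\lambda_{\mathfrak C}^{-k}\sum_wI_w^2|Y_w-Y_w'|$ generically of order one, not small. With unsquared currents, as you wrote, the coefficients sum to at least the graph distance across the core, whose growth factor $\ell_{\mathfrak C}^{\mathrm{ch}}$ is at least $\lambda_{\mathfrak C}$. Decay only comes from cancellation across independent leaves, i.e. from a variance bound, and that bound must use the currents of a single configuration.

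Your ``conditional variance bounded by $\sum_wI_w^2\lambda_{\mathfrak C}^{-2k}\Var Y$'' has $I_w$ depending on $Y$, so it is not justified as stated. Also, no Jensen correction arises: given $\mathcal F_k$, the two leaf inputs have the same law. The paper instead inserts the trial-current bound $(F_G(Y^{(i)})-F_G(Y))_+\le I_i(Y)^2Y_i'$ into the one-sided Efron--Stein inequality. This gives $\Var(Z\mid G)\le\E Y^2\,\E[\sum_iI_i(Y)^4\mid G]$.

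The missing idea is how to control $\sum_iI_i^2$. It is not current spreading: along series segments of the core the current does not spread at all. It is a lower bound on the leaf resistances. Since $|I_i|\le1$, Thomson's identity gives $\sum_iI_i^4\le\sum_iI_i^2\le Z\max_iY_i^{-1}$. Hölder and the crude maximum bound over at most $m^k$ leaves then give $\E\Var(Z\mid\mathcal F_k)\le\E Y^2\,\|Z\|_2\,m^{k/p}\|Y^{-1}\|_p$ for $p\ge2$. This is where the all-order inverse moments from Lemma~\ref{lem:electrical-eigenlaw} are essential. With stationary leaves, $Z=\lambda_{\mathfrak C}^kX_\varnothing$, so choosing $p$ with $m^{1/p}<\lambda_{\mathfrak C}$ yields $\E\Var(X_\varnothing\mid\mathcal F_k)\le C(m^{1/p}/\lambda_{\mathfrak C})^k$. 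Your plan already has Thomson's identity and $\lambda_{\mathfrak C}>1$. Without controlling $\min_iY_i$ over exponentially many leaves through inverse moments, however, it does not close.
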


\begin{proof}
Independent eigenlaw values on a finite boundary and upward recursion give consistent finite-dimensional distributions.
Their extension satisfies $F_{H_u}(X_{ue})=\lambda_{\mathfrak C} X_u$, with independent descendant subtrees.
For a fixed outer network $G$ with independent resistances $Y_i$, write $Z=F_G(Y)$.
Replacing $Y_i$ by an independent $Y_i'$ gives
$(F_G(Y^{(i)})-F_G(Y))_+\le I_i(Y)^2Y_i'$ by a trial current.
One-sided Efron--Stein gives
$\Var(Z\mid G)\le\E Y^2\,\E[\sum_iI_i(Y)^4\mid G]$.
Since $|I_i|\le1$ and $\sum_iY_iI_i^2=Z$, one has $\sum_iI_i^4\le Z\max_iY_i^{-1}$.
For at most $m^k$ edges and $p\ge2$,
\[
 \E\Var(Z\mid G)\le\E Y^2\,\|Z\|_2m^{k/p}\|Y^{-1}\|_p.
\]
With stationary leaves the output is $\lambda_{\mathfrak C}^kX$.
Choosing $p$ with $m^{1/p}/\lambda_{\mathfrak C}<1$ gives
$\E\Var(X_\varnothing\mid\mathcal F_k)\le C(m^{1/p}/\lambda_{\mathfrak C})^k$.
Thus $X_\varnothing$ is measurable with respect to the complete innovation tree.
A countable intersection gives this at every address.
\end{proof}

\begin{lemma}\label{lem:electrical-sandwich}
\[
 \E[X_\varnothing\mid\mathcal F_n]\le \bigl(\lambda_{\mathfrak C}^{-n}\Reff_{\mathfrak C_{n}}(\mathfrak v_+,\mathfrak v_-)\bigr)\le\frac{(\E X_\varnothing^{-1})}{\E[X_\varnothing^{-1}\mid\mathcal F_n]}\le(\E X_\varnothing^{-1}) \E[X_\varnothing\mid\mathcal F_n].
\]
\end{lemma}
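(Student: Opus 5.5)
Write $\mathcal H_n$ for the realised depth-$n$ core and $\mathcal F_n$ for the innovations above depth $n$. Iterating the stationary recursion of Lemma~\ref{lem:electrical-field} gives the exact identity
\[
 \lambda_{\mathfrak C}^nX_\varnothing=F_{\mathcal H_n}\bigl((X_u)_{|u|=n}\bigr),
\]
where the leaf values $X_u$ are independent of $\mathcal F_n$, mean one, and distributed as $X$. The same iteration of the cluster recursion gives
\[
 \Reff_{\mathfrak C_n}(\mathfrak v_+,\mathfrak v_-)=F_{\mathcal H_n}(\mathbf 1).
\]
Both quantities are therefore the same $\mathcal F_n$-measurable network functional, evaluated at random and at unit inputs. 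The proof compares them using concavity in two directions.

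For the lower bound, use that $F_{\mathcal H_n}$ is concave; this follows from Thomson's principle as in Lemma~\ref{lem:electrical-eigenlaw}, iterated through series--parallel compositions, or directly because $\mathcal H_n$ is itself a network. Conditional Jensen then gives
\[
 \E[\lambda_{\mathfrak C}^nX_\varnothing\mid\mathcal F_n]\le F_{\mathcal H_n}(\E[X_u\mid\mathcal F_n])=F_{\mathcal H_n}(\mathbf 1),
\]
which is the first inequality.

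For the upper bound, pass to conductances. The effective conductance $C_{\mathcal H_n}(c)=1/F_{\mathcal H_n}(c^{-1})$ is concave in the edge conductances $c$, by Dirichlet's principle as an infimum of linear functionals. Write the identity as
\[
 \lambda_{\mathfrak C}^{-n}X_\varnothing^{-1}=C_{\mathcal H_n}\bigl((X_u^{-1})\bigr),
\]
and apply conditional Jensen:
\[
 \E[\lambda_{\mathfrak C}^{-n}X_\varnothing^{-1}\mid\mathcal F_n]\le C_{\mathcal H_n}\bigl(\E X^{-1}\cdot\mathbf 1\bigr)=\E X^{-1}\cdot F_{\mathcal H_n}(\mathbf 1)^{-1}.
\]
Here homogeneity lets the constant $\E X^{-1}$ factor out, and $\E X^{-1}<\infty$ by Lemma~\ref{lem:electrical-eigenlaw}. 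Rearranging gives
\[
 \lambda_{\mathfrak C}^{-n}\Reff_{\mathfrak C_n}(\mathfrak v_+,\mathfrak v_-)\le\frac{\E X_\varnothing^{-1}}{\E[X_\varnothing^{-1}\mid\mathcal F_n]},
\]
which is the second inequality. The last inequality is conditional Jensen for the convex map $t\mapsto1/t$, namely $\E[X^{-1}\mid\mathcal F_n]\ge1/\E[X\mid\mathcal F_n]$.

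The main obstacle is justifying that the leaf values are independent of $\mathcal F_n$ and have exactly the eigenlaw, so that $\E[X_u\mid\mathcal F_n]=1$ and $\E[X_u^{-1}\mid\mathcal F_n]=\E X^{-1}$. This should follow from the realisation in Lemma~\ref{lem:electrical-field}, where $X_u$ is a function of the descendant subtree at $u$ alone, which uses coordinates disjoint from $\mathcal F_n$. One also has to check that the padding of unused coordinates by independent copies does not disturb this independence.
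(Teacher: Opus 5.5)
Your proposal is correct and matches the paper's proof: conditional Jensen for the concave resistance functional $F_{\mathcal H_n}$ at the stationary leaves gives the lower bound, and conditional Jensen for the concave effective conductance $c\mapsto F_{\mathcal H_n}(c^{-1})^{-1}$, together with homogeneity, gives the middle bound. For the last step, the paper uses conditional Cauchy--Schwarz, $\E[X_\varnothing\mid\mathcal F_n]\,\E[X_\varnothing^{-1}\mid\mathcal F_n]\ge1$, which is the same inequality as your Jensen step for $t\mapsto1/t$; the leaf independence you flag is exactly what Lemma~\ref{lem:electrical-field} supplies.
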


\begin{proof}
Concavity in resistances gives $\E[X_\varnothing\mid\mathcal F_n]\le \bigl(\lambda_{\mathfrak C}^{-n}\Reff_{\mathfrak C_{n}}(\mathfrak v_+,\mathfrak v_-)\bigr)$.
The effective conductance $c\mapsto F_{\mathcal H_n}(c^{-1})^{-1}$ is concave by the Dirichlet principle.
Applying it to the stationary boundary conductances gives $\E[X_\varnothing^{-1}\mid\mathcal F_n]\le(\E X_\varnothing^{-1})/\bigl(\lambda_{\mathfrak C}^{-n}\Reff_{\mathfrak C_{n}}(\mathfrak v_+,\mathfrak v_-)\bigr)$.
Conditional Cauchy--Schwarz gives $\E[X_\varnothing\mid\mathcal F_n]\E[X_\varnothing^{-1}\mid\mathcal F_n]\ge1$.
This proves the sandwich.
\end{proof}

\begin{lemma}\label{lem:electrical-uniform-moments}
For every $p>0$, $\sup_n\E(\bigl(\lambda_{\mathfrak C}^{-n}\Reff_{\mathfrak C_{n}}(\mathfrak v_+,\mathfrak v_-)\bigr)^p+\bigl(\lambda_{\mathfrak C}^{-n}\Reff_{\mathfrak C_{n}}(\mathfrak v_+,\mathfrak v_-)\bigr)^{-p})<\infty$ and $\E(\sup_n\bigl(\lambda_{\mathfrak C}^{-n}\Reff_{\mathfrak C_{n}}(\mathfrak v_+,\mathfrak v_-)\bigr))^p<\infty$.
\end{lemma}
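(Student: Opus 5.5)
Write $R_n:=\lambda_{\mathfrak C}^{-n}\Reff_{\mathfrak C_n}(\mathfrak v_+,\mathfrak v_-)$. The plan is to read everything off the sandwich of Lemma~\ref{lem:electrical-sandwich}, which reduces all statements about $R_n$ to statements about the martingale $M_n:=\E[X_\varnothing\mid\mathcal F_n]$ and the reversed quantity $N_n:=\E[X_\varnothing^{-1}\mid\mathcal F_n]$. By Lemma~\ref{lem:electrical-eigenlaw}, $X_\varnothing$ has all positive and inverse moments, so both $M_n$ and $N_n$ are martingales bounded in every $L^p$.

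\textbf{Positive moments and the maximal bound.} The upper half of the sandwich gives
\[
 R_n\le(\E X_\varnothing^{-1})\,M_n .
\]
For $p>1$, Doob's $L^p$ maximal inequality yields
\[
 \E(\sup_nM_n)^p\le\Bigl(\frac{p}{p-1}\Bigr)^p\E X_\varnothing^p<\infty .
\]
This already gives both $\sup_n\E R_n^p<\infty$ and $\E(\sup_nR_n)^p<\infty$ for $p>1$. For $0<p\le1$, the same bounds follow by Jensen (or Lyapunov) from the case $p=2$.

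\textbf{Negative moments.} The lower half of the sandwich gives $R_n\ge M_n$, so $R_n^{-1}\le M_n^{-1}$. By conditional Jensen applied to the convex map $t\mapsto t^{-p}$,
\[
 M_n^{-p}\le\E[X_\varnothing^{-p}\mid\mathcal F_n],
\]
so $\E R_n^{-p}\le\E X_\varnothing^{-p}<\infty$ uniformly in $n$.

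Alternatively, the middle term $R_n\le(\E X_\varnothing^{-1})/N_n$ gives the same kind of control from the conductance side. I would mention this only as a consistency check, since it is not needed for the bounds above.

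\textbf{Main difficulty.} The step needing the most care is not the estimate itself but confirming that Lemma~\ref{lem:electrical-sandwich} holds for every $n$ on one common probability space. That is, $\Reff_{\mathfrak C_n}$ must be computed on the same innovation tree on which the field $(X_u)$ of Lemma~\ref{lem:electrical-field} is realised, so that conditioning on $\mathcal F_n$ is meaningful for the unit-resistance cluster. I would check this coupling explicitly using the recursive representation: $\mathfrak C_n$ is $\mathcal F_n$-measurable, and the boundary values $X_u$, $|u|=n$, are independent of $\mathcal F_n$. Once that is in place, the remainder is the two applications of Doob and Jensen described above.
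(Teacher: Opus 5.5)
Your proposal is correct and is essentially the paper's own proof: the sandwich of Lemma~\ref{lem:electrical-sandwich} gives $\E[X_\varnothing\mid\mathcal F_n]\le R_n\le(\E X_\varnothing^{-1})\E[X_\varnothing\mid\mathcal F_n]$. Conditional Jensen then bounds the inverse moments by $\E X_\varnothing^{-p}$, and Doob's maximal inequality at an order above $\max\{1,p\}$ controls $\sup_n R_n$. The coupling you flag is already part of the setup the sandwich relies on, and your verification ($\mathfrak C_n$ is $\mathcal F_n$-measurable, the generation-$n$ field values are independent of $\mathcal F_n$) is the right check.
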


\begin{proof}
The bounds of Lemma~\ref{lem:electrical-sandwich} give the required comparison with conditional martingales.
The two conditional martingales converge to $X_\varnothing$ and $X_\varnothing^{-1}$.
The pathwise and mean assertions follow.
Conditional Jensen gives
$\bigl(\lambda_{\mathfrak C}^{-n}\Reff_{\mathfrak C_{n}}(\mathfrak v_+,\mathfrak v_-)\bigr)^{-p}\le \E[X_\varnothing\mid\mathcal F_n]^{-p}\le\E[X_\varnothing^{-p}\mid\mathcal F_n]$.
For positive moments use $\bigl(\lambda_{\mathfrak C}^{-n}\Reff_{\mathfrak C_{n}}(\mathfrak v_+,\mathfrak v_-)\bigr)\le(\E X_\varnothing^{-1}) \E[X_\varnothing\mid\mathcal F_n]$.
Doob's inequality at an order larger than $\max\{1,p\}$ gives the moment of $\sup_n\bigl(\lambda_{\mathfrak C}^{-n}\Reff_{\mathfrak C_{n}}(\mathfrak v_+,\mathfrak v_-)\bigr)$.
\end{proof}

The same sandwich also gives $1\le\E \bigl(\lambda_{\mathfrak C}^{-n}\Reff_{\mathfrak C_{n}}(\mathfrak v_+,\mathfrak v_-)\bigr)\le(\E X_\varnothing^{-1})$ and
$X_\varnothing\le\displaystyle \liminf \bigl(\lambda_{\mathfrak C}^{-n}\Reff_{\mathfrak C_{n}}(\mathfrak v_+,\mathfrak v_-)\bigr)\le\displaystyle \limsup \bigl(\lambda_{\mathfrak C}^{-n}\Reff_{\mathfrak C_{n}}(\mathfrak v_+,\mathfrak v_-)\bigr)\le(\E X_\varnothing^{-1}) X_\varnothing$ almost surely.

\begin{theorem}\label{thm:resistance-exponent}
There is a deterministic $\lambda_{\mathfrak C}>1$ such that
\[
 \lim_{n\to\infty}\frac{\log\Reff_{\mathfrak C_{n}}(\mathfrak v_+,\mathfrak v_-)}{n}
 =\lim_{n\to\infty}\frac{\E\log\Reff_{\mathfrak C_{n}}(\mathfrak v_+,\mathfrak v_-)}{n}
 =\lim_{n\to\infty}\frac{\log\E\Reff_{\mathfrak C_{n}}(\mathfrak v_+,\mathfrak v_-)}{n}
 =\log\lambda_{\mathfrak C},
\]
where the first limit holds almost surely.
The conclusions hold at every fixed connected address, and $\varphi_R'(p_c)\le\lambda_{\mathfrak C}\le \E|P(H)|$.
\end{theorem}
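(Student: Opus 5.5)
The plan is to take $\lambda_{\mathfrak C}$ to be the eigenvalue of Lemma~\ref{lem:electrical-eigenlaw} and to read off all three limits from the pathwise sandwich recorded after Lemma~\ref{lem:electrical-uniform-moments}. Write $R_n:=\lambda_{\mathfrak C}^{-n}\Reff_{\mathfrak C_n}(\mathfrak v_+,\mathfrak v_-)$. Almost surely,
\[
 X_\varnothing\le\liminf_n R_n\le\limsup_n R_n\le(\E X_\varnothing^{-1})X_\varnothing,
\]
and $0<X_\varnothing<\infty$ almost surely, by strict positivity and finiteness of moments in Lemma~\ref{lem:electrical-eigenlaw}. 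Hence $\log R_n$ is eventually bounded, so $\log R_n/n\to0$ almost surely. This gives the first limit, $n^{-1}\log\Reff_{\mathfrak C_n}\to\log\lambda_{\mathfrak C}$ almost surely.

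For the expectation limits, I will use the bound $1\le\E R_n\le\E X_\varnothing^{-1}$ stated after Lemma~\ref{lem:electrical-uniform-moments}. It gives $n^{-1}\log\E R_n\to0$, which is the third limit. For the second, note that $|\log R_n|\le R_n+R_n^{-1}$. Lemma~\ref{lem:electrical-uniform-moments} then gives $\sup_n\E|\log R_n|<\infty$, so $n^{-1}\E\log R_n\to0$. Determinism of $\lambda_{\mathfrak C}$ is built into its construction as the eigenvalue of the normalised law map $\widehat T$. It is also forced by the almost-sure limit: two candidate values would produce different almost-sure limits of the same sequence.

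The bounds $\varphi_R'(p_c)\le\lambda_{\mathfrak C}\le\E|P(H)|$ are part of Lemma~\ref{lem:electrical-eigenlaw}. Strict inequality $\lambda_{\mathfrak C}>1$ follows from $\varphi_R'(p_c)>1$, quoted from \cite[Theorem~2.4 and Corollary~2.5]{NeroliIGSII2026}. The statement at every fixed connected address comes from applying the same argument to the descendant cluster $\mathfrak C_{u,n}$. This is legitimate because that cluster has the root law by the recursive representation, and Lemma~\ref{lem:electrical-field} gives the field $X_u$ with the same stationary law. A countable intersection over addresses finishes the proof.

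The step that needs the most care is the pathwise sandwich, that is, the almost-sure convergence of the conditional martingales $\E[X_\varnothing\mid\mathcal F_n]$ and $\E[X_\varnothing^{-1}\mid\mathcal F_n]$ to $X_\varnothing$ and $X_\varnothing^{-1}$. This requires $X_\varnothing$ to be measurable with respect to the full innovation tree, which is exactly the content of Lemma~\ref{lem:electrical-field}. With that lemma available, the rest of the argument is bookkeeping.
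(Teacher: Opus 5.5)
Your proposal is correct and follows the paper's proof essentially line by line. You take $\lambda_{\mathfrak C}$ from Lemma~\ref{lem:electrical-eigenlaw}, get the almost-sure limit from the pathwise sandwich (which rests on the endogeny in Lemma~\ref{lem:electrical-field}), get the two expectation limits from $1\le\E R_n\le\E X_\varnothing^{-1}$ together with $|\log x|\le x+x^{-1}$ and the uniform moments of Lemma~\ref{lem:electrical-uniform-moments}, and finish with a countable intersection over connected addresses. Your explicit derivation of $\lambda_{\mathfrak C}>1$ from $\lambda_{\mathfrak C}\ge\varphi_R'(p_c)>1$ also supplies a step the paper leaves implicit.
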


\begin{proof}

Take the value of $\lambda_{\mathfrak C}$ from Lemma~\ref{lem:electrical-eigenlaw}.
The sandwich bounds $\bigl(\lambda_{\mathfrak C}^{-n}\Reff_{\mathfrak C_{n}}(\mathfrak v_+,\mathfrak v_-)\bigr)$ eventually between strictly positive finite random constants.
Hence
\[
 \displaystyle\lim_{n\to\infty}n^{-1}\log \bigl(\lambda_{\mathfrak C}^{-n}\Reff_{\mathfrak C_{n}}(\mathfrak v_+,\mathfrak v_-)\bigr)=0\textcolor{black}{.}
\]

The bounds $1\le\E \bigl(\lambda_{\mathfrak C}^{-n}\Reff_{\mathfrak C_{n}}(\mathfrak v_+,\mathfrak v_-)\bigr)\le(\E X_\varnothing^{-1})$ and
$\sup_n\E|\log \bigl(\lambda_{\mathfrak C}^{-n}\Reff_{\mathfrak C_{n}}(\mathfrak v_+,\mathfrak v_-)\bigr)|<\infty$, the latter following from $|\log x|\le x+x^{-1}$, prove the first-moment and mean-log limits separately.
Take the countable intersection over connected addresses.

\end{proof}

\begin{corollary}\label{cor:resistance-moment-exponents}
For every fixed $t\in\mathbb R$,
$\displaystyle \lim_{n\to\infty} \frac{1}{n}\log\E\Reff_{\mathfrak C_{n}}(\mathfrak v_+,\mathfrak v_-)^t
=t\log\lambda_{\mathfrak C}$.
\end{corollary}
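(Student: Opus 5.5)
Write $R_n:=\lambda_{\mathfrak C}^{-n}\Reff_{\mathfrak C_{n}}(\mathfrak v_+,\mathfrak v_-)$. The plan is to reduce the statement to two-sided moment bounds on $R_n$ that do not grow exponentially in $n$. Since $\E\Reff_{\mathfrak C_n}(\mathfrak v_+,\mathfrak v_-)^t=\lambda_{\mathfrak C}^{nt}\E R_n^t$, it suffices to show that for each fixed $t\in\mathbb R$ there are constants $0<c_t\le C_t<\infty$, independent of $n$, with
\[
 c_t\le \E R_n^t\le C_t\qquad\text{for all }n.
\]
Taking logarithms and dividing by $n$ then gives the limit $t\log\lambda_{\mathfrak C}$ directly.

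The upper bound comes straight from Lemma~\ref{lem:electrical-uniform-moments}. That lemma gives $\sup_n\E(R_n^p+R_n^{-p})<\infty$ for every $p>0$. For $t>0$ take $p=t$, and for $t<0$ take $p=-t$. For $t=0$ there is nothing to prove.

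For the lower bound, use Jensen's inequality (or Cauchy--Schwarz) on the reciprocal moment. For any $t\neq0$, $1=\E(R_n^{t/2}R_n^{-t/2})\le(\E R_n^t)^{1/2}(\E R_n^{-t})^{1/2}$. Hence
\[
 \E R_n^t\ge\frac{1}{\E R_n^{-t}}\ge\frac{1}{\sup_k\E R_k^{-t}}>0,
\]
and the last supremum is finite by the same lemma. Alternatively, for $t\ge1$ one can use Jensen together with $\E R_n\ge1$ from the remark after Lemma~\ref{lem:electrical-uniform-moments}. The Cauchy--Schwarz route covers all signs of $t$ uniformly, so I would use that.

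I expect no real obstacle here, since all the work is in the uniform positive and negative moment bounds of Lemma~\ref{lem:electrical-uniform-moments}. The only point to check is that these bounds genuinely hold for every real exponent with constants independent of $n$; this follows from the sandwich of Lemma~\ref{lem:electrical-sandwich} together with the finite moments $\E(X^p+X^{-p})<\infty$ of Lemma~\ref{lem:electrical-eigenlaw}. The result is then also valid at every fixed connected address by the same argument applied to the descendant cluster.
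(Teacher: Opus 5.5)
Your proof is correct and is essentially the paper's argument. Lemma~\ref{lem:electrical-uniform-moments} gives $\E R_n^t\le C_t$ uniformly in $n$, and the Cauchy--Schwarz inequality $\E R_n^t\,\E R_n^{-t}\ge1$ gives the matching lower bound $C_{-t}^{-1}$, so $n^{-1}\log\E R_n^t\to0$; the case $t=0$ is trivial, as you note.
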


\begin{proof}
The uniform moment bounds in Lemma~\ref{lem:electrical-uniform-moments} apply.
For $t\ne0$, uniform positive and inverse moments give
$C_{-t}^{-1}\le\E \bigl(\lambda_{\mathfrak C}^{-n}\Reff_{\mathfrak C_{n}}(\mathfrak v_+,\mathfrak v_-)\bigr)^t\le C_t$, since
\[
\E \bigl(\lambda_{\mathfrak C}^{-n}\Reff_{\mathfrak C_{n}}(\mathfrak v_+,\mathfrak v_-)\bigr)^t\,\E \bigl(\lambda_{\mathfrak C}^{-n}\Reff_{\mathfrak C_{n}}(\mathfrak v_+,\mathfrak v_-)\bigr)^{-t}\ge1.
\]
Their logarithms divided by $n$ vanish.
The case $t=0$ is immediate.
\end{proof}

The inequality $\Reff_{\mathfrak C_{n}}(\mathfrak v_+,\mathfrak v_-)\le d_{\mathfrak C_{n}}(\mathfrak v_+,\mathfrak v_-)$ also gives $\lambda_{\mathfrak C}\le\ell_{\mathfrak C}^{\mathrm{ch}}$.
The three exponent identities follow from the two-sided control, without interchanging a logarithm and an expectation.

The exponent does not yet specify the conductance of a fixed limiting cell.
We first select the physical amplitudes on the same innovation tree.

We retain the electrical field $X$, its natural multiplier $\lambda_{\mathfrak C}$, and the normalised unit-input resistances $\bigl(\lambda_{\mathfrak C}^{-n}\Reff_{\mathfrak C_{n}}(\mathfrak v_+,\mathfrak v_-)\bigr)$.
The first estimate selects a scalar multiple of $X$ along a subsequence; the second compares its remaining blocks uniformly over the outer network.

\begin{lemma}\label{lem:electrical-compactness}
The sequence $\bigl(\lambda_{\mathfrak C}^{-n}\Reff_{\mathfrak C_{n}}(\mathfrak v_+,\mathfrak v_-)\bigr)$ is relatively compact in the original $L^2$ space.
Its omega-limit set contains $cX_\varnothing$ for a deterministic $c\in[1,(\E X_\varnothing^{-1})]$.
\end{lemma}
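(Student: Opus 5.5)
Write $R_n:=\lambda_{\mathfrak C}^{-n}\Reff_{\mathfrak C_{n}}(\mathfrak v_+,\mathfrak v_-)$. The plan is to get relative compactness from the sandwich of Lemma~\ref{lem:electrical-sandwich} together with the uniform moments of Lemma~\ref{lem:electrical-uniform-moments}. Then I would identify a limit point of the form $cX_\varnothing$ by exploiting the homogeneity of the recursion on the common innovation tree.

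\textbf{Step 1: relative compactness.} By Lemma~\ref{lem:electrical-sandwich},
\[
\E[X_\varnothing\mid\mathcal F_n]\le R_n\le(\E X_\varnothing^{-1})\,\E[X_\varnothing\mid\mathcal F_n].
\]
Since $X_\varnothing$ is measurable with respect to the full innovation tree (Lemma~\ref{lem:electrical-field}), the martingale $\E[X_\varnothing\mid\mathcal F_n]$ converges to $X_\varnothing$ in $L^2$. Hence $R_n$ is eventually confined, up to a vanishing $L^2$ error, to the order interval $[X_\varnothing,(\E X_\varnothing^{-1})X_\varnothing]$.

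This interval alone is not norm compact, so I would also use the tree structure. Write $R_{n+k}=F_{\mathcal H_k}\bigl((R^{(v)}_{n})_{|v|=k}\bigr)\lambda_{\mathfrak C}^{-k}$, where the blocks $R^{(v)}_n$ are independent copies given $\mathcal F_k$ and are independent of $\mathcal F_k$. Then $R_{n+k}$ is a fixed Lipschitz function of the $\mathcal F_k$-measurable network and of finitely many i.i.d.\ blocks. Along a subsequence, the law of the block variable converges weakly; this is tight by the uniform moments. Applying the Efron--Stein estimate of Lemma~\ref{lem:electrical-field} to the outer network shows that $\E\Var(R_{n+k}\mid\mathcal F_k)\le C(m^{1/p}/\lambda_{\mathfrak C})^k$ uniformly in $n$. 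Consequently $R_{n+k}$ is $L^2$-close to the $\mathcal F_k$-measurable variable $\E[R_{n+k}\mid\mathcal F_k]$. For fixed $k$, these variables are continuous functions of finitely many innovations and of the block law, so they form a relatively compact set. A diagonal argument over $k$ then gives total boundedness in $L^2$.

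\textbf{Step 2: identifying the limit.} Take an $L^2$ limit point $Y$ along $n_j$; I expect this to be the main obstacle. By the same block argument, the blocks $R^{(v)}_{n_j}$ converge in joint law to i.i.d.\ copies of the law of $Y$. Therefore the law of $Y$ is a fixed point of the unnormalised law map $\nu\mapsto\mathrm{Law}(\lambda_{\mathfrak C}^{-1}F_H(Y_e))$. I would then have to show that this law is a scalar multiple of the eigenlaw.

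A cleaner route is to realise $Y$ on the tree. Pass to a further subsequence so that the limits $Y^{(u)}$ exist simultaneously at every address. Then $Y^{(u)}=\lambda_{\mathfrak C}^{-1}F_{H_u}(Y^{(ue)})$ holds pathwise, and the sandwich gives $X_u\le Y^{(u)}\le(\E X_\varnothing^{-1})X_u$. Set $Q_u:=Y^{(u)}/X_u\in[1,\E X_\varnothing^{-1}]$. Homogeneity and monotonicity of $F_H$ give $\min_e Q_{ue}\le Q_u\le\max_eQ_{ue}$, with stationarity across addresses. Hence $\operatorname{ess\,sup}Q_u$ and $\operatorname{ess\,inf}Q_u$ do not depend on $u$.

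To force $Q$ to be constant, I would look at the event that $Q_\varnothing$ lies near its essential supremum. By strict concavity in directions that are not proportional, this forces all children on a current-carrying path to lie near the supremum too. Iterating down $k$ generations gives a cost of probability comparable to $g$-type decay. On the other hand, the $\mathcal F_k$-approximation from Step 1 shows that $Q_\varnothing$ is asymptotically measurable with respect to coarse information, while the children's ratios are independent of it. This should force a degenerate ratio, so $Q\equiv c$ almost surely with $c$ deterministic. The bound $c\in[1,\E X_\varnothing^{-1}]$ then follows from the sandwich, and $cX_\varnothing$ lies in the omega-limit set.

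If this rigidity step resists, the fallback is a variance argument. Since $\E\Var(Y\mid\mathcal F_k)\to0$ and $\E\Var(X_\varnothing\mid\mathcal F_k)\to0$, both $Y$ and $X_\varnothing$ are limits of the same finite-network functional applied to i.i.d.\ leaves, one with law $\nu_Y$ and the other with the eigenlaw. Comparing first moments, which are $\E Y$ and $1$ respectively, via the homogeneity $F_H(cx)=cF_H(x)$ would then give $Y=(\E Y)X_\varnothing$.
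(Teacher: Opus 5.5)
Your Step~1 matches the paper's compactness argument: the conditional Efron--Stein bound $\sup_{n\ge k}\E\Var(R_n\mid\mathcal F_k)\le C\theta^k$, with leaves controlled by Lemma~\ref{lem:electrical-uniform-moments}, together with the fact that $\mathcal F_k$ has finitely many atoms, so the approximants lie in a bounded subset of a finite-dimensional space. You do not need continuity in the block law.

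\textbf{The gap is in Step~2.} You claim that a limit point $Y$ satisfies $Y^{(u)}=\lambda_{\mathfrak C}^{-1}F_{H_u}(Y^{(ue)})$ pathwise, and earlier that its law is a fixed point of the law map. Neither is available. With $Tf:=\lambda_{\mathfrak C}^{-1}F_H(f_1,\ldots,f_m)$ one has $R_{n+1}=TR_n$. So $R_{n_j}\to Y$ only gives $R_{n_j+1}\to TY$, and $Y=TY_{-1}$ for some predecessor $Y_{-1}$ obtained as a limit of $R_{n_j-1}$. The omega-limit set $\mathcal K$ satisfies $T\mathcal K=\mathcal K$ as a set, but no element is known to be fixed. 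Convergence of $R_n$ is Theorem~\ref{thm:electrical-amplitudes}, which is proved from this lemma.

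Concretely, there are two ways to read your construction, and neither works:
\begin{itemize}
\item If $Y^{(u)}$ is the limit of $R^{(u)}_{n_j}$, the recursion fails.
\item If $Y^{(u)}$ is the limit of $R^{(u)}_{n_j-|u|}$, the recursion holds. But then $Q_u$ has the law of $Y_{-|u|}/X_\varnothing$ for some $Y_{-|u|}\in\mathcal K$ with $T^{|u|}Y_{-|u|}=Y$. The ratios are not stationary across generations, and $\min_eQ_e\le Q_\varnothing\le\max_eQ_e$ only compares $\esssup(Y/X_\varnothing)$ with $\esssup(Y_{-1}/X_\varnothing)$.
\end{itemize}
The rigidity step after that is only heuristic. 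The fallback is a non sequitur: homogeneity relates outputs only when the leaf laws are already proportional. Matching first moments of two $\mathcal F_k$-approximations with leaf laws $\nu_Y$ and the eigenlaw does not make $Y$ proportional to $X_\varnothing$.

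\textbf{The missing idea} is to avoid fixed points and take the extremal ratio over the whole invariant set:
\[
c:=\sup_{h\in\mathcal K}\esssup(h/X_\varnothing)\in[1,\E X_\varnothing^{-1}].
\]
Taking the supremum over all of $\mathcal K$ is exactly what absorbs the predecessor issue above. Suppose $cX_\varnothing\notin\mathcal K$. Then the argument runs as follows.
\begin{itemize}
\item Compactness keeps $\E(cX_\varnothing-h)$ uniformly away from zero, and Paley--Zygmund gives a uniform small-ball bound.
\item Superadditivity gives $cX_\varnothing-Th\ge T(cX_\varnothing-h)$. Since every $h\in\mathcal K$ has a predecessor in $\mathcal K$, the thinning estimate yields $Q(t)\le g(Q(m\lambda_{\mathfrak C}t))$ for $Q(t):=\sup_h\Pp(cX_\varnothing-h\le t)$. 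This gives a uniform bound $M$ on $\E(cX_\varnothing-h)^{-1}$.
\item Along predecessor chains $T^nh_{-n}=h$, conductance concavity and conditional Jensen give $\E[cX_\varnothing-h\mid\mathcal F_n]\ge R_n/M\ge\E[X_\varnothing\mid\mathcal F_n]/M$.
\item Hence $h\le(c-1/M)X_\varnothing$ for every $h\in\mathcal K$, contradicting the choice of $c$.
\end{itemize}
This proves only what the lemma claims: $cX_\varnothing$ is \emph{some} limit point. Your plan tries to show that a given limit point is a multiple of $X_\varnothing$, which is a harder statement obtained only later.
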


\begin{proof}
For $n\ge k$, expose $\mathcal F_k$ and place independent copies of $\bigl(\lambda_{\mathfrak C}^{-{n-k}}\Reff_{\mathfrak C_{n-k}}(\mathfrak v_+,\mathfrak v_-)\bigr)$ at its leaves.
The output is $\lambda_{\mathfrak C}^k\bigl(\lambda_{\mathfrak C}^{-n}\Reff_{\mathfrak C_{n}}(\mathfrak v_+,\mathfrak v_-)\bigr)$.
Uniform positive and inverse moments in the electrical variance calculation yield
$\sup_{n\ge k}\E\Var(\bigl(\lambda_{\mathfrak C}^{-n}\Reff_{\mathfrak C_{n}}(\mathfrak v_+,\mathfrak v_-)\bigr)\mid\mathcal F_k)\le C\theta^k$ for some $\theta<1$.
For $n<k$ the variance is zero.
The bounded conditional expectations lie in a finite-dimensional space because $\mathcal F_k$ has finitely many atoms.
Their uniform approximation proves relative compactness.

Define $Tf:=\lambda_{\mathfrak C}^{-1}F_H(f_1,\ldots,f_m)$ on functions of the innovation tree.
It is $L^2$-continuous and $\bigl(\lambda_{\mathfrak C}^{-n}\Reff_{\mathfrak C_{n}}(\mathfrak v_+,\mathfrak v_-)\bigr)=T^n1$.
The nonempty compact omega-limit set $\mathcal K$ satisfies $T\mathcal K=\mathcal K$.
Every $h\in\mathcal K$ obeys $X_\varnothing\le h\le(\E X_\varnothing^{-1}) X_\varnothing$ by further almost-sure subsequences.
Put $c:=\sup_{h\in\mathcal K}\esssup(h/X_\varnothing)$.

If $cX_\varnothing\notin\mathcal K$, the nonnegative deficits $cX_\varnothing-h$ have means bounded away from zero and uniformly bounded second moments.
Paley--Zygmund gives $Q(t_0)\le1-\varepsilon$ for some $t_0,\varepsilon>0$, where $Q(t)=\sup_h\Pp((cX_\varnothing-h)\le t)$.
Superadditivity gives $(cX_\varnothing-Th)\ge T(cX_\varnothing-h)$.
Predecessors and the electrical small-edge estimate yield $Q(t)\le g(Q(m\lambda_{\mathfrak C} t))$.
Its iteration gives a finite constant $M$ such that $\sup_h\E (cX_\varnothing-h)^{-1}\le M$.

For fixed $h$, choose deterministic predecessors $h_{-n}$ with $T^nh_{-n}=h$.
Repeated superadditivity gives $(cX_\varnothing-h)\ge T^n(cX_\varnothing-h_{-n})$.
The leaf deficits are independent conditional on $\mathcal F_n$, with reciprocal mean at most $M$.
Conductance concavity gives
\[
 \E[(T^n(cX_\varnothing-h_{-n}))^{-1}\mid\mathcal F_n]\le M/\bigl(\lambda_{\mathfrak C}^{-n}\Reff_{\mathfrak C_{n}}(\mathfrak v_+,\mathfrak v_-)\bigr).
\]
Conditional Jensen therefore implies
$\E[(cX_\varnothing-h)\mid\mathcal F_n]\ge \bigl(\lambda_{\mathfrak C}^{-n}\Reff_{\mathfrak C_{n}}(\mathfrak v_+,\mathfrak v_-)\bigr)/M\ge \E[X_\varnothing\mid\mathcal F_n]/M$.
Martingale convergence gives $(cX_\varnothing-h)\ge X_\varnothing/M$ almost surely.
Thus every $h$ has essential ratio at most $c-1/M$, contradicting the definition of $c$.
The same deterministic $M$ works for all $h$; no uncountable common null set is needed for the essential-supremum conclusion.
\end{proof}

For positive random variables $U,V$ with the required moments, define
\[
\varepsilon(U,V):=
 \frac{\E[(U-V)_+\max\{1,V^{-2}\}]}{\E\min\{V,V^{-1}\}}\textcolor{black}{.}
\]
\begin{lemma}\label{lem:electrical-comparison}
Let a finite connected two-terminal graph $G$ carry independent identically distributed pairs of positive edge resistances $(U_e,V_e)$.
Then
\[
 \E(F_G(U)-F_G(V))_+
 \le \varepsilon(U,V)\E F_G(V).
\]
The reversed bound holds after exchanging $U,V$.
The coefficient does not depend on the graph.
\end{lemma}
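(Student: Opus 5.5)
The plan is to compare $F_G(U)$ with $F_G(V)$ through a trial current, and then to control the edge currents one edge at a time by freezing all other edges. The electrical Lipschitz-type estimate from the proof of Lemma~\ref{lem:electrical-eigenlaw} and the Euler identity for the positively homogeneous function $F_G$ supply both sides of the inequality.

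\emph{Step 1 (trial current).} Let $I(V)$ be the unit terminal current in $G$ with resistances $V$. By Thomson's principle,
\[
F_G(U)\le\sum_eU_eI_e(V)^2,\qquad F_G(V)=\sum_eV_eI_e(V)^2,
\]
so $(F_G(U)-F_G(V))_+\le\sum_e(U_e-V_e)_+I_e(V)^2$.

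\emph{Step 2 (dependence on the edge's own resistance).} Fix an edge $e$ and freeze $V_{-e}$. I will show by Thévenin's theorem that, as a function of the resistance $v$ of $e$, the squared current has the form
\[
\phi_e(v):=I_e^2=\frac{A_e^2}{(v+\beta_e)^2}.
\]
Here $A_e\ge0$ and $\beta_e\in[0,\infty]$ depend only on $V_{-e}$. The edge $e$ sees the rest of the network as a source with some Thévenin voltage and internal resistance $\beta_e$, and the unit-current normalisation only rescales $A_e$. Because the terminal current is held at one, the Thévenin source depends on $v$ through $F_G$, so this exact form is the step I expect to be the main obstacle. The fallback is to use only the two consequences needed below: that $v\mapsto\phi_e(v)$ is nonincreasing, and that $v\mapsto v^2\phi_e(v)$ is nondecreasing. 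Both should follow from monotonicity of currents (Rayleigh) and a series-parallel reduction around $e$.

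\emph{Step 3 (normalising at $v=1$).} From the displayed form one gets
\[
\frac{\phi_e(v)}{\phi_e(1)}=\Bigl(\frac{1+\beta_e}{v+\beta_e}\Bigr)^2\le\max\{1,v^{-2}\}.
\]
In the other direction,
\[
\frac{v\phi_e(v)}{\phi_e(1)}=v\Bigl(\frac{1+\beta_e}{v+\beta_e}\Bigr)^2\ge\min\{v,v^{-1}\}.
\]
This holds because the bracket is at least $1$ for $v\le1$ and at least $1/v$ for $v\ge1$. These two inequalities are exactly the two monotonicity facts from the fallback in Step 2.

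\emph{Step 4 (conditioning and summing).} Since $\phi_e(1)$ is a function of $V_{-e}$ only, it is independent of the pair $(U_e,V_e)$. Conditioning on $V_{-e}$ therefore gives
\[
\E[(U_e-V_e)_+\phi_e(V_e)]\le\E\phi_e(1)\,\E[(U-V)_+\max\{1,V^{-2}\}],
\]
\[
\E[V_e\phi_e(V_e)]\ge\E\phi_e(1)\,\E\min\{V,V^{-1}\}.
\]
Summing over $e$ and using Steps 1 and 3 yields
\[
\E(F_G(U)-F_G(V))_+\le\frac{\E[(U-V)_+\max\{1,V^{-2}\}]}{\E\min\{V,V^{-1}\}}\sum_e\E[V_e\phi_e(V_e)]=\varepsilon(U,V)\,\E F_G(V).
\]
The coefficient uses only the common law of a single pair, so it does not depend on $G$. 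The reversed bound follows by applying the same argument with the roles of $U$ and $V$ exchanged.
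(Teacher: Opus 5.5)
Your proof is correct and follows the paper's argument essentially step for step: the $V$-optimal current serves as a trial flow for $U$; with the other resistances frozen, the edge current has Thévenin form; you compare it with its value $\phi_e(1)$ at resistance one; and you eliminate $\E\phi_e(1)$ using independence of the pair at $e$ from the other edges and $\sum_e V_eI_e(V)^2=F_G(V)$. The obstacle you flag in Step 2 does not arise: with a unit current \emph{source} at the terminals, the open-circuit voltage across the endpoints of a nonbridge $e$ in $G\setminus e$ does not involve $v$, so $I_e(v)=A_e/(v+\beta_e)$ holds exactly. A bridge carries constant current (your $\beta_e=\infty$ case), and a zero current makes both bounds trivial.
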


\begin{proof}

Hold all other resistances fixed and impose unit terminal current.
For a bridge, the current is constant as its resistance varies.
For a nonbridge, deleting the edge leaves a connected network, and the Thevenin formula gives $I_e(r)=A_e/(r+B_e)$ with $B_e\ge0$.
Thus, for $\vartheta_e(r)=I_e(r)^2$,
\[
 \min\{1,r^{-2}\}\vartheta_e(1)
 \le\vartheta_e(r)\le
 \max\{1,r^{-2}\}\vartheta_e(1).
\]
This also covers constant bridge currents and zero currents.

The optimal $V$ current is a trial flow for $U$, so
$(F_G(U)-F_G(V))_+\le\sum_e(U_e-V_e)_+\vartheta_e(V_e)$.
Conditioning on the other pairs bounds the expected summand above by
$\vartheta_e(1)\E[(U-V)_+\max\{1,V^{-2}\}]$.
The expected $V_e\vartheta_e(V_e)$ is at least
$\vartheta_e(1)\E\min\{V,V^{-1}\}$.
Eliminate $\vartheta_e(1)$ and sum, using $\sum_eV_eI_e(V)^2=F_G(V)$.

\end{proof}

\begin{theorem}\label{thm:electrical-amplitudes}
There is a deterministic $c_{\mathrm R}>0$ such that
$\displaystyle\lim_{n\to\infty}\lambda_{\mathfrak C}^{-n}\Reff_{\mathfrak C_{u,n}}(\mathfrak v_+,\mathfrak v_-)=c_{\mathrm R}X_u$ almost surely at every connected address and in every finite positive $L^p$.
\end{theorem}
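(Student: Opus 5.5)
We follow the template of the chemical case, Theorem~\ref{thm:chemical-scalar}: first get convergence to $cX_\varnothing$ along a subsequence, then upgrade it to the whole sequence by a fixed-block comparison. Write $R_n:=\lambda_{\mathfrak C}^{-n}\Reff_{\mathfrak C_{n}}(\mathfrak v_+,\mathfrak v_-)$.

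By Lemma~\ref{lem:electrical-compactness}, there are a deterministic $c\in[1,\E X_\varnothing^{-1}]$ and a subsequence $n_j$ with $R_{n_j}\to cX_\varnothing$ in $L^2$. We set $c_{\mathrm R}:=c$. For each $j$, the pair $(U,V)=(R_{n_j},cX_\varnothing)$ is defined on one depth-$n_j$ innovation block. By Lemma~\ref{lem:electrical-uniform-moments} and Lemma~\ref{lem:electrical-eigenlaw}, both entries have uniform positive and inverse moments. Hence $L^2$ convergence, uniform integrability and H\"older's inequality give $\varepsilon(U,V)\to0$ and $\varepsilon(V,U)\to0$ as $j\to\infty$.

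Now fix $j$ and a depth $n\ge0$. We expose the outer network $\mathcal H_n$ from $\mathcal F_n$ and place, at its leaves, independent copies of the block pair. These are independent across leaves, and the within-pair coupling is kept. Positive homogeneity then gives two identities. The outer network $F_{\mathcal H_n}$ applied to the $U$-copies equals $\lambda_{\mathfrak C}^{n}R_{n+n_j}$. Applied to the $cX$-copies it equals $c\lambda_{\mathfrak C}^nX_\varnothing$, by the eigen-recursion of Lemma~\ref{lem:electrical-field}. We apply Lemma~\ref{lem:electrical-comparison} conditionally on $\mathcal H_n$ in both directions. Since its coefficient does not depend on the graph, this gives
\[
\E|R_{n+n_j}-cX_\varnothing|\le(\varepsilon(U,V)+\varepsilon(V,U))\,c\,\E X_\varnothing
\]
uniformly in $n$. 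This already yields $R_n\to cX_\varnothing$ in $L^1$ along the whole sequence. Uniform moments then upgrade this to every finite $L^p$.

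For almost-sure convergence, $L^1$ alone is not enough. We would use the a.s. sandwich $X_\varnothing\le\liminf R_n\le\limsup R_n\le(\E X_\varnothing^{-1})X_\varnothing$ together with a summability argument along the lines of the chemical proof. Fix $\delta>0$. We aim to show that $\Pp(R_n>(c+\delta)X_\varnothing)$ and $\Pp(R_n<(c-\delta)X_\varnothing)$ are summable in $n$. The tool is a second-moment/variance bound: the conditional variance estimate $\E\Var(R_n\mid\mathcal F_k)\le C\theta^k$ from the proof of Lemma~\ref{lem:electrical-compactness}, and the analogous bound for $X_\varnothing$ from Lemma~\ref{lem:electrical-field}. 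With these, $R_n-cX_\varnothing$ differs from its $\mathcal F_k$-conditional mean by $O(\theta^{k/2})$ in $L^2$. The conditional mean itself is controlled by the comparison above, applied with leaf depth $n-k$.

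A cleaner alternative is to mimic the exponential-moment route of Theorem~\ref{thm:chemical-scalar}. Here the resistance analogue of Lemma~\ref{lem:chemical-paths} uses the current-weighted trial flow in place of path minima. It shows that, outside a summable family of exceptions, the outer network inherits both signed comparisons $(c\pm\delta)X$ from the leaves. A countable sequence $\delta\downarrow0$ then gives a.s. convergence, and a countable intersection over addresses, by stationarity, gives the statement at every connected address.

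The main obstacle is this almost-sure upgrade. Lemma~\ref{lem:electrical-comparison} is only an expectation bound, so it does not directly give summable exceptions. If the variance route gives only $\theta^{k}$ decay with the dependence on $n-k$ uncontrolled, we will take $k=\lfloor n/2\rfloor$. We will then check that the rate $\varepsilon$ for leaf depth $n-k$ can be made summable, first along $n_j$ and then filled in between using monotone concavity bounds between successive depths.
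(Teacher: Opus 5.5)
Your first two steps are correct, and the second is a useful shortcut. Because the coefficient in Lemma~\ref{lem:electrical-comparison} is graph-independent, conditioning on $\mathcal F_n$ and integrating bounds $\E|R_{n+n_j}-cX_\varnothing|$ uniformly in $n$. This gives $L^1$ convergence along the whole sequence directly, and hence, by uniform moments, $L^p$ convergence. The paper obtains $L^p$ only after the almost-sure limit. One constant is off: the reversed comparison is weighted by $\E R_{n+n_j}\le\E X_\varnothing^{-1}$, not by $c\,\E X_\varnothing$. This is harmless.

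The gap is the almost-sure convergence, which is the main assertion and which you leave open. The route you commit to cannot be completed:
\begin{itemize}
\item Summability of $\Pp(R_n>(c+\delta)X_\varnothing)$ through the conditional mean would need a summable rate for $\varepsilon$ at leaf depth $n-k$. Compactness gives no rate at all.
\item Comparisons between successive depths, via monotonicity and homogeneity, give only constant factors. They cannot carry a vanishing error from the $n_j$ to intermediate depths.
\item The exponential-moment alternative is not available as sketched. Lemma~\ref{lem:chemical-paths} rests on counting at most $C_R^k$ paths whose leaf costs are independent sums. The weights $I_e^2$ of a current-weighted trial flow depend on all leaf values, so the product structure behind the union bound is lost.
\end{itemize}

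The missing idea is that only the \emph{fluctuation} needs to be summable, while the \emph{conditional mean} is handled pathwise. Fix a selected block $j$ and write $\varepsilon_j:=\varepsilon(R_j,cX_\varnothing)$, $\varepsilon_j':=\varepsilon(cX_\varnothing,R_j)$. Split each signed error $E_n=(R_{n+j}-cX_\varnothing)_\pm$ as $(E_n-\E[E_n\mid\mathcal F_n])+\E[E_n\mid\mathcal F_n]$.
\begin{itemize}
\item The first term has second moment at most $2\E\Var(R_{n+j}\mid\mathcal F_n)+2c^2\E\Var(X_\varnothing\mid\mathcal F_n)\le C\theta^n$, by the independent-copy variance identity and the Lipschitz property of the positive part. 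It therefore tends to zero almost surely by Chebyshev and Borel--Cantelli.
\item The second term is bounded pathwise by $\varepsilon_jc\,\E[X_\varnothing\mid\mathcal F_n]$, respectively $\varepsilon_j'(\E R_j)R_n$, the latter by conditional concavity. By martingale convergence and the sandwich, their upper limits are at most $\varepsilon_jcX_\varnothing$ and $\varepsilon_j'(\E X_\varnothing^{-1})^2X_\varnothing$.
\end{itemize}
Since the fixed shift $j$ covers all large depths, this bounds $\limsup_N|R_N-cX_\varnothing|$. Letting $j\to\infty$ through countably many blocks finishes the proof without any rate; this is the paper's argument.

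Given your full-sequence $L^2$ limit, $\varepsilon(R_N,cX_\varnothing)$ and $\varepsilon(cX_\varnothing,R_N)$ tend to zero along all $N$. So your choice $k=\lfloor N/2\rfloor$ would also work, but only with this pathwise treatment of the mean term rather than summability.
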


\begin{proof}
The compactness and comparison estimates are Lemmas~\ref{lem:electrical-compactness} and~\ref{lem:electrical-comparison}.
Choose a deterministic subsequence $j_r$ with $\displaystyle\lim_{r\to\infty}\bigl(\lambda_{\mathfrak C}^{-{j_r}}\Reff_{\mathfrak C_{j_r}}(\mathfrak v_+,\mathfrak v_-)\bigr)=cX_\varnothing$ in $L^2$.
For selected $j$, put $\varepsilon_j=\varepsilon(\bigl(\lambda_{\mathfrak C}^{-j}\Reff_{\mathfrak C_{j}}(\mathfrak v_+,\mathfrak v_-)\bigr),cX_\varnothing)$ and $\varepsilon_j'=\varepsilon(cX_\varnothing,\bigl(\lambda_{\mathfrak C}^{-j}\Reff_{\mathfrak C_{j}}(\mathfrak v_+,\mathfrak v_-)\bigr))$.
Both tend to zero along this subsequence.
Cauchy--Schwarz and uniform fourth inverse moments control the numerators.
The reversed denominator tends to a positive limit by bounded continuity of $\min\{x,x^{-1}\}$ on $(0,\infty)$.

Conditional on $\mathcal F_n$, the boundary pairs $(\bigl(\lambda_{\mathfrak C}^{-j}\Reff_{\mathfrak C_{j}}(\mathfrak v_+,\mathfrak v_-)\bigr),cX)$ are independent across edges.
Their normalised outputs are $\bigl(\lambda_{\mathfrak C}^{-{n+j}}\Reff_{\mathfrak C_{n+j}}(\mathfrak v_+,\mathfrak v_-)\bigr)$ and $cX_\varnothing$.
Lemma~\ref{lem:electrical-comparison} gives
\[
 \E[(\bigl(\lambda_{\mathfrak C}^{-{n+j}}\Reff_{\mathfrak C_{n+j}}(\mathfrak v_+,\mathfrak v_-)\bigr)-cX_\varnothing)_+\mid\mathcal F_n]\le\varepsilon_jc\E[X_\varnothing\mid\mathcal F_n],
\]
and, in the other direction,
\[\begin{aligned}
 &\E[(cX_\varnothing-\bigl(\lambda_{\mathfrak C}^{-{n+j}}\Reff_{\mathfrak C_{n+j}}(\mathfrak v_+,\mathfrak v_-)\bigr))_+\mid\mathcal F_n]
 \\ &\le\varepsilon_j'\E[\bigl(\lambda_{\mathfrak C}^{-{n+j}}\Reff_{\mathfrak C_{n+j}}(\mathfrak v_+,\mathfrak v_-)\bigr)\mid\mathcal F_n]
 \\ &\le\varepsilon_j'(\E \bigl(\lambda_{\mathfrak C}^{-j}\Reff_{\mathfrak C_{j}}(\mathfrak v_+,\mathfrak v_-)\bigr))\bigl(\lambda_{\mathfrak C}^{-n}\Reff_{\mathfrak C_{n}}(\mathfrak v_+,\mathfrak v_-)\bigr).
\end{aligned}\]
The final inequality is conditional concavity.

For either positive-part error, the independent-copy variance identity and Lipschitz continuity of the positive part give conditional variance in expectation at most
\[
 2\E\Var(\bigl(\lambda_{\mathfrak C}^{-{n+j}}\Reff_{\mathfrak C_{n+j}}(\mathfrak v_+,\mathfrak v_-)\bigr)\mid\mathcal F_n)
 +2c^2\E\Var(X_\varnothing\mid\mathcal F_n)
 \le C\theta^n.
\]
Chebyshev and Borel--Cantelli show that each error differs from its conditional mean by a quantity tending to zero almost surely for fixed selected $j$.
Since $\displaystyle\lim_{n\to\infty}\E[X_\varnothing\mid\mathcal F_n]=X_\varnothing$, $\E \bigl(\lambda_{\mathfrak C}^{-j}\Reff_{\mathfrak C_{j}}(\mathfrak v_+,\mathfrak v_-)\bigr)\le(\E X_\varnothing^{-1})$, and $\displaystyle \limsup \bigl(\lambda_{\mathfrak C}^{-n}\Reff_{\mathfrak C_{n}}(\mathfrak v_+,\mathfrak v_-)\bigr)\le(\E X_\varnothing^{-1}) X_\varnothing$, the upper limiting errors are at most $\varepsilon_jcX_\varnothing$ and $\varepsilon_j'(\E X_\varnothing^{-1})^2X_\varnothing$.
Intersect over the countably many selected blocks and let $j\to\infty$ through them.
Each fixed shift covers the whole eventual sequence, proving convergence with $c_{\mathrm R}=c$.

Uniform positive moments give convergence in every finite positive $L^p$. Take the countable address intersection.
\end{proof}

\begin{corollary}\label{cor:electrical-moment-limits}
For every fixed $t\in\mathbb R$,
$\displaystyle\lim_{n\to\infty}\lambda_{\mathfrak C}^{-nt}\E\Reff_{\mathfrak C_{n}}(\mathfrak v_+,\mathfrak v_-)^t=c_{\mathrm R}^{\,t}\E X_\varnothing^t$.
\end{corollary}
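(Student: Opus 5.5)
The plan is to obtain the moment limits directly from Theorem~\ref{thm:electrical-amplitudes} together with the uniform two-sided moment bounds of Lemma~\ref{lem:electrical-uniform-moments}, via uniform integrability. Write
\[
 Y_n:=\lambda_{\mathfrak C}^{-n}\Reff_{\mathfrak C_{n}}(\mathfrak v_+,\mathfrak v_-).
\]
Then the statement is equivalent to $\lim_{n\to\infty}\E Y_n^t=c_{\mathrm R}^{\,t}\E X_\varnothing^t$ for every fixed real $t$. The case $t=0$ is trivial, so we fix $t\neq0$.

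The first step is almost-sure convergence of $Y_n^t$. Theorem~\ref{thm:electrical-amplitudes} gives $Y_n\to c_{\mathrm R}X_\varnothing$ almost surely, and the limit is strictly positive by Lemma~\ref{lem:electrical-eigenlaw}. Since $x\mapsto x^t$ is continuous on $(0,\infty)$ for every real $t$, including negative $t$, it follows that $Y_n^t\to c_{\mathrm R}^{\,t}X_\varnothing^t$ almost surely.

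The second step is uniform integrability of $(Y_n^t)_n$. Choose $q:=2|t|$. Lemma~\ref{lem:electrical-uniform-moments} gives
\[
 \sup_n\E\bigl(Y_n^{q}+Y_n^{-q}\bigr)<\infty,
\]
so $\sup_n\E|Y_n^t|^2<\infty$ whatever the sign of $t$. Bounded second moments imply uniform integrability, and Vitali's theorem then upgrades almost-sure convergence to $\E Y_n^t\to c_{\mathrm R}^{\,t}\E X_\varnothing^t$. The limit is finite because $X_\varnothing$ has all positive and negative moments by Lemma~\ref{lem:electrical-eigenlaw}. Multiplying back by $\lambda_{\mathfrak C}^{nt}$ gives exactly the stated limit.

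No step is a genuine obstacle, because all the analytic difficulty is contained in the earlier results. The only point that needs checking is that the inverse-moment half of Lemma~\ref{lem:electrical-uniform-moments} is really uniform in $n$; this is what controls negative $t$. That uniformity comes from the conditional Jensen bound $Y_n^{-p}\le\E[X_\varnothing^{-p}\mid\mathcal F_n]$, whose expectation equals $\E X_\varnothing^{-p}$ for every $n$.
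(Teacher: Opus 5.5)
Your proposal is correct and follows essentially the same route as the paper's proof. Both get almost-sure convergence of $\lambda_{\mathfrak C}^{-n}\Reff_{\mathfrak C_n}(\mathfrak v_+,\mathfrak v_-)$ to the strictly positive limit $c_{\mathrm R}X_\varnothing$ from Theorem~\ref{thm:electrical-amplitudes}, then obtain uniform integrability at every fixed real power from the two-sided uniform moment bounds of Lemma~\ref{lem:electrical-uniform-moments}; your observation that the inverse-moment uniformity comes from the conditional Jensen bound matches how that lemma is proved.
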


\begin{proof}
Theorem~\ref{thm:electrical-amplitudes} gives almost-sure convergence to $c_{\mathrm R}X_\varnothing$. The uniform positive and inverse moment bounds in Lemma~\ref{lem:electrical-uniform-moments} give uniform integrability at every fixed real power, so expectations converge.
\end{proof}

\begin{theorem}\label{thm:electrical-uniqueness}
The mean-one eigenlaw and eigenvalue are unique among positive eigenlaws with every positive and inverse moment.
\end{theorem}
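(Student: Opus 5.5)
Let $(\lambda',Y)$ be a second positive eigenpair: $Y$ has mean one, every positive and inverse moment, and $F_H(Y_e)\overset{d}{=}\lambda'Y$ for independent child copies independent of $H$. The plan is to compare $Y$ with the actual normalised resistances, for which Theorem~\ref{thm:electrical-amplitudes} already gives a limit. This forces $\lambda'=\lambda_{\mathfrak C}$ and identifies the law of $Y$.

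\textbf{Step 1: the eigenvalue.} Place independent copies of $Y$ at the leaves of $\mathcal F_n$ and recurse upward; the output then has the law of $\lambda'^nY$. The argument of Lemma~\ref{lem:electrical-sandwich} needs only concavity of $F$ in resistances, concavity of effective conductance, and positive mean-one stationary leaves. Applying it with $Y$ in place of the electrical field gives
\[
 \E[\,\lambda'^nY\mid\mathcal F_n]\le\Reff_{\mathfrak C_n}(\mathfrak v_+,\mathfrak v_-)\le (\E Y^{-1})\,\E[\,\lambda'^nY\mid\mathcal F_n].
\]
Take expectations. Then $\lambda'^n\le\E\Reff_{\mathfrak C_n}(\mathfrak v_+,\mathfrak v_-)\le(\E Y^{-1})\lambda'^n$. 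By Theorem~\ref{thm:resistance-exponent}, $n^{-1}\log\E\Reff_{\mathfrak C_n}\to\log\lambda_{\mathfrak C}$, so $\lambda'=\lambda_{\mathfrak C}$.

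\textbf{Step 2: the law.} Lemma~\ref{lem:electrical-field} realises $Y$ on the innovation tree as a field $(Y_u)$ with $Y_\varnothing$ measurable with respect to the full tree. Its proof used only positive and inverse moments of the eigenlaw, together with $\lambda_{\mathfrak C}>1$ to choose $p$ with $m^{1/p}<\lambda_{\mathfrak C}$.

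The sandwich from Step 1 then holds pathwise on this tree with $\lambda'=\lambda_{\mathfrak C}$. Letting $n\to\infty$ and using martingale convergence with Theorem~\ref{thm:electrical-amplitudes} gives
\[
 Y_\varnothing\le c_{\mathrm R}X_\varnothing\le(\E Y^{-1})Y_\varnothing \quad\text{almost surely}.
\]
Thus $h:=c_{\mathrm R}X_\varnothing$ and $Y_\varnothing$ are comparable. Both are fixed points of the operator $T$ of Lemma~\ref{lem:electrical-compactness}, realised on the same tree.

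\textbf{Step 3: the main obstacle, upgrading comparability to proportionality.} I would rerun the deficit argument of Lemma~\ref{lem:electrical-compactness} with $\mathcal K=\{Y_\varnothing\}$. Set $c:=\esssup(Y_\varnothing/X_\varnothing)$, which is finite by Step 2, and consider the deficit $D:=cX_\varnothing-Y_\varnothing\ge0$.

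Superadditivity and the recursion give $D\ge T D$ on the tree. Suppose $D$ is not almost surely zero. The small-edge estimate and the iteration $Q(t)\le g(Q(m\lambda_{\mathfrak C}t))$ then give a finite bound on $\E D^{-1}$. This step uses $g(u)<u$ on $(0,1)$ and needs $\Pp(D=0)<1$. To get it, show that $z=\Pp(D=0)$ satisfies $z\le g(z)$, by thinning the cell edges whose deficit vanishes; this forces $z\in\{0,1\}$.

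With the bound on $\E D^{-1}$, conductance concavity and conditional Jensen give $D\ge X_\varnothing/M$ almost surely, exactly as in Lemma~\ref{lem:electrical-compactness}. This contradicts the definition of $c$ as an essential supremum.

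Hence $D=0$, so $Y_\varnothing=cX_\varnothing$. Both variables have mean one, so $c=1$, and therefore $Y\overset{d}{=}X$.

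The delicate points are the zero-one dichotomy for $\Pp(D=0)$ and checking that Lemma~\ref{lem:electrical-field} applies verbatim to an arbitrary eigenlaw. I expect the dichotomy to be the main obstacle. It should follow because a cell's deficit vanishes only if it vanishes on a terminal crossing set of its children, which is a thinning event of probability $g(z)$.
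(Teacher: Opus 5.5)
Your proposal is correct and follows essentially the paper's route: the sandwich fixes the eigenvalue, Lemma~\ref{lem:electrical-field} realises the new field endogenously on the same tree, and the deficit (endpoint) argument of Lemma~\ref{lem:electrical-compactness} together with Theorem~\ref{thm:electrical-amplitudes} forces proportionality. The paper phrases this last step as placing a multiple of $Y$ in the singleton omega-limit set $\{c_{\mathrm R}X_\varnothing\}$, whereas you compare the two fixed points directly; this is the same argument with the roles exchanged, and it is also the equality case of Lemma~\ref{lem:critical-supersolution}. The zero-one dichotomy you flag is not a real obstacle: if $D$ is not almost surely zero then $\Pp(D\le t_0)<1$ for some $t_0>0$ immediately, and the iteration $Q(t)\le g(Q(m\lambda_{\mathfrak C}t))$ then itself yields $\Pp(D=0)=0$ and $\E D^{-1}<\infty$.
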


\begin{proof}
Apply Theorem~\ref{thm:electrical-amplitudes} to the natural eigenlaw.
For any other eigenlaw in the stated class, the sandwich bounds the physical mean between fixed multiples of its eigenvalue to the power $n$.
Its eigenvalue must therefore equal $\lambda_{\mathfrak C}$.
\textcolor{black}{The construction of Lemma~\ref{lem:electrical-field} realises its mean-one endogenous field on the same innovation tree.
The sandwich bounds apply to this field, so the endpoint argument of Lemma~\ref{lem:electrical-compactness} places a deterministic positive multiple of it in the omega-limit set of the same unit-input resistance sequence.
By Theorem~\ref{thm:electrical-amplitudes}, that set is the singleton $\{c_{\mathrm R}X_\varnothing\}$.
Taking expectations identifies the multiple as $c_{\mathrm R}$, and hence the two mean-one fields and their laws agree.}
\end{proof}

All component resistances in the following argument are initially calculated within the component.
Adding an exterior network can only decrease them.

\begin{lemma}\label{lem:full-resistance-mesh}
Almost surely there are exponentially decreasing bounds $\Gamma_k$ with $\displaystyle\lim_{k\to\infty}\Gamma_k=0$ for the diameters of every retained generation-$k$ component in $\lambda_{\mathfrak C}^{-n}\Reff_{\mathfrak C_n}$, simultaneously for all $n\ge k$.
Each component's normalised diameter supremum has every positive moment.
The assertion includes both sides of $b$ and all one-terminal decorations.
\end{lemma}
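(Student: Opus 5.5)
The plan is to reduce the resistance mesh to the chemical mesh already available, and to handle the electrical scaling separately by a branching union bound. The key inequality is $\Reff\le d$ on every graph. So for a retained generation-$k$ component $A$ and $n\ge k$, the intrinsic resistance diameter of its depth-$n$ realisation is at most its intrinsic chemical diameter. By Rayleigh monotonicity, adding the exterior network only decreases resistances, so the component diameter in $\Reff_{\mathfrak C_n}$ is bounded by the intrinsic one.

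The chemical scale $(\ell_{\mathfrak C}^{\mathrm{ch}})^{-n}$ and the electrical scale $\lambda_{\mathfrak C}^{-n}$ differ, with $\lambda_{\mathfrak C}\le\ell_{\mathfrak C}^{\mathrm{ch}}$. A direct comparison therefore loses the factor $(\ell_{\mathfrak C}^{\mathrm{ch}}/\lambda_{\mathfrak C})^{n}$. Instead I will run the argument of Lemma~\ref{lem:full-mesh} in electrical form.

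For a component planted at address $u$, decompose its depth-$(n-k)$ descendant into its generation-$(k+1)$ subcells, and iterate through the generations $j\ge k$ of the component's descendant tree. Any point is joined to the planted boundary through a chain of nested cells. Each step crosses at most one coarse terminal path inside a connected cell, or travels inside a dangling branch attached at a single vertex. For connected cells this gives a series bound: the resistance from $x$ to $\partial A$ is at most $\sum_{j\ge k}L\max_{|w|=j+1,\,w\succ u}\lambda_{\mathfrak C}^{-(n-j-1)}\Reff_{\mathfrak C_{w,n-j-1}}(\mathfrak v_+,\mathfrak v_-)\cdot\lambda_{\mathfrak C}^{-(j+1)}$, up to the normalisation. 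Dangling and one-arm pieces, including both sides of $b$, are treated the same way. A one-root piece is crossed, from its root to any point, by a path through nested cells that are either connected or themselves one-root, and the $u$-mask recursion of Lemma~\ref{lem:mass-perron} gives the identical decomposition.

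The diameter is then at most twice a geometric-type sum. Its $j$-th term is $\lambda_{\mathfrak C}^{-j}$ times the maximum of the normalised suprema $S_w:=\sup_{n}\lambda_{\mathfrak C}^{-n}\Reff_{\mathfrak C_{w,n}}(\mathfrak v_+,\mathfrak v_-)$ over at most $m^{j}$ descendants $w$. By Lemma~\ref{lem:electrical-uniform-moments}, every $S_w$ has every positive moment with a law independent of $w$. A union bound then gives $\Pp(\max_{|w|=j}S_w>e^{\varepsilon j})\le m^j e^{-p\varepsilon j}\E S^p$, which is summable once $p\varepsilon>\log m$.

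By Borel--Cantelli, almost surely $\max_{|w|=j}S_w\le C_\omega e^{\varepsilon j}$ for all $j$, simultaneously over the countably many roots. Choosing $\varepsilon<\log\lambda_{\mathfrak C}$, which is possible since $\lambda_{\mathfrak C}>1$ by Theorem~\ref{thm:resistance-exponent}, gives the bound $\Gamma_k:=C_\omega'(e^{\varepsilon}/\lambda_{\mathfrak C})^{k}$ uniformly in $n\ge k$. For the moment claim, the normalised diameter supremum of one component is bounded by $\sum_{j\ge0}\lambda_{\mathfrak C}^{-j}\max_{|w|=j}S_w$. Minkowski's inequality together with $\E\max_{|w|=j}S_w^p\le m^j\E S^p$ gives an $L^p$ norm of at most $\sum_j\lambda_{\mathfrak C}^{-j}m^{j/p}\|S\|_p$. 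This is finite once $p$ is large enough that $m^{1/p}<\lambda_{\mathfrak C}$, and smaller moments follow.

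The main obstacle is the decomposition step for noncrossing and one-terminal components. There the descendant cells are not all of state $c$, so the terminal resistances $S_w$ are unavailable for the $b$ and $u$ children. I would handle this by bounding the resistance from the active terminal across a $u$-cell by a path through its connected subcells. Each such coarse path has at most $L$ edges, and every edge on it is a $c$-child whose resistance is controlled by $S_w$ together with the fresh normalisation. The remaining care is to ensure that the number of generations traversed produces only the factor $\lambda_{\mathfrak C}^{-j}$, without extra multiplicative constants per level that would destroy summability. I would absorb the per-level factor $L$ by grouping levels into blocks, using that $\lambda_{\mathfrak C}^{r}$ eventually dominates $L$.
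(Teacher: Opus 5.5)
Your proposal is correct and follows essentially the paper's route. The shared steps are the radius recursion through nested cells along routes of connected children, your $S_w$ (the paper's $V_w=\sup_r\lambda_{\mathfrak C}^{-r}\Reff_{\mathfrak C_{w,r}}(\mathfrak v_+,\mathfrak v_-)$, with all moments by Lemma~\ref{lem:electrical-uniform-moments}), the bound by $\sum_j\lambda_{\mathfrak C}^{-j}\max_{|w|=j}S_w$, Minkowski once $m^{1/p}<\lambda_{\mathfrak C}$, and Borel--Cantelli; you apply the last per generation to $\max_{|w|=j}S_w$ rather than to the component bounds $G_u$, which is equivalent.

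Two small corrections. The per-level count should be $m$ rather than $L$, because a simple open coarse route from an interior rule vertex to a terminal need not lie on a terminal path and can be longer than $L$. Also, that constant enters additively once per level, as your own displayed sum shows, so there is no compounding and no need to group levels into blocks.
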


\begin{proof}
At connected $u$, put $V_u=\sup_{r\ge0}\lambda_{\mathfrak C}^{-r}\Reff_{\mathfrak C_{u,r}}(\mathfrak v_+,\mathfrak v_-)$.
It has all positive moments by Lemma~\ref{lem:electrical-uniform-moments}.
Define $H_{u,r}$ as the maximum resistance distance to available terminals in each vertex's own component, using the nearer terminal for $c$ and the maximum of the separate radii for $b$.
The seed radii vanish.
Reach a child terminal and follow a simple active coarse route to a parent terminal.
Resistance triangle inequalities and Rayleigh monotonicity give
\[
 H_{u,r+1}\le\max_{i\text{ retained}}H_{ui,r}
                +\sum_{i:T_{ui}=c}\Reff_{\mathfrak C_{ui,r}}(\mathfrak v_+,\mathfrak v_-).
\]
Apply this separately to each side of $b$.

As in Lemma~\ref{lem:full-mesh},
\[
 \sup_r\lambda_{\mathfrak C}^{-r}H_{u,r}
 \le A_u:=m\sum_{j\ge1}\lambda_{\mathfrak C}^{-j}
          \max_{\substack{|w|=j\text{ below }u\\T_w=c}}V_w.
\]
For arbitrarily large $p$, its $L^p$ norm is bounded by
$mC_p\sum_{j\ge1}(m^{1/p}/\lambda_{\mathfrak C})^j$.
Thus $G_u=2A_u+\mathbf1_{\{T_u=c\}}V_u$ bounds every normalised component diameter and has all positive moments.
The maximum over at most $m^k$ frontier sockets gives
$\Pp(\lambda_{\mathfrak C}^{-k}\max G_u>\eta^k)\le C[m/(\lambda_{\mathfrak C}\eta)^p]^k$
for $m^{1/p}/\lambda_{\mathfrak C}<\eta<1$.
Borel--Cantelli proves the result.
\end{proof}

Retain the increasing finite skeletons $V_k$ and their union $V_*$ from Section~\ref{sec:geometry}.

\begin{theorem}\label{thm:resistance-metric}
There is a compact resistance metric $R_*$ on the chemical carrier $\mathfrak C$ such that
$(V(\mathfrak C_n),\lambda_{\mathfrak C}^{-n}\Reff_{\mathfrak C_n})$ converges almost surely to $(\mathfrak C,R_*)$ in marked Gromov--Hausdorff topology.
The metrics $R_*$ and $d_{\mathrm{ch}}$ induce the same topology.
\end{theorem}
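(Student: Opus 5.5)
The plan is to mirror the proof of Theorem~\ref{thm:full-ghp}, replacing the chemical skeleton lengths by electrical data.

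\textbf{Step 1: the skeleton network.} On each finite skeleton $V_k$ and for $n\ge k$, form a finite network. Each connected generation-$k$ cell $u$ becomes a single edge of resistance $\Reff_{\mathfrak C_{u,n-k}}(\mathfrak v_+,\mathfrak v_-)$. Dangling one-attachment components are kept as trees of such edges, and non-crossing cells contribute no through-edge. By the series--parallel reduction of a two-terminal cell, the trace of $\Reff_{\mathfrak C_n}$ on $V_k$ is exactly the effective resistance of this weighted network. The reason is that a cell attaches to the rest of the graph only at its two planted vertices, so its internal structure can be replaced by its two-terminal effective resistance without changing resistances between outside points.

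\textbf{Step 2: limit metric on $V_*$.} Scale by $\lambda_{\mathfrak C}^{-n}$. Theorem~\ref{thm:electrical-amplitudes} shows each edge resistance converges almost surely to $c_{\mathrm R}\lambda_{\mathfrak C}^{-k}X_u$, which is strictly positive. Effective resistance on a finite network is continuous in positive edge resistances. Hence the scaled traces converge on $V_k$ to a resistance metric $R_k$. The recursion $F_{H_u}(X_{ui})=\lambda_{\mathfrak C}X_u$ makes these metrics compatible, since the trace of $R_{k+1}$ on $V_k$ equals $R_k$ (traces compose). This defines a metric $R_*$ on $V_*$ that is a resistance metric on every finite subset.

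\textbf{Step 3: comparison with $d_{\mathrm{ch}}$ and compactness.} On a finite skeleton, resistance is at most the chemical path length. Together with the scaling of Theorem~\ref{thm:chemical-scalar}, this gives the one-sided bound in the right form. The two metrics use different scalings, so I will not compare them globally. Instead I will argue through meshes:
\begin{itemize}
\item Lemma~\ref{lem:full-resistance-mesh} gives $R_*$-diameters at most $\Gamma_k\to0$ for every generation-$k$ component, uniformly in $n\ge k$.
\item The chemical mesh from Lemma~\ref{lem:full-mesh} gives the analogous $d_{\mathrm{ch}}$ bound.
\end{itemize}
Component anchors therefore form finite nets in both metrics. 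It follows that $(V_*,R_*)$ is totally bounded, so its completion is compact. For marked GH convergence, match skeleton labels and project other vertices to anchors. For fixed $k$ the error is $o(1)$ as $n\to\infty$ plus $2\Gamma_k$; then let $k\to\infty$.

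\textbf{Step 4: same topology (the main obstacle).} I would show that a sequence in $V_*$ is $R_*$-Cauchy iff it is $d_{\mathrm{ch}}$-Cauchy, and that $R_*$-separation forces $d_{\mathrm{ch}}$-separation. Then the identity on $V_*$ extends to a homeomorphism of completions, giving $R_*$ on the chemical carrier $\mathfrak C$.
\begin{itemize}
\item \emph{Cauchy sequences.} Both metrics are controlled by common component nesting. Points lying eventually in the same generation-$k$ component, or in adjacent components sharing a planted vertex, are within $2\Gamma_k$ in both metrics.
\item \emph{Separation.} This is the hard part. Points at positive chemical distance must have positive resistance distance, and vice versa. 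For two points in disjoint generation-$k$ components (not adjacent), the resistance between them is at least the resistance from one to the union of boundaries of its component. I would lower-bound this by the positive conductance bound $c_{\mathrm R}\lambda_{\mathfrak C}^{-k'}X_w$ of a deeper connected cell separating the point from $\partial A$. This is the electrical analogue of Lemma~\ref{lem:cell-separation}, using Rayleigh monotonicity with a cut edge set.
\item \emph{Limit points.} Skeleton vertices are all at positive distance in both metrics, so only limit points need care. For these, nested cylinders of distinct points become disjoint at some generation, and the cut-based lower bound separates them in either metric.
\end{itemize}
Compactness plus a continuous bijection in one direction, namely $d_{\mathrm{ch}}\to R_*$ (continuous via the common mesh), then yields a homeomorphism.
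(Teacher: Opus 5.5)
Your Steps~1--3 match the paper's proof. The paper also:
\begin{itemize}
\item replaces each connected generation-$k$ cell by one trace edge of conductance $\lambda_{\mathfrak C}^n/\Reff_{\mathfrak C_{u,n-k}}(\mathfrak v_+,\mathfrak v_-)$;
\item passes to the limiting forms $\mathcal E^{(k)}$ via Theorem~\ref{thm:electrical-amplitudes};
\item obtains $\Tr_{V_k}\mathcal E^{(k+1)}=\mathcal E^{(k)}$ (by continuity of finite traces in the conductances rather than through the recursion $F_{H_u}(X_{ui})=\lambda_{\mathfrak C}X_u$, but either works);
\item uses Lemma~\ref{lem:full-resistance-mesh} to make anchors $\Gamma_k$-nets, giving compactness and marked convergence.
\end{itemize}

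The genuine difference is Step~4. The paper first builds the resistance form $\mathcal E=\sup_k\mathcal E^{(k)}$ through Kigami's compatible-sequence theorem. It gets continuity of $\mathfrak C\to\mathfrak C_{\mathrm R}$ from the resistance mesh together with Lemma~\ref{lem:cell-localisation}. It gets injectivity from a harmonic test function equal to $0$ on $\partial A$ and $1$ on $\partial B$, for disjoint components $A\ni x$, $B\ni y$, so that $R_*(x,y)\ge 1/\mathcal E(f)$; this is a Dirichlet-principle bound.

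You replace this with a shorting/Nash-Williams cut bound, which needs only the finite trace networks. This is more elementary, and done properly it gives more. Shorting $A$ and $B$ each to a point yields $R_*(x,y)\ge R_{V_k}(\partial A,\partial B)>0$. That quantity is computed in the finite limiting network on $V_k$ and is uniform over the finitely many disjoint pairs. So you get uniform continuity of the identity in both directions, with no deeper cells and no case split for $x\in\partial A$ (where your bound $R(x,\partial A)$ degenerates).

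What the paper's route buys is the form itself. It is needed next for Theorem~\ref{thm:full-resistance-form}. Through Kigami's extension to the completion, it also certifies that $R_*$ is a resistance metric on all of $\mathfrak C$, not only on finite subsets of $V_*$. Your proposal leaves that last point implicit; you should cite it.

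There are two details to tighten.
\begin{itemize}
\item Continuity of $d_{\mathrm{ch}}\to R_*$ does not follow from the common mesh alone. You also need that chemically close points lie in the same or in adjacent generation-$k$ components. The paper takes this from Lemma~\ref{lem:cell-localisation} plus compactness of the finitely many component closures. Alternatively, a first-exit argument gives $d_{\mathrm{ch}}(x,y)\ge\min_{a\in\partial A,\,b\in\partial B}d_{\mathrm{ch}}(a,b)>0$ for disjoint $A,B$.
\item The lower bound from a cut set $\Pi$ of generation-$k'$ cells is $\bigl(\sum_{w\in\Pi}\lambda_{\mathfrak C}^{k'}/(c_{\mathrm R}X_w)\bigr)^{-1}$. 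The resistance $c_{\mathrm R}\lambda_{\mathfrak C}^{-k'}X_w$ of a single cell is a lower bound only when that cell is a series bottleneck, which need not happen.
\end{itemize}
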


\begin{proof}
Work on the event of probability one where the amplitude limits at all addresses and both component mesh bounds hold.
For each connected cell $u$, let $s_u,t_u$ be its planted terminals, and define $\mathcal E_n^0(f):=\sum_{\{x,y\}\in E(\mathfrak C_n)}(f(x)-f(y))^2$, counting each undirected edge once.
Eliminating the interiors of generation-$k$ cells gives
\[
 \Tr_{V_k}(\lambda_{\mathfrak C}^n\mathcal E_n^0)(f)
 =\sum_{\substack{|u|=k\\T_u=c}}
 \frac{\lambda_{\mathfrak C}^n}{\Reff_{\mathfrak C_{u,n-k}}(\mathfrak v_+,\mathfrak v_-)}
       (f(s_u)-f(t_u))^2.
\]
A one-root component contributes zero because its boundary value extends constantly over the component.
This applies separately to the two components of a $b$-cell; their interiors reappear on finer skeletons, while connected cells in dangling branches already contribute to the displayed sum.

Theorem~\ref{thm:electrical-amplitudes} gives the limiting finite form
\begin{equation}\label{eq:limiting-trace}
 \mathcal E^{(k)}(f):=
 \sum_{\substack{|u|=k\\T_u=c}}
 \frac{\lambda_{\mathfrak C}^k}{c_{\mathrm R}X_u}
       (f(s_u)-f(t_u))^2.
\end{equation}
The skeleton network is connected and its conductances are positive.
On such a finite network, tracing to a fixed vertex set depends continuously on the conductances, as follows from the finite energy-minimisation problem.
We may therefore pass to the limit in the finite-level trace identity to obtain $\Tr_{V_k}\mathcal E^{(k+1)}=\mathcal E^{(k)}$.
Effective resistance is preserved by tracing, so these forms define a single resistance metric $R_*$ on $V_*$.

To prove compactness, pass the bound in Lemma~\ref{lem:full-resistance-mesh} to each fixed finite set of limiting born vertices.
The generation-$k$ component anchors then form a finite $\Gamma_k$-net in $V_*$, and hence in its resistance completion $\mathfrak C_{\mathrm R}$.
Since $\displaystyle\lim_{k\to\infty}\Gamma_k=0$, this completion is compact.
Project the finite approximants and the completion to the same labelled anchors.
For fixed $k$, their anchor resistance matrices converge, while each projection moves points by at most $\Gamma_k$.
First taking the limit in $n$ and then in $k$ proves marked resistance Gromov--Hausdorff convergence to $\mathfrak C_{\mathrm R}$.

We next construct the form needed to identify this completion with the chemical carrier.
For a function on $V_*$, define $\mathcal E(f):=\sup_k\mathcal E^{(k)}(f|_{V_k})$; trace compatibility makes these finite energies increasing.
The finite resistance inequalities give $|f(x)-f(y)|^2\le R_*(x,y)\mathcal E(f)$, so every finite-energy function extends continuously to $\mathfrak C_{\mathrm R}$.
Let $\mathcal D$ consist of these extensions.
Harmonic extension from $V_k$ to successive skeletons preserves energy, and uniqueness of each finite minimiser makes the extensions agree on their common vertices.
Clipping a function to $[0,1]$ decreases every finite-network energy; taking the supremum therefore gives the Markov property for $\mathcal E$.
The compatible-sequence theorem \cite[Theorem~2.13]{Kigami2003} supplies the resistance-form axioms on $V_*$ and identifies the associated metric with $R_*$.
Theorem~2.12 of the same reference extends the form to $\mathfrak C_{\mathrm R}$, and Theorem~2.14 shows that its traces on the dense skeletons determine it uniquely.

It remains to identify the two completions.
Fix $\varepsilon>0$ and choose a generation whose resistance mesh is at most $\varepsilon$.
By Lemma~\ref{lem:cell-localisation}, its chemical component closures meet only at shared planted vertices.
There are finitely many such compact sets, so the distance between any disjoint pair has a positive lower bound, when disjoint pairs exist.
Consequently, sufficiently chemically close born vertices lie in the same component or in two components sharing a planted vertex, and their resistance distance is at most $2\varepsilon$.
Thus the identity on $V_*$ is uniformly continuous from the chemical metric to $R_*$ and extends to a continuous map $\mathfrak C\to\mathfrak C_{\mathrm R}$.
Its image is compact and contains the dense set $V_*$, so the map is surjective.

For injectivity, take distinct $x,y\in\mathfrak C$ and a generation whose chemical mesh is less than $d_{\mathrm{ch}}(x,y)/4$.
The components containing $x$ and $y$ have disjoint planted boundary sets, since a shared boundary point would put $x$ and $y$ at distance less than $d_{\mathrm{ch}}(x,y)/2$.
Prescribe zero on the first boundary, one on the second, and arbitrary values at the other skeleton vertices, then extend harmonically to the finer skeletons.
The extension has finite energy and is constant on each of the two components: constant boundary data have the constant extension as their energy minimiser.
Its resistance-continuous extension therefore separates the images of $x$ and $y$.
The map is a continuous bijection between compact metric spaces, hence a homeomorphism.
Identifying $\mathfrak C_{\mathrm R}$ with $\mathfrak C$ gives the claimed metric and common topology; the same labelled anchors give the marked convergence on this carrier.
\end{proof}

\begin{theorem}\label{thm:full-resistance-form}
The finite electrical traces determine a unique regular strongly local resistance form $(\mathcal E,\mathcal D)$ on $\mathfrak C$.
\end{theorem}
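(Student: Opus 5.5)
The plan is to build on the construction already carried out in the proof of Theorem~\ref{thm:resistance-metric}. There, the compatible sequence $(\mathcal E^{(k)})$ of finite traces \eqref{eq:limiting-trace} on the skeletons $V_k$ gave, via \cite[Theorems~2.12--2.14]{Kigami2003}, a resistance form $(\mathcal E,\mathcal D)$ on $\mathfrak C_{\mathrm R}=\mathfrak C$. Its resistance metric is $R_*$, and it is uniquely determined by its traces on the dense set $V_*$. Existence and uniqueness are therefore already in hand. The new points to prove are regularity and strong locality. I would first restate uniqueness carefully: any resistance form on $\mathfrak C$ whose traces on every $V_k$ equal $\mathcal E^{(k)}$ has resistance metric $R_*$ on $V_*$. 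Since $V_*$ is dense in $(\mathfrak C,R_*)$, the form coincides with ours by \cite[Theorem~2.14]{Kigami2003}.

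\emph{Regularity.} I would show that $\mathcal D\cap C(\mathfrak C)=\mathcal D$ is uniformly dense in $C(\mathfrak C)$. Every function in $\mathcal D$ is $R_*$-H\"older of order $1/2$, hence continuous, and $(\mathfrak C,R_*)$ is compact by Theorem~\ref{thm:resistance-metric}. For compact resistance spaces, Kigami's criterion (\cite{Kigami2003}, and the standard result that a resistance form on a compact resistance space with the induced topology is regular) already gives regularity. To keep the argument self-contained, I would instead check that $\mathcal D$ separates points and is a lattice closed under constants. Separation follows from the harmonic-extension construction in the injectivity step of the proof of Theorem~\ref{thm:resistance-metric}. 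Closure under $\max$ and $\min$ follows because finite network energies satisfy $\mathcal E^{(k)}(f\vee g)\le\mathcal E^{(k)}(f)+\mathcal E^{(k)}(g)$, and this passes to the supremum. The lattice version of Stone--Weierstrass then yields density.

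\emph{Strong locality.} I expect this to be the main obstacle. The plan is to show that $\mathcal E(f,g)=0$ whenever $f$ is constant on a neighbourhood of $\supp g$. First, write $\mathcal E(f,g)=\lim_k\mathcal E^{(k)}(f|_{V_k},g|_{V_k})$, which holds by polarisation and monotone convergence of the traces. Next, fix an open set $U\supset\supp g$ on which $f$ is constant. Choose $k$ so large that the chemical mesh of generation-$k$ components, which by Theorem~\ref{prop:chemical-minkowski-dimension}'s diameter bounds and Lemma~\ref{lem:full-mesh} tends to zero, is smaller than $\operatorname{dist}(\supp g,U^c)$. Each term of \eqref{eq:limiting-trace} involves only the two planted terminals $s_u,t_u$ of one connected cell. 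Such a cell lies within a single component of tiny diameter, so either both terminals are in $U$, where $f(s_u)=f(t_u)$, or both are outside $\supp g$, where $g(s_u)=g(t_u)=0$. Hence every summand $(f(s_u)-f(t_u))(g(s_u)-g(t_u))$ vanishes for all large $k$. The delicate point is that components of chemical diameter at most $\Gamma_k$ really contain their planted terminals; Lemma~\ref{lem:cell-localisation} gives this. Since the metrics $d_{\mathrm{ch}}$ and $R_*$ induce the same topology, the neighbourhoods used may be taken in either metric. Finally, the resistance form is the one generated by local edge terms, so no jump or killing part can arise. This gives strong locality, because the Beurling--Deny decomposition of the associated Dirichlet form then has only its diffusion part.
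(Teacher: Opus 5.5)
Your proposal is correct. For uniqueness and strong locality it is essentially the paper's proof. Uniqueness is read off from the compatible-trace construction in the proof of Theorem~\ref{thm:resistance-metric}. Strong locality uses exactly the paper's mesh argument: choose $k$ so that every generation-$k$ trace edge $\{s_u,t_u\}$ is shorter than the distance from $\supp g$ to $\mathfrak C\setminus U$. Then each mixed summand of \eqref{eq:limiting-trace} vanishes, and the polarised limit gives $\mathcal E(f,g)=0$.

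The real difference is regularity. The paper cites \cite[Corollary~6.4]{Kigami2012} and sketches a constructive alternative: harmonically extend the values of a continuous $h$ on $V_k$. By the maximum principle, the extension on each generation-$k$ component lies between the boundary values of $h$, so the uniform error is at most the oscillation of $h$ over sets of diameter $\Gamma_k$. You instead apply the lattice Stone--Weierstrass theorem. You get closure under $\vee,\wedge$ from the edgewise bound $((f\vee g)(x)-(f\vee g)(y))^2\le(f(x)-f(y))^2+(g(x)-g(y))^2$ in each finite trace, together with point separation, which is in fact automatic for a resistance form. Your route is self-contained and uses only the lattice structure and compactness, so it is essentially the general argument behind Kigami's corollary. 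The paper's route instead yields explicit approximants with a modulus-of-continuity error. It also reuses the same vanishing cell mesh that drives strong locality and the later heat-kernel estimates.

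Two small points to tighten. First, your closing Beurling--Deny sentence is redundant, and its stated reason does not stand on its own. Each $\mathcal E^{(k)}$ is a pure-jump form on the finite set $V_k$, so locality comes only from the vanishing mesh, which your direct argument already uses. Second, in your restated uniqueness claim the competing form must be required to be compatible with the topology of $\mathfrak C$; for instance, its resistance metric should induce that topology. Agreement with $R_*$ on $V_*$ alone does not make $V_*$ dense for the competitor's metric. Indeed, a bijection of $\mathfrak C$ that fixes $V_*$ but swaps two other points transports $\mathcal E$ to a different resistance form with the same traces.
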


\begin{proof}
The compatible-trace construction in the proof of Theorem~\ref{thm:resistance-metric} supplies the resistance form and proves uniqueness from its dense traces.

Compact resistance forms are regular \cite[Corollary~6.4]{Kigami2012}.
Here this also follows by harmonic interpolation of a continuous function's values on $V_k$: the maximum principle and vanishing component mesh give uniform approximation.

Suppose $f,g\in\mathcal D$ and $f$ is constant on a neighbourhood $U$ of $\supp g$.
If $\mathfrak C\setminus U$ is nonempty, its chemical distance from $\supp g$ is positive.
For large $k$, every trace edge has endpoints in a component smaller than this distance.
If $g$ differs at the endpoints, one lies in $\supp g$ and both lie in $U$, so the difference of $f$ is zero.
Every mixed trace summand vanishes.
Passing to the limit gives $\mathcal E(f,g)=0$.
If $U=\mathfrak C$ the conclusion is immediate.
Thus the form is strongly local.
\end{proof}

For later use, the construction also gives an exact decomposition into the energies of the retained components at every fixed generation.
Write $\mathcal E_A^{\mathrm g}$ for a component's contribution in global normalisation.
Finite traces split over disjoint socket interiors; monotone passage to their limits gives
$\mathcal E(f)=\sum_A\mathcal E_A^{\mathrm g}(f|_A)$.
The intrinsic resistance inequality in $A$ uses this local energy.
A finite-energy function on $A$ vanishing on its planted boundary extends by zero to $\mathcal D$, as is first seen on every finite trace.
A connected generation-$k$ component has the root energy law multiplied by $\lambda_{\mathfrak C}^k$.
For a $b$-socket these statements apply separately to its two components without making their laws independent.
\textcolor{black}{The intrinsic diameter bounds also pass to these limiting components.
For each fixed pair of internal skeleton vertices, the amplitude limits and continuity of finite-network resistance give convergence within the component.
The all-depth intrinsic bound of Lemma~\ref{lem:full-resistance-mesh} therefore passes first to every skeleton pair and then, by density, to the intrinsic completion, with the same moment bounds.}

\section{Diffusion limit and spectral dimension}\label{sec:diffusion}

Combining the chemical geometry, mass measure and resistance form, we construct a diffusion and prove that the simple random walks converge to it after suitable space and time rescaling.
Local mass and resistance estimates then determine its spectral dimension at $\mu$-almost every point, in terms of the chemical Minkowski and resistance dimensions.

The degree probability measure $\frac{1}{2|E(\mathfrak C_n)|}\sum_{x\in V(\mathfrak C_n)}\deg_{\mathfrak C_n}(x)\delta_x$ is reversible for simple random walk.

\begin{lemma}\label{lem:degree-measure}
Both $\frac{1}{|V(\mathfrak C_n)|}\sum_{x\in V(\mathfrak C_n)}\delta_x$ and $\frac{1}{2|E(\mathfrak C_n)|}\sum_{x\in V(\mathfrak C_n)}\deg_{\mathfrak C_n}(x)\delta_x$ converge to the probability measure $\mu$, in the common chemical and resistance realisations.
\end{lemma}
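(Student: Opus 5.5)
The uniform vertex measure needs nothing new: Theorem~\ref{thm:full-ghp} already gives its convergence to $\mu$ in the chemical realisation. Theorem~\ref{thm:resistance-metric} shows that $R_*$ and $d_{\mathrm{ch}}$ induce the same topology on $\mathfrak C$, with the same labelled anchors. Hence the identical anchor-projection argument transfers weak convergence to the resistance realisation. The work is therefore to compare the degree measure $\nu_n:=\frac{1}{2|E(\mathfrak C_n)|}\sum_x\deg_{\mathfrak C_n}(x)\delta_x$ with the uniform measure $\mu_n$ on the same components. I would do this cell by cell, showing that for every fixed generation $k$ and every retained component $A\in\mathcal A_k$ the $\nu_n$-mass and the $\mu_n$-mass of $A$ have the same limit.

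The first step is to express edge counts through the branching structure, mirroring Lemma~\ref{lem:vertex-mass-limit}. Each substitution of a state-$\sigma$ cell introduces a bounded number $J'_\sigma$ of retained open edges (the edges of state-$c$ children at the last generation, plus the open edges inside one-arm pieces). So $|E(\mathfrak C_n)|$, and more generally the number of edges inside the descendant component of $A$, is an accumulated reward over the same multitype process, with bounded offspring. The Chebyshev and Borel--Cantelli argument of Lemma~\ref{lem:vertex-mass-limit}, combined with Lemma~\ref{lem:population-limit}, gives almost surely
\[
 \rho(\mathbf M_{\mathfrak C})^{-n}|E(\mathfrak C_{u,n})|\to \kappa\, W^{(u)}_{T_u},
\]
with a deterministic constant $\kappa$. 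The same holds for the masked one-arm components of $b$-cells, as in Lemma~\ref{lem:mass-perron}.

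The main obstacle is showing that the edge and vertex limits are proportional with the \emph{same} constant at every address and in every state. The intended route is this: both rewards, after projection onto the population, have asymptotic mean $\langle \mathbf Z_k,\cdot\rangle$ contracted against the left Perron direction $\mathbf v$. Lemma~\ref{lem:population-limit} forces every normalised population vector to be a scalar multiple of $\mathbf v$. The limit of each reward is then that scalar times a fixed linear functional of $\mathbf v$, giving a common ratio $\kappa$ of edge-limit to vertex-limit for every state. If this direct identification became delicate for the $u$- and $b$-states, I would fall back on the deeper-generation decomposition. Write the component at $u$ as the union of its generation-$(|u|+j)$ descendants. By Lemma~\ref{lem:population-limit} applied below $u$, their type proportions converge to $\mathbf v$, so the ratio over the union tends to the $\mathbf v$-average independently of $T_u$.

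Given proportionality, the degree sum over $A$ is twice its internal edge count plus a correction. That correction is at most $d_R$ times a bounded number of planted boundary vertices per component (multiplied over at most $n$ generations), which is negligible after dividing by $\rho(\mathbf M_{\mathfrak C})^{n}$. So $\nu_n(A)$ and $\mu_n(A)$ have the same limit, namely the mass from Lemma~\ref{lem:mass-perron}. Finally, the Prokhorov estimate of Theorem~\ref{thm:full-ghp} transfers: project each component's $\nu_n$-mass to its anchor, bound the transport error by the mesh $\Gamma_k$ (chemical or resistance, via Lemma~\ref{lem:full-resistance-mesh}), and let $n\to\infty$ and then $k\to\infty$. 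This yields $\nu_n\to\mu$ in both realisations.
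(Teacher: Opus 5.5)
Your proposal is correct and follows essentially the paper's route: compare edge and vertex masses component by component, use the common Perron direction so that the deterministic constants (here $v_c$ and $1$) cancel in the ratios, and transport mass to anchors with error bounded by the chemical or resistance mesh. Two points should be tightened. First, $|E(\mathfrak C_n)|$ is not an accumulated reward: it is exactly the current $c$-population $Z_{n,c}$, so Lemma~\ref{lem:population-limit} gives the edge limit $v_cW_c$ directly; the paper also avoids boundary corrections by assigning both half-masses of each edge to its owning component's anchor, which makes the projected degree mass exactly $|E_A|/|E(\mathfrak C_n)|$. Second, in your vertex-sum version the junction correction grows like $d_R^{\,n-k}$ rather than boundedly or linearly in $n$, and it is negligible only because $\rho(\mathbf M_{\mathfrak C})>d_R$, which you should cite.
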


\begin{proof}
With the Perron normalisation of Section~\ref{sec:geometry},
$\displaystyle\lim_{n\to\infty}\rho(\mathbf M_{\mathfrak C})^{-n}|E(\mathfrak C_n)|=v_cW_c$ and $\displaystyle\lim_{n\to\infty}\rho(\mathbf M_{\mathfrak C})^{-n}|V(\mathfrak C_n)|=W_c$ almost surely.

Each microscopic $c$ seed contributes one edge; every other seed contributes none.
Thus $|E(\mathfrak C_n)|=Z_{n,c}$ and the edge limit is the Perron limit.
The almost-sure vertex limit is Lemma~\ref{lem:vertex-mass-limit}, with the mass normalisation of IGS~II.
The same edge and birth rewards work inside every fixed component, with the inherited mask for a side of $b$.

Assign each microscopic edge to its unique frontier component and move both endpoint half-masses to its anchor.
The projected degree probability mass is exactly its edge count divided by $|E(\mathfrak C_n)|$, even at old high-degree junctions.
For counting measure use first-birth ownership.
The allocation error involves only finitely many frontier boundary vertices and vanishes after normalisation.
Component edge and vertex ratios tend to the same descendant martingale ratios, because their deterministic reward constants cancel.
These ratios define $\mu$.
Both transports have displacement bounded by the component mesh in either metric.
First let $n\to\infty$ and then let the frontier depth tend to infinity.
Full support was proved in Section~\ref{sec:geometry}.

\end{proof}

With speed measure $\mu$, the regular strongly local resistance form defines a conservative recurrent diffusion $B$.
Indeed, $1\in\mathcal D$ and $\mathcal E(1)=0$.
Compactness and full support give its regular Dirichlet form, and positive capacity of each point defines the process from every starting point \cite[Theorems~9.4 and 9.9]{Kigami2012}.
Strong locality gives continuous paths.

On one event of probability one, the full clusters converge to $(\mathfrak C,d_{\mathrm{ch}},\mu)$ and $(\mathfrak C,R_*,\mu)$ in their respective marked Gromov--Hausdorff--Prokhorov topologies with common component correspondences.

\begin{theorem}\label{thm:natural-process}
On one event of probability one, for every starting-vertex sequence converging to $x\in \mathfrak C$, the conditional laws of
$(S^n_{\lfloor2|E(\mathfrak C_n)|\lambda_{\mathfrak C}^nt\rfloor})_{t\ge0}$ converge weakly to $B$ started at $x$, in the Skorokhod $J_1$ topology of a common chemical realisation.
Here $S^n$ is discrete simple random walk.
\end{theorem}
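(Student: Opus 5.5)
The plan is to invoke Croydon's theorem on scaling limits of random walks on graphs that converge in the Gromov--Hausdorff--Prokhorov sense with resistance metrics (Croydon, "Scaling limits of stochastic processes associated with resistance forms"). In its extended form it allows convergence in a common metric realisation where the resistance metrics converge together with a second, equivalent metric. Simple random walk $S^n$ on $\mathfrak C_n$ is the Markov chain of the electrical network with unit conductances. Its reversible measure is the degree measure. With the resistance metric $\lambda_{\mathfrak C}^{-n}\Reff_{\mathfrak C_n}$ and the degree probability measure $\mu_n:=\frac{1}{2|E(\mathfrak C_n)|}\sum_x\deg(x)\delta_x$, the associated process is $S^n$ run in continuous time at rate $2|E(\mathfrak C_n)|\lambda_{\mathfrak C}^n$. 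So the time scaling $\lfloor 2|E(\mathfrak C_n)|\lambda_{\mathfrak C}^nt\rfloor$ is exactly the product of the resistance scale and the total mass.

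The key steps, in order, are as follows.
\begin{enumerate}
\item Fix the common event. On it, Theorem~\ref{thm:resistance-metric} gives marked Gromov--Hausdorff convergence $(V(\mathfrak C_n),\lambda_{\mathfrak C}^{-n}\Reff_{\mathfrak C_n})\to(\mathfrak C,R_*)$, and Lemma~\ref{lem:degree-measure} gives $\mu_n\to\mu$ through the same component correspondences. Together these give resistance-GHP convergence with spatial embeddings matched to the chemical ones of Theorem~\ref{thm:full-ghp}.
\item Identify the limiting resistance form. The form obtained from the limiting resistance metric is the one in Theorem~\ref{thm:full-resistance-form}, by uniqueness from dense traces (Kigami). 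With speed measure $\mu$ of full support, this identifies the limiting process as $B$.
\item Verify the non-explosion and compactness hypotheses. For compact spaces these are automatic, since the limit is compact and recurrence is trivial. Croydon's result then gives annealed-in-starting-point convergence, in the quenched sense, for the continuous-time walks.
\item Transfer to discrete time. The exponential holding times at rate $2|E(\mathfrak C_n)|\lambda_{\mathfrak C}^n\to\infty$ give a time change that converges uniformly on compacts to the identity. A law-of-large-numbers argument for the number of jumps up to time $t$ shows that the discrete and continuous-time processes are $J_1$-close.
\item Transfer from the resistance to the chemical realisation. By Theorem~\ref{thm:resistance-metric} the two metrics induce the same topology on $\mathfrak C$. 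Through the common component correspondences the identity maps are uniformly equicontinuous in both directions, uniformly in $n$: component mesh bounds hold simultaneously in both metrics by Lemma~\ref{lem:full-mesh} and Lemma~\ref{lem:full-resistance-mesh}. Hence Skorokhod convergence in one realisation implies it in the other.
\end{enumerate}

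The main obstacle is the last step. It needs a uniform-in-$n$ statement: two microscopic vertices that are close in $\lambda_{\mathfrak C}^{-n}\Reff_{\mathfrak C_n}$ must be close in the scaled chemical metric, and conversely. This is the finite-level analogue of the homeomorphism argument, and I would try that argument first. Choose a generation $k$ whose mesh is at most $\varepsilon$ in both metrics, for all $n\ge k$. Then use the fact that only finitely many generation-$k$ components exist and that disjoint ones are separated in resistance by a positive amount uniformly in $n$. To get this separation, bound their effective resistance from below using the converging finite trace networks on $V_k$ and the intrinsic-boundary separation of Lemma~\ref{lem:cell-separation} at the level of traces.

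The remaining conditions are routine. Convergence must hold for every starting sequence $x_n\to x$, which Croydon's theorem yields through continuity of the limiting law in the starting point; this continuity holds because $B$ has continuous heat kernel, given positive capacity of points in resistance forms.
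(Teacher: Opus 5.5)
Your proposal is correct and follows the paper's route: Croydon's Theorem~1.2 applied to the compact measured resistance convergence (the escape condition is trivial because the diameters are bounded), transfer to a common chemical realisation through Croydon's spatial-embedding result (his Theorem~7.1), and removal of the exponential holding times by a clock law of large numbers. The only difference is that your ``main obstacle'' in step~5 needs no new finite-level separation estimate, because the common component correspondences already have vanishing distortion in both metrics and the limiting identity $(\mathfrak C,R_*)\to(\mathfrak C,d_{\mathrm{ch}})$ is continuous by Theorem~\ref{thm:resistance-metric}, so the required asymptotic equicontinuity follows from uniform continuity on the compact limit (and arbitrary starting sequences $x_n\to x$ are covered simply by re-marking the pointed spaces, not by continuity of the limiting law in the starting point).
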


\begin{proof}
Fix an environment with all the address-wise limits, both uniform mesh bounds and the mass limits.
Theorem~\ref{thm:full-resistance-form} and Lemma~\ref{lem:degree-measure} give compact measured resistance convergence.
Croydon's theorem \cite[Theorem~1.2]{Croydon2018} applies.
Its escape condition is automatic: the compact convergent spaces have uniformly bounded diameters, so sufficiently large balls contain the entire spaces.
The associated process laws converge weakly in a common resistance embedding.

Use the common component correspondences to realise the chemical spaces in one complete separable space.
They have vanishing resistance distortion and chemical embedding error.
The identity from the limiting resistance space to its chemical carrier is continuous.
These are the spatial conditions of \cite[Theorem~7.1]{Croydon2018}, giving weak $J_1$ convergence of the embedded processes with continuous limiting paths.
The argument is deterministic after fixing the environment.
Every convergent starting sequence supplies a convergent pointed sequence, so the same environment event covers all such sequences.

For the discrete transition matrix $\mathbf P_n$, direct summation gives

\[
 \langle f-\mathbf P_n f,f\rangle_{\frac{1}{2|E(\mathfrak C_n)|}\sum_{x\in V(\mathfrak C_n)}\deg_{\mathfrak C_n}(x)\delta_x}=\mathcal E_n^0(f)/(2|E(\mathfrak C_n)|)\textcolor{black}{.}
\]
Thus the process for $\lambda_{\mathfrak C}^n\mathcal E_n^0$ and $\frac{1}{2|E(\mathfrak C_n)|}\sum_{x\in V(\mathfrak C_n)}\deg_{\mathfrak C_n}(x)\delta_x$ has generator $2|E(\mathfrak C_n)|\lambda_{\mathfrak C}^n(\mathbf P_n-\mathbf I)$.
The factor two comes from counting undirected edges once in the energy.

Attach independent mean-one exponential holding times $\xi_i$.
Set $\sigma_k:=\sum_{i\le k}\xi_i$, $N(t):=\max\{k:\sigma_k\le t\}$ and $Y_n(t):=S^n_{N((2|E(\mathfrak C_n)|\lambda_{\mathfrak C}^n)t)}$.
Since $\displaystyle\lim_{n\to\infty}2|E(\mathfrak C_n)|\lambda_{\mathfrak C}^n=\infty$, both $N((2|E(\mathfrak C_n)|\lambda_{\mathfrak C}^n)t)/(2|E(\mathfrak C_n)|\lambda_{\mathfrak C}^n)$ and
$\gamma_n(t):=\sigma_{\lfloor (2|E(\mathfrak C_n)|\lambda_{\mathfrak C}^n)t\rfloor}/(2|E(\mathfrak C_n)|\lambda_{\mathfrak C}^n)$ tend uniformly on compact intervals in probability to $t$.
The identity $Y_n(\gamma_n(t))=S^n_{\lfloor (2|E(\mathfrak C_n)|\lambda_{\mathfrak C}^n)t\rfloor}$ holds.
Convergence to a continuous limiting path law and the vanishing clock error give the same weak $J_1$ limit for the discrete walk.
No aperiodicity is required.

\textcolor{black}{Almost-sure metric--measure convergence together with these conditional path-law limits gives quenched Gromov--Hausdorff--Prokhorov--Skorokhod convergence under the clock $2|E(\mathfrak C_n)|\lambda_{\mathfrak C}^n$.}
It is not an assertion of almost-sure convergence of the original walk sample paths.
Integration over environments gives the corresponding annealed joint convergence.
\end{proof}

\begin{corollary}\label{cor:continuous-time-walk}
Under the hypotheses of Theorem~\ref{thm:natural-process}, rate-one continuous-time simple random walk, accelerated by $2|E(\mathfrak C_n)|\lambda_{\mathfrak C}^n$, has the same conditional diffusion limit.
\end{corollary}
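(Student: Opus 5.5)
The plan is to reduce the corollary to Theorem~\ref{thm:natural-process} by a random time change. No new analysis of the environment should be needed. Fix the same environment event of probability one and a starting sequence converging to $x$, and write $a_n:=2|E(\mathfrak C_n)|\lambda_{\mathfrak C}^n$. Rate-one continuous-time simple random walk is the jump chain $S^n$ run with independent mean-one exponential holding times $\xi_i$. Accelerated by $a_n$, it is therefore
\[
 \widetilde Y_n(t):=S^n_{N(a_nt)},\qquad N(s):=\max\Bigl\{k:\sum_{i\le k}\xi_i\le s\Bigr\}.
\]
This is exactly the process $Y_n$ introduced in the proof of Theorem~\ref{thm:natural-process}.

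In that proof, the Dirichlet form of $Y_n$ was identified through the generator $a_n(\mathbf P_n-\mathbf I)$ with respect to the degree measure. Croydon's theorem \cite[Theorem~1.2 and Theorem~7.1]{Croydon2018} then gave weak $J_1$ convergence of $Y_n$ to $B$ started at $x$ in the common chemical realisation. The discrete walk was deduced from this only afterwards, via the clock comparison. So the main step is to point out that the first part of that argument already proves the corollary, with no clock correction. The only thing to check is that the process defined by the form $\lambda_{\mathfrak C}^n\mathcal E_n^0$ and the normalised degree measure is the constant-speed walk. If instead the counting measure were the natural one, the walk would have vertex-dependent rates. Since $\mathcal E_n^0$ counts each undirected edge once, the generator computation $a_n(\mathbf P_n-\mathbf I)$ shows that every vertex has total jump rate $a_n$. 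That is precisely the accelerated rate-one walk, with no spatially varying clock.

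If one prefers to route the argument through the discrete statement instead, I would reverse the clock comparison. By the law of large numbers for $(\xi_i)$ and $a_n\to\infty$, the quantity $N(a_nt)/a_n\to t$ uniformly on compacts in probability. Hence $\widetilde Y_n(t)=S^n_{\lfloor a_n\,(N(a_nt)/a_n)\rfloor}$ is the discrete rescaled walk composed with a time change converging uniformly to the identity. Because the limit $B$ has continuous paths, composition with such a time change is continuous at the limit in $J_1$, and a Skorokhod coupling finishes the argument.

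I expect the main obstacle to be bookkeeping rather than analysis. It lies in confirming that the normalisation of the measure and of the energy yields rate exactly $a_n$ and not $a_n/2$. It also requires checking that quenched convergence holds uniformly over all starting sequences on the single environment event. Both points are already settled inside the proof of Theorem~\ref{thm:natural-process}.
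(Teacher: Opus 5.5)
Your main argument is the paper's proof. The Poissonised walk $Y_n$ in the proof of Theorem~\ref{thm:natural-process} is exactly the rate-one walk accelerated by $2|E(\mathfrak C_n)|\lambda_{\mathfrak C}^n$: it has generator $2|E(\mathfrak C_n)|\lambda_{\mathfrak C}^n(\mathbf P_n-\mathbf I)$, and its form is $\lambda_{\mathfrak C}^n\mathcal E_n^0$ on the degree probability measure. Its convergence via Croydon's theorem is obtained there before the exponential holding times are removed. Your alternative route, composing the discrete-time statement with the time change $N(a_nt)/a_n\to t$, is also valid: it uses only the theorem's statement and the continuity of the limiting paths.
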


\begin{proof}
The Poissonised walk in the proof of Theorem~\ref{thm:natural-process} is the process associated with the rescaled energy and the degree probability measure. Its convergence is obtained there before removing the exponential holding times.
\end{proof}

\begin{corollary}\label{cor:variable-speed-walk}
Under the hypotheses of Theorem~\ref{thm:natural-process}, the continuous-time walk with rate one along each incident edge, accelerated by $|V(\mathfrak C_n)|\lambda_{\mathfrak C}^n$, has the same conditional diffusion limit.
\end{corollary}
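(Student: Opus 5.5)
The plan is to deduce Corollary~\ref{cor:variable-speed-walk} from Corollary~\ref{cor:continuous-time-walk} by a change of speed measure, in the same way that Theorem~\ref{thm:natural-process} passed from the Poissonised walk to the discrete walk.

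\textbf{The process as a form--measure pair.} The variable-speed walk with unit rate per incident edge has generator $(\mathcal L f)(x)=\sum_{y\sim x}(f(y)-f(x))$. This operator is symmetric with respect to counting measure. Direct summation gives
\[
 \langle -\mathcal L f,f\rangle_{\#}=\mathcal E_n^0(f).
\]
Normalise counting measure to the uniform probability measure $\frac{1}{|V(\mathfrak C_n)|}\sum_{x}\delta_x$ and rescale the energy to $\lambda_{\mathfrak C}^n\mathcal E_n^0$. The associated process then has generator $|V(\mathfrak C_n)|\lambda_{\mathfrak C}^n\mathcal L$. This is exactly the accelerated variable-speed walk in the statement.

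\textbf{Applying Croydon's theorem.} Three inputs are needed.
\begin{itemize}
\item The rescaled resistance metric converges: $(V(\mathfrak C_n),\lambda_{\mathfrak C}^{-n}\Reff_{\mathfrak C_n})$ converges by Theorem~\ref{thm:resistance-metric}.
\item The measure converges: Lemma~\ref{lem:degree-measure} shows that the uniform vertex measure converges to the same limit $\mu$ as the degree measure, with the same component correspondences.
\item The spaces are compact with uniformly bounded diameters, so the escape condition is again automatic.
\end{itemize}
With these, \cite[Theorem~1.2]{Croydon2018} gives weak convergence of the associated process laws to the process of $(\mathcal E,\mathcal D)$ with speed measure $\mu$, which is $B$. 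The transfer to the common chemical realisation then goes through verbatim from the proof of Theorem~\ref{thm:natural-process} via \cite[Theorem~7.1]{Croydon2018}. It uses the same environment event and the same treatment of convergent starting sequences.

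\textbf{Main obstacle.} The only step needing care is the measure convergence, that is, checking that Lemma~\ref{lem:degree-measure} really delivers Prokhorov convergence of the uniform vertex measure in the common realisation used for the resistance convergence. High-degree junction vertices could in principle carry non-negligible mass. However, the first-birth ownership allocation in that lemma assigns each such vertex to a single component. The discrepancy involves only finitely many frontier vertices per generation and so vanishes after normalisation by $|V(\mathfrak C_n)|$. No holding-time or clock argument is needed here, since the variable-speed walk is itself the Markov process associated with the form--measure pair.
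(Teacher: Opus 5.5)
Your proposal is correct and follows essentially the same route as the paper. You identify the rate-one-per-edge generator as the process of the pair consisting of $\lambda_{\mathfrak C}^n\mathcal E_n^0$ and the uniform vertex probability measure, so acceleration by $|V(\mathfrak C_n)|\lambda_{\mathfrak C}^n$ is exactly right, and you then rerun the measured-resistance convergence (Croydon's theorem with Lemma~\ref{lem:degree-measure}) and the chemical-realisation transfer from Theorem~\ref{thm:natural-process}. Your opening description as a deduction from Corollary~\ref{cor:continuous-time-walk} is slightly misleading: what you actually do, as the paper does, is apply Croydon's theorem afresh with the new measure, with no clock argument needed.
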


\begin{proof}
The generator is $L_nf(x)=\sum_{y\sim x}(f(y)-f(x))$. Its energy for the uniform vertex measure is $\mathcal E_n^0/|V(\mathfrak C_n)|$. \textcolor{black}{Acceleration by $|V(\mathfrak C_n)|\lambda_{\mathfrak C}^n$ gives $\lambda_{\mathfrak C}^n\mathcal E_n^0$,} and the same measured-resistance convergence and spatial comparison used in Theorem~\ref{thm:natural-process} apply.
\end{proof}

\begin{corollary}\label{cor:deterministic-clock}
Under deterministic acceleration $(\rho(\mathbf M_{\mathfrak C})\lambda_{\mathfrak C})^n$, simple random walk converges conditionally to $(B_{t/(2v_cW_c)})_{t\ge0}$, equivalently to the resistance-form process with speed measure $2v_cW_c\mu$.
For the variable-speed walk, replace $2v_cW_c$ by $W_c$.
\end{corollary}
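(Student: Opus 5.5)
The plan is to reduce Corollary~\ref{cor:deterministic-clock} to Theorem~\ref{thm:natural-process} and Corollary~\ref{cor:variable-speed-walk} by a deterministic time change on the fixed environment event. The random clocks used there are $a_n:=2|E(\mathfrak C_n)|\lambda_{\mathfrak C}^n$ and $b_n:=|V(\mathfrak C_n)|\lambda_{\mathfrak C}^n$. By Lemma~\ref{lem:degree-measure} and Lemma~\ref{lem:vertex-mass-limit}, almost surely
\[
 \frac{a_n}{(\rho(\mathbf M_{\mathfrak C})\lambda_{\mathfrak C})^n}\to 2v_cW_c,
 \qquad
 \frac{b_n}{(\rho(\mathbf M_{\mathfrak C})\lambda_{\mathfrak C})^n}\to W_c,
\]
and both limits are strictly positive and finite because $W_c>0$ (Lemma~\ref{lem:population-limit}) and $v_c>0$ (the left Perron vector is positive). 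We work on the intersection of this event with the environment event of Theorem~\ref{thm:natural-process}.

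On that event, write $D_n:=(\rho(\mathbf M_{\mathfrak C})\lambda_{\mathfrak C})^n$ and $\kappa_n:=D_n/a_n\to\kappa:=1/(2v_cW_c)$. Then
\[
 S^n_{\lfloor D_nt\rfloor}=S^n_{\lfloor a_n\kappa_nt\rfloor}=\widetilde S^n(\kappa_nt),
\]
where $\widetilde S^n(s):=S^n_{\lfloor a_ns\rfloor}$. Theorem~\ref{thm:natural-process} gives weak $J_1$ convergence of $\widetilde S^n$ to $B$, whose paths are continuous. The key step is continuity of the time-change map $(\omega,c)\mapsto\omega(c\,\cdot)$ on $D([0,\infty))\times(0,\infty)$ at pairs whose first coordinate is a continuous path. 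At such points $J_1$ convergence is equivalent to local uniform convergence, and $\omega_n(c_n\cdot)\to\omega(c\,\cdot)$ uniformly on compacts follows from uniform continuity of $\omega$ on compacts. Since $\kappa_n$ is deterministic given the environment, the continuous mapping theorem (applied via Skorokhod representation) yields convergence of $S^n_{\lfloor D_n\cdot\rfloor}$ to $(B_{\kappa t})_{t\ge0}=(B_{t/(2v_cW_c)})_{t\ge0}$, uniformly over convergent starting sequences.

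For the equivalent description, the process of the form $(\mathcal E,\mathcal D)$ on $L^2(c\mu)$ has generator $c^{-1}$ times the generator for $\mu$, since $\mathcal E(f,g)=-\langle L f,g\rangle_\mu=-\langle c^{-1}Lf,g\rangle_{c\mu}$. That process is therefore $B_{t/c}$, and taking $c=2v_cW_c$ gives the stated speed-measure form. The variable-speed case is identical, using Corollary~\ref{cor:variable-speed-walk} with clock $b_n$ and ratio limit $W_c$. The continuous-time walk causes no floor-function issue, since the time change acts on a càdlàg process directly.

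The main obstacle is bookkeeping rather than analysis: we must check that the time-change ratio converges on the same probability-one event used for the quenched statement. This holds because all the almost-sure mass limits were already included in the environment event fixed in the proof of Theorem~\ref{thm:natural-process}.
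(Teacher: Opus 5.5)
Your proposal is correct and follows the paper's proof. The paper likewise notes that the ratios of $(\rho(\mathbf M_{\mathfrak C})\lambda_{\mathfrak C})^n$ to the exact clocks $2|E(\mathfrak C_n)|\lambda_{\mathfrak C}^n$ and $|V(\mathfrak C_n)|\lambda_{\mathfrak C}^n$ converge almost surely to $(2v_cW_c)^{-1}$ and $W_c^{-1}$, applies these as linear time changes to the limits of Theorem~\ref{thm:natural-process} and Corollary~\ref{cor:variable-speed-walk}, and uses that multiplying the speed measure by a positive constant divides the generator by that constant; your $J_1$ continuity argument at continuous limit paths spells out a step the paper leaves implicit.
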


\begin{proof}
The ratios of deterministic acceleration to the exact clocks tend to $(2v_cW_c)^{-1}$ and $W_c^{-1}$.
These positive limits give convergent linear changes of time.
Multiplying a speed measure by a positive constant divides the generator by that constant.
\end{proof}

The deterministic exponential clock is $(\rho(\mathbf M_{\mathfrak C})\lambda_{\mathfrak C})^n$, with a random scalar speed unless the finite-volume clock is used.
This clock statement does not identify an all-scale expected exit-time dimension.
The construction uses microscopic unit resistances throughout.
It should be distinguished from the resistance-normalised diffusion of \cite{hambly2010diffusion}, for which the local electrical normalisation is part of the model.

\subsection*{Spectral dimension}\label{sec:spectral}

The heat kernel below is relative to the probability speed measure $\mu$.

\begin{definition}\label{def:spectral-dimension}
Following the diffusion notation of IGS~I, at $x\in \mathfrak C$ the pointwise spectral dimension is
\[
 \dimspec^{(\mathrm N)}(\mathfrak C:x):=\lim_{t\downarrow0}
 \frac{2\log p_t(x,x)}{\log(1/t)},
\]
when this limit exists.
The resistance dimension relative to the chemical scale is
$\dimresis(\mathfrak C,d_{\mathrm{ch}}):=\log\lambda_{\mathfrak C}/\log\ell_{\mathfrak C}^{\mathrm{ch}}$.
\end{definition}

The auxiliary heat-kernel estimates are proved in Appendix~\ref{app:spectral}.
In particular, the heat kernel is jointly continuous by Lemma~\ref{lem:heat-kernel-continuity}, $\mu(V_*)=0$ by Lemma~\ref{lem:skeleton-zero-mass}, and at $\mu$-almost every point the containing generation-$k$ component has mass $\rho(\mathbf M_{\mathfrak C})^{-k}e^{o(k)}$ and intrinsic resistance diameter at most $\lambda_{\mathfrak C}^{-k}e^{o(k)}$.
The collar estimate also provides smaller cells separated from its planted boundary.
These facts give matching resolvent bounds at the natural clock scale.

\begin{theorem}\label{thm:heat-kernels}\label{lem:spectral-typical}
Almost surely, for $\mu$-almost every $x\in \mathfrak C$,
\begin{equation}\label{eq:spectral-dimension-form}
 \dimspec^{(\mathrm N)}(\mathfrak C:x)=\frac{2\log\rho(\mathbf M_{\mathfrak C})}{\log(\rho(\mathbf M_{\mathfrak C})\lambda_{\mathfrak C})}
 =\frac{2\dimminkowski(\mathfrak C,d_{\mathrm{ch}})}{\dimminkowski(\mathfrak C,d_{\mathrm{ch}})+\dimresis(\mathfrak C,d_{\mathrm{ch}})}.
\end{equation}
\end{theorem}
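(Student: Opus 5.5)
The second equality in \eqref{eq:spectral-dimension-form} is algebra. Dividing numerator and denominator by $\log\ell_{\mathfrak C}^{\mathrm{ch}}$, and using Theorem~\ref{prop:chemical-minkowski-dimension} together with the definition of $\dimresis$, turns $2\log\rho/\log(\rho\lambda_{\mathfrak C})$ into $2\dimminkowski/(\dimminkowski+\dimresis)$. The real content is the first equality. The plan is to show that for $\mu$-a.e.\ $x$ and times $t_k:=(\rho(\mathbf M_{\mathfrak C})\lambda_{\mathfrak C})^{-k}$,
\[
 p_{t_k}(x,x)=\rho(\mathbf M_{\mathfrak C})^{k}e^{o(k)}.
\]
Monotonicity of $t\mapsto p_t(x,x)$ then interpolates between consecutive $t_k$, since $t_{k}/t_{k+1}$ is a fixed constant.

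For the upper bound I will use the standard resistance-form argument. Let $A_k(x)$ be the generation-$k$ component containing $x$; it is well defined for $\mu$-a.e.\ $x$ because $\mu(V_*)=0$ by Lemma~\ref{lem:skeleton-zero-mass}. The typical-point facts recalled before the theorem give
\[
 \mu(A_k(x))=\rho^{-k}e^{o(k)},\qquad \diam_{R_*}(A_k(x))\le\lambda_{\mathfrak C}^{-k}e^{o(k)}.
\]
For $f=p_t(x,\cdot)$, the resistance inequality $|f(x)-f(y)|^2\le R_*(x,y)\mathcal E(f)$ applied on $A_k(x)$ yields
\[
 p_t(x,x)\le \frac{2}{\mu(A_k(x))}+2\,\lambda^{-k}e^{o(k)}\,\mathcal E(p_{t/2}(x,\cdot)),
\]
using $\int_{A_k}f\,d\mu\le1$. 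I will combine this with the spectral estimate $\mathcal E(p_{t/2}(x,\cdot))\le p_{t/2}(x,x)/t$ (from $-\tfrac{d}{dt}p_t(x,x)=2\mathcal E(p_{t/2})$ and monotonicity), evaluated at $t=t_k e^{\varepsilon k}$. Then the second term is $e^{-\varepsilon k+o(k)}\rho^{k}$ times $p_{t/2}$, which absorbs after a Kumagai-type iteration, since $p_t(x,x)$ is a priori at most polynomial in $1/t$ because of the global bound $p_t\le C/t$ on compact resistance spaces. The result is $p_{t_k}(x,x)\le\rho^{k}e^{o(k)}$.

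For the lower bound I will use exit times. The collar estimate supplies a smaller cell $A'\ni x$, of mass and resistance scale comparable to $A_k$ up to $e^{o(k)}$, separated from $\partial A_k(x)$ by resistance at least $\lambda^{-k}e^{-o(k)}$. Let $\tau$ be the exit time from $A_k(x)$, where exit must pass through the planted boundary by Lemma~\ref{lem:cell-localisation}. The Green function for the form killed on $\partial A_k$ gives
\[
 E_x\tau\ge g(x,x)\,\mu(A')\gtrsim \lambda^{-k}\rho^{-k}e^{-o(k)},
\]
and $E_y\tau\le R\,\mu\le t_ke^{o(k)}$, so $P_x(\tau>ct_ke^{-o(k)})\ge e^{-o(k)}$. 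Cauchy--Schwarz then gives
\[
 P_x\bigl(B_t\in A_k(x)\bigr)^2\le \mu(A_k(x))\,p_{2t}(x,x),
\]
and hence $p_{2t}(x,x)\ge\rho^ke^{-o(k)}$ at $t\approx t_k e^{-o(k)}$. Joint continuity from Lemma~\ref{lem:heat-kernel-continuity} makes the pointwise diagonal values meaningful.

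The main obstacle is the $e^{o(k)}$ control at $\mu$-typical points: the bounds on mass and intrinsic resistance diameter must hold for all large $k$ at once, together with a collar of comparable size. The mesh lemmas give all moments for the diameters, and the mass follows from the martingale limits. A $\mu$-typical point is, however, a size-biased sample, so I would pass to the Peyrière (size-biased) measure on rays. Under it, the successive descendant ratios are stationary with all moments and inverse moments (Lemma~\ref{lem:chemical-field-integrability} and Lemma~\ref{lem:electrical-uniform-moments}). Borel--Cantelli with polynomial tails then gives the $e^{\pm\varepsilon k}$ bounds eventually, and the result transfers back to $\mu$-a.e.\ $x$ almost surely.
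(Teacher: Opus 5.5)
Your second equality, the choice $t_k=(\rho\lambda)^{-k}$ (with $\rho=\rho(\mathbf M_{\mathfrak C})$, $\lambda=\lambda_{\mathfrak C}$) and the monotone interpolation are fine. Your route also genuinely differs from the paper's, which never uses exit times. The paper bounds the resolvent $G_s(x,x)$ above by Lemma~\ref{lem:resolvent-component-bound} and below by inserting a cutoff function into \eqref{eq:resolvent-variational}. It then converts the two resolvent exponents into the heat-kernel exponent with the Tauberian Lemma~\ref{lem:spectral-resolvent}.

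\textbf{Gap in the lower bound.} The step $E_x\tau\ge g(x,x)\mu(A')$ is false, because the killed Green function is maximal on the diagonal: $g_A(x,y)=g_A(x,x)P_y(\sigma_x<\tau)\le g_A(x,x)$, where $\sigma_x$ is the hitting time of $x$.
\begin{itemize}
\item The correct tool is $g_A(x,y)\ge g_A(x,x)-R_*(x,y)$ with $g_A(x,x)=R_*(x,\partial A)$. So you need a cell $A'\ni x$ whose resistance diameter is at most $\tfrac12R_*(x,\partial A)$.
\item Lemma~\ref{lem:spectral-collars} gives only combinatorial separation: the generation-$(k+n)$ cell $D\ni x$, with $n=\lceil\eta k\rceil$, misses $\partial A$. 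The resistance separation must be proved. The available argument is exactly the paper's cutoff bound \eqref{eq:typical-cutoff}: at most $m^n$ refined trace edges of conductance $\lambda^{k+n}/(c_{\mathrm R}X_u)$, so $R_*(x,\partial A)\ge C^{-1}\lambda^{-k-n}m^{-n}e^{-\varepsilon(k+n)}$.
\item This is smaller than the available diameter bound $C\lambda^{-k-n}e^{\varepsilon(k+n)}$ for $D$ itself. So the known estimates do not let $D$ play the role of $A'$. You must descend a further $O((\eta+\varepsilon)k)$ generations, use the typical mass lower bound there, and let $\eta,\varepsilon\downarrow0$.
\end{itemize}
With that repair, $E_x\tau\ge\lambda^{-k}\rho^{-k}e^{-O((\eta+\varepsilon)k)}$, and your Markov-property bound on $P_x(\tau>t)$ and the Cauchy--Schwarz step go through. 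The paper's variational route needs none of this. Taking $f=1$ on $D$ gives $G_s(x,x)\ge1/(\mathcal E(f)+s\mu(A))$ directly, with no mass lower bound for a subcell and no diameter-versus-separation comparison.

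\textbf{Upper bound.} Your display drops a square root. Averaging $f(x)\le f(y)+\sqrt{R_*(x,y)\mathcal E(f)}$ over $y\in A$ with $f=p_t(x,\cdot)$ and $\int f\,d\mu\le1$ gives
\[
p_t(x,x)\le\mu(A)^{-1}+\bigl(\diam_{R_*}(A)\,\mathcal E(p_t(x,\cdot))\bigr)^{1/2}.
\]
As you wrote it, the energy term scales like the square of the heat kernel and cannot be absorbed by any iteration. With the square root and $\mathcal E(p_t(x,\cdot))\le p_t(x,x)/(et)$, you get $p_t(x,x)\le\max\{2/\mu(A),\,4\diam_{R_*}(A)/(et)\}$ immediately at $t=t_k$. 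This is the heat-kernel analogue of Lemma~\ref{lem:resolvent-component-bound}; no time shift or iteration is needed.

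\textbf{Typicality.} The Peyri\`ere measure is unnecessary, and your justification cites the wrong inputs. The chemical field plays no role in the heat kernel, and no inverse moment of the branching mass is established. The paper obtains the upper bounds on $Y_A$, $H_A$ and $X_u^{-1}$ uniformly over the at most $2m^k$ frontier components, using polynomial moments and Borel--Cantelli (Lemma~\ref{lem:spectral-moments}). It obtains the lower mass bound and collar avoidance at $\mu$-almost every point from first-moment bounds on $\E[W_c\mu(\cdot)]$ (Lemma~\ref{lem:spectral-collars}), which is size-biasing at the level of expectations.
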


\begin{proof}

At the points of Lemma~\ref{lem:spectral-collars}, the containing component has mass $\rho(\mathbf M_{\mathfrak C})^{-k}e^{o(k)}$ and intrinsic resistance diameter at most $\lambda_{\mathfrak C}^{-k}e^{o(k)}$.
Equation~\eqref{eq:resolvent-component} gives
$G_{(\rho(\mathbf M_{\mathfrak C})\lambda_{\mathfrak C})^k}(x,x)\le\lambda_{\mathfrak C}^{-k}e^{o(k)}$.

Fix rational $\eta>0$ and $n=\lceil\eta k\rceil$.
Eventually the generation-$(k+n)$ component $D$ containing $x$ does not meet the boundary of $A=A_k(x)$.
On the refined skeleton in $A$, give the roots of $D$ value one and all other vertices value zero.
Extend harmonically within the descendant components and by zero outside $A$.
The finite trace construction gives $f\in\mathcal D$, with $0\le f\le1$ and $f(x)=1$.
For a side of $b$, the other side has its separate boundary prescription on the shared continuation.
There are at most $m^n$ connected refined trace edges inside the parent socket.
Therefore
\begin{equation}\label{eq:typical-cutoff}
 \mathcal E(f)\le
 \frac{m^n\lambda_{\mathfrak C}^{k+n}}{c_{\mathrm R}}
 \max_{\substack{|u|=k+n\\T_u=c}}X_u^{-1},
 \qquad \|f\|_2^2\le\mu(A).
\end{equation}
Equation~\eqref{eq:spectral-maxima} bounds the energy by
$C_{\omega,\varepsilon}\lambda_{\mathfrak C}^k
 e^{n\log(m\lambda_{\mathfrak C})+\varepsilon(k+n)}$,
whereas $(\rho(\mathbf M_{\mathfrak C})\lambda_{\mathfrak C})^k\|f\|_2^2\le\lambda_{\mathfrak C}^ke^{o(k)}$.

Insert $f$ in \eqref{eq:resolvent-variational}.
The exponential loss per generation is at most
$\eta\log(m\lambda_{\mathfrak C})+\varepsilon(1+\eta)$.
Take common events for rational parameters and then let $\eta,\varepsilon\downarrow0$.
This proves the lower bound $\lambda_{\mathfrak C}^{-k}e^{-o(k)}$.
Monotonicity interpolates between successive $(\rho(\mathbf M_{\mathfrak C})\lambda_{\mathfrak C})^k$.
Since $\lambda_{\mathfrak C}^{-k}=((\rho(\mathbf M_{\mathfrak C})\lambda_{\mathfrak C})^k)^{\log\rho(\mathbf M_{\mathfrak C})/\log(\rho(\mathbf M_{\mathfrak C})\lambda_{\mathfrak C})-1}$ and $0<\log\rho(\mathbf M_{\mathfrak C})/\log(\rho(\mathbf M_{\mathfrak C})\lambda_{\mathfrak C})<1$, Lemma~\ref{lem:spectral-resolvent} applies.

The resulting diagonal heat-kernel exponent gives the stated spectral dimension by Definition~\ref{def:spectral-dimension}.

\end{proof}

\begin{example}\label{ex:spectral-threshold}
For the fivefold subdivided diamond, the two terminal paths each have length ten.
Its fully open effective resistance is $5$, so Rayleigh monotonicity at every substitution depth gives $\lambda_{\mathfrak C}\ge5$.
The rule has twenty edges, whence $\rho(\mathbf M_{\mathfrak C})\le20$.
The spectral formula, increasing in $\rho(\mathbf M_{\mathfrak C})$ and decreasing in $\lambda_{\mathfrak C}>1$, yields
\[
 \dimspec^{(\mathrm N)}(\mathfrak C)\le\frac{2\log20}{\log20+\log5}
 =\frac{\log20}{\log10}\approx1.3010<\frac43.
\]
This calculation already gives a strict non-AO example without computing the random electrical eigenlaw.
\end{example}

The time exponent relative to chemical scale is
$\log(\rho(\mathbf M_{\mathfrak C})\lambda_{\mathfrak C})/\log\ell_{\mathfrak C}^{\mathrm{ch}}=\dimminkowski(\mathfrak C,d_{\mathrm{ch}})+\dimresis(\mathfrak C,d_{\mathrm{ch}})$.
Identification with an exponent defined through expected exits from arbitrary chemical balls would require further exit-time estimates and is not used here.
Multiplying the speed measure by a positive environment-dependent constant $c_\omega$ changes the kernel to
$p_t^{(c_\omega)}(x,y)=c_\omega^{-1}p_{t/c_\omega}(x,y)$, so the scalar time changes in Corollary~\ref{cor:deterministic-clock} do not alter the spectral dimension.
Our assertion concerns typical points of the compact limiting diffusion.
It neither gives a value at every finite-born vertex nor identifies a long-time return exponent on an infinite microscopic graph from fixed-time convergence alone.

\section{Alexander--Orbach conjecture on hierarchical lattices}\label{sec:ao}
Can the Alexander--Orbach conjecture hold on hierarchical lattices?
We address this question using the results of \textcolor{black}{Sections~\ref{sec:geometry}--\ref{sec:diffusion}}.

In the plane of chemical Minkowski and resistance dimensions, we prove that the typical spectral dimension differs from $4/3$ outside $(4/3,2)\times(2/3,1)$.
Equality remains possible only on an open segment of the Alexander--Orbach line; we conjecture that it never occurs on any classical EIGS hierarchical lattice.

Throughout this section, $\dimminkowski(\mathfrak C,d_{\mathrm{ch}})$ is the chemical Minkowski dimension of the complete critical cluster, and $\dimresis(\mathfrak C,d_{\mathrm{ch}})$ is the resistance dimension relative to the same chemical scale.
Theorem~\ref{thm:heat-kernels} gives
\begin{equation}\label{eq:ao-equality}
 \dimspec^{(\mathrm N)}(\mathfrak C)=\frac43
 \quad\Longleftrightarrow\quad
 \rho(\mathbf M_{\mathfrak C})=\lambda_{\mathfrak C}^2
 \quad\Longleftrightarrow\quad
 \dimminkowski(\mathfrak C,d_{\mathrm{ch}})=2\dimresis(\mathfrak C,d_{\mathrm{ch}}).
\end{equation}
Here and below $\dimspec^{(\mathrm N)}(\mathfrak C)$ denotes the deterministic value at $\mu$-almost every point, on an event of probability one.

The elementary inequality $\Reff_{\mathfrak C_{n}}(\mathfrak v_+,\mathfrak v_-)\le d_{\mathfrak C_{n}}(\mathfrak v_+,\mathfrak v_-)$ gives only a non-strict comparison of growth factors.
To exclude the boundary of the equality region, we first show why a local electrical improvement persists at the natural exponential scale.
For a nonnegative integrable function $Z$ of the original innovation tree, define
$\mathcal T Z:=F_H(Z_e:e\in E(H))$, where each $Z_e$ is the same function evaluated on the corresponding descendant tree and $F_H$ is the electrical functional.
Thus $\mathcal T X=\lambda_{\mathfrak C}X$ for the natural field of Section~\ref{sec:resistance}.

\begin{lemma}\label{lem:critical-supersolution}
Suppose $Z\ge0$ is measurable with respect to the innovation tree, $0<\E Z<\infty$, and $\mathcal T Z\le aZ$ almost surely for a deterministic $a>0$.
Then $a\ge\lambda_{\mathfrak C}$.
If equality holds, $Z$ is a deterministic positive multiple of $X$.
\end{lemma}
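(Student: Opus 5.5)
The approach is to iterate the supersolution inequality and compare it with the natural resistance sequence through the sandwich and conductance-concavity machinery of Section~\ref{sec:resistance}.

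\textbf{Step 1: iterate the inequality.} Since $F_H$ is increasing and positively homogeneous, $\mathcal T$ is monotone and $1$-homogeneous. Iterating $\mathcal T Z\le aZ$ gives $\mathcal T^n Z\le a^nZ$ almost surely. Here $\mathcal T^nZ=F_{\mathcal H_n}(Z_w:|w|=n)$, and the leaf values $Z_w$ are evaluated on disjoint descendant trees. Conditional on $\mathcal F_n$, these leaf values are therefore independent copies of $Z$.

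\textbf{Step 2: the lower bound $a\ge\lambda_{\mathfrak C}$.} Take conditional expectations of $\mathcal T^nZ$ given $\mathcal F_n$.
\begin{itemize}
\item Concavity of $F_{\mathcal H_n}$ in resistances would push the wrong way. Instead, apply the concavity of effective conductance used in Lemma~\ref{lem:electrical-sandwich}, which gives a lower bound on $\E[\mathcal T^nZ\mid\mathcal F_n]$.
\item This argument needs $Z>0$ almost surely with $\E Z^{-1}<\infty$. I would first establish this by the small-edge argument of Lemma~\ref{lem:electrical-eigenlaw}. Since $\mathcal T Z\le aZ$, small values of $Z$ force small values along a crossing of the core. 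This yields $q=\Pp(Z\le t)$ satisfying $\Pp(Z\le t)\le g(\Pp(Z\le mat))$, and hence $\Pp(Z=0)=0$ via $g(u)<u$, together with superexponential decay near zero.
\item With this in hand, $\E[\mathcal T^nZ\mid\mathcal F_n]\ge \Reff_{\mathfrak C_n}(\mathfrak v_+,\mathfrak v_-)/\E Z^{-1}$.
\item Taking expectations gives $a^n\E Z\ge \E\Reff_{\mathfrak C_n}/\E Z^{-1}$. Theorem~\ref{thm:resistance-exponent} (the limit of $\tfrac1n\log\E\Reff$) then forces $a\ge\lambda_{\mathfrak C}$.
\end{itemize}

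\textbf{Step 3: the equality case $a=\lambda_{\mathfrak C}$.} Put $D:=\lambda_{\mathfrak C}Z-\mathcal TZ\ge0$; the goal is to show $D=0$.
\begin{itemize}
\item Taking expectations in the previous bound gives $\E[\lambda_{\mathfrak C}^{-n}\mathcal T^nZ]$ bounded below by $\E Z^{-1}$ times a constant. The sequence is also nonincreasing, since $\mathcal T^{n+1}Z\le\lambda_{\mathfrak C}\mathcal T^nZ$ by monotonicity.
\item Write $h_n:=\lambda_{\mathfrak C}^{-n}\mathcal T^nZ$, a pathwise decreasing sequence. Let $h:=\lim h_n\ge0$.
\item Using the sandwich bounds, $h_n\ge \E[\cdot]$-type lower bounds of the form $\lambda_{\mathfrak C}^{-n}\Reff_{\mathfrak C_n}/\E Z^{-1}$ hold conditionally. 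Combined with Theorem~\ref{thm:electrical-amplitudes}, this should show $h\ge cX$ for some $c>0$.
\item Conversely, an upper sandwich via conditional concavity, $\E[h_n\mid\mathcal F_n]\le \E Z\cdot\lambda_{\mathfrak C}^{-n}\Reff_{\mathfrak C_n}$, gives $h\le C X$ after martingale convergence. So $h$ is a positive fixed point, $\mathcal T h=\lambda_{\mathfrak C}h$, with $h\le Z$.
\item The uniqueness Theorem~\ref{thm:electrical-uniqueness}, applied through its endogenous-field form (the proof of Lemma~\ref{lem:electrical-compactness}), identifies $h=\kappa X$. Some moment control is needed here; lower moments come from $h\ge cX$ and upper moments from $h\le CX$.
\item Finally $Z-h=\sum_{n}\lambda_{\mathfrak C}^{-n}(\lambda_{\mathfrak C}^{-1}\cdot)$-telescoping of $D$ terms, and its expectation is $\E Z-\kappa$.
\end{itemize}

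\textbf{Main obstacle: forcing $D=0$.} I would argue as in the deficit argument of Lemma~\ref{lem:electrical-compactness}. Apply superadditivity to $Z=\kappa X+(Z-\kappa X)$; then $\mathcal T Z\ge\kappa\lambda_{\mathfrak C}X+\mathcal T(Z-\kappa X)$, so $\mathcal T(Z-\kappa X)\le\lambda_{\mathfrak C}(Z-\kappa X)$. The difference $Z-\kappa X$ is therefore itself a nonnegative supersolution with eigenvalue $\lambda_{\mathfrak C}$. If it had positive mean, the same steps would extract a further positive multiple $\kappa'X\le Z-\kappa X$. But $h$ was the full limit of $h_n$, and $\lambda_{\mathfrak C}^{-n}\mathcal T^n$ applied to $Z-\kappa X$ converges to $h-\kappa X=0$, since $\mathcal T^n$ is superadditive. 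This gives a contradiction, so $Z=\kappa X$ almost surely.
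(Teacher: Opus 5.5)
Your Step~2 is the paper's own argument: thinning gives $\Pp(Z=0)=0$, the small-edge iteration gives inverse moments, and conductance concavity with Jensen gives $\E[\mathcal T^nZ\mid\mathcal F_n]\ge\Reff_{\mathfrak C_n}(\mathfrak v_+,\mathfrak v_-)/\E Z^{-1}$. One small correction: take the multiplier as $\max\{2,ma\}$ rather than $ma$, since you do not yet know $ma>1$.

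The equality case is where you genuinely differ from the paper. The paper never forms a limit of iterates. It applies the martingale argument to $Z$ itself to get $Z\ge X/\E Z^{-1}$, and sets $c:=\esssup(X/Z)<\infty$. By superadditivity, $cZ-X\ge0$ is again a $\lambda_{\mathfrak C}$-supersolution. If it were nonzero, the bound $cZ-X\ge X/\E(cZ-X)^{-1}$ would force $\esssup(X/Z)<c$, a contradiction.

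You instead take the pathwise decreasing limit $h=\lim_n\lambda_{\mathfrak C}^{-n}\mathcal T^nZ$. You show it is an exact fixed point with $c_1X\le h\le C_1X$ and identify $h=\kappa X$ via Theorem~\ref{thm:electrical-uniqueness}. You then dispose of $W=Z-\kappa X$ by combining two facts:
\begin{itemize}
\item $0\le\lambda_{\mathfrak C}^{-n}\mathcal T^nW\le\min\{W,h_n-\kappa X\}\to0$, so these expectations tend to zero;
\item $\E[\lambda_{\mathfrak C}^{-n}\mathcal T^nW]\ge1/\E W^{-1}>0$ whenever $\E W>0$.
\end{itemize}
This is valid and gives slightly more: a decomposition of every critical supersolution, with $\lim_n\lambda_{\mathfrak C}^{-n}\mathcal T^nZ=\kappa X$. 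But it is heavier, because it imports the uniqueness theorem and with it the omega-limit and amplitude machinery. The esssup trick is self-contained, needs no moment bounds on a fixed point, and in fact reproves uniqueness among integrable tree-measurable solutions.

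Two steps you only gesture at need to be written out.
\begin{itemize}
\item The pathwise bound $h\ge c_1X$ does not follow from the conditional bound on $\E[h_n\mid\mathcal F_n]$ alone. Use $\E(h_n-h)\to0$, by monotone convergence with $h_n\le Z$, so that $\E[h\mid\mathcal F_n]\ge\lambda_{\mathfrak C}^{-n}\Reff_{\mathfrak C_n}(\mathfrak v_+,\mathfrak v_-)/\E Z^{-1}-\E[h_n-h\mid\mathcal F_n]$. Then pass to an almost surely convergent subsequence using Theorem~\ref{thm:electrical-amplitudes}.
\item Uniqueness of the law yields $h=\kappa X$ pathwise only after you note that $\E[h\mid\mathcal F_k]=\E[\kappa X\mid\mathcal F_k]$ for every $k$. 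Both equal $\lambda_{\mathfrak C}^{-k}F_{\mathcal H_k}$ averaged over i.i.d.\ leaf inputs of the same law; then apply martingale convergence.
\end{itemize}
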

\begin{proof}
Independent thinning gives
$\Pp(Z=0)\le\Pp(\mathcal T Z=0)=g(\Pp(Z=0))$.
Since $\E Z>0$ and $g(u)<u$ on $(0,1)$, we have $\Pp(Z=0)=0$.
The small-resistance argument in Lemma~\ref{lem:electrical-eigenlaw} also gives
\[
 \Pp(Z\le t)\le g\bigl(\Pp(Z\le\max\{2,ma\}t)\bigr).
\]
Iteration, starting at any $t_0$ with $\Pp(Z\le t_0)<1$, yields every inverse moment of $Z$.

Iterate the supersolution inequality and condition on $\mathcal F_n$.
Concavity of effective conductance, followed by Jensen's inequality for reciprocals, gives
\[
 \E[Z\mid\mathcal F_n]
 \ge a^{-n}\E[F_{\mathcal H_n}(Z_u)\mid\mathcal F_n]
 \ge\frac{\Reff_{\mathfrak C_{n}}(\mathfrak v_+,\mathfrak v_-)}{a^n\E Z^{-1}}.
\]
Taking expectations and using Theorem~\ref{thm:resistance-exponent} proves $a\ge\lambda_{\mathfrak C}$.
If $a=\lambda_{\mathfrak C}$, the lower sandwich bound in Lemma~\ref{lem:electrical-sandwich} and martingale convergence imply
$Z\ge X/\E Z^{-1}$ almost surely.

Let $c:=\esssup(X/Z)$, so $0<c<\infty$.
Superadditivity of resistance shows that the nonnegative integrable function $cZ-X$ satisfies
$\mathcal T(cZ-X)\le\lambda_{\mathfrak C}(cZ-X)$.
If it is nonzero, the preceding argument applied to this function gives
\[
 cZ-X\ge\frac{X}{\E(cZ-X)^{-1}}.
\]
Consequently $X/Z\le c/(1+1/\E(cZ-X)^{-1})<c$ almost surely, contradicting the definition of $c$.
Thus $cZ=X$ almost surely.
\end{proof}

\begin{lemma}\label{lem:strict-chemical-resistance}
For every classical rule,
$1<\lambda_{\mathfrak C}<\ell_{\mathfrak C}^{\mathrm{ch}}$.
\end{lemma}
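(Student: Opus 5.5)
The two inequalities are handled separately; the strict upper bound is the substantial one.

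\emph{The bound $\lambda_{\mathfrak C}>1$.} Theorem~\ref{thm:resistance-exponent} gives $\lambda_{\mathfrak C}\ge\varphi_R'(p_c)$. By \cite[Theorem~2.4 and Corollary~2.5]{NeroliIGSII2026}, $\varphi_R'(p_c)>1$, so this case is immediate.

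\emph{The bound $\lambda_{\mathfrak C}<\ell_{\mathfrak C}^{\mathrm{ch}}$.} We already have the non-strict inequality from $\Reff\le d$. The plan is to argue by contradiction using Lemma~\ref{lem:critical-supersolution}.

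Take $Z:=X^{\mathrm{ch}}$, the chemical field of Theorem~\ref{thm:chemical-field}. It is measurable with respect to the innovation tree, strictly positive and mean one. For every network, effective resistance is at most the shortest-path length with the same edge lengths, since a unit flow along one path is a trial flow in Thomson's principle. Hence $F_H(x)\le F_H^{\mathrm{ch}}(x)$, and so
\[
 \mathcal T X^{\mathrm{ch}}=F_H(X^{\mathrm{ch}}_e)\le F_H^{\mathrm{ch}}(X^{\mathrm{ch}}_e)=\ell_{\mathfrak C}^{\mathrm{ch}}X^{\mathrm{ch}}
\]
almost surely. Lemma~\ref{lem:critical-supersolution} then gives $\ell_{\mathfrak C}^{\mathrm{ch}}\ge\lambda_{\mathfrak C}$ again. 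Suppose equality holds. The equality clause of the lemma says $X^{\mathrm{ch}}=\kappa X$ for a deterministic $\kappa>0$, and taking means gives $\kappa=1$. Then $F_H(X_e)=\lambda_{\mathfrak C}X=F_H^{\mathrm{ch}}(X_e)$ almost surely. So the sandwich $F_H\le F_H^{\mathrm{ch}}$ is an equality at the random input $(X_e)$, almost surely.

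It remains to rule this out. On a finite network with strictly positive resistances, $F_H(x)=F_H^{\mathrm{ch}}(x)$ forces the unit current to be concentrated on a single path. That happens exactly when the terminal-path core has no cycle through which current can split. More precisely, equality holds only if $H$ is a single path, or, when $H$ has two disjoint routes, only if some route carries zero current, which is impossible with finite positive resistances. Here is the only structural input needed. Because $\operatorname{mincut}_R(\beta^+,\beta^-)\ge2$, Menger's theorem gives two edge-disjoint terminal paths in $R$. The event that all edges of both paths are open has positive probability under $\mathcal Q_c$. On that event $H$ contains two distinct terminal paths, hence a cycle. For every strictly positive input, the electrical current then splits, giving $F_H(x)<F_H^{\mathrm{ch}}(x)$: Thomson's principle is uniquely minimised by the current, and a single-path flow is not a current, since Kirchhoff's voltage law fails around the cycle whenever the parallel route has positive conductance. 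The inputs $X_e$ are positive and independent of $H$, so strict inequality occurs with positive probability. This contradicts almost-sure equality, and hence $\lambda_{\mathfrak C}<\ell_{\mathfrak C}^{\mathrm{ch}}$.

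\emph{Main obstacle.} The expected difficulty is the strict-inequality step on the cycle event. One must check that the two edge-disjoint open paths really lie in the core $H$, meaning the union of open edges on simple terminal paths. They do, since each is itself a simple terminal path once all its edges are open. One must also check that the strict gap persists on a positive-probability set of the inputs. This follows from strict positivity of $X$ (Lemma~\ref{lem:electrical-eigenlaw}), together with the fact that a single-path flow fails to be the unique Thomson minimiser on any network containing a terminal cycle with finite positive resistances.
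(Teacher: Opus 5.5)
Your proof is correct and follows the paper's route. You apply Lemma~\ref{lem:critical-supersolution} to $Z=X^{\mathrm{ch}}$ with $a=\ell_{\mathfrak C}^{\mathrm{ch}}$, then rule out the equality case $X^{\mathrm{ch}}=\kappa X$ by a strict electrical improvement on a positive-probability open event supplied by $\operatorname{mincut}_R(\beta^+,\beta^-)\ge2$.

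The differences are cosmetic. You derive strictness from uniqueness of the Thomson minimiser and Kirchhoff's voltage law, whereas the paper diverts a small current along an alternative segment (linear saving against quadratic cost). You also spell out the step the paper leaves implicit: equality would force $\mathcal T X^{\mathrm{ch}}=\ell_{\mathfrak C}^{\mathrm{ch}}X^{\mathrm{ch}}$ almost surely.
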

\begin{proof}
The first inequality is Theorem~\ref{thm:resistance-exponent}.
A unit flow along a shortest path with edge costs $X_e^{\mathrm{ch}}$ gives
$\mathcal T X^{\mathrm{ch}}\le\ell_{\mathfrak C}^{\mathrm{ch}}X^{\mathrm{ch}}$.
This inequality is strict on the all-open parent event.
Indeed, the terminal mincut is at least two, so a shortest terminal path has an alternative segment joining two of its vertices through edges outside that segment.
Diverting a sufficiently small positive amount of its unit current along the alternative segment decreases the energy: the saving on the original segment is linear in the diverted current, whereas the additional energy is quadratic.
All edge costs are positive, and the all-open event has conditional probability $p_c^{m-1}>0$.
Lemma~\ref{lem:critical-supersolution} therefore rules out $\ell_{\mathfrak C}^{\mathrm{ch}}=\lambda_{\mathfrak C}$.
\end{proof}

The full cluster and its terminal path core have different mass growth.
Let $H$ be the crossing-conditioned union of simple terminal paths.
This counts the edges supporting terminal routes; the complete mass matrix also retains connected dangling cells and noncrossing live cells.

\begin{lemma}\label{lem:mass-core-clock}
For every classical rule, $\dimminkowski(\mathfrak C,d_{\mathrm{ch}})+\dimresis(\mathfrak C,d_{\mathrm{ch}})>2$.
\end{lemma}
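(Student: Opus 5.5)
The plan is to reduce the claim to the inequality $(\ell_{\mathfrak C}^{\mathrm{ch}})^2<\rho(\mathbf M_{\mathfrak C})\lambda_{\mathfrak C}$. This is equivalent, because
\[
\dimminkowski(\mathfrak C,d_{\mathrm{ch}})+\dimresis(\mathfrak C,d_{\mathrm{ch}})=\frac{\log(\rho(\mathbf M_{\mathfrak C})\lambda_{\mathfrak C})}{\log\ell_{\mathfrak C}^{\mathrm{ch}}}
\]
and $\ell_{\mathfrak C}^{\mathrm{ch}}\ge\ell\ge2>1$. I would prove it by combining a deterministic Nash-Williams estimate on the terminal path core with Cauchy--Schwarz in expectation. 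The strict inequality will come from the dangling mass that the core omits.

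\textbf{Step 1 (deterministic shell bound).} Let $\mathcal H_n$ be the realised depth-$n$ terminal path core, let $D=d_{\mathfrak C_n}(\mathfrak v_+,\mathfrak v_-)$, and let $E_j$ be the set of core edges joining chemical distance $j-1$ to distance $j$ from $\mathfrak v_+$, for $j=1,\dots,D$. Restricting to the core does not change $D$, since terminal geodesics lie on simple terminal paths, and it does not change $\Reff_{\mathfrak C_n}(\mathfrak v_+,\mathfrak v_-)$, by the remark in Section~\ref{sec:resistance}. Each $E_j$ is a terminal-separating cut and the $E_j$ are disjoint. Nash-Williams and Cauchy--Schwarz then give
\[
\Reff_{\mathfrak C_n}(\mathfrak v_+,\mathfrak v_-)\ge\sum_{j=1}^D|E_j|^{-1}\ge \frac{D^2}{\sum_j|E_j|}\ge\frac{d_{\mathfrak C_n}(\mathfrak v_+,\mathfrak v_-)^2}{|E(\mathcal H_n)|}.
\]

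\textbf{Step 2 (pass to growth rates).} Taking square roots and expectations, Cauchy--Schwarz gives
\[
\E d_{\mathfrak C_n}(\mathfrak v_+,\mathfrak v_-)\le(\E\Reff_{\mathfrak C_n}(\mathfrak v_+,\mathfrak v_-))^{1/2}(\E|E(\mathcal H_n)|)^{1/2}.
\]
By the remark after Theorem~\ref{thm:chemical-scalar}, the left side is $c_{\mathrm{ch}}(\ell_{\mathfrak C}^{\mathrm{ch}})^ne^{o(n)}$, and by Theorem~\ref{thm:resistance-exponent}, $\E\Reff_{\mathfrak C_n}(\mathfrak v_+,\mathfrak v_-)=\lambda_{\mathfrak C}^ne^{o(n)}$. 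For the core, the recursion of Section~\ref{sec:geometry} expresses $\mathcal H_{n+1}$ as the substitution of independent depth-$n$ cores into the coarse core $H$. Hence $\E|E(\mathcal H_n)|=\mu_H^n$ with $\mu_H:=\E|E(H)|$. Together these give $(\ell_{\mathfrak C}^{\mathrm{ch}})^2\le\lambda_{\mathfrak C}\mu_H$.

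\textbf{Step 3 (strict gap $\mu_H<\rho(\mathbf M_{\mathfrak C})$).} Every core edge is an open child with both endpoints active, so it is a $c$-child, and therefore $\mu_H\le M_{c,c}$. For a primitive nonnegative matrix, the spectral radius strictly exceeds any diagonal entry as soon as the corresponding row has a nonzero off-diagonal entry. So it remains to exhibit $M_{c,b}>0$ or $M_{c,u}>0$. On the crossing event, close one edge of $R$ incident to $\beta^+$ and keep the remaining edges open. Since $\operatorname{mincut}_R(\beta^+,\beta^-)\ge2$ the crossing survives, and by canonicality the closed edge has both endpoints active. This event has positive conditional probability and produces a $b$-child, so $M_{c,b}>0$. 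Alternatively, primitivity alone already forces $\rho(\mathbf M_{\mathfrak C})>M_{c,c}$ once the matrix is not diagonal in row $c$. Thus $(\ell_{\mathfrak C}^{\mathrm{ch}})^2\le\lambda_{\mathfrak C}\mu_H<\lambda_{\mathfrak C}\rho(\mathbf M_{\mathfrak C})$, which is the claim.

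The step needing most care is Step 1's identification of $\E|E(\mathcal H_n)|$ as exactly $\mu_H^n$. One must check that the core of a substituted cell is precisely the union of the cores of the child cells lying on coarse core edges, so that dangling connected children are excluded. If this exact multiplicativity proved delicate, the bound $\E|E(\mathcal H_n)|\le M_{c,c}^n$, obtained by counting $c$-descendants through $c$-parents only, would suffice for the argument.
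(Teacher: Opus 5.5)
Your proposal is correct and follows the paper's proof essentially step for step. Both use the deterministic bound $d_{\mathfrak C_n}(\mathfrak v_+,\mathfrak v_-)^2\le|E(\mathcal H_n)|\,\Reff_{\mathfrak C_n}(\mathfrak v_+,\mathfrak v_-)$ on the path core, Cauchy--Schwarz in expectation with $\E|E(\mathcal H_n)|=(\E|E(H)|)^n$, and $\rho(\mathbf M_{\mathfrak C})>M_{c,c}\ge\E|E(H)|$ from a positive off-diagonal entry in the $c$ row. The differences are minor: you derive the deterministic bound from Nash-Williams with distance-level cuts rather than the paper's decomposition of the unit current into terminal path flows, and you exhibit the off-diagonal ($b$-child) entry explicitly.
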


\begin{proof}
Every core edge is a connected child, so $M_{c,c}\ge\E|E(H)|$.
Primitivity and the positive right Perron vector of $\mathbf M_{\mathfrak C}$ give $\rho(\mathbf M_{\mathfrak C})>M_{c,c}$: the $c$ row has a positive off-diagonal entry, corresponding to a live noncrossing child.
Thus $\rho(\mathbf M_{\mathfrak C})>\E|E(H)|$.

For any finite network with unit resistances, orient its electrical unit flow along decreasing potential and decompose it into terminal path flows of total weight one.
The sum of the absolute edge currents is at least the shortest terminal distance.
Cauchy--Schwarz therefore gives, for the recursively retained core,
\[
 d_{\mathfrak C_{n}}(\mathfrak v_+,\mathfrak v_-)^2\le |E(\mathcal H_n)|\Reff_{\mathfrak C_{n}}(\mathfrak v_+,\mathfrak v_-).
\]
Its expected edge count is $(\E|E(H)|)^n$, by independent reproduction at each core address.
    Taking square roots and expectations gives
\[
 (\E d_{\mathfrak C_{n}}(\mathfrak v_+,\mathfrak v_-))^2\le(\E|E(H)|)^n\E\Reff_{\mathfrak C_{n}}(\mathfrak v_+,\mathfrak v_-).
\]
The growth limits in Theorems~\ref{thm:chemical-scalar} and~\ref{thm:resistance-exponent} give $(\ell_{\mathfrak C}^{\mathrm{ch}})^2\le(\E|E(H)|)\lambda_{\mathfrak C}$.
Combining the two inequalities gives $\rho(\mathbf M_{\mathfrak C})\lambda_{\mathfrak C}>(\ell_{\mathfrak C}^{\mathrm{ch}})^2$, which is precisely the asserted strict dimension bound.
\end{proof}

These two comparisons leave only an open segment of the AO line as a possible equality set.

\begin{aoresult}\label{thm:ao-region}
For every classical EIGS rule, if
\[
 (\dimminkowski(\mathfrak C,d_{\mathrm{ch}}),\dimresis(\mathfrak C,d_{\mathrm{ch}}))
 \notin (\frac{4}{3},2)\times(\frac{2}{3},1),
\]
then the typical spectral dimension of the natural limiting diffusion on its critical cluster satisfies
\[
 \dimspec^{(\mathrm N)}(\mathfrak C)\neq\frac{4}{3}.
\]
\end{aoresult}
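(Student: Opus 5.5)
The proof is a contrapositive argument in the plane of the two dimensions. Write $D:=\dimminkowski(\mathfrak C,d_{\mathrm{ch}})$ and $\rho:=\dimresis(\mathfrak C,d_{\mathrm{ch}})$. Suppose $\dimspec^{(\mathrm N)}(\mathfrak C)=\frac43$; we will show that $(D,\rho)$ must then lie in $(\frac43,2)\times(\frac23,1)$. By the equivalence \eqref{eq:ao-equality}, which rests on Theorem~\ref{thm:heat-kernels}, the equality $\dimspec^{(\mathrm N)}(\mathfrak C)=\frac43$ is the same as $D=2\rho$. So the task reduces to locating the AO line $D=2\rho$ using the two strict inequalities proved just above.

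First, Lemma~\ref{lem:strict-chemical-resistance} gives $1<\lambda_{\mathfrak C}<\ell_{\mathfrak C}^{\mathrm{ch}}$. Taking logarithms and dividing by $\log\ell_{\mathfrak C}^{\mathrm{ch}}>0$ yields $0<\rho<1$. The positivity of $\log\ell_{\mathfrak C}^{\mathrm{ch}}$ holds because $\ell_{\mathfrak C}^{\mathrm{ch}}\ge\ell\ge2$ by Theorem~\ref{thm:chemical-field}.

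Second, Lemma~\ref{lem:mass-core-clock} gives $D+\rho>2$. Substituting $D=2\rho$ gives $3\rho>2$, that is, $\rho>\frac23$. Together with the first step, $\rho\in(\frac23,1)$, and therefore $D=2\rho\in(\frac43,2)$.

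Hence every point of the AO line compatible with the two lemmas lies in the open box. Contrapositively, if $(D,\rho)$ lies outside the box, then $D\neq2\rho$, and $\dimspec^{(\mathrm N)}(\mathfrak C)\neq\frac43$ follows again from \eqref{eq:ao-equality}.

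There is no real obstacle here, since both strict inequalities are already established. The one point to check is that \eqref{eq:ao-equality} is a genuine equivalence. Theorem~\ref{thm:heat-kernels} gives
\[
\dimspec^{(\mathrm N)}(\mathfrak C)=\frac{2D}{D+\rho},
\]
with $D>0$ because $\rho(\mathbf M_{\mathfrak C})\ge2$, and with $D+\rho>0$. Under these conditions $\frac{2D}{D+\rho}=\frac43$ is equivalent to $6D=4D+4\rho$, that is, $D=2\rho$.
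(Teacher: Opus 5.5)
Your proposal is correct and follows the paper's proof: Lemma~\ref{lem:strict-chemical-resistance} gives $0<\dimresis(\mathfrak C,d_{\mathrm{ch}})<1$, Lemma~\ref{lem:mass-core-clock} gives $\dimminkowski(\mathfrak C,d_{\mathrm{ch}})+\dimresis(\mathfrak C,d_{\mathrm{ch}})>2$, and the Alexander--Orbach equality $\dimminkowski(\mathfrak C,d_{\mathrm{ch}})=2\dimresis(\mathfrak C,d_{\mathrm{ch}})$ from \eqref{eq:ao-equality} then forces the pair strictly into the open box. Your extra check that \eqref{eq:ao-equality} is a genuine equivalence is a harmless addition, but writing $\rho$ for the resistance dimension clashes with the paper's spectral radius $\rho(\mathbf M_{\mathfrak C})$, which you also use in the same argument.
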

\begin{proof}
Lemmas~\ref{lem:strict-chemical-resistance} and~\ref{lem:mass-core-clock} give
$0<\dimresis(\mathfrak C,d_{\mathrm{ch}})<1$ and $\dimminkowski(\mathfrak C,d_{\mathrm{ch}})+\dimresis(\mathfrak C,d_{\mathrm{ch}})>2$.
If $\dimspec^{(\mathrm N)}(\mathfrak C)=4/3$, equation~\eqref{eq:ao-equality} gives
$3\dimresis(\mathfrak C,d_{\mathrm{ch}})>2$ and $\dimminkowski(\mathfrak C,d_{\mathrm{ch}})=2\dimresis(\mathfrak C,d_{\mathrm{ch}})<2$.
Both inequalities are strict, proving the statement including its boundary cases.
\end{proof}

The proof also fixes the sign in two useful regions.
If $\dimminkowski(\mathfrak C,d_{\mathrm{ch}})\ge2$, then $\dimminkowski(\mathfrak C,d_{\mathrm{ch}})>2\dimresis(\mathfrak C,d_{\mathrm{ch}})$ and $\dimspec^{(\mathrm N)}(\mathfrak C)>4/3$.
If $\dimminkowski(\mathfrak C,d_{\mathrm{ch}})\le4/3$, the strict clock bound gives
$\dimresis(\mathfrak C,d_{\mathrm{ch}})>2-\dimminkowski(\mathfrak C,d_{\mathrm{ch}})\ge\dimminkowski(\mathfrak C,d_{\mathrm{ch}})/2$, and $\dimspec^{(\mathrm N)}(\mathfrak C)<4/3$.

\begin{example}\label{ex:long-diamond-region}
Replace each edge of the standard diamond by a path of five unit edges.
The resulting rule has twenty edges and two terminal paths of length ten, so $\ell_{\mathfrak C}^{\mathrm{ch}}=10$.
Since $\rho(\mathbf M_{\mathfrak C})<20$,
\[
 \dimminkowski(\mathfrak C,d_{\mathrm{ch}})<\frac{\log20}{\log10}\approx1.3010<\frac43.
\]
Its typical spectral dimension is therefore strictly smaller than $4/3$ by the preceding sign calculation.
The critical point throughout is that of this subdivided rule.
\end{example}

The preceding theorem excludes the Alexander--Orbach equality outside a narrow region, but leaves open the possibility of equality within it.
To seek an example attaining $4/3$, we ask whether the typical spectral dimensions of classical EIGS critical clusters are dense in $(1,2)$, so that a sequence of finite rules can approximate the desired value.
This leads to the following result.

\begin{theorem}\label{thm:spectral-density}
The typical spectral dimensions $\dimspec^{(\mathrm N)}(\mathfrak C)$ of the natural limiting diffusions on critical percolation clusters of classical EIGS hierarchical lattices are dense in $(1,2)$.
\end{theorem}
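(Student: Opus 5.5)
By Theorem~\ref{thm:heat-kernels} the typical spectral dimension is
\[
\dimspec^{(\mathrm N)}(\mathfrak C)=\frac{2}{1+\log\lambda_{\mathfrak C}/\log\rho(\mathbf M_{\mathfrak C})}.
\]
Density in $(1,2)$ is therefore equivalent to density of the ratios $\kappa:=\log\lambda_{\mathfrak C}/\log\rho(\mathbf M_{\mathfrak C})$ in $(0,1)$. The plan is to build explicit families of classical rules in which $\rho(\mathbf M_{\mathfrak C})$ and $\lambda_{\mathfrak C}$ can be estimated to within factors $e^{o(N)}$ of $a^N$ and $b^N$ respectively, for a large parameter $N$. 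Then $\kappa\approx\log b/\log a$, and we make this ratio range over a dense set. The building blocks will be iterated compositions, series and parallel, of a fixed base rule. The strategy rests on the fact that substituting a rule $R$ into itself $j$ times produces the rule $R^{(j)}$ whose hierarchical lattice is the $j$-step coarsening of that of $R$. The critical point is unchanged, since $\varphi_{R^{(j)}}=\varphi_R^{\circ j}$. Hence $\rho$ and $\lambda$ become $\rho^j$ and $\lambda^j$, and $\kappa$ is invariant. Mixing two rules $R_1,R_2$ with different $\kappa$ values inside a single rule should therefore interpolate.

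The concrete construction I would try uses a $k$-fold series--parallel cascade. Let $D_{s,t}$ be the rule with $t$ parallel branches, each a path of $s$ edges; it is classical for $s,t\ge2$. Iterating gives Rayleigh-type control of $\lambda$: the fully open resistance $s/t$ is a lower bound, as in Example~\ref{ex:spectral-threshold}, and the selected-path bound $\E|P(H)|$ of Lemma~\ref{lem:electrical-eigenlaw} is an upper bound. Mass is controlled by $M_{c,c}\le\rho(\mathbf M_{\mathfrak C})<m$. Because these bounds are only coarse, I would instead use a composite rule $R_{N}=D^{(N)}$ built from nested substitutions of two different diamonds $D_1,D_2$ in a chosen pattern: $\alpha N$ levels of $D_1$ and $(1-\alpha)N$ levels of $D_2$, arranged periodically. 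For such a deterministic periodic composite rule, the critical cluster is governed by a product of the per-level random EIGS steps. The key claim is that $\log\rho$ and $\log\lambda$ of the composite are, up to $O(1)$ errors, equal to $\alpha N\log\rho_1+(1-\alpha)N\log\rho_2$ and the analogous sum for $\lambda$. Here $\rho_i,\lambda_i$ are the exponents of the lattice built from a rule that alternates with its own critical point. The difficulty is that $D_1$ and $D_2$ have different critical points, so the composite does not have critical point $p_c(D_i)$ at each level.

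To handle the critical-point mismatch I would choose $D_1,D_2$ with a common reliability polynomial up to conjugation, or more robustly a one-parameter family of rules sharing the same $\varphi_R$. Different rules can have the same terminal reliability polynomial but different edge and path structure, for example by attaching equal-reliability gadgets arranged differently. If the mismatch cannot be avoided, the alternative is to use the resistance supersolution comparison of Lemma~\ref{lem:critical-supersolution}, together with the submultiplicativity of $\E\Reff$ and of the mass matrix under composition, to get two-sided bounds $\rho(\mathbf M_{\mathfrak C}(R_N))=e^{N\bar r+O(1)}$ and $\lambda=e^{N\bar q+O(1)}$, with $(\bar r,\bar q)$ depending continuously on $\alpha$. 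By the intermediate value theorem, $\kappa(\alpha)$ then sweeps an interval. Finally, I would take extreme base rules. Long subdivided diamonds as in Example~\ref{ex:long-diamond-region} push $\kappa$ toward $1$, since near-path clusters have $\lambda\approx\rho$. Rules with very many short parallel branches push $\kappa$ toward $0$: the resistance stays bounded while the mass grows. The union of these intervals exhausts $(0,1)$ densely.

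The main obstacle is the composition step: showing that both exponents are asymptotically additive under concatenation of substitution levels, with error $o(N)$, for rules sharing a critical point. The random electrical eigenlaw is not multiplicative across heterogeneous levels. I would first try to prove additivity using only the moment sandwich of Lemma~\ref{lem:electrical-sandwich} and Corollary~\ref{cor:resistance-moment-exponents}, applied to the periodic random EIGS. This should give the needed constant-factor control, because the inverse moments stay uniformly bounded across levels.
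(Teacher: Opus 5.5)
Your reduction to density of $\kappa=\log\lambda_{\mathfrak C}/\log\rho(\mathbf M_{\mathfrak C})$ in $(0,1)$ is correct, and so is the invariance of $\kappa$ under self-composition. The gap is that the whole argument then rests on additivity of $\log\rho$ and $\log\lambda$ for the composite rule $R_N$, and that step is missing rather than merely technical. The composite is studied at its own critical point $p^*$, the interior fixed point of $\varphi_{D_{i_1}}\circ\cdots\circ\varphi_{D_{i_N}}$. Each intermediate level therefore runs at the corresponding point of the orbit of $p^*$, not at $p_c(D_1)$ or $p_c(D_2)$. Neither the critical mass matrix of $D_i$ nor its electrical eigenlaw describes such a level. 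For a short period (say alternating $D_1,D_2$), the composite is just another classical rule run at parameters that are off-critical for both constituents, and there is no reason its exponents should be weighted averages. The tools you invoke cannot supply this. Lemma~\ref{lem:electrical-sandwich}, Corollary~\ref{cor:resistance-moment-exponents} and Lemma~\ref{lem:critical-supersolution} all concern one rule iterated at its own critical point against its own stationary eigenlaw. So ``inverse moments stay uniformly bounded across levels'' and the continuity of $(\bar r,\bar q)$ in $\alpha$ are assertions, not consequences.

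Neither of your proposed repairs closes the gap.

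With an exactly common critical point, additivity for two long blocks looks within reach. The mean matrices multiply, and the one-level bounds $1/\E[\Reff_{\mathfrak C_1}(\mathfrak v_+,\mathfrak v_-)^{-1}]\le\lambda_{\mathfrak C}\le\E\Reff_{\mathfrak C_1}(\mathfrak v_+,\mathfrak v_-)$ follow from the eigenlaw identity by concavity of resistance and conductance. Combined with Corollary~\ref{cor:electrical-moment-limits} for each constituent, these give $O(1)$ errors. But you would then need base rules sharing one critical point whose $\kappa$ values approach $0$ and $1$, and you give no mechanism for this; the ``equal-reliability gadgets'' are unspecified. Your own extreme families have incompatible critical points: long subdivided diamonds have $p_c\to1$, while many parallel paths of fixed length $s$ have $p_c\approx t^{-1/(s-1)}\to0$.

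With mismatched critical points and long blocks, the orbit of $p^*$ does stay exponentially close to each $p_c(D_i)$ except at $O(1)$ transition levels, so additivity is plausible. Proving it, however, needs a near-critical perturbation theory for the mass matrices and the electrical recursion, which neither your proposal nor the paper provides.

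The paper avoids composition altogether. For rational $B>1$ and large $t$, it takes a single rule: a series chain of $\lceil B^t\rceil$ blocks, each of $\lceil t(4/3)^t\rceil$ parallel paths of length $t$, extended by one edge at each end, with a length-two path in parallel that pins $p_c\in(7/10,4/5)$. It then estimates directly $\log\lambda_{\mathfrak C}=t\log B+O_B(1)$ and $\log\rho(\mathbf M_{\mathfrak C})=t(\log B+\log\frac43)+\log t+O(1)$. The spectral dimension therefore tends to $2(\log B+\log\frac43)/(2\log B+\log\frac43)$, which sweeps $(1,2)$ as $B$ varies.
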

\begin{proof}
    See Appendix~\ref{app:spectral-density}.
\end{proof}

The density theorem allows approximation of $4/3$, but does not produce a finite rule whose critical cluster has exactly this spectral dimension.
In the construction of Appendix~\ref{app:spectral-density}, the rule size diverges along the approximating sequence.
We do not claim that this approach is impossible, but none of our attempts has yielded such a finite rule; this leads us to the following conjecture.

\begin{aoconjecture}\label{conj:ao-global}
For every classical EIGS hierarchical lattice at its own critical point, the typical spectral dimension of the natural limiting diffusion on the full terminal-connected cluster satisfies
\[
\dimspec^{(\mathrm N)}(\mathfrak C) \neq \frac{4}{3}.
\]
That is, the Alexander-Orbach conjecture does not hold on any classical EIGS hierarchical lattice.
\end{aoconjecture}

\section*{Acknowledgement}
This work was supported by the Additional Funding Programme for Mathematical Sciences, delivered by EPSRC (EP/V521917/1) and the Heilbronn Institute for Mathematical Research,
and also by the EPSRC Centre for Doctoral Training in Mathematics of Random Systems: Analysis, Modelling and Simulation (EP/S023925/1).

\bibliographystyle{amsplain}
\begingroup
\small
\bibliography{references}
\endgroup

\appendix
\newcommand{\appendixsection}[1]{%
  \refstepcounter{section}%
  \section*{Appendix \thesection. #1}%
}
\appendixsection{Chemical scaling estimates}\label{app:chemical}
\addcontentsline{toc}{section}{Appendix}

This appendix proves the scalar estimates used in Theorem~\ref{thm:chemical-scalar} and the component mesh used in the full-space construction.
All scalar functionals in this appendix are shortest-path functionals $F_H^{\mathrm{ch}}$.
For its stationary law, write $X^{\mathrm{ch}}$ for the root variable.

\begin{lemma}\label{lem:chemical-moments}
The laws of $d_{\mathfrak C_{n}}(\mathfrak v_+,\mathfrak v_-)/(\E d_{\mathfrak C_{n}}(\mathfrak v_+,\mathfrak v_-))$ have uniformly bounded moments of every fixed positive and negative order.
There is a mean-one law on $(0,\infty)$ and $\ell_{\mathfrak C}^{\mathrm{ch}}>1$ such that $F_H^{\mathrm{ch}}(X_e^{\mathrm{ch}})$ has the law of $\ell_{\mathfrak C}^{\mathrm{ch}} X^{\mathrm{ch}}$ for independent child copies.
This law has every positive and negative moment.
For each finite $t>0$, $\E e^{tX^{\mathrm{ch}}}$ and $\sup_n\E e^{td_{\mathfrak C_n}(\mathfrak v_+,\mathfrak v_-)/(\E d_{\mathfrak C_{n}}(\mathfrak v_+,\mathfrak v_-))}$ are finite.
\end{lemma}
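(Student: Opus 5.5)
The plan mirrors the electrical construction of Lemma~\ref{lem:electrical-eigenlaw}, with Thomson's principle replaced by the shortest-path functional $F_H^{\mathrm{ch}}$. By Lemma~\ref{lem:chemical-path-functional}, $F_H^{\mathrm{ch}}$ is increasing, concave, positively homogeneous, superadditive and $1$-Lipschitz in $\ell^1$. Let $\widehat T^{\mathrm{ch}}$ send a mean-one law $\nu$ on $[0,\infty)$ to the law of $F_H^{\mathrm{ch}}(Y_e)/\E F_H^{\mathrm{ch}}(Y_e)$, where the $Y_e$ are independent with law $\nu$ and independent of $H$. The normalised distances $d_{\mathfrak C_n}(\mathfrak v_+,\mathfrak v_-)/\E d_{\mathfrak C_n}(\mathfrak v_+,\mathfrak v_-)$ are exactly the iterates $(\widehat T^{\mathrm{ch}})^n\delta_1$, by the common-tree recursion. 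It therefore suffices to find closed convex sets of laws that $\widehat T^{\mathrm{ch}}$ maps into themselves, and then apply Schauder's theorem. The denominator is harmless: by Lemma~\ref{lem:chemical-elementary-bounds} and the refinement bounds, $\ell\le\E F_H^{\mathrm{ch}}(Y)\le\E|P(H)|\le L$ for mean-one inputs.

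\textbf{Positive moments.} Bound $F_H^{\mathrm{ch}}(Y)\le\sum_{e\in P(H)}Y_e$ using the selected path and expand the $p$-th power multinomially. The diagonal terms contribute $\E|P(H)|\,\E Y^p$. Each mixed term involves only moments of order at most $p-1$, and by log-convex interpolation between the first moment (equal to one) and the $p$-th moment these are at most $A_p(\E Y^p)^{(p-2)/(p-1)}$. Dividing by the mean, which is at least $\ell\ge2$, gives
\[
\int x^p\,\widehat T^{\mathrm{ch}}\nu(dx)\le \frac{\E|P(H)|}{\ell^{p}}\int x^p\,\nu(dx)+A_p\Bigl(\int x^p\,\nu(dx)\Bigr)^{(p-2)/(p-1)} .
\]
Since $\E|P(H)|\le L$, the coefficient $L/\ell^p$ is below one for large $p$. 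Hence there are invariant bounds $K_p$ for every large integer $p$, and thus for every positive order.

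\textbf{Exponential moments.} I would not sum the moment series. Instead I would bound $\E e^{tF(Y)}$ through the selected path, in the form $\E e^{tF}\le\E\prod_{e\in P(H)}\E e^{tY/\ell}$ after normalisation. This gives a recursive inequality $\phi_{n+1}(t)\le\Psi(\phi_n(t/\ell))$, where $\phi_n(s)=1+s+O(s^2)$ near zero. Because the mean of the unnormalised output is at least $\ell$ times the input mean, the small-$t$ regime is contracting, which bounds the exponential moment for small $t$ uniformly in $n$. To reach every finite $t$, iterate this $k$ times, using the depth-$k$ scaling $\ell^{-k}$, together with the deterministic bound $d_{\mathfrak C_n}(\mathfrak v_+,\mathfrak v_-)\le L^n$ for the initial generations.

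\textbf{Negative moments.} Follow the small-value argument of Lemma~\ref{lem:electrical-eigenlaw}. If $F_H^{\mathrm{ch}}(Y)\le s$, then every edge of some terminal path has $Y_e\le s$. Thinning gives $\Pp(\widehat T^{\mathrm{ch}}Y\le t)\le g\bigl(\Pp(Y\le Lt)\bigr)$, with the normalising mean absorbed into the constant. Iterating, and using $g(u)=O(u^\ell)$ with $\ell\ge2$, yields $F(t_0C^{-k})\le\exp(-c\ell^k)$. Starting from $F(t_0)<1$ is possible uniformly in $n$ by Paley--Zygmund, since the mean is one and second moments are bounded. This gives every negative moment uniformly, and it also shows the limit law has no atom at zero.

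\textbf{Fixed point.} The set of mean-one laws satisfying all the moment bounds is convex and weakly compact; the mean is preserved in the limit by uniform integrability. The Lipschitz estimate makes $\widehat T^{\mathrm{ch}}$ continuous on this set, so Schauder's theorem gives a fixed law $\nu^*$. Set $\ell_{\mathfrak C}^{\mathrm{ch}}:=\E F_H^{\mathrm{ch}}(X^{\mathrm{ch}}_e)\ge\ell>1$. The integrability properties are inherited from the invariant set.

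I expect the main obstacle to be the uniform exponential moments for every finite $t$. The polynomial recursion only gives growth of the moments that is factorial at best, so the argument needs the depth-iterated tail bound rather than a moment-by-moment estimate.
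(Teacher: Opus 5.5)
Your architecture matches the paper's: Schauder's theorem on moment-constrained mean-one laws, the selected-path multinomial bound, thinning for small values, and a Paley--Zygmund starting point. However, two steps fail as written.

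\textbf{The normalising mean.} Your claim that $\E F_H^{\mathrm{ch}}(Y)\ge\ell$ for arbitrary mean-one inputs is false. On the diamond, when both branches are open, $F_H^{\mathrm{ch}}(Y)=\min(Y_1+Y_2,Y_3+Y_4)$, whose mean is strictly below $2$ for every nondegenerate $Y$. With widely spread inputs, $\E F_H^{\mathrm{ch}}(Y)$ in fact approaches $\varphi_R'(p_c)\approx1.53<2=\ell$. The refinement bound $\ell d_{\mathfrak C_n}\le d_{\mathfrak C_{n+1}}$ holds only along the iterates of $\delta_1$ in the common-tree coupling. Schauder invariance, by contrast, must hold for every law in your convex set. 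The uniform bound that is actually available comes from the open edges lying on every terminal path: their inputs appear in every path cost, so $\E F_H^{\mathrm{ch}}(Y)\ge\E N_{\mathrm{piv}}(R)=\varphi_R'(p_c)>1$ by the pivotal identity of IGS~II. Replacing $\ell^p$ by $\varphi_R'(p_c)^p$ repairs the invariance. It gives $\ell_{\mathfrak C}^{\mathrm{ch}}\in[\varphi_R'(p_c),\E J(H)]$, which suffices for $\ell_{\mathfrak C}^{\mathrm{ch}}>1$. The bound $\ell_{\mathfrak C}^{\mathrm{ch}}\ge\ell$ is not available at this stage; the paper obtains it later from bounded normalised means.

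\textbf{Exponential moments.} The base step does not work. Your recursion $\phi_{n+1}(t)\le\Psi(\phi_n(t/\ell))$ is valid along the iterates, but it discards first-order information. Its linearisation at $t=0$ has slope $\E|P(H)|/\ell\ge1$, which is strictly larger than one unless every selected path has length exactly $\ell$. Iterating from $\phi_0(t)=e^t$ therefore only bounds the exponent by roughly $(\E|P(H)|/\ell)^nt$. Using log-convexity does not help: closing a bound $\sup_{[0,s_0]}\phi_n\le B$ would require $\E B^{|P(H)|/\ell}\le B$, which Jensen rules out.

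What is missing is exactly the estimate you dismissed. Factorial growth $\E Y_n^p\le A^pp!$, uniformly in $n$, is precisely what yields $\sup_n\E e^{tY_n}<\infty$ for $t<1/A$. The paper proves it by induction on $p$. If the bound holds below order $p$, each mixed multinomial term is at most $A^pp!$. There are at most $\binom{p+L-1}{L-1}$ terms, which is polynomial in $p$ and eventually beaten by $\varphi_R'(p_c)^p$, so the top-order recursion closes for $p\ge p_0$. Only after this base step does your extension argument ($\psi_{n+1}(\varphi_R'(p_c)t)\le\psi_n(t)^L$, or with $\ell$ along the iterates) reach every finite $t$.

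For the same reason, the eigenlaw's exponential moments are not inherited from your invariant set: the constraints $K_p$ carry no growth control in $p$. The paper instead derives the factorial bound for $X^{\mathrm{ch}}$ from the fixed-point equation, using $(\ell_{\mathfrak C}^{\mathrm{ch}})^p\ge\varphi_R'(p_c)^p>\binom{p+L-1}{L-1}$.

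A minor point: in the small-ball iteration you also need the case $n<k$. It is handled by the deterministic bound $d_{\mathfrak C_n}(\mathfrak v_+,\mathfrak v_-)/\E d_{\mathfrak C_n}(\mathfrak v_+,\mathfrak v_-)\ge(\ell/L)^n$.
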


\begin{proof}
Under $\mathcal Q_c$, the pivotal bonds counted by $N_{\mathrm{piv}}(R)$ in IGS~II are precisely the open edges present in every terminal path.
The conditional pivotal-count identity in \textcolor{black}{\cite[proof of Lemma~2.16]{NeroliIGSII2026}} gives $\E N_{\mathrm{piv}}(R)=\varphi_R'(p_c)>1$.
For independent mean-one nonnegative inputs, their sum is a lower bound for $F_H^{\mathrm{ch}}$.
Choose a deterministic simple path in each $H$, independently of the inputs, and let its length be $J(H)$.
Then $\E J(H)\le L$.
For any mean-one input law $\nu$ we obtain $\varphi_R'(p_c)\le \E F_H^{\mathrm{ch}}(X_e^{\mathrm{ch}})\le\E J(H)$.
This lower bound is $\varphi_R'(p_c)$, rather than a pathwise assertion that arbitrary weighted paths cost at least $\ell$.

These first-moment bounds allow us to construct an invariant class of laws.
Let $\widehat T\nu$ be the law of $F_H^{\mathrm{ch}}(X_e^{\mathrm{ch}})/(\E F_H^{\mathrm{ch}}(X_e^{\mathrm{ch}}))$.
For integer $p\ge2$, interpolation gives $\E (X^{\mathrm{ch}})^r\le \bigl(\E (X^{\mathrm{ch}})^p\bigr)^{(r-1)/(p-1)}$ for $1\le r\le p$.
The pure terms in a selected-path multinomial expansion contribute $(\E J(H))\bigl(\E (X^{\mathrm{ch}})^p\bigr)$.
A mixed monomial with $k\ge2$ distinct variables is bounded by $\bigl(\E (X^{\mathrm{ch}})^p\bigr)^{(p-k)/(p-1)}$.
Consequently
\[
 \int_0^\infty x^p\,(\widehat T\nu)(dx)\le \frac{\E J(H)}{\varphi_R'(p_c)^p}\left(\int_0^\infty x^p\,\nu(dx)\right)
       +A_p\left(\int_0^\infty x^p\,\nu(dx)\right)^{(p-2)/(p-1)}
\]
for a finite $A_p$.
For all sufficiently large integers $p$, choose $K_p\ge1$ so that this bound preserves $\bigl(\E (X^{\mathrm{ch}})^p\bigr)\le K_p$.
The mean-one laws satisfying these constraints form a nonempty convex weakly compact set.
Higher moments ensure tightness and preservation of the mean; lower semicontinuity preserves the constraints.
The Lipschitz bound for $F_H^{\mathrm{ch}}$ and uniform integrability make $\widehat T$ continuous there.
Schauder's theorem gives a fixed law with eigenvalue $\ell_{\mathfrak C}^{\mathrm{ch}}=\E F_H^{\mathrm{ch}}(X_e^{\mathrm{ch}})\in[\varphi_R'(p_c),\E J(H)]$.
It has every positive moment.
Iteration from $\delta_1$ gives the corresponding uniform moments of $d_{\mathfrak C_{n}}(\mathfrak v_+,\mathfrak v_-)/(\E d_{\mathfrak C_{n}}(\mathfrak v_+,\mathfrak v_-))$.

To control negative moments, we estimate the probability of small values.
If $z=\Pp(X^{\mathrm{ch}}=0)$, then $z=g(z)$, since zero terminal cost is a crossing through zero-cost child edges.
This is independent thinning of a crossing Bernoulli-$p_c$ configuration.
The mean excludes $z=1$, and $g(u)<u$ for $0<u<1$ excludes $0<z<1$.
Thus $X^{\mathrm{ch}}>0$.
For $F(t):=\Pp(X^{\mathrm{ch}}\le t)$, a path with cost at most $\ell_{\mathfrak C}^{\mathrm{ch}} t$ uses only edges of cost at most $\ell_{\mathfrak C}^{\mathrm{ch}} t$.
Hence $F(t)\le g(F(\ell_{\mathfrak C}^{\mathrm{ch}} t))$.
Choosing $F(t_0)<1$ and iterating gives $F(t_0(\ell_{\mathfrak C}^{\mathrm{ch}})^{-k})\le\exp(-c\ell^k)$ after finitely many initial terms.
This proves every inverse moment.

For
\[
 F_n(t):=\Pp(d_{\mathfrak C_{n}}(\mathfrak v_+,\mathfrak v_-)/(\E d_{\mathfrak C_{n}}(\mathfrak v_+,\mathfrak v_-))\le t)
\]
, the bound
\[
 (\E d_{\mathfrak C_{n+1}}(\mathfrak v_+,\mathfrak v_-))/(\E d_{\mathfrak C_{n}}(\mathfrak v_+,\mathfrak v_-))\le L
\]
 gives $F_{n+1}(t)\le g(F_n(Lt))$.
Uniform second moments and Paley--Zygmund give $F_n(1/2)\le r<1$.
For $n\ge k$, $F_n((1/2)L^{-k})\le g^{\circ k}(r)$.
For $n<k$, the event is empty because
\[
 d_{\mathfrak C_{n}}(\mathfrak v_+,\mathfrak v_-)/(\E d_{\mathfrak C_{n}}(\mathfrak v_+,\mathfrak v_-))\ge(\ell/L)^n>(1/2)L^{-k}\textcolor{black}{.}
\]
The same small-ball estimate is therefore uniform over $n$.

We next strengthen the positive-moment estimates to factorial bounds.
Choose $p_0$ so that $\binom{p+L-1}{L-1}<\varphi_R'(p_c)^p$ for $p\ge p_0$.
Choose $A\ge1$ to cover the finitely many lower-order moment bounds by $A^r r!$.
If the bound holds below order $p$, each mixed multinomial term is at most $A^pp!$.
A path of length $J$ has at most $\binom{p+L-1}{L-1}-J$ mixed terms.
Thus
\[\begin{aligned}
 &\E\left(\frac{d_{\mathfrak C_{n+1}}(\mathfrak v_+,\mathfrak v_-)}{\E d_{\mathfrak C_{n+1}}(\mathfrak v_+,\mathfrak v_-)}\right)^p\\
 &\quad\le \varphi_R'(p_c)^{-p}(\E J(H))\E\left(\frac{d_{\mathfrak C_{n}}(\mathfrak v_+,\mathfrak v_-)}{\E d_{\mathfrak C_{n}}(\mathfrak v_+,\mathfrak v_-)}\right)^p\\
 &\qquad{}+\varphi_R'(p_c)^{-p}A^pp!\left(\binom{p+L-1}{L-1}-\E J(H)\right).
\end{aligned}\]
Induction on $p$ and then $n$ yields $\E\left(\frac{d_{\mathfrak C_{n}}(\mathfrak v_+,\mathfrak v_-)}{\E d_{\mathfrak C_{n}}(\mathfrak v_+,\mathfrak v_-)}\right)^p\le A^pp!$.
For the eigenlaw, $((\ell_{\mathfrak C}^{\mathrm{ch}})^p-\E J(H))\bigl(\E (X^{\mathrm{ch}})^p\bigr)\le A^pp!(\binom{p+L-1}{L-1}-\E J(H))$ gives the same bound because $(\ell_{\mathfrak C}^{\mathrm{ch}})^p\ge \varphi_R'(p_c)^p>\binom{p+L-1}{L-1}$.

The factorial bounds first give exponential moments on a right neighbourhood of zero.
If $\psi_n(t):=\E e^{td_{\mathfrak C_n}(\mathfrak v_+,\mathfrak v_-)/(\E d_{\mathfrak C_{n}}(\mathfrak v_+,\mathfrak v_-))}$, the path bound and $(\E d_{\mathfrak C_{n+1}}(\mathfrak v_+,\mathfrak v_-))/(\E d_{\mathfrak C_{n}}(\mathfrak v_+,\mathfrak v_-))\ge \varphi_R'(p_c)$ give $\psi_{n+1}(\varphi_R'(p_c)t)\le\psi_n(t)^L$.
Together with $\psi_0(t)=e^t$, this extends uniform finiteness from $t$ to $\varphi_R'(p_c)t$.
Iteration reaches every finite positive $t$.
The eigenlaw obeys the analogous bound $\psi(\varphi_R'(p_c)t)\le\psi(t)^L$.
\end{proof}

\begin{lemma}\label{lem:chemical-paths}
There is $C_R<\infty$ such that a realised depth-$n$ path-core graph has at most $C_R^k$ simple terminal paths of $k$ microscopic edges.
If $\theta>0$ and independent leaf costs $U\ge0$ satisfy $C_Re^{\theta v}\E e^{-\theta U}<1$, then
\[
 \Pp\{F_{\mathcal H_n}^{\mathrm{ch}}(U)<vd_{\mathfrak C_n}(\mathfrak v_+,\mathfrak v_-)\mid\mathcal H_n\}\le\frac{(C_Re^{\theta v}\E e^{-\theta U})^{\ell^n}}{1-C_Re^{\theta v}\E e^{-\theta U}}.
\]
For $\theta>0$ and pairs $(U,V)$ independent across leaves, $C_R\E e^{\theta(U-cV)}<1$ gives the same bound with $C_R\E e^{\theta(U-cV)}$ in place of $C_Re^{\theta v}\E e^{-\theta U}$ for the event that some terminal path has $\sum U>c\sum V$.
\end{lemma}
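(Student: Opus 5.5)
The plan is to prove the two assertions separately: a deterministic path count for the first, and an exponential Chernoff bound combined with a union bound over paths for the second.

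\emph{Path count.} Every simple terminal path in $\mathcal H_n$ coarsens uniquely, level by level, into simple terminal paths of the rule cores $H_u$, with $H_u\subseteq R$. Each coarse path has between $\ell$ and $L$ edges. Conversely, a microscopic path of $k$ edges is determined by a finite tree of choices. At each connected address on the route we choose one simple terminal path of $R$, and there are at most $N_R$ such paths for a constant $N_R$ depending only on $R$. The route visits a number of cells per generation that grows geometrically, and at generation $n$ it visits exactly $k$ cells. Summing the geometric series, the total number of addresses where a choice is made is at most $k\sum_{j\ge0}\ell^{-j}=k\ell/(\ell-1)$. Hence the number of such paths is at most $N_R^{k\ell/(\ell-1)}=:C_R^k$.

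\emph{Tail bound, first form.} Since $F^{\mathrm{ch}}_{\mathcal H_n}(U)$ is a minimum over paths, it is below $v\,d_{\mathfrak C_n}$ only if some path $P$ has $\sum_{e\in P}U_e<v\,d_{\mathfrak C_n}\le v|P|$. This uses $d_{\mathfrak C_n}(\mathfrak v_+,\mathfrak v_-)\le|P|$, because $d_{\mathfrak C_n}$ is the shortest path length in the core. Conditional on $\mathcal H_n$ the leaf costs are independent. For a fixed $P$ with $|P|=k$, Markov's inequality applied to $e^{-\theta\sum U}$ gives
\[
\Pp\Bigl(\sum_{e\in P}U_e<vk\Bigr)\le\bigl(e^{\theta v}\E e^{-\theta U}\bigr)^k.
\]
Lemma~\ref{lem:chemical-elementary-bounds} gives $k\ge\ell^n$. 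Summing $C_R^k(e^{\theta v}\E e^{-\theta U})^k$ over $k\ge\ell^n$, the geometric tail gives exactly the stated bound.

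\emph{Tail bound, signed form.} The second form is the same argument with $\sum_{e\in P}(U_e-cV_e)>0$ as the event. By independence of the pairs across leaves, Markov's inequality gives the per-path factor $(\E e^{\theta(U-cV)})^{|P|}$, and we union-bound over paths as before.

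The only delicate step is the path count. The point to get right is that the exponent counts microscopic edges $k$ and not depth $n$; the bound $\ell\ge2$ and the geometric-series argument above handle this.
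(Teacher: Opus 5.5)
Your proof is correct and follows essentially the same route as the paper: simple microscopic terminal paths project recursively to simple coarse terminal paths, giving an encoding tree with $k$ leaves, at least $\ell\ge2$ children per internal node and finitely many labels per node; this is followed by an exponential Markov bound for each path and a union bound over $k\ge\ell^n$, summed as a geometric tail. Your top-down count is a slight streamlining of the paper's ``plane trees times labels'' count, since given $\mathcal H_n$ the tree shape is determined by the labels. One small point: different paths use different numbers of choices, so the step to at most $N_R^{k\ell/(\ell-1)}$ paths needs a line of justification. For instance, a uniformly random top-down choice of paths produces each path $P$ needing $I(P)\le k/(\ell-1)$ choices with probability at least $N_R^{-I(P)}$, and these probabilities sum to at most one.
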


\begin{proof}
A simple microscopic terminal path cannot revisit a planted vertex or re-enter a two-terminal child after leaving it.
It therefore projects recursively to simple coarse terminal paths.
Its ordered encoding tree has $k$ leaves and between $\ell$ and $L$ children at each internal vertex.
There are at most $k-1$ internal vertices.
The number of such plane trees is exponential in $k$, and each internal label has only finitely many possible oriented rule paths.
This gives $C_R^k$ after increasing the constant.

For a fixed path of $k$ leaves, exponential Markov bounds $\Pp(\sum U<vk)$ by $(e^{\theta v}\E e^{-\theta U})^k$.
Sum over all paths and $k\ge\ell^n$.
If every path costs at least $v$ times its length, the minimum is at least $vd_{\mathfrak C_n}(\mathfrak v_+,\mathfrak v_-)$.
Applying the same calculation to $U-cV$ gives the paired assertion.
The coordinates of each pair can be dependent.
\end{proof}

\begin{lemma}\label{lem:chemical-compactness}
The eigenlaw has a stationary realisation measurable with respect to the original innovation tree.
There are deterministic finite $C,C'$ such that
\[
 \begin{gathered}
 1\le\E \bigl((\ell_{\mathfrak C}^{\mathrm{ch}})^{-n}d_{\mathfrak C_{n}}(\mathfrak v_+,\mathfrak v_-)\bigr)\le C,\qquad
 \sup_n\E e^{t\bigl((\ell_{\mathfrak C}^{\mathrm{ch}})^{-n}d_{\mathfrak C_{n}}(\mathfrak v_+,\mathfrak v_-)\bigr)}<\infty\ (t>0),\qquad
 \\
 X^{\mathrm{ch}}\le\liminf_n\bigl((\ell_{\mathfrak C}^{\mathrm{ch}})^{-n}d_{\mathfrak C_{n}}(\mathfrak v_+,\mathfrak v_-)\bigr)\le\limsup_n\bigl((\ell_{\mathfrak C}^{\mathrm{ch}})^{-n}d_{\mathfrak C_{n}}(\mathfrak v_+,\mathfrak v_-)\bigr)\le C'X^{\mathrm{ch}}
 \quad\text{almost surely}.
 \end{gathered}
\]
The sequence is relatively compact in this same $L^2$ probability space.
\end{lemma}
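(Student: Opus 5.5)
The plan is to transcribe the electrical arguments of Lemmas~\ref{lem:electrical-field}, \ref{lem:electrical-sandwich} and~\ref{lem:electrical-compactness} to the shortest-path functional $F_H^{\mathrm{ch}}$. Concavity of effective conductance has no analogue for a minimum of linear forms, so for the reverse comparison I will use the path-counting large-deviation bound of Lemma~\ref{lem:chemical-paths}.

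\textbf{Stationary realisation.} Start from the mean-one eigenlaw of Lemma~\ref{lem:chemical-moments}. Place independent eigenlaw values on the generation-$k$ boundary and apply the recursion upwards. This gives consistent finite-dimensional laws, and Kolmogorov's extension yields a field with $F^{\mathrm{ch}}_{H_u}(X^{\mathrm{ch}}_{ui})=\ell^{\mathrm{ch}}_{\mathfrak C}X^{\mathrm{ch}}_u$ and independent descendant subtrees.

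\textbf{Endogeny.} For a fixed outer path-core network $G$ with independent leaf costs $Y_i$, write $Z=F^{\mathrm{ch}}_G(Y)$. Replacing $Y_i$ by an independent copy $Y_i'$ raises $Z$ by at most $Y_i'\mathbf 1\{i\in P^*\}$, where $P^*$ is a fixed minimising path for $Y$. One-sided Efron--Stein then gives
\[
 \Var(Z\mid G)\le \E Y^2\,\E[|P^*|\mid G].
\]
Since $\sum_{i\in P^*}Y_i=Z$, we have $|P^*|\le Z\max_iY_i^{-1}$. H\"older over at most $m^k$ leaves, with the inverse moments of Lemma~\ref{lem:chemical-moments}, bounds the right side by $C\|Z\|_2m^{k/p}\|Y^{-1}\|_p$. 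With stationary leaves $Z=(\ell^{\mathrm{ch}}_{\mathfrak C})^kX^{\mathrm{ch}}_\varnothing$, and since $\ell^{\mathrm{ch}}_{\mathfrak C}>1$ one can choose $p$ with $m^{1/p}<\ell^{\mathrm{ch}}_{\mathfrak C}$. This yields
\[
 \E\Var(X^{\mathrm{ch}}_\varnothing\mid\mathcal F_k)\le C(m^{1/p}/\ell^{\mathrm{ch}}_{\mathfrak C})^k\to0,
\]
so $X^{\mathrm{ch}}_\varnothing$ is measurable with respect to the innovation tree. A countable intersection gives the same at every address.

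\textbf{Lower sandwich.} Because $F^{\mathrm{ch}}_{\mathcal H_n}$ is concave (Lemma~\ref{lem:chemical-path-functional}), conditional Jensen with mean-one leaves gives
\[
 \E[X^{\mathrm{ch}}_\varnothing\mid\mathcal F_n]\le(\ell^{\mathrm{ch}}_{\mathfrak C})^{-n}d_{\mathfrak C_n}(\mathfrak v_+,\mathfrak v_-).
\]
Taking expectations gives the lower mean bound $1$. Martingale convergence gives $X^{\mathrm{ch}}\le\liminf$.

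\textbf{Upper sandwich.} Since $X^{\mathrm{ch}}>0$, $\E e^{-\theta X^{\mathrm{ch}}}\to0$ as $\theta\to\infty$. Fix $\theta$ with $C_R\E e^{-\theta X^{\mathrm{ch}}}<1/2$, then $v>0$ small with $C_Re^{\theta v}\E e^{-\theta X^{\mathrm{ch}}}=:q<1$. Lemma~\ref{lem:chemical-paths}, applied to the stationary leaves at depth $n$, gives
\[
 \Pp\{(\ell^{\mathrm{ch}}_{\mathfrak C})^nX^{\mathrm{ch}}_\varnothing<v\,d_{\mathfrak C_n}(\mathfrak v_+,\mathfrak v_-)\}\le q^{\ell^n}/(1-q).
\]
This is summable, so Borel--Cantelli gives $\limsup\le C'X^{\mathrm{ch}}$ with $C'=1/v$.

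\textbf{Mean and exponential bounds.} For the mean upper bound, split on the same event. On the good event the normalised distance is at most $X^{\mathrm{ch}}/v$. On the bad event use the crude bound $d_{\mathfrak C_n}\le L^n$ from Lemma~\ref{lem:chemical-elementary-bounds}; since $(L/\ell^{\mathrm{ch}}_{\mathfrak C})^nq^{\ell^n}$ is summable and bounded, $C$ is uniform in $n$. The uniform exponential moments then follow from those of $d_{\mathfrak C_n}/\E d_{\mathfrak C_n}$ in Lemma~\ref{lem:chemical-moments}, because $\E\bigl((\ell^{\mathrm{ch}}_{\mathfrak C})^{-n}d_{\mathfrak C_n}\bigr)\in[1,C]$.

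\textbf{Relative compactness.} As in Lemma~\ref{lem:electrical-compactness}, expose $\mathcal F_k$ and place independent copies of the depth-$(n-k)$ normalised distances at its leaves. The Efron--Stein estimate above, now using the uniform positive and inverse moments of those copies, gives
\[
 \sup_{n\ge k}\E\Var\bigl((\ell^{\mathrm{ch}}_{\mathfrak C})^{-n}d_{\mathfrak C_n}(\mathfrak v_+,\mathfrak v_-)\mid\mathcal F_k\bigr)\le C\theta_0^k
\]
for some $\theta_0<1$. The conditional expectations live on finitely many atoms of $\mathcal F_k$ and are $L^2$-bounded, so they lie in a compact subset of a finite-dimensional space. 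A diagonal argument then gives relative compactness in $L^2$.

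\textbf{Main obstacle.} I expect the upper sandwich to be the main difficulty, since it must replace conductance duality. Its delicate points are choosing $\theta$ and $v$ so that the path count $C_R^k$ is beaten, and making the summable bad-event tail uniform for the mean bound. If the crude $L^n$ bound proves too weak there, I would instead use Cauchy--Schwarz with the uniform second moments of Lemma~\ref{lem:chemical-moments}.
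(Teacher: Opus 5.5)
Your proposal is correct and follows the paper's proof essentially step for step. The paper uses the same ingredients: Kolmogorov extension of the recursion; endogeny via the one-sided Efron--Stein bound with $|P^*|\le Z\max_iY_i^{-1}$ and $m^{1/p}<\ell^{\mathrm{ch}}_{\mathfrak C}$; conditional concavity for the lower sandwich; Lemma~\ref{lem:chemical-paths} together with the crude $L^n$ bound for the upper sandwich and the uniform mean; and the finite-atom conditional-variance approximation for relative compactness. The paper additionally records the by-product $\ell^{\mathrm{ch}}_{\mathfrak C}\ge\ell$, which follows from the bounded means and $\E d_{\mathfrak C_n}(\mathfrak v_+,\mathfrak v_-)\ge\ell^n$ and is used in Theorem~\ref{thm:chemical-field}.
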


\begin{proof}
Sample independent eigenlaw variables at a finite boundary and recurse upwards.
The eigenlaw identity makes the finite-dimensional distributions consistent.
Kolmogorov extension gives a stationary recursive field satisfying $F_{H_u}^{\mathrm{ch}}(X_{ui}^{\mathrm{ch}})=\ell_{\mathfrak C}^{\mathrm{ch}} X_u^{\mathrm{ch}}$, with independent descendant subtrees.
We now remove its possible extra boundary randomness.

Let $\mathcal F_k$ be the first $k$ innovation generations.
For a fixed outer graph and independent positive costs $Y_i$, put $Z=F_{\mathcal H_k}^{\mathrm{ch}}(Y)$ and choose a geodesic $P(Y)$ by deterministic tie-breaking.
Replacing $Y_i$ by an independent $Y_i'$ increases $Z$ by at most $\mathbf1_{\{i\in P(Y)\}}Y_i'$.
The one-sided Efron--Stein inequality gives
\[
 \E\Var(Z\mid\mathcal F_k)\le \E Y^2\,\E|P(Y)|.
\]
Since $|P(Y)|\le Z\max_iY_i^{-1}$ and there are at most $m^k$ leaves, Cauchy--Schwarz and a maximum bound give
\[
 \E|P(Y)|\le\|Z\|_2m^{k/p}\|Y^{-1}\|_p\qquad(p\ge2).
\]
With $Y=X^{\mathrm{ch}}$ the output is $(\ell_{\mathfrak C}^{\mathrm{ch}})^kX^{\mathrm{ch}}$.
Choose $p$ with $m^{1/p}<\ell_{\mathfrak C}^{\mathrm{ch}}$.
Then
\begin{equation}\label{eq:chemical-endogeny}
 \E\Var(X^{\mathrm{ch}}\mid\mathcal F_k)\le C_0(m^{1/p}/\ell_{\mathfrak C}^{\mathrm{ch}})^k.
\end{equation}
Thus $X^{\mathrm{ch}}$ is measurable with respect to all innovations.
The same holds at each address; this property is called endogeny \cite{AldousBandyopadhyay2005}.

Conditional concavity gives $\E[X^{\mathrm{ch}}\mid\mathcal F_n]\le \bigl((\ell_{\mathfrak C}^{\mathrm{ch}})^{-n}d_{\mathfrak C_{n}}(\mathfrak v_+,\mathfrak v_-)\bigr)$.
Martingale convergence implies $X^{\mathrm{ch}}\le\displaystyle \liminf \bigl((\ell_{\mathfrak C}^{\mathrm{ch}})^{-n}d_{\mathfrak C_{n}}(\mathfrak v_+,\mathfrak v_-)\bigr)$, and $\E \bigl((\ell_{\mathfrak C}^{\mathrm{ch}})^{-n}d_{\mathfrak C_{n}}(\mathfrak v_+,\mathfrak v_-)\bigr)\ge1$.
Choose $\theta,v>0$ with $r=C_Re^{\theta v}\E e^{-\theta X^{\mathrm{ch}}}<1$, using $X^{\mathrm{ch}}>0$.
The stationary identity and Lemma~\ref{lem:chemical-paths} give
\begin{equation}\label{eq:chemical-exception}
 \Pp(\bigl((\ell_{\mathfrak C}^{\mathrm{ch}})^{-n}d_{\mathfrak C_{n}}(\mathfrak v_+,\mathfrak v_-)\bigr)>X^{\mathrm{ch}}/v)\le C_1r^{\ell^n}.
\end{equation}
On this event use $\bigl((\ell_{\mathfrak C}^{\mathrm{ch}})^{-n}d_{\mathfrak C_{n}}(\mathfrak v_+,\mathfrak v_-)\bigr)\le(L/\ell_{\mathfrak C}^{\mathrm{ch}})^n$.
It follows that $\E \bigl((\ell_{\mathfrak C}^{\mathrm{ch}})^{-n}d_{\mathfrak C_{n}}(\mathfrak v_+,\mathfrak v_-)\bigr)\le v^{-1}+C_1(L/\ell_{\mathfrak C}^{\mathrm{ch}})^nr^{\ell^n}$, uniformly bounded.
Borel--Cantelli gives $\displaystyle \limsup \bigl((\ell_{\mathfrak C}^{\mathrm{ch}})^{-n}d_{\mathfrak C_{n}}(\mathfrak v_+,\mathfrak v_-)\bigr)\le X^{\mathrm{ch}}/v$.
Since $(\E d_{\mathfrak C_{n}}(\mathfrak v_+,\mathfrak v_-))\ge\ell^n$, bounded means also force $\ell_{\mathfrak C}^{\mathrm{ch}}\ge\ell$.
The uniform bound on $(\ell_{\mathfrak C}^{\mathrm{ch}})^{-n}\E d_{\mathfrak C_n}(\mathfrak v_+,\mathfrak v_-)$ therefore transfers the exponential bounds of Lemma~\ref{lem:chemical-moments} to the chemically rescaled terminal distances.

For $n\ge k$, condition on $\mathcal F_k$ and use independent inputs $Y=\bigl((\ell_{\mathfrak C}^{\mathrm{ch}})^{-{n-k}}d_{\mathfrak C_{n-k}}(\mathfrak v_+,\mathfrak v_-)\bigr)$.
Their positive and inverse moments are uniform, by the preceding mean bounds.
The output is $(\ell_{\mathfrak C}^{\mathrm{ch}})^k\bigl((\ell_{\mathfrak C}^{\mathrm{ch}})^{-n}d_{\mathfrak C_{n}}(\mathfrak v_+,\mathfrak v_-)\bigr)$, and the same variance estimate gives
\begin{equation}\label{eq:chemical-compactness}
 \sup_{n\ge k}\E\Var(\bigl((\ell_{\mathfrak C}^{\mathrm{ch}})^{-n}d_{\mathfrak C_{n}}(\mathfrak v_+,\mathfrak v_-)\bigr)\mid\mathcal F_k)\le C_2(m^{1/p}/\ell_{\mathfrak C}^{\mathrm{ch}})^k.
\end{equation}
For $n<k$ the conditional variance is zero.
Each $\mathcal F_k$ has finitely many atoms, so its bounded conditional expectations lie in a relatively compact finite-dimensional set.
Equation~\eqref{eq:chemical-compactness} approximates the whole sequence uniformly by these sets.
\end{proof}

For a nonnegative square-integrable function of the innovation tree, define $Tf:=(\ell_{\mathfrak C}^{\mathrm{ch}})^{-1}F_H^{\mathrm{ch}}(f_1,\ldots,f_m)$, where $f_i$ is the same function on the $i$th descendant tree.
Then $TX^{\mathrm{ch}}=X^{\mathrm{ch}}$ and $T^n1=\bigl((\ell_{\mathfrak C}^{\mathrm{ch}})^{-n}d_{\mathfrak C_{n}}(\mathfrak v_+,\mathfrak v_-)\bigr)$.

\begin{lemma}\label{lem:chemical-endpoint}
The $L^2$ omega-limit set $\mathcal O$ of $(\bigl((\ell_{\mathfrak C}^{\mathrm{ch}})^{-n}d_{\mathfrak C_{n}}(\mathfrak v_+,\mathfrak v_-)\bigr))$ contains $cX^{\mathrm{ch}}$ for a deterministic $c\ge1$.
\end{lemma}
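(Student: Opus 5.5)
The plan is to copy the electrical endpoint argument of Lemma~\ref{lem:electrical-compactness}, with the shortest-path functional $F_H^{\mathrm{ch}}$ in place of $F_H$. Every ingredient used there has a chemical counterpart already proved: monotonicity, homogeneity and superadditivity from Lemma~\ref{lem:chemical-path-functional}; $L^2$ relative compactness and the almost-sure sandwich $X^{\mathrm{ch}}\le\liminf\le\limsup\le C'X^{\mathrm{ch}}$ from Lemma~\ref{lem:chemical-compactness}; and the small-ball recursion with $g$ from Lemma~\ref{lem:chemical-moments}.

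\textbf{Step 1: properties of $\mathcal O$.} First, $\mathcal O$ is nonempty and compact by relative compactness. Since $T$ is $L^2$-continuous (Lipschitz bound and uniform moments) and $T^n1$ is the sequence itself, we get $T\mathcal O=\mathcal O$. Passing to almost-sure subsequences, every $h\in\mathcal O$ satisfies $X^{\mathrm{ch}}\le h\le C'X^{\mathrm{ch}}$. Set
\[
c:=\sup_{h\in\mathcal O}\esssup(h/X^{\mathrm{ch}})\in[1,C'].
\]
This $c$ is deterministic because $\mathcal O$ is.

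\textbf{Step 2: reduction to a uniform inverse-moment bound.} Suppose $cX^{\mathrm{ch}}\notin\mathcal O$. By compactness, the deficits $D_h:=cX^{\mathrm{ch}}-h\ge0$ then have $\E D_h$ bounded below uniformly, and their second moments are uniformly bounded. Paley--Zygmund gives $t_0,\varepsilon>0$ with $Q(t_0):=\sup_h\Pp(D_h\le t_0)\le1-\varepsilon$. Superadditivity and $TX^{\mathrm{ch}}=X^{\mathrm{ch}}$ give $D_{Th}\ge T D_h$. Choose predecessors $h'\in\mathcal O$ with $Th'=h$. A small weighted shortest path uses only small edges, and independent thinning yields
\[
Q(t)\le g\bigl(Q(\ell_{\mathfrak C}^{\mathrm{ch}}t)\bigr).
\]
Since $g(u)=O(u^\ell)$, iteration gives a deterministic $M$ with $\sup_h\E D_h^{-1}\le M$.

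\textbf{Step 3: the contradiction.} For fixed $h$ and $n$, pick $h_{-n}\in\mathcal O$ with $T^nh_{-n}=h$. Iterating the superadditivity bound gives
\[
D_h\ge(\ell_{\mathfrak C}^{\mathrm{ch}})^{-n}F^{\mathrm{ch}}_{\mathcal H_n}(D_{h_{-n}}).
\]
Conditional on $\mathcal F_n$, the leaf deficits are independent with reciprocal mean at most $M$.

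This is the step I expect to be the main obstacle, since the chemical functional has no conductance-concavity. I would replace that step with a deviation bound: apply Lemma~\ref{lem:chemical-paths} with $U=D_{h_{-n}}$. Exponential moments are available because $D\le cX^{\mathrm{ch}}$, and the small-ball control is uniform in $h$. Taking $v$ small, this gives, outside a summable exceptional probability $Cr^{\ell^n}$,
\[
F^{\mathrm{ch}}_{\mathcal H_n}(D)\ge v\,d_{\mathfrak C_n}(\mathfrak v_+,\mathfrak v_-).
\]
Borel--Cantelli then gives
\[
D_h\ge v\liminf_n(\ell_{\mathfrak C}^{\mathrm{ch}})^{-n}d_{\mathfrak C_n}(\mathfrak v_+,\mathfrak v_-)\ge vX^{\mathrm{ch}}
\]
almost surely. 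The constant $v$ depends only on the uniform bounds from Step 2, not on $h$. Hence $\esssup(h/X^{\mathrm{ch}})\le c-v$ for every $h$, contradicting the definition of $c$. Therefore $cX^{\mathrm{ch}}\in\mathcal O$ with $c\ge1$.

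The delicate point in Step 3 is uniformity. The exponential bound $C_Re^{\theta v}\E e^{-\theta D}<1$ must hold simultaneously over $h\in\mathcal O$. I would obtain it from the uniform bound $\Pp(D_h\le t)\le Q(t)\to0$ as $t\to0$, which Step 2 provides.
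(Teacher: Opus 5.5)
Your proposal is correct and follows essentially the same route as the paper. Both arguments use the same definition of $c$, the same Paley--Zygmund and thinning recursion $Q(t)\le g(Q(\ell_{\mathfrak C}^{\mathrm{ch}}t))$, and uniform Laplace-transform decay obtained from $Q(t)\to0$. Both then apply Lemma~\ref{lem:chemical-paths} with Borel--Cantelli to the predecessor deficits, giving $cX^{\mathrm{ch}}-h\ge vX^{\mathrm{ch}}$ with $v$ independent of $h$.

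Two parts of your write-up are unnecessary and can be dropped. The uniform inverse-moment bound $M$ in Step~2 is a leftover from the electrical argument and is never used. The remark about positive exponential moments is also not needed, since Lemma~\ref{lem:chemical-paths} only requires $\E e^{-\theta U}$ for $U\ge0$.
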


\begin{proof}
The Lipschitz bound makes $T$ continuous in $L^2$.
Compactness gives a nonempty compact $\mathcal O$ with $T\mathcal O=\mathcal O$; the reverse inclusion follows by taking convergent predecessor subsequences.
Each $h\in\mathcal O$ satisfies $X^{\mathrm{ch}}\le h\le C'X^{\mathrm{ch}}$ almost surely, by a further almost-surely convergent subsequence.
Let $c:=\sup_{h\in\mathcal O}\esssup(h/X^{\mathrm{ch}})$.

Suppose $cX^{\mathrm{ch}}\notin\mathcal O$ and consider the nonnegative differences $cX^{\mathrm{ch}}-h$.
Compactness gives $\delta>0$ such that $\inf_h\E (cX^{\mathrm{ch}}-h)\ge\delta$, since otherwise a convergent subsequence would have limit $cX^{\mathrm{ch}}$.
The second moments have a common finite bound $M$.
Paley--Zygmund gives $\sup_h\Pp((cX^{\mathrm{ch}}-h)\le\delta/2)\le1-\delta^2/(4M)<1$.
Superadditivity yields $(cX^{\mathrm{ch}}-Th)\ge T(cX^{\mathrm{ch}}-h)$.
With $Q(t):=\sup_h\Pp((cX^{\mathrm{ch}}-h)\le t)$, predecessor invariance and thinning give $Q(t)\le g(Q(\ell_{\mathfrak C}^{\mathrm{ch}} t))$.
Thus $\displaystyle\lim_{t\downarrow0}Q(t)=0$.
The inequality $\sup_h\E e^{-\theta (cX^{\mathrm{ch}}-h)}\le Q(t)+e^{-\theta t}$ proves uniform decay of these Laplace transforms as $\theta\to\infty$.
Choose $\theta,v>0$ with $C_Re^{\theta v}\sup_h\E e^{-\theta (cX^{\mathrm{ch}}-h)}<1$.

For fixed $h$, choose deterministic predecessor functions $h_{-n}\in\mathcal O$ with $T^nh_{-n}=h$.
Then $(cX^{\mathrm{ch}}-h)\ge T^n(cX^{\mathrm{ch}}-h_{-n})$.
Their leaf copies are independent and independent of the exposed outer graph.
The uniform path bound and Borel--Cantelli give $(cX^{\mathrm{ch}}-h)\ge v\bigl((\ell_{\mathfrak C}^{\mathrm{ch}})^{-n}d_{\mathfrak C_{n}}(\mathfrak v_+,\mathfrak v_-)\bigr)$ eventually, hence $(cX^{\mathrm{ch}}-h)\ge vX^{\mathrm{ch}}$.
The constant $v$ is common to all $h$.
Apply the conclusion on a countable dense subset and use closedness of the order inequality.
Every $h\in\mathcal O$ then obeys $h\le(c-v)X^{\mathrm{ch}}$, contradicting the definition of $c$.
\end{proof}

\begin{lemma}\label{lem:full-mesh}
For every live type, the supremum over all remaining depths of each $\ell_{\mathfrak C}^{\mathrm{ch}}$-normalised retained-component diameter has every finite positive moment.
Almost surely there are exponentially decreasing bounds $\Gamma_k$ with $\displaystyle\lim_{k\to\infty}\Gamma_k=0$ for all generation-$k$ intrinsic component diameters in $(\ell_{\mathfrak C}^{\mathrm{ch}})^{-n}d_{\mathfrak C_n}$, simultaneously for all $n\ge k$.
\end{lemma}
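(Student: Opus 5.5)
The plan is to follow the template of Lemma~\ref{lem:full-resistance-mesh}, with shortest-path functionals in place of effective resistances. At a connected address $u$ put
\[
V_u^{\mathrm{ch}}:=\sup_{r\ge0}(\ell_{\mathfrak C}^{\mathrm{ch}})^{-r}d_{\mathfrak C_{u,r}}(\mathfrak v_+,\mathfrak v_-).
\]
Lemma~\ref{lem:chemical-compactness} gives this supremum every positive moment. Indeed, the normalised distances are dominated by the Borel--Cantelli bound $X^{\mathrm{ch}}/v$ outside the exceptions \eqref{eq:chemical-exception}, and on those exceptions by $(L/\ell_{\mathfrak C}^{\mathrm{ch}})^n$. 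Since $r^{\ell^n}$ decays doubly exponentially, $\sum_n(L/\ell_{\mathfrak C}^{\mathrm{ch}})^{np}r^{\ell^n}<\infty$. Together with the exponential moments of $X^{\mathrm{ch}}$ from Lemma~\ref{lem:chemical-field-integrability}, this gives every moment of $V_u^{\mathrm{ch}}$.

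Next I would define the radius $H_{u,r}$: the maximum, over vertices of each retained component of the depth-$r$ cell at $u$, of the intrinsic chemical distance to the nearest available planted terminal. For a $b$-cell, each side is measured separately using its own one-arm mask. The seed radii are zero. One step of substitution gives the recursion
\[
H_{u,r+1}\le\max_{i\ \text{retained}}H_{ui,r}+\sum_{i:T_{ui}=c}d_{\mathfrak C_{ui,r}}(\mathfrak v_+,\mathfrak v_-).
\]
To see this, a vertex first reaches a terminal of its child cell. It then follows a simple active coarse route in the rule graph to a parent terminal, crossing at most $m$ connected children along their terminal geodesics. For $u$- and $b$-type parents, the active coarse route to the active terminal exists by the definition of the retained states in Definition~\ref{def:retained-states}. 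Unrolling the recursion and normalising yields
\[
\sup_r(\ell_{\mathfrak C}^{\mathrm{ch}})^{-r}H_{u,r}\le A_u:=m\sum_{j\ge1}(\ell_{\mathfrak C}^{\mathrm{ch}})^{-j}\max_{|w|=j,\ w\text{ below }u,\ T_w=c}V_w^{\mathrm{ch}}.
\]
Bounding the maximum of at most $m^j$ terms by their $p$-th power sum gives
\[
\|A_u\|_p\le mC_p\sum_j\bigl(m^{1/p}/\ell_{\mathfrak C}^{\mathrm{ch}}\bigr)^j,
\]
which is finite once $m^{1/p}<\ell_{\mathfrak C}^{\mathrm{ch}}$. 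Since $p$ is arbitrary, the first assertion follows with $G_u:=2A_u+\mathbf 1_{\{T_u=c\}}V_u^{\mathrm{ch}}$ bounding every intrinsic component diameter. The law of $G_u$ depends only on the live type $T_u$ by stationarity of the descendant trees.

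For the almost-sure mesh, the generation-$k$ diameters in $(\ell_{\mathfrak C}^{\mathrm{ch}})^{-n}d_{\mathfrak C_n}$ equal $(\ell_{\mathfrak C}^{\mathrm{ch}})^{-k}$ times the corresponding descendant quantities, and these are bounded by $G_u$ uniformly in $n\ge k$. Adding an exterior network can only shorten distances, so the intrinsic bound also controls the embedded diameters. A union bound over the at most $m^k$ frontier sockets and Markov's inequality at order $p$ give
\[
\Pp\Bigl((\ell_{\mathfrak C}^{\mathrm{ch}})^{-k}\max_{|u|=k}G_u>\eta^k\Bigr)\le C_p\bigl[m/(\ell_{\mathfrak C}^{\mathrm{ch}}\eta)^p\bigr]^k.
\]
For any $\eta\in(m^{1/p}/\ell_{\mathfrak C}^{\mathrm{ch}},1)$ this is summable, and Borel--Cantelli gives $\Gamma_k=C_\omega\eta^k$.

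The main obstacle is the recursion step for the non-crossing types, especially the two sides of a $b$-cell. There I must check that every retained vertex indeed has an intrinsic route to its own active terminal through retained children, using only connected children's terminal geodesics and child radii. I also need the dependence between sibling types to be harmless. It is, because the bound only uses maxima and sums over at most $m$ children, and the moment estimate never invokes independence between siblings, only stationarity of each descendant law.
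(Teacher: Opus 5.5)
Your proposal is correct and follows the paper's proof essentially step for step. The shared steps are: the all-moment bound for $V=\sup_n(\ell_{\mathfrak C}^{\mathrm{ch}})^{-n}d_{\mathfrak C_n}(\mathfrak v_+,\mathfrak v_-)$ from \eqref{eq:chemical-exception} and the deterministic bound $(L/\ell_{\mathfrak C}^{\mathrm{ch}})^n$; the radius recursion for $H_{u,r}$ through connected children; the majorant $A_u$ with $\|A_u\|_p\le m\|V\|_p\sum_{j\ge1}(m^{1/p}/\ell_{\mathfrak C}^{\mathrm{ch}})^j$, using only conditional marginal laws; the bound $G_u=2A_u+\mathbf 1_{\{T_u=c\}}V_u$; and the frontier union bound with Markov and Borel--Cantelli. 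The only cosmetic difference is in controlling $\E V^p$: you dominate $V$ by a maximum and sum $(L/\ell_{\mathfrak C}^{\mathrm{ch}})^{np}$ against the exception probabilities, whereas the paper applies Minkowski to a sum of indicators, and both are valid.
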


\begin{proof}
Equation~\eqref{eq:chemical-exception} and $\bigl((\ell_{\mathfrak C}^{\mathrm{ch}})^{-n}d_{\mathfrak C_{n}}(\mathfrak v_+,\mathfrak v_-)\bigr)\le(L/\ell_{\mathfrak C}^{\mathrm{ch}})^n$ give
\[
 V:=\sup_n\bigl((\ell_{\mathfrak C}^{\mathrm{ch}})^{-n}d_{\mathfrak C_{n}}(\mathfrak v_+,\mathfrak v_-)\bigr)\le X^{\mathrm{ch}}/v+\sum_{n\ge0}(L/\ell_{\mathfrak C}^{\mathrm{ch}})^n\mathbf1_{\{\bigl((\ell_{\mathfrak C}^{\mathrm{ch}})^{-n}d_{\mathfrak C_{n}}(\mathfrak v_+,\mathfrak v_-)\bigr)>X^{\mathrm{ch}}/v\}}.
\]
Minkowski and the bound $C_1r^{\ell^n}$ show that $V$ has all positive moments.

Let $H_{u,r}$ be the maximum distance to an available terminal in a vertex's own retained component.
For $c$, minimise over the two terminals; for $b$, take the larger of the separate one-root radii.
The seed radii are zero.
Reach an available child terminal and follow a simple active coarse route to an available parent terminal.
Only connected children provide through-routes.
This yields
\[
 H_{u,r+1}\le\max_iH_{ui,r}+\sum_{i:T_{ui}=c}d_{\mathfrak C_{ui,r}}(\mathfrak v_+,\mathfrak v_-).
\]
A possible crossing of the initial connected child is included in the sum.
Iteration gives
\[
 \sup_r(\ell_{\mathfrak C}^{\mathrm{ch}})^{-r}H_{u,r}
 \le A_u:=m\sum_{j\ge1}(\ell_{\mathfrak C}^{\mathrm{ch}})^{-j}
          \max_{\substack{|w|=j\text{ below }u\\T_w=c}}V_w.
\]
An empty maximum is zero.
Conditional marginal moment bounds and Minkowski give
$\|A_u\|_p\le m\|V\|_p\sum_{j\ge1}(m^{1/p}/\ell_{\mathfrak C}^{\mathrm{ch}})^j<\infty$ for arbitrarily large $p$.
Thus all positive orders are finite.
The variable $G_u=2A_u+\mathbf1_{\{T_u=c\}}V_u$ bounds the diameter supremum for each component.
No independence of sibling types is used.

Put $\Gamma_k:=(\ell_{\mathfrak C}^{\mathrm{ch}})^{-k}\max_{|u|=k,\;u\text{ live}}G_u$.
For $m^{1/p}/\ell_{\mathfrak C}^{\mathrm{ch}}<\eta<1$, Markov gives
$\Pp(\Gamma_k>\eta^k)\le C[m/(\ell_{\mathfrak C}^{\mathrm{ch}}\eta)^p]^k$.
Choose $p$ large and apply Borel--Cantelli.
The supremum over remaining depths was already taken in $G_u$, proving the required uniformity.
\end{proof}

\appendixsection{Heat-kernel estimates}\label{app:spectral}

The component geometry and natural resistance form are those of Section~\ref{sec:resistance}.
The following estimates establish the mass, localisation and resolvent inputs to Theorem~\ref{thm:heat-kernels}.

A generation-$k$ socket has unnormalised mass $\rho(\mathbf M_{\mathfrak C})^{-k}W_{T_u}^{(u)}$.
For its retained component $A$, write $W_c\mu(A)=\rho(\mathbf M_{\mathfrak C})^{-k}Y_A$.
The two masked masses of $b$ sum to its socket mass and retain their original dependence.
The intrinsic resistance diameter of $A$ is at most $\lambda_{\mathfrak C}^{-k}H_A$, where the component laws of $H_A$ have all positive moments by Lemma~\ref{lem:full-resistance-mesh}.
Let $\mathcal E_A^{\mathrm g}$ denote its local energy in global normalisation.

By \cite[Lemma~3.2]{NeroliIGSII2026}, $\rho(\mathbf M_{\mathfrak C})>d_R$ and the normalised retained vertex counts have uniformly bounded moments of every positive order.
The $L^2$ mass limits in \cite[proof of Lemma~3.3]{NeroliIGSII2026} and Fatou's lemma therefore give every positive moment of $W_\sigma$.
Masked masses are bounded by socket masses.

\begin{lemma}\label{lem:spectral-moments}
For each $\varepsilon>0$, almost surely,
\begin{equation}\label{eq:spectral-maxima}
 \max_{A\in\mathcal A_k}(Y_A+H_A)
 +\max_{\substack{|u|=k\\T_u=c}}X_u^{-1}
 \le C_{\omega,\varepsilon}e^{\varepsilon k}\qquad(k\ge0).
\end{equation}
An empty maximum is zero.
\end{lemma}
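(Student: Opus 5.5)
The plan is a union bound over the at most $O(m^k)$ frontier objects at generation $k$, combined with high moments, followed by Borel--Cantelli. Each of the three quantities $Y_A$, $H_A$ and $X_u^{-1}$ has a marginal law with every finite positive moment, uniformly over components of a given live type. This uniformity is what lets a polynomial count be absorbed into $e^{\varepsilon k}$.

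\begin{enumerate}
\item \emph{Moments.} For $Y_A$: the masked mass is bounded by the socket mass $W_{T_u}^{(u)}$. This is a copy of one of the three type limits $W_c,W_b,W_u$, each of which has every positive moment, as recorded just before the lemma from \cite[Lemma~3.2]{NeroliIGSII2026} and Fatou. For $H_A$: Lemma~\ref{lem:full-resistance-mesh} gives that the component variable $G_u$ has all positive moments for each live type, including both sides of $b$. For $X_u^{-1}$: Lemma~\ref{lem:electrical-eigenlaw} gives $\E X^{-p}<\infty$ for every $p$, and stationarity of the field (Lemma~\ref{lem:electrical-field}) makes each $X_u$ have the root law. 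Conditioning on $T_u=c$ only restricts which addresses are counted; it does not change the law of $X_u$, since $X_u$ depends only on the descendant subtree, while $T_u$ is determined by the ancestors. If one prefers not to use this, one can simply bound the maximum over all addresses of length $k$.
\item \emph{Union bound.} Fix $\varepsilon>0$ and choose $p$ with $p\varepsilon>\log m+1$. Since $|\mathcal A_k|\le 2m^k$ (at most two components per socket), Markov's inequality gives
\[
\Pp\Bigl(\max_{A\in\mathcal A_k}Y_A>e^{\varepsilon k}\Bigr)\le 2m^k\,C_p\,e^{-p\varepsilon k},
\]
and this is summable in $k$. The same estimate holds for $H_A$ and for $X_u^{-1}$. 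No independence among siblings is needed, because only marginal moments enter.
\item \emph{Conclusion.} Borel--Cantelli shows that each maximum is at most $e^{\varepsilon k}$ for all $k\ge k_0(\omega)$. Finitely many initial $k$ contribute finite random values, which are absorbed into $C_{\omega,\varepsilon}$. The convention that an empty maximum is zero handles generations with no state-$c$ cells.
\end{enumerate}

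The main point needing care is the uniformity of the moment bound for $H_A$ across component types, particularly the one-sided components of a $b$-socket with their inherited masks. Here I would rely on the statement of Lemma~\ref{lem:full-resistance-mesh} that each component's normalised diameter supremum has every positive moment, with constants depending only on the finitely many live types.
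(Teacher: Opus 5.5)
Your proposal is correct and is essentially the paper's proof: a union bound over at most $2m^k$ generation-$k$ components (or addresses), marginal $p$-th moments of $Y_A$, $H_A$ and $X_u^{-1}$, a choice of $p$ with $p\varepsilon>\log m$, and Borel--Cantelli. The paper also intersects over rational $\varepsilon$ so that the bound holds for all $\varepsilon$ at once; this is not needed for the statement at fixed $\varepsilon$. One wording slip: the frontier count $m^k$ is exponential, not polynomial, which is why $p$ must be taken of order $1/\varepsilon$.
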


\begin{proof}
There are at most $m^k$ sockets and two components per socket.
For each variable in \eqref{eq:spectral-maxima}, a moment bound gives frontier exceedance probability at most $C_pm^ke^{-p\varepsilon k}$.
Choose $p\varepsilon>\log m$ and apply Borel--Cantelli.
An intersection over positive rational $\varepsilon$ proves the assertion.
\end{proof}

For $A\in\mathcal A_k$, let $C_{k,n}(A)$ be the union of its generation-$(k+n)$ descendants meeting its planted boundary.
Set $C_{k,n}:=\bigcup_{A\in\mathcal A_k}C_{k,n}(A)$, using inherited masks throughout.

\begin{lemma}\label{lem:collar-expected-mass}
There is $C<\infty$ with $\E[W_c\mu(C_{k,n})]\le C(d_R/\rho(\mathbf M_{\mathfrak C}))^n$ for all $k,n$.
\end{lemma}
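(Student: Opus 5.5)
The plan is to reduce the collar mass to a first-moment count in the multitype branching tree, and to count only those descendants that touch the planted boundary. Descendants that meet a planted vertex $a$ of $A$ form a chain. At each substitution step, a cell incident to $a$ has, among its children, only those edges of the rule incident to the corresponding terminal. There are exactly $d_R=\deg_R(\beta^\pm)$ of them.

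Hence, for each $A\in\mathcal A_k$ and each of its at most two planted vertices, the number of generation-$(k+n)$ descendants containing that vertex is at most $d_R^n$ deterministically. Since $C_{k,n}(A)$ is the union of these descendants, its mass is
\[
 W_c\mu(C_{k,n}(A))=\rho(\mathbf M_{\mathfrak C})^{-(k+n)}\sum_{D}Y_D ,
\]
with $D$ ranging over at most $2d_R^n$ retained components. Here each $Y_D$ is the masked mass limit of that descendant, with the inherited mask for sides of $b$.

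Next I take expectations, conditioning on the first $k+n$ generations of innovations $\mathcal F_{k+n}$. The identity of the boundary-touching descendants and their states is $\mathcal F_{k+n}$-measurable. The descendant mass limits $W^{(u)}_{T_u}$ are conditionally distributed as the root mass limits for the given state. Masked masses are bounded by socket masses, and every $W_\sigma$ has finite mean by the moment remark preceding Lemma~\ref{lem:spectral-moments}. So each conditional expectation is at most $\max_\sigma\E W_\sigma=:w<\infty$, and therefore
\[
 \E[W_c\mu(C_{k,n}(A))\mid\mathcal F_{k+n}]\le 2w\,d_R^n\rho(\mathbf M_{\mathfrak C})^{-(k+n)}.
\]

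Summing over $A\in\mathcal A_k$ requires the expected number of live generation-$k$ components. This is $\E|\mathbf Z_k|\le C\rho(\mathbf M_{\mathfrak C})^k$ by primitivity of $\mathbf M_{\mathfrak C}$ (Perron--Frobenius), with at most two components per socket. Taking the outer expectation gives $\E[W_c\mu(C_{k,n})]\le C(d_R/\rho(\mathbf M_{\mathfrak C}))^n$, using the union bound in place of exact additivity.

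The main obstacle is justifying that the collar descendants really number at most $d_R^n$ per boundary point, including dangling and noncrossing components. Two facts are needed here. First, under the masked and retained-state representation, a retained descendant meeting the planted vertex must be a child edge incident to that terminal. Second, shared frontier vertices counted in several components only cause overcounting, which is harmless for an upper bound. I would check this directly from Definition~\ref{def:retained-states}: the planting identification places the parent terminal only at the endpoints of the $d_R$ terminal edges of the rule. Consequently, any component containing it lies in one of those children, and the count follows by induction on $n$.
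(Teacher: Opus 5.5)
Your proposal is correct and follows the paper's proof essentially step for step: at most $2d_R^n$ boundary-touching descendant sockets per component, conditional continuation means bounded by $\max_\sigma h_\sigma$, $\E|\mathbf Z_k|\le C\rho(\mathbf M_{\mathfrak C})^k$, and subadditivity. The one thing to fix is that your display for $W_c\mu(C_{k,n}(A))$ should be an inequality rather than an equality: additivity of mass at shared planted vertices is only derived afterwards, in Lemma~\ref{lem:skeleton-zero-mass}, from this very bound, as your closing remark about the union bound already suggests.
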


\begin{proof}
Each substitution multiplies the geometric socket degree at a planted vertex by $d_R$.
At most $2d_R^n$ descendant sockets contribute to the collar of one component.
For a side of $b$ its mass is bounded by the whole socket mass.
Conditional expected continuation masses are at most $\max_\sigma h_\sigma$, while $\E|\mathbf Z_k|\le C_0\rho(\mathbf M_{\mathfrak C})^k$.
Subadditivity gives
\begin{equation}\label{eq:collar-mass}
 \E[W_c\mu(C_{k,n})]\le C_1\rho(\mathbf M_{\mathfrak C})^kd_R^n\rho(\mathbf M_{\mathfrak C})^{-k-n}
 =C_1(d_R/\rho(\mathbf M_{\mathfrak C}))^n.
\end{equation}
This does not require independent $b$ sides or prior mass additivity at boundaries.
\end{proof}

\begin{lemma}\label{lem:skeleton-zero-mass}
Almost surely, $\mu(V_*)=0$.
\end{lemma}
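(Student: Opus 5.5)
The idea is that every skeleton point sits inside the collar of each generation-$k$ component that has it as a planted vertex, and the collar mass decays geometrically. Since $V_*=\bigcup_kV_k$ is countable, it suffices to show $\mu(\{z\})=0$ for every $z\in V_*$. Equivalently, I will show $\mu(V_k)=0$ for each fixed $k$, and then take a countable union.

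Fix $k$ and a vertex $z\in V_k$. For every $j\ge k$, the point $z$ is a planted vertex of each generation-$j$ component containing it. This holds because a vertex born by generation $k$ is never interior to a later cell. Hence for every $n\ge0$ the point $z$ lies in $C_{j,n}(A)$ for any $A\in\mathcal A_j$ with $z\in\partial A$, and therefore $V_j\subseteq C_{j,n}$ in the limiting space. Applying this with $j=k$ gives
\[
 W_c\,\mu(V_k)\le W_c\,\mu(C_{k,n})\qquad(n\ge0).
\]
Here $\mu$ of a collar means the limiting mass of the finite union of descendant components it contains. That mass is well defined by Lemma~\ref{lem:mass-perron}, and by the Prokhorov construction of Theorem~\ref{thm:full-ghp} it dominates $\mu$ of the closed set it covers. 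Taking expectations and using Lemma~\ref{lem:collar-expected-mass},
\[
 \E[W_c\,\mu(V_k)]\le C\,(d_R/\rho(\mathbf M_{\mathfrak C}))^n .
\]
Since $d_R<\rho(\mathbf M_{\mathfrak C})$ by \cite[Lemma~3.2]{NeroliIGSII2026}, the right side tends to zero as $n\to\infty$. Hence $W_c\mu(V_k)=0$ almost surely. Lemma~\ref{lem:population-limit} gives $W_c>0$, so $\mu(V_k)=0$ almost surely. Intersecting over $k$ gives $\mu(V_*)=0$.

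The step I expect to need the most care is the inequality $\mu(V_k)\le\mu(C_{k,n})$ for the limiting measure, which has to be read off from how $\mu$ was built. The measure $\mu$ was defined as the weak limit of atomic measures projected to anchors, not directly as a measure on components. I would first check that the closure of each retained component $A$ satisfies $\mu(\overline A)\le W_c^{-1}\rho(\mathbf M_{\mathfrak C})^{-|A|}Y_A$, where $|A|$ is the generation. For this I would use closed-set upper semicontinuity under the Prokhorov limit, together with the vanishing allocation error at the finitely many shared boundary vertices. I would then apply this bound to the finitely many descendants forming $C_{k,n}$; this works since each point of $V_k$ lies in the closure of some collar component by Lemma~\ref{lem:cell-localisation}. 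Subadditivity then gives the bound without needing any mass additivity at shared boundaries, in line with the remark after \eqref{eq:collar-mass}.
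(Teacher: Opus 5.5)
Your argument is the paper's own: each $z\in V_k$ lies in $C_{k,n}$ for every $n$, \eqref{eq:collar-mass} with $d_R<\rho(\mathbf M_{\mathfrak C})$ forces $\E[W_c\mu(V_k)]=0$, and $W_c>0$ with a countable union finishes. The paper takes the comparison of $\mu(V_k)$ with the limiting collar mass for granted. The justification you sketch for it, however, uses the inequality in the wrong direction and needs to be replaced.

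Weak convergence of the anchored atomic measures $\mu_j\to\mu$ from Theorem~\ref{thm:full-ghp} gives $\limsup_j\mu_j(F)\le\mu(F)$ for closed $F$, which is a \emph{lower} bound on $\mu(F)$. Since every descendant anchor of $A$ lies in $A$, it yields $\mu(\overline A)\ge W_c^{-1}\rho(\mathbf M_{\mathfrak C})^{-|A|}Y_A$, not the reverse. An upper bound on $\mu(\overline A)$ has to exclude mass of neighbouring components piling up at the shared planted vertices in the limit. That is exactly the atom the lemma rules out, and the vanishing finite-level allocation error says nothing about it.

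Work with open sets instead. For $z\in V_k$, the generation-$(k+n)$ components not containing $z$ are finitely many compact sets (Lemma~\ref{lem:cell-localisation}), so they lie at distance at least some $r>0$ from $z$. Let $U$ be the open $d_{\mathrm{ch}}$-ball of radius $r$ about $z$. For $j\ge k+n$, each generation-$j$ component and its anchor lie in its generation-$(k+n)$ ancestor, so every anchor in $U$ belongs to a descendant of a component containing $z$. The open-set half of the portmanteau theorem then gives $\mu(\{z\})\le\mu(U)\le\liminf_j\mu_j(U)$. By compatibility, this is at most the total limiting mass of the generation-$(k+n)$ components containing $z$. These all lie in $C_{k,n}$, and each contains at most two points of $V_k$ (its planted vertices). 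Summing over the finitely many $z\in V_k$ therefore gives $\mu(V_k)\le2\mu(C_{k,n})$ in your component-mass sense, and the rest of your proof goes through unchanged.
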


\begin{proof}
Every retained vertex born by generation $k$ lies in $C_{k,n}$ for every $n$.
Since $\rho(\mathbf M_{\mathfrak C})>d_R$, \eqref{eq:collar-mass} gives $\E[W_c\mu(V_k)]=0$.
Countably many generations and $W_c>0$ give $\mu(V_*)=0$.
Components meet only at planted vertices, so their masses are additive and the containing component is unique outside $V_*$.
\end{proof}

\begin{lemma}\label{lem:spectral-collars}
On one event of probability one, for $\mu$-almost every $x$, its unique generation-$k$ component $A_k(x)$ has $\mu(A_k(x))=\rho(\mathbf M_{\mathfrak C})^{-k}e^{o(k)}$.
For each positive rational $\eta$, such points lie outside $C_{k,\lceil\eta k\rceil}$ eventually.
\end{lemma}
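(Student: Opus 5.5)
We prove the two assertions of Lemma~\ref{lem:spectral-collars} separately and then intersect over a countable family of events. The mass statement is handled first. Fix rational $\varepsilon>0$. The upper bound $\mu(A_k(x))\le\rho(\mathbf M_{\mathfrak C})^{-k}e^{\varepsilon k}$ holds for every $x$ and every large $k$. Indeed, $W_c\mu(A)=\rho(\mathbf M_{\mathfrak C})^{-k}Y_A$, and Lemma~\ref{lem:spectral-moments} gives $\max_{A\in\mathcal A_k}Y_A\le C_{\omega,\varepsilon}e^{\varepsilon k}$, while $W_c>0$ by Lemma~\ref{lem:population-limit}.

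For the lower bound we use a mass-counting argument. Let $B_k$ be the union of the components $A\in\mathcal A_k$ with $\mu(A)\le\rho(\mathbf M_{\mathfrak C})^{-k}e^{-\varepsilon k}$. There are at most $2m^k$ such components, but this crude count is too weak. We instead use the almost-sure population bound $|\mathbf Z_k|\le C_\omega\rho(\mathbf M_{\mathfrak C})^k$ from Lemma~\ref{lem:population-limit}, and bound the number of live components by a constant times $|\mathbf Z_k|$. This gives
\[
\mu(B_k)\le 2C_\omega\rho(\mathbf M_{\mathfrak C})^{k}\rho(\mathbf M_{\mathfrak C})^{-k}e^{-\varepsilon k}=2C_\omega e^{-\varepsilon k}.
\]
This is summable, so Borel--Cantelli with respect to $\mu$ (for a fixed environment) shows that $\mu$-almost every $x$ lies outside $B_k$ eventually. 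By Lemma~\ref{lem:skeleton-zero-mass}, the component $A_k(x)$ is unique off the $\mu$-null set $V_*$. Intersecting over rational $\varepsilon$ then gives $\mu(A_k(x))=\rho(\mathbf M_{\mathfrak C})^{-k}e^{o(k)}$.

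For the collar statement we fix rational $\eta>0$ and set $n=\lceil\eta k\rceil$. Lemma~\ref{lem:collar-expected-mass} gives
\[
\E[W_c\mu(C_{k,n})]\le C(d_R/\rho(\mathbf M_{\mathfrak C}))^{\lceil\eta k\rceil},
\]
and this decays geometrically in $k$ because $\rho(\mathbf M_{\mathfrak C})>d_R$. Hence $\E\sum_k W_c\mu(C_{k,\lceil\eta k\rceil})<\infty$, so almost surely $\sum_k\mu(C_{k,\lceil\eta k\rceil})<\infty$, again using $W_c>0$. For such an environment, Borel--Cantelli under $\mu$ shows that $\mu$-almost every $x$ belongs to only finitely many of the sets $C_{k,\lceil\eta k\rceil}$. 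We then intersect over the countably many rational $\eta$, together with the mass event above, to obtain a single environment event of probability one and, on it, a single full-$\mu$-measure set of points.

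The main obstacle is the lower mass bound. It requires controlling the number of retained generation-$k$ components by $O_\omega(\rho(\mathbf M_{\mathfrak C})^k)$ rather than by $m^k$; the population limit of Lemma~\ref{lem:population-limit} supplies exactly this. A second point needs care: both sides of a $b$-socket must be counted while keeping their inherited masks. This only doubles the count and does not affect summability. A remaining subtlety is measurability of $x\mapsto A_k(x)$. This holds because the components are finitely many compact sets meeting only in $V_*$, so $A_k(x)$ is determined off a $\mu$-null set.
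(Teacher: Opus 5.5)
Your proof is correct and follows essentially the same route as the paper: the count of at most $2|\mathbf Z_k|$ components times the small-mass threshold gives the lower mass bound, the maxima bound \eqref{eq:spectral-maxima} with $W_c>0$ gives the upper bound, and the summable expected collar mass with Tonelli, division by $W_c$ and measure Borel--Cantelli gives collar avoidance, all on a countable intersection over rational $\varepsilon,\eta$. The only difference is cosmetic: you bound the component count pathwise via the almost-sure limit of $\rho(\mathbf M_{\mathfrak C})^{-k}\mathbf Z_k$, whereas the paper bounds $\E[W_c\mu(B_k(\varepsilon))]$ through $\E|\mathbf Z_k|$ and then uses Tonelli; both yield almost-sure summability of $\mu(B_k)$ without any negative moment of the branching mass.
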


\begin{proof}
The union $B_k(\varepsilon)$ of components with $Y_A<e^{-\varepsilon k}$ has
\[
 \E[W_c\mu(B_k(\varepsilon))]
 \le\rho(\mathbf M_{\mathfrak C})^{-k}e^{-\varepsilon k}\E[2|\mathbf Z_k|]
 \le C_2e^{-\varepsilon k}.
\]
Tonelli and division by $W_c>0$ give summability of $\mu(B_k(\varepsilon))$ almost surely.
Measure Borel--Cantelli implies eventual avoidance at almost every point.
Together with \eqref{eq:spectral-maxima} and countably many rational $\varepsilon$, this proves the mass asymptotic.
No negative moment of the branching mass is used.

For fixed rational $\eta>0$, \eqref{eq:collar-mass} with $n=\lceil\eta k\rceil$ is summable over $k$.
The same argument proves eventual collar avoidance.
\end{proof}

\begin{lemma}\label{lem:heat-kernel-continuity}
The heat kernel is jointly continuous.
\end{lemma}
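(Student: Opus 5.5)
The plan is to derive joint continuity from the compact resistance-form structure via the standard ultracontractivity route.

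By Theorem~\ref{thm:resistance-metric}, $(\mathfrak C,R_*)$ is a compact resistance space. By Lemma~\ref{lem:degree-measure} and Section~\ref{sec:geometry}, $\mu$ is a full-support probability measure. By Theorem~\ref{thm:full-resistance-form}, the form $(\mathcal E,\mathcal D)$ is regular on $L^2(\mu)$. The basic input is the resistance inequality
\[
|f(x)-f(y)|^2\le R_*(x,y)\,\mathcal E(f),\qquad f\in\mathcal D.
\]
Combined with the bound $\diam_{R_*}\mathfrak C=:D<\infty$ and $\mu(\mathfrak C)=1$, it gives the Nash-type estimate
\[
\|f\|_\infty^2\le 2\|f\|_2^2+2D\,\mathcal E(f).
\]
To see this, pick $y$ with $|f(y)|\le\|f\|_2$, which is possible because $\mu$ is a probability measure.

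First I would show that the semigroup is ultracontractive. Apply the estimate to $f=P_tg$ and use the spectral bounds $\mathcal E(P_tg)\le \|g\|_2^2/(et)$ and $\|P_tg\|_2\le\|g\|_2$. This gives
\[
\|P_t\|_{2\to\infty}^2\le 2+2D/(et).
\]
By duality and the semigroup property, $\|P_t\|_{1\to\infty}\le\|P_{t/2}\|_{2\to\infty}^2<\infty$. Therefore $P_t$ has a bounded kernel $p_t$ for every $t>0$.

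Second, I would construct the kernel explicitly. Compactness of the resistance space makes the embedding $\mathcal D\hookrightarrow C(\mathfrak C)$ compact, by the Hölder-type bound above and Arzelà--Ascoli. Hence the generator has discrete spectrum $0=\lambda_0\le\lambda_1\le\cdots$ with eigenfunctions $\varphi_j\in\mathcal D\subset C(\mathfrak C)$. I would then define
\[
p_t(x,y)=\sum_j e^{-\lambda_jt}\varphi_j(x)\varphi_j(y).
\]
The eigenfunction identity $\varphi_j=e^{\lambda_j t}P_t\varphi_j$ gives $\|\varphi_j\|_\infty\le e^{\lambda_jt}\|P_t\|_{2\to\infty}$, and the resistance inequality gives $\mathcal E(\varphi_j)=\lambda_j$.

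Third, I would prove uniform convergence of the series on $[t_0,\infty)\times\mathfrak C\times\mathfrak C$. The terms are bounded by
\[
e^{-\lambda_jt}\|\varphi_j\|_\infty^2\le e^{-\lambda_j t/2}\,C(t_0).
\]
Summability of $\sum_j e^{-\lambda_jt/2}$ follows because this sum is the trace $\int p_{t/2}(x,x)\,\mu(dx)\le\|P_{t/4}\|_{2\to\infty}^2$. Each term is continuous, so $p_t(x,y)$ is jointly continuous in $(t,x,y)$ on $(0,\infty)\times\mathfrak C^2$. The kernel is continuous in both metrics, since $R_*$ and $d_{\mathrm{ch}}$ induce the same topology by Theorem~\ref{thm:resistance-metric}.

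The main obstacle is the trace-class bound that justifies uniform convergence. I would obtain it from the Hilbert--Schmidt norm estimate $\|P_{t/4}\|_{HS}^2=\int\!\!\int p_{t/4}^2\le\|P_{t/8}\|_{1\to\infty}$ together with finiteness of $\mu$. Alternatively, I would cite the general result for compact resistance forms with finite measure, Kigami's memoir \cite{Kigami2012}, which gives joint continuity directly.
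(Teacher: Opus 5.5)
Your proof is correct, but after the common start it follows a different route from the paper's. Both arguments begin with the averaged resistance inequality $|f(x)|^2\le 2\|f\|_2^2+2D\,\mathcal E(f)$. Both then use Arzel\`a--Ascoli to get discrete spectrum with continuous eigenfunctions.

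The paper then avoids the semigroup entirely. Boundedness of point evaluation on $(\mathcal D,\mathcal E+\|\cdot\|_2^2)$ is exactly the statement $\sum_j\phi_j(x)^2/(1+\zeta_j)\le C$ uniformly in $x$; this sum is the squared norm of the Riesz representer of evaluation at $x$. Hence, for $t\ge t_0$, the diagonal tail after index $n$ is at most $C\sup_{j>n}(1+\zeta_j)e^{-t_0\zeta_j}$. This tends to zero merely because $\zeta_j\to\infty$, and Cauchy--Schwarz handles the off-diagonal tails.

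You instead prove ultracontractivity, $\|P_t\|_{2\to\infty}^2\le 2+2D/(et)$. You then bound each eigenfunction by $\|\varphi_j\|_\infty\le e^{\lambda_j s}\|P_s\|_{2\to\infty}$ and run a Weierstrass M-test. This requires an extra ingredient, namely trace-class of $P_s$ (summability of $\sum_j e^{-\lambda_j s}$), which the paper's weighted-$\ell^2$ argument never needs. In exchange, you get quantitative byproducts: an on-diagonal bound $\sup_x p_t(x,x)\le C(1+1/t)$ and absolute uniform convergence of the eigenfunction series.

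Three small points.
\begin{itemize}
\item Writing the trace as $\int p_{t/2}(x,x)\,\mu(dx)$ before the continuous kernel has been constructed is circular as stated. Your Hilbert--Schmidt alternative repairs this. Another repair is to test $P_s$ against finite combinations $\sum_j a_j\varphi_j$ (using full support and continuity to pass from a.e.\ to pointwise equality). This gives $\sum_j e^{-2\lambda_j s}\varphi_j(x)^2\le\|P_s\|_{2\to\infty}^2$ for every $x$, and integrating yields the trace bound.
\item That pointwise estimate also lets you bound the diagonal tail by $\sup_{j>n}e^{-(t-2s)\lambda_j}\|P_s\|_{2\to\infty}^2$. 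This would remove the need for trace-class, just as in the paper.
\item The identity $\mathcal E(\varphi_j)=\lambda_j$ comes from the eigen-equation, not from the resistance inequality.
\end{itemize}
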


\begin{proof}
Averaging the resistance inequality gives
$|f(x)|^2\le2\|f\|_2^2+2\diam_{R_*}(\mathfrak C)\mathcal E(f)$.
A bounded set for the form norm is uniformly bounded and equicontinuous by
$|f(x)-f(y)|^2\le R_*(x,y)\mathcal E(f)$.
Arzelà--Ascoli gives compact form embedding into $L^2(\mu)$.
Let $(\zeta_j,\phi_j)$ be its spectral decomposition.
The eigenfunctions are continuous, $\displaystyle\lim_{j\to\infty}\zeta_j=\infty$, and bounded point evaluation gives
$\sum_j\phi_j(x)^2/(1+\zeta_j)\le C$ uniformly in $x$.
\textcolor{black}{For $t\ge t_0>0$, the diagonal tail after index $n$ is bounded uniformly in $x$ by $C\sup_{j>n}(1+\zeta_j)e^{-t_0\zeta_j}$.
This bound tends to zero as $n$ tends to infinity.}
Cauchy--Schwarz bounds off-diagonal tails.
Thus $\sum_je^{-t\zeta_j}\phi_j(x)\phi_j(y)$ defines a jointly continuous semigroup kernel.
\end{proof}

For $s>0$ and $x\in\mathfrak C$, define $G_s(x,x):=\int_0^\infty e^{-st}p_t(x,x)\,dt$.

\begin{lemma}\label{lem:resolvent-variational}
For $s>0$ and $x\in \mathfrak C$,
\begin{equation}\label{eq:resolvent-variational}
 G_s(x,x)=\sup_{f\in\mathcal D,\;f\ne0}
 \frac{f(x)^2}{\mathcal E(f)+s\|f\|_{L^2(\mu)}^2}.
\end{equation}
\end{lemma}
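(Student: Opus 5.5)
The plan is to use the spectral decomposition constructed in the proof of Lemma~\ref{lem:heat-kernel-continuity}. There we have eigenpairs $(\zeta_j,\phi_j)$ of the compactly embedded form, with $\phi_j$ continuous and orthonormal in $L^2(\mu)$, and the kernel $p_t(x,y)=\sum_je^{-t\zeta_j}\phi_j(x)\phi_j(y)$. For $t\ge t_0>0$ the series converges uniformly. Integrating in $t$ by Tonelli, since each diagonal term is nonnegative, gives
\[
 G_s(x,x)=\sum_j\frac{\phi_j(x)^2}{s+\zeta_j}.
\]
This sum is finite by the uniform bound $\sum_j\phi_j(x)^2/(1+\zeta_j)\le C$ after comparing $s+\zeta_j$ with $1+\zeta_j$.

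Next, identify this sum with the point-evaluation norm in the Hilbert space $(\mathcal D,\mathcal E_s)$, where $\mathcal E_s(f):=\mathcal E(f)+s\|f\|_2^2$. Point evaluation is a bounded functional on this space: averaging the resistance inequality gives $|f(x)|^2\le 2\|f\|_2^2+2\diam_{R_*}(\mathfrak C)\mathcal E(f)$, and this is bounded by a constant multiple of $\mathcal E_s(f)$. By Riesz there is $g_x^s\in\mathcal D$ with $\mathcal E_s(g_x^s,f)=f(x)$ for all $f\in\mathcal D$. Cauchy--Schwarz in $\mathcal E_s$ gives $f(x)^2\le\mathcal E_s(g_x^s)\mathcal E_s(f)$, with equality at $f=g_x^s$. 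Hence the supremum in \eqref{eq:resolvent-variational} equals $\mathcal E_s(g_x^s)=g_x^s(x)$.

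It remains to show $g_x^s(x)=\sum_j\phi_j(x)^2/(s+\zeta_j)$. The functions $\phi_j/\sqrt{s+\zeta_j}$ form an orthonormal basis of $(\mathcal D,\mathcal E_s)$; completeness holds because they are complete in $L^2(\mu)$ and $\mathcal D$ is the form domain of the generator with this spectral resolution. Expanding $g_x^s$ in this basis, its coefficients are $\mathcal E_s(g_x^s,\phi_j)/\sqrt{s+\zeta_j}=\phi_j(x)/\sqrt{s+\zeta_j}$. Therefore
\[
 g_x^s(x)=\mathcal E_s(g_x^s)=\sum_j\frac{\phi_j(x)^2}{s+\zeta_j},
\]
which proves the identity.

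The main obstacle is justifying the pointwise identities. The series for $g_x^s$ converges in $\mathcal E_s$, so by boundedness of point evaluation it also converges uniformly, and its value at $x$ is the pointwise sum. Near $t=0$ the kernel series is not uniformly controlled, so the interchange of $\int_0^\infty$ with the sum must go through monotone convergence on the nonnegative diagonal terms rather than through uniform convergence. Both points follow from the bounds already recorded in Lemma~\ref{lem:heat-kernel-continuity}.
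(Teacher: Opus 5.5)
Your proposal is correct and follows the paper's route. The supremum is the squared dual norm of point evaluation in $(\mathcal D,\mathcal E+s\|\cdot\|_2^2)$; the eigenbasis identifies it with $\sum_j\phi_j(x)^2/(s+\zeta_j)$, and Tonelli on the nonnegative diagonal series identifies that sum with the Laplace transform of $p_t(x,x)$. The one point you leave implicit, which the paper states explicitly, is that full support of $\mu$ together with continuity of elements of $\mathcal D$ makes $\mathcal E+s\|\cdot\|_2^2$ a genuine norm, so the denominator is positive for every nonzero $f$.
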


\begin{proof}
\textcolor{black}{For every $s>0$, $\mathcal E+s\|\cdot\|_2^2$ is the square of a norm equivalent to the form norm, and point evaluation is bounded for this norm.
Full support of $\mu$ makes the denominator in \eqref{eq:resolvent-variational} positive for every nonzero $f\in\mathcal D$.
The supremum is therefore the squared dual norm of point evaluation.
Spectral expansion identifies it with $\sum_j\phi_j(x)^2/(s+\zeta_j)$; Tonelli's theorem identifies this sum with the Laplace transform of the nonnegative diagonal heat series.}
\end{proof}

\begin{lemma}\label{lem:resolvent-component-bound}
For a positive-mass component $A$ containing $x$,
\begin{equation}\label{eq:resolvent-component}
 G_s(x,x)\le2\max\left\{\frac1{s\mu(A)},
               \diam^{\mathrm{intr}}_{R_*}(A)\right\}.
\end{equation}
\end{lemma}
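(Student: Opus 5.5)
The plan is to bound the variational formula \eqref{eq:resolvent-variational} of Lemma~\ref{lem:resolvent-variational} directly. Fix $f\in\mathcal D$ with $f(x)\ne0$; by homogeneity we may normalise $f(x)=1$. We then show that
\[
 \mathcal E(f)+s\|f\|_{L^2(\mu)}^2\ge\tfrac12\min\Bigl\{s\mu(A),\,\bigl(\diam^{\mathrm{intr}}_{R_*}(A)\bigr)^{-1}\Bigr\}.
\]
Inverting this bound gives \eqref{eq:resolvent-component}.

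First, lower-bound the energy of $f$ by its local energy on $A$. The exact decomposition $\mathcal E(f)=\sum_{A'}\mathcal E_{A'}^{\mathrm g}(f|_{A'})$ at the generation of $A$, recorded after Theorem~\ref{thm:full-resistance-form}, gives $\mathcal E(f)\ge\mathcal E_A^{\mathrm g}(f|_A)$. The intrinsic resistance inequality in $A$, which uses exactly this local energy, then gives for every $y\in A$
\[
 |f(x)-f(y)|^2\le\diam^{\mathrm{intr}}_{R_*}(A)\,\mathcal E_A^{\mathrm g}(f|_A)\le D\,\mathcal E(f),
 \qquad D:=\diam^{\mathrm{intr}}_{R_*}(A).
\]
We work on the closure of the intrinsic skeleton, and use continuity of $f$ together with Lemma~\ref{lem:cell-localisation} to pass to all of $A$.

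Next, split into two cases according to the size of $D\,\mathcal E(f)$.
\begin{itemize}
\item If $D\,\mathcal E(f)\ge1/4$, then $\mathcal E(f)\ge1/(4D)$ and the denominator is already at least $1/(4D)$.
\item Otherwise $|f(y)-1|\le1/2$ on all of $A$, so $f\ge1/2$ on $A$ and $s\|f\|_2^2\ge s\mu(A)/4$.
\end{itemize}
In either case the denominator is at least $\tfrac14\min\{s\mu(A),1/D\}$. This yields $G_s(x,x)\le4\max\{1/(s\mu(A)),D\}$.

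To obtain the stated constant $2$, optimise the threshold instead of fixing it at $1/4$. With $a=D\,\mathcal E(f)$ we have $f\ge(1-\sqrt a)_+$ on $A$, so the denominator is at least $a/D+s\mu(A)(1-\sqrt a)_+^2$. Writing $M=\max\{1/(s\mu(A)),D\}$, this is at least $M^{-1}\bigl(a+(1-\sqrt a)^2\bigr)$. The minimum of $a+(1-\sqrt a)^2$ over $a\in[0,1]$ is $1/2$, attained at $\sqrt a=1/2$. Hence the denominator is at least $1/(2M)$, which gives exactly the factor $2$. If this elementary optimisation fails to deliver $2$, any absolute constant suffices for the subsequent use in Theorem~\ref{thm:heat-kernels}, since only $e^{o(k)}$ precision is needed there.

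The main obstacle is justifying the localised resistance inequality. We need $|f(x)-f(y)|^2\le D\,\mathcal E_A^{\mathrm g}(f|_A)$ for arbitrary $f\in\mathcal D$, not only for functions on the finite skeleton, and with $D$ the intrinsic rather than the ambient diameter. The plan is to verify it on each finite trace $\Tr_{V_k\cap A}$: there the inequality is the finite-network resistance bound inside the component's own network. We then pass to the limit using the monotone convergence of finite energies, the convergence of intrinsic resistances within $A$ noted at the end of Section~\ref{sec:resistance}, and density of skeleton points together with continuity of $f$.
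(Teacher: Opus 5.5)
Your proposal is correct and is essentially the paper's argument: the intrinsic resistance inequality in $A$ with the local energy $\mathcal E_A^{\mathrm g}$, the bound of local energy by global energy, and $\mu(A)$ controlling the $L^2$ term. The paper avoids your threshold optimisation by averaging $|f(x)|^2\le2|f(y)|^2+2|f(x)-f(y)|^2$ over $y\in A$ with respect to $\mu$. That is the same elementary inequality as your $b^2+(1-b)^2\ge\tfrac12$, and it gives the constant $2$ directly.
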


\begin{proof}
For $y\in A$, the local resistance inequality gives
\begin{equation}\label{eq:component-evaluation}
 |f(x)|^2\le\frac2{\mu(A)}\int_Af^2\,d\mu
       +2\diam^{\mathrm{intr}}_{R_*}(A)\mathcal E_A^{\mathrm g}(f)
\end{equation}
after averaging.
Since the local energy is at most the global energy, this proves \eqref{eq:resolvent-component}.
\end{proof}

\begin{lemma}\label{lem:spectral-resolvent}
For $0<\beta<1$, a resolvent asymptotic $G_s(x,x)=s^{\beta-1+o(1)}$ implies $p_t(x,x)=t^{-\beta+o(1)}$.
Two-sided constant-factor resolvent bounds imply the corresponding constant-factor heat-kernel bounds.
\end{lemma}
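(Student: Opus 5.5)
Since the statement is a Tauberian transfer, I would argue directly from the spectral representation $p_t(x,x)=\sum_j e^{-t\zeta_j}\phi_j(x)^2$ from Lemma~\ref{lem:heat-kernel-continuity}. The key structural fact is that $t\mapsto p_t(x,x)$ is nonnegative and nonincreasing. Equivalently, the spectral measure $\nu_x:=\sum_j\phi_j(x)^2\delta_{\zeta_j}$ is a positive measure whose Laplace transform is $p_t(x,x)$. Its Stieltjes-type transform is then $G_s(x,x)=\int(s+\zeta)^{-1}\nu_x(d\zeta)$, by Lemma~\ref{lem:resolvent-variational}. The plan is to pass from $G_s$ to the distribution function $N_x(\Lambda):=\nu_x([0,\Lambda])$ and back to $p_t$, using only monotonicity. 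This avoids any regular-variation hypothesis, which would be needed for a sharp Karamata statement but is not available for $o(1)$ exponents.

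The upper bound on $p_t$ is the easy direction. Since $p_u(x,x)\ge p_t(x,x)$ for $u\le t$, we have
\[
G_s(x,x)\ge\int_0^t e^{-su}p_u(x,x)\,du\ge t e^{-st}p_t(x,x).
\]
Taking $s=1/t$ gives $p_t(x,x)\le e\,t^{-1}G_{1/t}(x,x)$. Under the hypothesis this is $t^{-1}t^{1-\beta+o(1)}=t^{-\beta+o(1)}$, and under a constant-factor upper bound it gives the constant-factor heat-kernel upper bound.

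The lower bound is the main obstacle, since a large resolvent could a priori come from large times rather than small ones. I would split the Laplace integral at a small time $a$ and a large time $b$, both proportional to $1/s$. On $[a,b]$ the kernel is at most $p_a(x,x)$, so that part contributes at most $b\,p_a(x,x)$. The tail $\int_b^\infty e^{-su}p_u\,du$ is bounded using the already proved upper bound $p_u\le e u^{-1}G_{1/u}$, which gives a contribution of order $e^{-sb}G_{1/b}$, small relative to $G_s$. The head $\int_0^a e^{-su}p_u\,du$ is bounded by $\int_0^a p_u\,du$. Using the upper bound again with the exponent $\beta<1$, this is at most $a^{1-\beta+o(1)}$, again small relative to $G_s=s^{\beta-1+o(1)}$. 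For this I would take $a=\delta/s$ with $\delta=s^{\kappa}$ for a tiny $\kappa>0$, so that the $o(1)$ exponent losses are absorbed.

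The middle piece therefore carries a fixed fraction of $G_s$, giving $p_{a}(x,x)\ge c\,G_s/b$. With $b=s^{-1-\kappa}$ and $\kappa\downarrow0$ along rationals, this yields $p_t(x,x)\ge t^{-\beta-o(1)}$. The role of $\beta<1$ is precisely to make $\int_0^a p_u\,du$ converge and be negligible. In the constant-factor case the same three-piece split works with fixed $\delta$ and fixed $b=K/s$, where $\delta$ is small and $K$ large constants depending only on the resolvent constants and $\beta$. This produces two-sided constant-factor bounds on $p_t(x,x)$.
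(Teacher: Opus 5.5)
Your route is essentially the paper's. You use the monotone bound $p_t(x,x)\le e\,t^{-1}G_{1/t}(x,x)$ for the upper estimate. You then split the Laplace integral at a time $a\ll 1/s$, with the head $\int_0^a p_u(x,x)\,du$ controlled by the integrated upper bound; this is where $\beta<1$ enters.

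Your third (tail) piece is unnecessary. Monotonicity already gives $\int_a^\infty e^{-su}p_u(x,x)\,du\le p_a(x,x)/s$, hence $p_a(x,x)\ge s\bigl(G_s(x,x)-\int_0^a p_u(x,x)\,du\bigr)$. This is exactly what the paper uses. In the $o(1)$ case it splits at $t$ with $s=t^{-(1-\beta-2\varepsilon)/(1-\beta+\varepsilon)}$; in the constant-factor case it splits at $\delta t$ with $s=1/t$. The resulting bound is sharper than your $G_s/b$. It also avoids invoking the resolvent hypothesis at $1/b$, or for large $u$, where it is not available.

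As written, however, your parameters in the $o(1)$ case have the wrong sign for the relevant regime $s\to\infty$.
\begin{itemize}
\item With $\delta=s^{\kappa}$ you get $a=s^{\kappa-1}\gg 1/s$. Your head bound $a^{1-\beta+o(1)}$ then exceeds $G_s=s^{\beta-1+o(1)}$ and yields nothing.
\item With $b=s^{-1-\kappa}$ you get $b<a$, so the middle interval $[a,b]$ is empty.
\end{itemize}
The fix is to take $a=s^{-1-\kappa}$ (that is, $\delta=s^{-\kappa}$) and, if you keep the tail, $b=s^{-1+\kappa}$. Then:
\begin{itemize}
\item the head is negligible once the exponent error $\varepsilon$ in the resolvent hypothesis satisfies $(1+\kappa)(1-\beta-\varepsilon)>1-\beta+\varepsilon$;
\item the tail is at most $p_b(x,x)e^{-s^{\kappa}}/s$, which is super-polynomially small relative to $G_s$;
\item you obtain $p_a(x,x)\ge\frac12 s^{\beta-\varepsilon-\kappa}$ with $s=a^{-1/(1+\kappa)}$, which gives the claim as $\kappa,\varepsilon\downarrow0$.
\end{itemize}

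The constant-factor case, with fixed small $\delta$ and $b=K/s$, is fine as stated. The announced passage through $N_x(\Lambda)$ is never used and can be dropped.
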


\begin{proof}
The diagonal spectral series is nonincreasing in time.
Thus $p_t(x,x)\le eG_{1/t}(x,x)/t$.
The assumed resolvent estimate gives
$p_u(x,x)\le C_\varepsilon u^{-\beta-\varepsilon}$ for small $u$.
Splitting the Laplace integral at $t$ gives
$G_s(x,x)\le C_\varepsilon't^{1-\beta-\varepsilon}+s^{-1}p_t(x,x)$.
Choose $0<\varepsilon<\min\{\beta,(1-\beta)/2\}$, put
$s=t^{-(1-\beta-2\varepsilon)/(1-\beta+\varepsilon)}$.
The lower resolvent bound gives
$G_s(x,x)\ge t^{1-\beta-2\varepsilon}$ for small $t$.
The first upper-bound term is negligible.
Hence $p_t(x,x)\ge\frac12t^{-(1-\beta-2\varepsilon)(\beta-\varepsilon)/(1-\beta+\varepsilon)}$ eventually.
Let $\varepsilon\downarrow0$.

The same inequality gives the heat upper bound $Ct^{-\beta}$.
For a resolvent lower bound $G_{1/t}(x,x)\ge ct^{1-\beta}$, split the integral at $\delta t$.
Its first part is at most $C'\delta^{1-\beta}t^{1-\beta}$.
Choose fixed small $\delta$ to make this at most half the lower bound.
The remaining part is at most $t\,p_{\delta t}(x,x)$.
This gives the matching lower bound after a fixed time change.
\end{proof}

\appendixsection{Density of the spectral dimensions}\label{app:spectral-density}

We prove Theorem~\ref{thm:spectral-density} by constructing finite rules with fixed terminal geometry; for each rule, we take the natural growth limits before varying the rule.

\begin{proof}[Proof of Theorem~\ref{thm:spectral-density}]
Fix a rational number $B>1$.
For each sufficiently large integer $t$, take a series chain of $\lceil B^t\rceil$ blocks, each consisting of $\lceil t(4/3)^t\rceil$ internally disjoint paths of length $t$.
Add one edge at each end of this chain, and place a separate path of length two in parallel with the extended chain.
Use the resulting graph as the same deterministic rule at every substitution step.
It is finite, simple and canonical, and reflection exchanges its terminals.
The short and long branches give two edge-disjoint terminal paths, whereas each terminal has degree two.
Thus its terminal distance, terminal degrees and terminal mincut are all two.

All probabilities below use this rule's own critical parameter $p_c$.
Its critical equation is
\begin{equation}\label{eq:density-critical}
 \left[1-(1-p_c^t)^{\lceil t(4/3)^t\rceil}\right]^{\lceil B^t\rceil}
 =\frac{1}{p_c(1+p_c)}.
\end{equation}
Indeed, the full reliability is the short-path probability $p^2$ plus $(1-p^2)p^2$ times the chain-crossing probability.
The left side of \eqref{eq:density-critical} is strictly increasing in $p_c$, and the right side is strictly decreasing.
For all sufficiently large $t$, $7/10<p_c<4/5$.
At $p=7/10$, one block crosses with probability at most $2t(14/15)^t<1/2$, whereas the right side of \eqref{eq:density-critical} is $100/119$.
At $p=4/5$, the mean number of successful paths in a block is at least $t(16/15)^t$.
Once $(16/15)^t\ge B$, a union bound makes the entire chain's failure probability at most
$2B^te^{-tB}\le2e^{-t}<11/36$.
This gives the upper endpoint, where the right side is $25/36$.

Let $J$ be the number of successful paths in one block before conditioning.
It has distribution $\operatorname{Bin}(\lceil t(4/3)^t\rceil,p_c^t)$.
Equation~\eqref{eq:density-critical} implies
\[
 \Pp(J>0)>\frac{25}{36},
 \qquad
 \frac{1}{8\lceil B^t\rceil}<\Pp(J=0)<\frac{1}{\lceil B^t\rceil}.
\]
For the first failure bound, the chain fails with probability greater than $19/119>1/8$, and a union bound applies over its blocks.
For the second, a block failure probability at least $1/\lceil B^t\rceil$ would make the chain-crossing probability at most $e^{-1}<25/36$.
Using $(1-p_c^t)^{\lceil t(4/3)^t\rceil}\le e^{-\E J}$ and, eventually, $p_c^t<1/2$, we obtain
\begin{equation}\label{eq:density-path-count}
 \frac{t\log B}{2}<\E J
 <\log(8\lceil B^t\rceil)<(\lceil B\rceil+2)t,
 \qquad
 \E[J\mid J>0]<2(\lceil B\rceil+2)t.
\end{equation}
The lower bound follows from
$-\log(1-p_c^t)\le2p_c^t$ and the upper bound on $\Pp(J=0)$.

We next bound the natural resistance factor from below, uniformly in recursive depth.
\textcolor{black}{Give the successful edges independent positive resistances with a common finitely supported law $Y$, independent of their statuses, where $\E Y=1$ and $\Var(Y)\le8$.
Such laws include the normalised resistances at every finite recursive depth.}
For independent copies along a path,
\[
 \E e^{-Y/18}\le1-\frac1{18}+\frac9{2\cdot18^2}=\frac{23}{24},
 \qquad
 \Pp\left(\sum_{i=1}^tY_i<\frac t2\right)
 \le\left(\frac{69}{70}\right)^t.
\]
Here exponential Markov and $e^{1/36}\le36/35$ give the second estimate.
Let $U$ be the resistance of a block conditioned on crossing.
Given $J=j>0$, with probability at least $1-j(69/70)^t$ every successful path has resistance at least $t/2$.
Parallel addition and Jensen's inequality therefore give
\[
 \E U\ge\frac t2\E[J^{-1}\mid J>0]
          -\frac t2\left(\frac{69}{70}\right)^t
 >\frac{1}{4(\lceil B\rceil+2)}
          -\frac t2\left(\frac{69}{70}\right)^t
 >\frac{1}{8(\lceil B\rceil+2)}
\]
for sufficiently large $t$.
Choosing the first successful path bounds $\E U^2$ by $t^2+8t$.

Let $V$ be the resistance of the extended long branch conditioned on crossing, and let $Z$ be the resistance of the whole rule conditioned on crossing.
Under the long-branch conditioning, the blocks and exterior-edge inputs are independent, so
\[
 \E V>\frac{\lceil B^t\rceil}{8(\lceil B\rceil+2)},
 \qquad
 \Var(V)\le16+\lceil B^t\rceil(t^2+8t).
\]
\textcolor{black}{Before conditioning on whole-rule crossing, the short-branch status is independent of the long-branch states and resistances.
The whole rule crosses with probability $p_c$, whereas the short branch crosses with probability $p_c^2$.
Thus, given whole-rule crossing, the short branch crosses with probability $p_c^2/p_c=p_c$.
On this event, its two input resistances remain independent with law $Y$; their sum bounds $Z$ from above and has second moment $2\E Y^2+2(\E Y)^2\le20$.
On the complementary event, of conditional probability $1-p_c$, the short branch fails and the long branch crosses.
By the independence before conditioning, the long-branch resistance on this event has the law of $V$, and it equals the whole-rule resistance.}
Consequently
\[
 \E Z\ge(1-p_c)\E V,\qquad
 \E Z^2\le20p_c+(1-p_c)\bigl((\E V)^2+\Var(V)\bigr).
\]
Dividing by the squared mean lower bound yields
\begin{equation}\label{eq:density-variance}
 \frac{\Var(Z)}{(\E Z)^2}
 <4+\frac{1600(\lceil B\rceil+2)^2(t^2+8t+36)}{\lceil B^t\rceil}
 <8
\end{equation}
for sufficiently large $t$.
\textcolor{black}{All choices of sufficiently large $t$ above depend only on $B$, uniformly over the finitely supported input laws with the stated two moments.
Fix one such $t$.
Starting from unit input, apply the estimate at depth $n$ with $Y$ distributed as $\Reff_{\mathfrak C_n}(\mathfrak v_+,\mathfrak v_-)/\E\Reff_{\mathfrak C_n}(\mathfrak v_+,\mathfrak v_-)$.
The exact cell recursion and homogeneity identify the output $Z$ with $\Reff_{\mathfrak C_{n+1}}(\mathfrak v_+,\mathfrak v_-)/\E\Reff_{\mathfrak C_n}(\mathfrak v_+,\mathfrak v_-)$ in law.
Thus \eqref{eq:density-variance} preserves the variance bound at depth $n+1$, while $\E Z>\lceil B^t\rceil/[40(\lceil B\rceil+2)]$ gives}
{\color{black}
\[
 \frac{\E\Reff_{\mathfrak C_{n+1}}(\mathfrak v_+,\mathfrak v_-)}{\E\Reff_{\mathfrak C_n}(\mathfrak v_+,\mathfrak v_-)}
 \ge\frac{\lceil B^t\rceil}{40(\lceil B\rceil+2)}.
\]
}
\textcolor{black}{Induction and Theorem~\ref{thm:resistance-exponent}, applied to this fixed rule, yield $\lambda_{\mathfrak C}\ge\lceil B^t\rceil/[40(\lceil B\rceil+2)]$.}

For an upper bound, use unit inputs in one generation.
The binomial identity
\[
 \E\frac1{J+1}
 =\frac{1-(1-p_c^t)^{\lceil t(4/3)^t\rceil+1}}
        {(\lceil t(4/3)^t\rceil+1)p_c^t}
 \le\frac1{\E J}
\]
and $j^{-1}\le2/(j+1)$ for $j\ge1$ imply
$\E[J^{-1}\mid J>0]\le4/\E J$.
The successful block then has resistance $t/J$, so \eqref{eq:density-path-count} gives $\E U<8/\log B$.
Routing along the short branch when it crosses and otherwise along the long branch bounds the whole one-generation mean by $2+8\lceil B^t\rceil/\log B$.
Concavity and homogeneity of resistance bound every subsequent mean-growth factor by this unit-input mean.
Theorem~\ref{thm:resistance-exponent} and the lower bound above now give
\begin{equation}\label{eq:density-resistance-asymptotic}
 \frac{\lceil B^t\rceil}{40(\lceil B\rceil+2)}
 \le\lambda_{\mathfrak C}
 \le\left(2+\frac8{\log B}\right)\lceil B^t\rceil,
 \qquad
 \log\lambda_{\mathfrak C}=t\log B+O_B(1).
\end{equation}

It remains to estimate the full mass, including the dangling parts.
Given whole-rule crossing, the long branch crosses with probability $1/(1+p_c)$ by \eqref{eq:density-critical}.
On this event each block has its crossing-conditioned law.
In each of its paths, the first edge has conditional opening probability at least $p_c$, because block crossing is an increasing event.
Whenever this edge is open it is attached to the retained block terminal, so it contributes a connected child even if the rest of its path fails.
The connected diagonal entry of the full mass matrix therefore gives
\[
 \rho(\mathbf M_{\mathfrak C})\ge M_{c,c}
 \ge\frac{p_c}{1+p_c}\lceil B^t\rceil\lceil t(4/3)^t\rceil
 >\frac13\lceil B^t\rceil\lceil t(4/3)^t\rceil.
\]

For the upper bound, first examine the union of both terminal components of a block without conditioning.
On any length-$t$ path, its retained open edges lie in the two open prefixes from the path endpoints, whose expected total length is at most $2p_c/(1-p_c)<8$.
At most two closed frontier edges are also retained.
Hence a path contributes fewer than ten retained child cells on average.
Every globally retained child in a block lies in this local two-terminal union, since the block meets the rest of the rule only at its two terminals.
For each full parent state $c,b,u$, conditioning divides by $p_c$ or $1-p_c$, both greater than $1/5$.
The same bound applies to either inherited active-terminal choice for state $u$.
Each row sum of $\mathbf M_{\mathfrak C}$ is therefore less than
$50\lceil B^t\rceil\lceil t(4/3)^t\rceil+4$, including the two exterior edges and the short path.
A child in state $b$ is counted once, with both sides inherited from the same failed configuration.
The row-sum bound and the preceding lower bound yield
\begin{equation}\label{eq:density-mass-asymptotic}
 \frac13\lceil B^t\rceil\lceil t(4/3)^t\rceil
 <\rho(\mathbf M_{\mathfrak C})
 <50\lceil B^t\rceil\lceil t(4/3)^t\rceil+4,
\end{equation}
and hence
$\log\rho(\mathbf M_{\mathfrak C})
=t(\log B+\log(4/3))+\log t+O(1)$.

For fixed $B$, Theorem~\ref{thm:heat-kernels} and these two logarithmic estimates show that the typical spectral dimensions of the constructed rules satisfy
\[
 \lim_{t\to\infty}\dimspec^{(\mathrm N)}(\mathfrak C)
 =\frac{2(\log B+\log(4/3))}{2\log B+\log(4/3)}.
\]
Here $\mathfrak C$ is the critical cluster associated with the rule constructed at parameter $t$.
The expression on the right is continuous and strictly decreasing in $B>1$, with range $(1,2)$.
Given any nonempty open interval contained in $(1,2)$, choose a rational $B>1$ whose limiting value lies in that interval, and then choose a sufficiently large finite $t$.
The corresponding fixed classical rule has its typical spectral dimension in the prescribed interval, proving the theorem.
\end{proof}

\end{document}

%% file: figure/dhl_construction.tex
\begin{tikzpicture}[line width=0.45pt,line cap=round]
\begin{scope}[xshift=-3.8cm]
\node[anchor=east] at (4.4,2.65) {Rule:};
\draw[-{Stealth[length=4pt]}] (4.7,2.65) -- (5.7,2.65);
\fill (4.7,2.65) circle (1.4pt);
\fill (5.7,2.65) circle (1.4pt);
\draw[-{Stealth[length=5pt]}] (6.0,2.65) -- (6.7,2.65);
\draw[-{Stealth[length=3pt]}] (7.1,2.65) -- (8.3,3.25);
\draw[-{Stealth[length=3pt]}] (8.3,3.25) -- (9.5,2.65);
\draw[-{Stealth[length=3pt]}] (7.1,2.65) -- (8.3,2.05);
\draw[-{Stealth[length=3pt]}] (8.3,2.05) -- (9.5,2.65);
\foreach \x/\y in {7.1/2.65,8.3/3.25,9.5/2.65,8.3/2.05}{\fill (\x,\y) circle (1.4pt);}
\node[below left,font=\scriptsize] at (7.1,2.65) {$\beta^+$};
\node[below right,font=\scriptsize] at (9.5,2.65) {$\beta^-$};
\end{scope}
\draw[dashed,gray] (0,1.6) -- (14.7,1.6);

\begin{scope}[xshift=0cm]
\coordinate (v0) at (0.000000,0.000000);
\coordinate (v1) at (1.400000,0.000000);
\draw[-{Stealth[length=2pt,width=1.4pt]}] (v0) -- (v1);
\fill (v0) circle (0.85pt);
\fill (v1) circle (0.85pt);
\node at (0.7,-1.5) {$\Xi^{0}$};
\end{scope}
\draw[-{Stealth[length=4pt]}] (1.5999999999999999,0) -- (2.0999999999999996,0);
\begin{scope}[xshift=2.3cm]
\coordinate (v0) at (0.000000,0.000000);
\coordinate (v1) at (2.200000,0.000000);
\coordinate (v2) at (1.100000,0.550000);
\coordinate (v3) at (1.100000,-0.550000);
\draw[-{Stealth[length=2pt,width=1.4pt]}] (v0) -- (v2);
\draw[-{Stealth[length=2pt,width=1.4pt]}] (v2) -- (v1);
\draw[-{Stealth[length=2pt,width=1.4pt]}] (v0) -- (v3);
\draw[-{Stealth[length=2pt,width=1.4pt]}] (v3) -- (v1);
\fill (v0) circle (0.85pt);
\fill (v1) circle (0.85pt);
\fill (v2) circle (0.85pt);
\fill (v3) circle (0.85pt);
\node at (1.1,-1.5) {$\Xi^{1}$};
\end{scope}
\draw[-{Stealth[length=4pt]}] (4.7,0) -- (5.2,0);
\begin{scope}[xshift=5.4cm]
\coordinate (v0) at (0.000000,0.000000);
\coordinate (v1) at (3.100000,0.000000);
\coordinate (v2) at (1.550000,0.775000);
\coordinate (v3) at (1.550000,-0.775000);
\coordinate (v4) at (0.664286,0.608929);
\coordinate (v5) at (0.885714,0.166071);
\coordinate (v6) at (2.435714,0.608929);
\coordinate (v7) at (2.214286,0.166071);
\coordinate (v8) at (0.885714,-0.166071);
\coordinate (v9) at (0.664286,-0.608929);
\coordinate (v10) at (2.214286,-0.166071);
\coordinate (v11) at (2.435714,-0.608929);
\draw[-{Stealth[length=2pt,width=1.4pt]}] (v0) -- (v4);
\draw[-{Stealth[length=2pt,width=1.4pt]}] (v4) -- (v2);
\draw[-{Stealth[length=2pt,width=1.4pt]}] (v0) -- (v5);
\draw[-{Stealth[length=2pt,width=1.4pt]}] (v5) -- (v2);
\draw[-{Stealth[length=2pt,width=1.4pt]}] (v2) -- (v6);
\draw[-{Stealth[length=2pt,width=1.4pt]}] (v6) -- (v1);
\draw[-{Stealth[length=2pt,width=1.4pt]}] (v2) -- (v7);
\draw[-{Stealth[length=2pt,width=1.4pt]}] (v7) -- (v1);
\draw[-{Stealth[length=2pt,width=1.4pt]}] (v0) -- (v8);
\draw[-{Stealth[length=2pt,width=1.4pt]}] (v8) -- (v3);
\draw[-{Stealth[length=2pt,width=1.4pt]}] (v0) -- (v9);
\draw[-{Stealth[length=2pt,width=1.4pt]}] (v9) -- (v3);
\draw[-{Stealth[length=2pt,width=1.4pt]}] (v3) -- (v10);
\draw[-{Stealth[length=2pt,width=1.4pt]}] (v10) -- (v1);
\draw[-{Stealth[length=2pt,width=1.4pt]}] (v3) -- (v11);
\draw[-{Stealth[length=2pt,width=1.4pt]}] (v11) -- (v1);
\fill (v0) circle (0.85pt);
\fill (v1) circle (0.85pt);
\fill (v2) circle (0.85pt);
\fill (v3) circle (0.85pt);
\fill (v4) circle (0.85pt);
\fill (v5) circle (0.85pt);
\fill (v6) circle (0.85pt);
\fill (v7) circle (0.85pt);
\fill (v8) circle (0.85pt);
\fill (v9) circle (0.85pt);
\fill (v10) circle (0.85pt);
\fill (v11) circle (0.85pt);
\node at (1.55,-1.5) {$\Xi^{2}$};
\end{scope}
\draw[-{Stealth[length=4pt]}] (8.7,0) -- (9.2,0);
\begin{scope}[xshift=9.4cm]
\coordinate (v0) at (0.000000,0.000000);
\coordinate (v1) at (4.600000,0.000000);
\coordinate (v2) at (2.300000,1.150000);
\coordinate (v3) at (2.300000,-1.150000);
\coordinate (v4) at (0.985714,0.903571);
\coordinate (v5) at (1.314286,0.246429);
\coordinate (v6) at (3.614286,0.903571);
\coordinate (v7) at (3.285714,0.246429);
\coordinate (v8) at (1.314286,-0.246429);
\coordinate (v9) at (0.985714,-0.903571);
\coordinate (v10) at (3.285714,-0.246429);
\coordinate (v11) at (3.614286,-0.903571);
\coordinate (v12) at (0.436384,0.513393);
\coordinate (v13) at (0.549330,0.390178);
\coordinate (v14) at (1.627455,1.108928);
\coordinate (v15) at (1.658259,0.944643);
\coordinate (v16) at (0.641741,0.205357);
\coordinate (v17) at (0.672545,0.041072);
\coordinate (v18) at (1.750670,0.759822);
\coordinate (v19) at (1.863616,0.636607);
\coordinate (v20) at (2.972545,1.108928);
\coordinate (v21) at (2.941741,0.944643);
\coordinate (v22) at (4.163616,0.513393);
\coordinate (v23) at (4.050670,0.390178);
\coordinate (v24) at (2.849330,0.759822);
\coordinate (v25) at (2.736384,0.636607);
\coordinate (v26) at (3.958259,0.205357);
\coordinate (v27) at (3.927455,0.041072);
\coordinate (v28) at (0.672545,-0.041072);
\coordinate (v29) at (0.641741,-0.205357);
\coordinate (v30) at (1.863616,-0.636607);
\coordinate (v31) at (1.750670,-0.759822);
\coordinate (v32) at (0.549330,-0.390178);
\coordinate (v33) at (0.436384,-0.513393);
\coordinate (v34) at (1.658259,-0.944643);
\coordinate (v35) at (1.627455,-1.108928);
\coordinate (v36) at (2.736384,-0.636607);
\coordinate (v37) at (2.849330,-0.759822);
\coordinate (v38) at (3.927455,-0.041072);
\coordinate (v39) at (3.958259,-0.205357);
\coordinate (v40) at (2.941741,-0.944643);
\coordinate (v41) at (2.972545,-1.108928);
\coordinate (v42) at (4.050670,-0.390178);
\coordinate (v43) at (4.163616,-0.513393);
\draw[-{Stealth[length=2pt,width=1.4pt]}] (v0) -- (v12);
\draw[-{Stealth[length=2pt,width=1.4pt]}] (v12) -- (v4);
\draw[-{Stealth[length=2pt,width=1.4pt]}] (v0) -- (v13);
\draw[-{Stealth[length=2pt,width=1.4pt]}] (v13) -- (v4);
\draw[-{Stealth[length=2pt,width=1.4pt]}] (v4) -- (v14);
\draw[-{Stealth[length=2pt,width=1.4pt]}] (v14) -- (v2);
\draw[-{Stealth[length=2pt,width=1.4pt]}] (v4) -- (v15);
\draw[-{Stealth[length=2pt,width=1.4pt]}] (v15) -- (v2);
\draw[-{Stealth[length=2pt,width=1.4pt]}] (v0) -- (v16);
\draw[-{Stealth[length=2pt,width=1.4pt]}] (v16) -- (v5);
\draw[-{Stealth[length=2pt,width=1.4pt]}] (v0) -- (v17);
\draw[-{Stealth[length=2pt,width=1.4pt]}] (v17) -- (v5);
\draw[-{Stealth[length=2pt,width=1.4pt]}] (v5) -- (v18);
\draw[-{Stealth[length=2pt,width=1.4pt]}] (v18) -- (v2);
\draw[-{Stealth[length=2pt,width=1.4pt]}] (v5) -- (v19);
\draw[-{Stealth[length=2pt,width=1.4pt]}] (v19) -- (v2);
\draw[-{Stealth[length=2pt,width=1.4pt]}] (v2) -- (v20);
\draw[-{Stealth[length=2pt,width=1.4pt]}] (v20) -- (v6);
\draw[-{Stealth[length=2pt,width=1.4pt]}] (v2) -- (v21);
\draw[-{Stealth[length=2pt,width=1.4pt]}] (v21) -- (v6);
\draw[-{Stealth[length=2pt,width=1.4pt]}] (v6) -- (v22);
\draw[-{Stealth[length=2pt,width=1.4pt]}] (v22) -- (v1);
\draw[-{Stealth[length=2pt,width=1.4pt]}] (v6) -- (v23);
\draw[-{Stealth[length=2pt,width=1.4pt]}] (v23) -- (v1);
\draw[-{Stealth[length=2pt,width=1.4pt]}] (v2) -- (v24);
\draw[-{Stealth[length=2pt,width=1.4pt]}] (v24) -- (v7);
\draw[-{Stealth[length=2pt,width=1.4pt]}] (v2) -- (v25);
\draw[-{Stealth[length=2pt,width=1.4pt]}] (v25) -- (v7);
\draw[-{Stealth[length=2pt,width=1.4pt]}] (v7) -- (v26);
\draw[-{Stealth[length=2pt,width=1.4pt]}] (v26) -- (v1);
\draw[-{Stealth[length=2pt,width=1.4pt]}] (v7) -- (v27);
\draw[-{Stealth[length=2pt,width=1.4pt]}] (v27) -- (v1);
\draw[-{Stealth[length=2pt,width=1.4pt]}] (v0) -- (v28);
\draw[-{Stealth[length=2pt,width=1.4pt]}] (v28) -- (v8);
\draw[-{Stealth[length=2pt,width=1.4pt]}] (v0) -- (v29);
\draw[-{Stealth[length=2pt,width=1.4pt]}] (v29) -- (v8);
\draw[-{Stealth[length=2pt,width=1.4pt]}] (v8) -- (v30);
\draw[-{Stealth[length=2pt,width=1.4pt]}] (v30) -- (v3);
\draw[-{Stealth[length=2pt,width=1.4pt]}] (v8) -- (v31);
\draw[-{Stealth[length=2pt,width=1.4pt]}] (v31) -- (v3);
\draw[-{Stealth[length=2pt,width=1.4pt]}] (v0) -- (v32);
\draw[-{Stealth[length=2pt,width=1.4pt]}] (v32) -- (v9);
\draw[-{Stealth[length=2pt,width=1.4pt]}] (v0) -- (v33);
\draw[-{Stealth[length=2pt,width=1.4pt]}] (v33) -- (v9);
\draw[-{Stealth[length=2pt,width=1.4pt]}] (v9) -- (v34);
\draw[-{Stealth[length=2pt,width=1.4pt]}] (v34) -- (v3);
\draw[-{Stealth[length=2pt,width=1.4pt]}] (v9) -- (v35);
\draw[-{Stealth[length=2pt,width=1.4pt]}] (v35) -- (v3);
\draw[-{Stealth[length=2pt,width=1.4pt]}] (v3) -- (v36);
\draw[-{Stealth[length=2pt,width=1.4pt]}] (v36) -- (v10);
\draw[-{Stealth[length=2pt,width=1.4pt]}] (v3) -- (v37);
\draw[-{Stealth[length=2pt,width=1.4pt]}] (v37) -- (v10);
\draw[-{Stealth[length=2pt,width=1.4pt]}] (v10) -- (v38);
\draw[-{Stealth[length=2pt,width=1.4pt]}] (v38) -- (v1);
\draw[-{Stealth[length=2pt,width=1.4pt]}] (v10) -- (v39);
\draw[-{Stealth[length=2pt,width=1.4pt]}] (v39) -- (v1);
\draw[-{Stealth[length=2pt,width=1.4pt]}] (v3) -- (v40);
\draw[-{Stealth[length=2pt,width=1.4pt]}] (v40) -- (v11);
\draw[-{Stealth[length=2pt,width=1.4pt]}] (v3) -- (v41);
\draw[-{Stealth[length=2pt,width=1.4pt]}] (v41) -- (v11);
\draw[-{Stealth[length=2pt,width=1.4pt]}] (v11) -- (v42);
\draw[-{Stealth[length=2pt,width=1.4pt]}] (v42) -- (v1);
\draw[-{Stealth[length=2pt,width=1.4pt]}] (v11) -- (v43);
\draw[-{Stealth[length=2pt,width=1.4pt]}] (v43) -- (v1);
\fill (v0) circle (0.85pt);
\fill (v1) circle (0.85pt);
\fill (v2) circle (0.85pt);
\fill (v3) circle (0.85pt);
\fill (v4) circle (0.85pt);
\fill (v5) circle (0.85pt);
\fill (v6) circle (0.85pt);
\fill (v7) circle (0.85pt);
\fill (v8) circle (0.85pt);
\fill (v9) circle (0.85pt);
\fill (v10) circle (0.85pt);
\fill (v11) circle (0.85pt);
\fill (v12) circle (0.85pt);
\fill (v13) circle (0.85pt);
\fill (v14) circle (0.85pt);
\fill (v15) circle (0.85pt);
\fill (v16) circle (0.85pt);
\fill (v17) circle (0.85pt);
\fill (v18) circle (0.85pt);
\fill (v19) circle (0.85pt);
\fill (v20) circle (0.85pt);
\fill (v21) circle (0.85pt);
\fill (v22) circle (0.85pt);
\fill (v23) circle (0.85pt);
\fill (v24) circle (0.85pt);
\fill (v25) circle (0.85pt);
\fill (v26) circle (0.85pt);
\fill (v27) circle (0.85pt);
\fill (v28) circle (0.85pt);
\fill (v29) circle (0.85pt);
\fill (v30) circle (0.85pt);
\fill (v31) circle (0.85pt);
\fill (v32) circle (0.85pt);
\fill (v33) circle (0.85pt);
\fill (v34) circle (0.85pt);
\fill (v35) circle (0.85pt);
\fill (v36) circle (0.85pt);
\fill (v37) circle (0.85pt);
\fill (v38) circle (0.85pt);
\fill (v39) circle (0.85pt);
\fill (v40) circle (0.85pt);
\fill (v41) circle (0.85pt);
\fill (v42) circle (0.85pt);
\fill (v43) circle (0.85pt);
\node at (2.3,-1.5) {$\Xi^{3}$};
\end{scope}
\node at (14.55,0) {$\cdots$};
\end{tikzpicture}

%% file: figure/random_DHL_rules.tex
\begingroup
\definecolor{eigsred}{RGB}{205,46,53}
\definecolor{eigsblue}{RGB}{38,100,190}
\definecolor{eigsgreen}{RGB}{29,139,76}
\begin{tikzpicture}[x=1cm,y=1cm,line cap=round,line join=round,
 every node/.style={font=\small},
 bond/.style={line width=.85pt,-{Stealth[length=2.5pt,width=2.3pt]}},
 map/.style={black!45,line width=.65pt,-{Stealth[length=4pt]}}]
\path[use as bounding box] (0,-8.7) rectangle (16,.75);
\node[anchor=west,font=\small\bfseries] at (0,.62) {Rules};
\draw[bond,eigsred] (0.1500,-0.3500) -- (1.1000,-0.3500);
\fill (0.1500,-0.3500) circle (1.15pt);
\fill (1.1000,-0.3500) circle (1.15pt);
\draw[map] (1.40,-0.35) -- (2.05,-0.35);
\node at (4.25,0.18000000000000005) {$\frac{p^4}{2p^2-p^4}$};
\draw[bond,eigsred] (3.8700,-0.3500) -- (4.2500,-0.1100);
\draw[bond,eigsred] (4.2500,-0.1100) -- (4.6300,-0.3500);
\draw[bond,eigsred] (3.8700,-0.3500) -- (4.2500,-0.5900);
\draw[bond,eigsred] (4.2500,-0.5900) -- (4.6300,-0.3500);
\fill (3.8700,-0.3500) circle (1.0pt);
\fill (4.2500,-0.1100) circle (1.0pt);
\fill (4.2500,-0.5900) circle (1.0pt);
\fill (4.6300,-0.3500) circle (1.0pt);
\node at (8.85,0.18000000000000005) {$\frac{p^3(1-p)}{2p^2-p^4}$};
\draw[bond,eigsred] (6.9700,-0.3500) -- (7.3500,-0.1100);
\draw[bond,eigsred] (7.3500,-0.1100) -- (7.7300,-0.3500);
\draw[bond,eigsred] (6.9700,-0.3500) -- (7.3500,-0.5900);
\draw[bond,eigsblue] (7.3500,-0.5900) -- (7.7300,-0.3500);
\fill (6.9700,-0.3500) circle (1.0pt);
\fill (7.3500,-0.1100) circle (1.0pt);
\fill (7.3500,-0.5900) circle (1.0pt);
\fill (7.7300,-0.3500) circle (1.0pt);
\draw[bond,eigsred] (7.9700,-0.3500) -- (8.3500,-0.1100);
\draw[bond,eigsred] (8.3500,-0.1100) -- (8.7300,-0.3500);
\draw[bond,eigsblue] (7.9700,-0.3500) -- (8.3500,-0.5900);
\draw[bond,eigsred] (8.3500,-0.5900) -- (8.7300,-0.3500);
\fill (7.9700,-0.3500) circle (1.0pt);
\fill (8.3500,-0.1100) circle (1.0pt);
\fill (8.3500,-0.5900) circle (1.0pt);
\fill (8.7300,-0.3500) circle (1.0pt);
\draw[bond,eigsred] (8.9700,-0.3500) -- (9.3500,-0.1100);
\draw[bond,eigsblue] (9.3500,-0.1100) -- (9.7300,-0.3500);
\draw[bond,eigsred] (8.9700,-0.3500) -- (9.3500,-0.5900);
\draw[bond,eigsred] (9.3500,-0.5900) -- (9.7300,-0.3500);
\fill (8.9700,-0.3500) circle (1.0pt);
\fill (9.3500,-0.1100) circle (1.0pt);
\fill (9.3500,-0.5900) circle (1.0pt);
\fill (9.7300,-0.3500) circle (1.0pt);
\draw[bond,eigsblue] (9.9700,-0.3500) -- (10.3500,-0.1100);
\draw[bond,eigsred] (10.3500,-0.1100) -- (10.7300,-0.3500);
\draw[bond,eigsred] (9.9700,-0.3500) -- (10.3500,-0.5900);
\draw[bond,eigsred] (10.3500,-0.5900) -- (10.7300,-0.3500);
\fill (9.9700,-0.3500) circle (1.0pt);
\fill (10.3500,-0.1100) circle (1.0pt);
\fill (10.3500,-0.5900) circle (1.0pt);
\fill (10.7300,-0.3500) circle (1.0pt);
\node at (13.45,0.18000000000000005) {$\frac{p^2(1-p)^2}{2p^2-p^4}$};
\draw[bond,eigsred] (12.5700,-0.3500) -- (12.9500,-0.1100);
\draw[bond,eigsred] (12.9500,-0.1100) -- (13.3300,-0.3500);
\draw[bond,eigsgreen] (12.5700,-0.3500) -- (12.9500,-0.5900);
\draw[bond,eigsgreen] (13.3300,-0.3500) -- (12.9500,-0.5900);
\fill (12.5700,-0.3500) circle (1.0pt);
\fill (12.9500,-0.1100) circle (1.0pt);
\fill (12.9500,-0.5900) circle (1.0pt);
\fill (13.3300,-0.3500) circle (1.0pt);
\draw[bond,eigsgreen] (13.5700,-0.3500) -- (13.9500,-0.1100);
\draw[bond,eigsgreen] (14.3300,-0.3500) -- (13.9500,-0.1100);
\draw[bond,eigsred] (13.5700,-0.3500) -- (13.9500,-0.5900);
\draw[bond,eigsred] (13.9500,-0.5900) -- (14.3300,-0.3500);
\fill (13.5700,-0.3500) circle (1.0pt);
\fill (13.9500,-0.1100) circle (1.0pt);
\fill (13.9500,-0.5900) circle (1.0pt);
\fill (14.3300,-0.3500) circle (1.0pt);
\draw[black!12,line width=.35pt] (0,-0.9299999999999999) -- (16,-0.9299999999999999);
\draw[bond,eigsblue] (0.1500,-1.7000) -- (1.1000,-1.7000);
\fill (0.1500,-1.7000) circle (1.15pt);
\fill (1.1000,-1.7000) circle (1.15pt);
\draw[map] (1.40,-1.7000000000000002) -- (2.05,-1.7000000000000002);
\node at (4.25,-1.1700000000000002) {$\frac{p(1-p)^3}{(1-p^2)^2}$};
\draw[bond,eigsred] (2.3700,-1.7000) -- (2.7500,-1.4600);
\draw[bond,eigsblue] (2.7500,-1.4600) -- (3.1300,-1.7000);
\draw[bond,eigsgreen] (2.3700,-1.7000) -- (2.7500,-1.9400);
\draw[bond,eigsgreen] (3.1300,-1.7000) -- (2.7500,-1.9400);
\fill (2.3700,-1.7000) circle (1.0pt);
\fill (2.7500,-1.4600) circle (1.0pt);
\fill (2.7500,-1.9400) circle (1.0pt);
\fill (3.1300,-1.7000) circle (1.0pt);
\draw[bond,eigsblue] (3.3700,-1.7000) -- (3.7500,-1.4600);
\draw[bond,eigsred] (3.7500,-1.4600) -- (4.1300,-1.7000);
\draw[bond,eigsgreen] (3.3700,-1.7000) -- (3.7500,-1.9400);
\draw[bond,eigsgreen] (4.1300,-1.7000) -- (3.7500,-1.9400);
\fill (3.3700,-1.7000) circle (1.0pt);
\fill (3.7500,-1.4600) circle (1.0pt);
\fill (3.7500,-1.9400) circle (1.0pt);
\fill (4.1300,-1.7000) circle (1.0pt);
\draw[bond,eigsgreen] (4.3700,-1.7000) -- (4.7500,-1.4600);
\draw[bond,eigsgreen] (5.1300,-1.7000) -- (4.7500,-1.4600);
\draw[bond,eigsred] (4.3700,-1.7000) -- (4.7500,-1.9400);
\draw[bond,eigsblue] (4.7500,-1.9400) -- (5.1300,-1.7000);
\fill (4.3700,-1.7000) circle (1.0pt);
\fill (4.7500,-1.4600) circle (1.0pt);
\fill (4.7500,-1.9400) circle (1.0pt);
\fill (5.1300,-1.7000) circle (1.0pt);
\draw[bond,eigsgreen] (5.3700,-1.7000) -- (5.7500,-1.4600);
\draw[bond,eigsgreen] (6.1300,-1.7000) -- (5.7500,-1.4600);
\draw[bond,eigsblue] (5.3700,-1.7000) -- (5.7500,-1.9400);
\draw[bond,eigsred] (5.7500,-1.9400) -- (6.1300,-1.7000);
\fill (5.3700,-1.7000) circle (1.0pt);
\fill (5.7500,-1.4600) circle (1.0pt);
\fill (5.7500,-1.9400) circle (1.0pt);
\fill (6.1300,-1.7000) circle (1.0pt);
\node at (8.85,-1.1700000000000002) {$\frac{p^2(1-p)^2}{(1-p^2)^2}$};
\draw[bond,eigsred] (6.9700,-1.7000) -- (7.3500,-1.4600);
\draw[bond,eigsblue] (7.3500,-1.4600) -- (7.7300,-1.7000);
\draw[bond,eigsred] (6.9700,-1.7000) -- (7.3500,-1.9400);
\draw[bond,eigsblue] (7.3500,-1.9400) -- (7.7300,-1.7000);
\fill (6.9700,-1.7000) circle (1.0pt);
\fill (7.3500,-1.4600) circle (1.0pt);
\fill (7.3500,-1.9400) circle (1.0pt);
\fill (7.7300,-1.7000) circle (1.0pt);
\draw[bond,eigsblue] (7.9700,-1.7000) -- (8.3500,-1.4600);
\draw[bond,eigsred] (8.3500,-1.4600) -- (8.7300,-1.7000);
\draw[bond,eigsred] (7.9700,-1.7000) -- (8.3500,-1.9400);
\draw[bond,eigsblue] (8.3500,-1.9400) -- (8.7300,-1.7000);
\fill (7.9700,-1.7000) circle (1.0pt);
\fill (8.3500,-1.4600) circle (1.0pt);
\fill (8.3500,-1.9400) circle (1.0pt);
\fill (8.7300,-1.7000) circle (1.0pt);
\draw[bond,eigsred] (8.9700,-1.7000) -- (9.3500,-1.4600);
\draw[bond,eigsblue] (9.3500,-1.4600) -- (9.7300,-1.7000);
\draw[bond,eigsblue] (8.9700,-1.7000) -- (9.3500,-1.9400);
\draw[bond,eigsred] (9.3500,-1.9400) -- (9.7300,-1.7000);
\fill (8.9700,-1.7000) circle (1.0pt);
\fill (9.3500,-1.4600) circle (1.0pt);
\fill (9.3500,-1.9400) circle (1.0pt);
\fill (9.7300,-1.7000) circle (1.0pt);
\draw[bond,eigsblue] (9.9700,-1.7000) -- (10.3500,-1.4600);
\draw[bond,eigsred] (10.3500,-1.4600) -- (10.7300,-1.7000);
\draw[bond,eigsblue] (9.9700,-1.7000) -- (10.3500,-1.9400);
\draw[bond,eigsred] (10.3500,-1.9400) -- (10.7300,-1.7000);
\fill (9.9700,-1.7000) circle (1.0pt);
\fill (10.3500,-1.4600) circle (1.0pt);
\fill (10.3500,-1.9400) circle (1.0pt);
\fill (10.7300,-1.7000) circle (1.0pt);
\node at (13.45,-1.1700000000000002) {$\frac{(1-p)^4}{(1-p^2)^2}$};
\draw[bond,eigsgreen] (13.0700,-1.7000) -- (13.4500,-1.4600);
\draw[bond,eigsgreen] (13.8300,-1.7000) -- (13.4500,-1.4600);
\draw[bond,eigsgreen] (13.0700,-1.7000) -- (13.4500,-1.9400);
\draw[bond,eigsgreen] (13.8300,-1.7000) -- (13.4500,-1.9400);
\fill (13.0700,-1.7000) circle (1.0pt);
\fill (13.4500,-1.4600) circle (1.0pt);
\fill (13.4500,-1.9400) circle (1.0pt);
\fill (13.8300,-1.7000) circle (1.0pt);
\draw[black!12,line width=.35pt] (0,-2.2800000000000002) -- (16,-2.2800000000000002);
\draw[bond,eigsgreen] (0.1500,-3.0500) -- (1.1000,-3.0500);
\fill (0.1500,-3.0500) circle (1.15pt);
\fill (1.1000,-3.0500) circle (1.15pt);
\draw[map] (1.40,-3.0500000000000003) -- (2.05,-3.0500000000000003);
\node at (4.25,-2.5200000000000005) {$\frac{p(1-p)^2}{(1-p^2)^2}$};
\draw[bond,eigsred] (3.3700,-3.0500) -- (3.7500,-2.8100);
\draw[bond,eigsgreen] (3.7500,-2.8100) -- (4.1300,-3.0500);
\draw[bond,eigsgreen] (3.3700,-3.0500) -- (3.7500,-3.2900);
\draw[bond,black!65] (3.7500,-3.2900) -- (4.1300,-3.0500);
\fill (3.3700,-3.0500) circle (1.0pt);
\fill (3.7500,-2.8100) circle (1.0pt);
\fill (3.7500,-3.2900) circle (1.0pt);
\fill (4.1300,-3.0500) circle (1.0pt);
\draw[bond,eigsgreen] (4.3700,-3.0500) -- (4.7500,-2.8100);
\draw[bond,black!65] (4.7500,-2.8100) -- (5.1300,-3.0500);
\draw[bond,eigsred] (4.3700,-3.0500) -- (4.7500,-3.2900);
\draw[bond,eigsgreen] (4.7500,-3.2900) -- (5.1300,-3.0500);
\fill (4.3700,-3.0500) circle (1.0pt);
\fill (4.7500,-2.8100) circle (1.0pt);
\fill (4.7500,-3.2900) circle (1.0pt);
\fill (5.1300,-3.0500) circle (1.0pt);
\node at (8.85,-2.5200000000000005) {$\frac{p^2(1-p)^2}{(1-p^2)^2}$};
\draw[bond,eigsred] (8.4700,-3.0500) -- (8.8500,-2.8100);
\draw[bond,eigsgreen] (8.8500,-2.8100) -- (9.2300,-3.0500);
\draw[bond,eigsred] (8.4700,-3.0500) -- (8.8500,-3.2900);
\draw[bond,eigsgreen] (8.8500,-3.2900) -- (9.2300,-3.0500);
\fill (8.4700,-3.0500) circle (1.0pt);
\fill (8.8500,-2.8100) circle (1.0pt);
\fill (8.8500,-3.2900) circle (1.0pt);
\fill (9.2300,-3.0500) circle (1.0pt);
\node at (13.45,-2.5200000000000005) {$\frac{(1-p)^2}{(1-p^2)^2}$};
\draw[bond,eigsgreen] (13.0700,-3.0500) -- (13.4500,-2.8100);
\draw[bond,black!65] (13.4500,-2.8100) -- (13.8300,-3.0500);
\draw[bond,eigsgreen] (13.0700,-3.0500) -- (13.4500,-3.2900);
\draw[bond,black!65] (13.4500,-3.2900) -- (13.8300,-3.0500);
\fill (13.0700,-3.0500) circle (1.0pt);
\fill (13.4500,-2.8100) circle (1.0pt);
\fill (13.4500,-3.2900) circle (1.0pt);
\fill (13.8300,-3.0500) circle (1.0pt);
\draw[black!12,line width=.35pt] (0,-3.6300000000000003) -- (16,-3.6300000000000003);
\draw[bond,black!65] (0.1500,-4.4000) -- (1.1000,-4.4000);
\fill (0.1500,-4.4000) circle (1.15pt);
\fill (1.1000,-4.4000) circle (1.15pt);
\draw[map] (1.40,-4.4) -- (2.05,-4.4);
\node at (4.25,-3.87) {$1$};
\draw[bond,black!65] (3.8700,-4.4000) -- (4.2500,-4.1600);
\draw[bond,black!65] (4.2500,-4.1600) -- (4.6300,-4.4000);
\draw[bond,black!65] (3.8700,-4.4000) -- (4.2500,-4.6400);
\draw[bond,black!65] (4.2500,-4.6400) -- (4.6300,-4.4000);
\fill (3.8700,-4.4000) circle (1.0pt);
\fill (4.2500,-4.1600) circle (1.0pt);
\fill (4.2500,-4.6400) circle (1.0pt);
\fill (4.6300,-4.4000) circle (1.0pt);
\draw[densely dashed,black!45,line width=.6pt] (0,-5.0) -- (16,-5.0);
\node[anchor=west,font=\small\bfseries] at (0,-5.38) {One realisation};
\draw[bond,eigsred,line width=1pt] (0.2000,-6.6500) -- (1.0000,-6.6500);
\fill (0.2000,-6.6500) circle (1.15pt);
\fill (1.0000,-6.6500) circle (1.15pt);
\node at (.60,-8.45) {$\Xi^0$};
\draw[bond,eigsred] (1.8500,-6.6500) -- (2.6000,-6.3350);
\draw[bond,eigsred] (2.6000,-6.3350) -- (3.3500,-6.6500);
\draw[bond,eigsred] (1.8500,-6.6500) -- (2.6000,-6.9650);
\draw[bond,eigsblue] (2.6000,-6.9650) -- (3.3500,-6.6500);
\fill (1.8500,-6.6500) circle (1.0pt);
\fill (2.6000,-6.3350) circle (1.0pt);
\fill (2.6000,-6.9650) circle (1.0pt);
\fill (3.3500,-6.6500) circle (1.0pt);
\node at (2.6,-8.45) {$\Xi^1$};
\draw[bond,eigsred] (4.5000,-6.6500) -- (5.2328,-6.1540);
\draw[bond,eigsred] (5.2328,-6.1540) -- (6.1000,-5.9780);
\draw[bond,eigsgreen] (4.5000,-6.6500) -- (5.3672,-6.4740);
\draw[bond,eigsgreen] (6.1000,-5.9780) -- (5.3672,-6.4740);
\draw[bond,eigsred] (6.1000,-5.9780) -- (6.9672,-6.1540);
\draw[bond,eigsred] (6.9672,-6.1540) -- (7.7000,-6.6500);
\draw[bond,eigsred] (6.1000,-5.9780) -- (6.8328,-6.4740);
\draw[bond,eigsred] (6.8328,-6.4740) -- (7.7000,-6.6500);
\draw[bond,eigsgreen] (4.5000,-6.6500) -- (5.3672,-6.8260);
\draw[bond,eigsgreen] (6.1000,-7.3220) -- (5.3672,-6.8260);
\draw[bond,eigsred] (4.5000,-6.6500) -- (5.2328,-7.1460);
\draw[bond,eigsred] (5.2328,-7.1460) -- (6.1000,-7.3220);
\draw[bond,eigsred] (6.1000,-7.3220) -- (6.8328,-6.8260);
\draw[bond,eigsblue] (6.8328,-6.8260) -- (7.7000,-6.6500);
\draw[bond,eigsgreen] (6.1000,-7.3220) -- (6.9672,-7.1460);
\draw[bond,eigsgreen] (7.7000,-6.6500) -- (6.9672,-7.1460);
\fill (4.5000,-6.6500) circle (1.0pt);
\fill (6.1000,-5.9780) circle (1.0pt);
\fill (6.1000,-7.3220) circle (1.0pt);
\fill (7.7000,-6.6500) circle (1.0pt);
\fill (5.2328,-6.1540) circle (1.0pt);
\fill (5.3672,-6.4740) circle (1.0pt);
\fill (6.9672,-6.1540) circle (1.0pt);
\fill (6.8328,-6.4740) circle (1.0pt);
\fill (5.3672,-6.8260) circle (1.0pt);
\fill (5.2328,-7.1460) circle (1.0pt);
\fill (6.8328,-6.8260) circle (1.0pt);
\fill (6.9672,-7.1460) circle (1.0pt);
\node at (6.1,-8.45) {$\Xi^2$};
\draw[bond,eigsred,line width=.75pt] (8.9000,-6.6500) -- (9.4761,-6.1198);
\draw[bond,eigsred,line width=.75pt] (9.4761,-6.1198) -- (10.1824,-5.7820);
\draw[bond,eigsred,line width=.75pt] (8.9000,-6.6500) -- (9.6063,-6.3122);
\draw[bond,eigsblue,line width=.75pt] (9.6063,-6.3122) -- (10.1824,-5.7820);
\draw[bond,eigsred,line width=.75pt] (10.1824,-5.7820) -- (10.9181,-5.5142);
\draw[bond,eigsred,line width=.75pt] (10.9181,-5.5142) -- (11.7000,-5.4740);
\draw[bond,eigsgreen,line width=.75pt] (10.1824,-5.7820) -- (10.9643,-5.7418);
\draw[bond,eigsgreen,line width=.75pt] (11.7000,-5.4740) -- (10.9643,-5.7418);
\draw[bond,eigsred,line width=.75pt] (8.9000,-6.6500) -- (9.6357,-6.3822);
\draw[bond,eigsgreen,line width=.75pt] (9.6357,-6.3822) -- (10.4176,-6.3420);
\draw[bond,eigsred,line width=.75pt] (8.9000,-6.6500) -- (9.6819,-6.6098);
\draw[bond,eigsgreen,line width=.75pt] (9.6819,-6.6098) -- (10.4176,-6.3420);
\draw[bond,eigsgreen,line width=.75pt] (11.7000,-5.4740) -- (11.1239,-6.0042);
\draw[bond,black!65,line width=.75pt] (11.1239,-6.0042) -- (10.4176,-6.3420);
\draw[bond,eigsgreen,line width=.75pt] (11.7000,-5.4740) -- (10.9937,-5.8118);
\draw[bond,black!65,line width=.75pt] (10.9937,-5.8118) -- (10.4176,-6.3420);
\draw[bond,eigsred,line width=.75pt] (11.7000,-5.4740) -- (12.4819,-5.5142);
\draw[bond,eigsred,line width=.75pt] (12.4819,-5.5142) -- (13.2176,-5.7820);
\draw[bond,eigsred,line width=.75pt] (11.7000,-5.4740) -- (12.4357,-5.7418);
\draw[bond,eigsblue,line width=.75pt] (12.4357,-5.7418) -- (13.2176,-5.7820);
\draw[bond,eigsred,line width=.75pt] (13.2176,-5.7820) -- (13.9239,-6.1198);
\draw[bond,eigsred,line width=.75pt] (13.9239,-6.1198) -- (14.5000,-6.6500);
\draw[bond,eigsgreen,line width=.75pt] (13.2176,-5.7820) -- (13.7937,-6.3122);
\draw[bond,eigsgreen,line width=.75pt] (14.5000,-6.6500) -- (13.7937,-6.3122);
\draw[bond,eigsred,line width=.75pt] (11.7000,-5.4740) -- (12.4063,-5.8118);
\draw[bond,eigsred,line width=.75pt] (12.4063,-5.8118) -- (12.9824,-6.3420);
\draw[bond,eigsred,line width=.75pt] (11.7000,-5.4740) -- (12.2761,-6.0042);
\draw[bond,eigsred,line width=.75pt] (12.2761,-6.0042) -- (12.9824,-6.3420);
\draw[bond,eigsgreen,line width=.75pt] (12.9824,-6.3420) -- (13.7643,-6.3822);
\draw[bond,eigsgreen,line width=.75pt] (14.5000,-6.6500) -- (13.7643,-6.3822);
\draw[bond,eigsred,line width=.75pt] (12.9824,-6.3420) -- (13.7181,-6.6098);
\draw[bond,eigsred,line width=.75pt] (13.7181,-6.6098) -- (14.5000,-6.6500);
\draw[bond,eigsred,line width=.75pt] (8.9000,-6.6500) -- (9.6819,-6.6902);
\draw[bond,eigsgreen,line width=.75pt] (9.6819,-6.6902) -- (10.4176,-6.9580);
\draw[bond,eigsgreen,line width=.75pt] (8.9000,-6.6500) -- (9.6357,-6.9178);
\draw[bond,black!65,line width=.75pt] (9.6357,-6.9178) -- (10.4176,-6.9580);
\draw[bond,eigsgreen,line width=.75pt] (11.7000,-7.8260) -- (10.9937,-7.4882);
\draw[bond,black!65,line width=.75pt] (10.9937,-7.4882) -- (10.4176,-6.9580);
\draw[bond,eigsred,line width=.75pt] (11.7000,-7.8260) -- (11.1239,-7.2958);
\draw[bond,eigsgreen,line width=.75pt] (11.1239,-7.2958) -- (10.4176,-6.9580);
\draw[bond,eigsred,line width=.75pt] (8.9000,-6.6500) -- (9.6063,-6.9878);
\draw[bond,eigsred,line width=.75pt] (9.6063,-6.9878) -- (10.1824,-7.5180);
\draw[bond,eigsred,line width=.75pt] (8.9000,-6.6500) -- (9.4761,-7.1802);
\draw[bond,eigsred,line width=.75pt] (9.4761,-7.1802) -- (10.1824,-7.5180);
\draw[bond,eigsgreen,line width=.75pt] (10.1824,-7.5180) -- (10.9643,-7.5582);
\draw[bond,eigsgreen,line width=.75pt] (11.7000,-7.8260) -- (10.9643,-7.5582);
\draw[bond,eigsred,line width=.75pt] (10.1824,-7.5180) -- (10.9181,-7.7858);
\draw[bond,eigsred,line width=.75pt] (10.9181,-7.7858) -- (11.7000,-7.8260);
\draw[bond,eigsred,line width=.75pt] (11.7000,-7.8260) -- (12.2761,-7.2958);
\draw[bond,eigsred,line width=.75pt] (12.2761,-7.2958) -- (12.9824,-6.9580);
\draw[bond,eigsred,line width=.75pt] (11.7000,-7.8260) -- (12.4063,-7.4882);
\draw[bond,eigsblue,line width=.75pt] (12.4063,-7.4882) -- (12.9824,-6.9580);
\draw[bond,eigsred,line width=.75pt] (12.9824,-6.9580) -- (13.7181,-6.6902);
\draw[bond,eigsblue,line width=.75pt] (13.7181,-6.6902) -- (14.5000,-6.6500);
\draw[bond,eigsred,line width=.75pt] (12.9824,-6.9580) -- (13.7643,-6.9178);
\draw[bond,eigsblue,line width=.75pt] (13.7643,-6.9178) -- (14.5000,-6.6500);
\draw[bond,eigsred,line width=.75pt] (11.7000,-7.8260) -- (12.4357,-7.5582);
\draw[bond,eigsgreen,line width=.75pt] (12.4357,-7.5582) -- (13.2176,-7.5180);
\draw[bond,eigsred,line width=.75pt] (11.7000,-7.8260) -- (12.4819,-7.7858);
\draw[bond,eigsgreen,line width=.75pt] (12.4819,-7.7858) -- (13.2176,-7.5180);
\draw[bond,eigsgreen,line width=.75pt] (14.5000,-6.6500) -- (13.9239,-7.1802);
\draw[bond,black!65,line width=.75pt] (13.9239,-7.1802) -- (13.2176,-7.5180);
\draw[bond,eigsgreen,line width=.75pt] (14.5000,-6.6500) -- (13.7937,-6.9878);
\draw[bond,black!65,line width=.75pt] (13.7937,-6.9878) -- (13.2176,-7.5180);
\fill (8.9000,-6.6500) circle (0.75pt);
\fill (11.7000,-5.4740) circle (0.75pt);
\fill (11.7000,-7.8260) circle (0.75pt);
\fill (14.5000,-6.6500) circle (0.75pt);
\fill (10.1824,-5.7820) circle (0.75pt);
\fill (10.4176,-6.3420) circle (0.75pt);
\fill (13.2176,-5.7820) circle (0.75pt);
\fill (12.9824,-6.3420) circle (0.75pt);
\fill (10.4176,-6.9580) circle (0.75pt);
\fill (10.1824,-7.5180) circle (0.75pt);
\fill (12.9824,-6.9580) circle (0.75pt);
\fill (13.2176,-7.5180) circle (0.75pt);
\fill (9.4761,-6.1198) circle (0.75pt);
\fill (9.6063,-6.3122) circle (0.75pt);
\fill (10.9181,-5.5142) circle (0.75pt);
\fill (10.9643,-5.7418) circle (0.75pt);
\fill (9.6357,-6.3822) circle (0.75pt);
\fill (9.6819,-6.6098) circle (0.75pt);
\fill (11.1239,-6.0042) circle (0.75pt);
\fill (10.9937,-5.8118) circle (0.75pt);
\fill (12.4819,-5.5142) circle (0.75pt);
\fill (12.4357,-5.7418) circle (0.75pt);
\fill (13.9239,-6.1198) circle (0.75pt);
\fill (13.7937,-6.3122) circle (0.75pt);
\fill (12.4063,-5.8118) circle (0.75pt);
\fill (12.2761,-6.0042) circle (0.75pt);
\fill (13.7643,-6.3822) circle (0.75pt);
\fill (13.7181,-6.6098) circle (0.75pt);
\fill (9.6819,-6.6902) circle (0.75pt);
\fill (9.6357,-6.9178) circle (0.75pt);
\fill (10.9937,-7.4882) circle (0.75pt);
\fill (11.1239,-7.2958) circle (0.75pt);
\fill (9.6063,-6.9878) circle (0.75pt);
\fill (9.4761,-7.1802) circle (0.75pt);
\fill (10.9643,-7.5582) circle (0.75pt);
\fill (10.9181,-7.7858) circle (0.75pt);
\fill (12.2761,-7.2958) circle (0.75pt);
\fill (12.4063,-7.4882) circle (0.75pt);
\fill (13.7181,-6.6902) circle (0.75pt);
\fill (13.7643,-6.9178) circle (0.75pt);
\fill (12.4357,-7.5582) circle (0.75pt);
\fill (12.4819,-7.7858) circle (0.75pt);
\fill (13.9239,-7.1802) circle (0.75pt);
\fill (13.7937,-6.9878) circle (0.75pt);
\node at (11.7,-8.45) {$\Xi^3$};
\draw[map] (1.2,-6.65) -- (1.65,-6.65);
\draw[map] (3.6,-6.65) -- (4.1,-6.65);
\draw[map] (8.0,-6.65) -- (8.55,-6.65);
\draw[map] (14.8,-6.65) -- (15.35,-6.65);
\node at (15.75,-6.65) {$\cdots$};
\end{tikzpicture}
\endgroup

%% file: main.bbl
\providecommand{\bysame}{\leavevmode\hbox to3em{\hrulefill}\thinspace}
\providecommand{\MR}{\relax\ifhmode\unskip\space\fi MR }
\providecommand{\MRhref}[2]{%
  \href{http://www.ams.org/mathscinet-getitem?mr=#1}{#2}
}
\providecommand{\href}[2]{#2}
\begin{thebibliography}{10}

\bibitem{aharony1984breakdown}
Amnon Aharony and Dietrich Stauffer, \emph{Possible breakdown of the
  {Alexander--Orbach} rule at low dimensionalities}, Physical Review Letters
  \textbf{52} (1984), no.~26, 2368--2370.

\bibitem{AldousBandyopadhyay2005}
David~J. Aldous and Antar Bandyopadhyay, \emph{A survey of max-type recursive
  distributional equations}, The Annals of Applied Probability \textbf{15}
  (2005), no.~2, 1047--1110.

\bibitem{alexander1982density}
S.~Alexander and R.~Orbach, \emph{Density of states on fractals: ``fractons''},
  Journal de Physique Lettres \textbf{43} (1982), no.~17, L625--L631.

\bibitem{alves2026percolation}
Caio Alves, Rangel Baldasso, Carlos~Gustavo Moreira, and Augusto Teixeira,
  \emph{Percolation on hierarchical lattices}, 2026, arXiv:2606.11503
  [math.PR].

\bibitem{anttila2024replacement}
Riku Anttila and Sylvester Eriksson-Bique, \emph{On constructions of fractal
  spaces using replacement and the combinatorial {Loewner} property}, 2024,
  arXiv:2406.08062v2.

\bibitem{anttila2026loewner}
\bysame, \emph{The combinatorial {Loewner} property and super-multiplicativity
  inequalities for symmetric self-similar metric spaces}, Potential Analysis
  \textbf{65} (2026), Paper No.~9, 65 pp.

\bibitem{anttila2025energy}
Riku Anttila, Sylvester Eriksson-Bique, and Ryosuke Shimizu, \emph{Construction
  of self-similar energy forms and singularity of {Sobolev} spaces on
  {Laakso}-type fractal spaces}, 2025, arXiv:2503.13258v2.

\bibitem{berker1979renormalisation}
A.~N. Berker and S.~Ostlund, \emph{Renormalisation-group calculations of finite
  systems: order parameter and specific heat for epitaxial ordering}, Journal
  of Physics C: Solid State Physics \textbf{12} (1979), no.~22, 4961--4975.

\bibitem{Croydon2018}
D.~A. Croydon, \emph{Scaling limits of stochastic processes associated with
  resistance forms}, Annales de l'Institut Henri Poincar\'e, Probabilit\'es et
  Statistiques \textbf{54} (2018), no.~4, 1939--1968.

\bibitem{croydon2008selfsimilarity}
D.~A. Croydon and B.~M. Hambly, \emph{Self-similarity and spectral asymptotics
  for the continuum random tree}, Stochastic Processes and their Applications
  \textbf{118} (2008), no.~5, 730--754.

\bibitem{croydon2010spectral}
\bysame, \emph{Spectral asymptotics for stable trees}, Electronic Journal of
  Probability \textbf{15} (2010), no.~57, 1772--1801.

\bibitem{fitzner2017meanfield}
Robert Fitzner and Remco van~der Hofstad, \emph{Mean-field behavior for
  nearest-neighbor percolation in {$d>10$}}, Electronic Journal of Probability
  \textbf{22} (2017), Paper No.~43, 65 pp.

\bibitem{gefen1983anomalous}
Yuval Gefen, Amnon Aharony, and Shlomo Alexander, \emph{Anomalous diffusion on
  percolating clusters}, Physical Review Letters \textbf{50} (1983), no.~1,
  77--80.

\bibitem{grassberger1999conductivity}
Peter Grassberger, \emph{Conductivity exponent and backbone dimension in {2-d}
  percolation}, Physica A: Statistical Mechanics and its Applications
  \textbf{262} (1999), no.~3--4, 251--263.

\bibitem{griffiths1982spin}
Robert~B. Griffiths and Miron Kaufman, \emph{Spin systems on hierarchical
  lattices. {Introduction} and thermodynamic limit}, Physical Review B
  \textbf{26} (1982), no.~9, 5022--5032.

\bibitem{hambly2010diffusion}
B.~M. Hambly and T.~Kumagai, \emph{Diffusion on the scaling limit of the
  critical percolation cluster in the diamond hierarchical lattice},
  Communications in Mathematical Physics \textbf{295} (2010), no.~1, 29--69.

\bibitem{hauser1986geometrical}
P.~R. Hauser and V.~K. Saxena, \emph{Geometrical phase transitions on
  hierarchical lattices and universality}, Physical Review B \textbf{34}
  (1986), no.~11, 8193--8195.

\bibitem{kaufman1981exactly}
Miron Kaufman and Robert~B. Griffiths, \emph{Exactly soluble {Ising} models on
  hierarchical lattices}, Physical Review B \textbf{24} (1981), no.~1,
  496--498.

\bibitem{Kigami2003}
Jun Kigami, \emph{Harmonic analysis for resistance forms}, Journal of
  Functional Analysis \textbf{204} (2003), no.~2, 399--444.

\bibitem{Kigami2012}
\bysame, \emph{Resistance forms, quasisymmetric maps and heat kernel
  estimates}, Memoirs of the American Mathematical Society \textbf{216} (2012),
  no.~1015, vi+132.

\bibitem{kozma2009alexander}
Gady Kozma and Asaf Nachmias, \emph{The {Alexander--Orbach} conjecture holds in
  high dimensions}, Inventiones mathematicae \textbf{178} (2009), no.~3,
  635--654.

\bibitem{KurtzLyonsPemantlePeres1997}
Thomas~G. Kurtz, Russell Lyons, Robin Pemantle, and Yuval Peres, \emph{A
  conceptual proof of the {Kesten--Stigum} theorem for multi-type branching
  processes}, Classical and Modern Branching Processes (Krishna~B. Athreya and
  Peter Jagers, eds.), The IMA Volumes in Mathematics and its Applications,
  vol.~84, Springer, New York, 1997, pp.~181--185.

\bibitem{li2024on}
Nero Li and Thomas Britz, \emph{On the scale-freeness of random colored
  substitution networks}, Proceedings of the American Mathematical Society
  \textbf{152} (2024), no.~4, 1377--1389.

\bibitem{li2025reducible}
Nero~Ziyu Li, Frank~Xin Hu, and Thomas Britz, \emph{Reducible {Iterated Graph
  Systems}: multiscale-freeness and multifractals}, Nonlinearity \textbf{39}
  (2026), no.~4, 045009.

\bibitem{lobb1984percolative}
C.~J. Lobb and D.~J. Frank, \emph{Percolative conduction and the
  {Alexander--Orbach} conjecture in two dimensions}, Physical Review B
  \textbf{30} (1984), no.~7, 4090--4092.

\bibitem{neroli2024fractal}
Ziyu Neroli, \emph{Fractal dimensions for iterated graph systems}, Proceedings
  of the Royal Society A: Mathematical, Physical and Engineering Sciences
  \textbf{480} (2024), no.~2300, 20240406.

\bibitem{Neroli2026}
\bysame, \emph{{Iterated Graph Systems} ({I}): random walks and diffusion
  limits}, 2026, arXiv:2603.13798v3.

\bibitem{NeroliIGSII2026}
\bysame, \emph{{Iterated Graph Systems} ({II}): {Bernoulli} percolation and
  critical-exponent universality classes on hierarchical lattices}, 2026,
  arXiv:2610.10469v1.

\bibitem{stauffer1994percolation}
Dietrich Stauffer and Amnon Aharony, \emph{Introduction to percolation theory},
  revised second ed., Taylor \& Francis, London, 1994.

\bibitem{xi2017fractality}
Lifeng Xi, Lihong Wang, Songjing Wang, Zhouyu Yu, and Qin Wang,
  \emph{Fractality and scale-free effect of a class of self-similar networks},
  Physica A: Statistical Mechanics and its Applications \textbf{478} (2017),
  31--40.

\end{thebibliography}
